\documentclass[a4paper, 11pt]{amsart}
\usepackage[T1]{fontenc}
\usepackage[utf8]{inputenc}
\usepackage{a4wide}
\usepackage{lmodern}
\usepackage{mathtools}
\usepackage{amsthm}
\usepackage{amsfonts}
\usepackage{amssymb}
\usepackage{tikz}
\usepackage{tikz-cd}
\usetikzlibrary{tikzmark}
\usepackage{enumitem}
\usepackage{xcolor}
\usepackage{array}
\usepackage{booktabs}
\usepackage{colortbl}
\usepackage{wrapfig}
\usepackage{ifthen}
\usepackage{tensor}
\usepackage{cutwin}
\usepackage{tabularx}
\usepackage{float}
\usepackage{xspace} 
\usepackage{longtable}
\usepackage{comment}
\usepackage{arydshln}
\usepackage{titletoc}

\usepackage[colorlinks=true, allcolors=blue]{hyperref}

\theoremstyle{plain}
\newtheorem{Th}{Theorem}[section]
\newtheorem{Lemma}[Th]{Lemma}
\newtheorem{Cor}[Th]{Corollary}
\newtheorem{Prop}[Th]{Proposition}

\theoremstyle{definition}
\newtheorem{Def}[Th]{Definition}

\newtheorem{Conj}[Th]{Conjecture}
\newtheorem{Rem}[Th]{Remark}
\newtheorem{?}[Th]{Problem}

\newtheorem{Cons}[Th]{Construction}

\definecolor{chartgray}{gray}{0.4}
\definecolor{darkgreen}{rgb}{0, 0.7, 0}
\definecolor{darkcyan}{rgb}{0, 0.7, 0.7}

\newcommand{\tmf}{\text{tmf}\xspace}
\newcommand{\mmf}{\text{mmf}\xspace}

\newcommand{\cl}{\text{cl}\xspace}

\newcommand{\Be}{\text{Be}\xspace}
\newcommand{\MGL}{\text{MGL}\xspace}
\newcommand{\MU}{\text{MU}\xspace}
\newcommand{\BP}{\text{BP}\xspace}
\newcommand{\Tot}{\text{Tot}\xspace}
\newcommand{\KQ}{\text{KQ}\xspace}
\newcommand{\kq}{\text{kq}\xspace}
\newcommand{\ko}{\text{ko}\xspace}
\newcommand{\kgl}{\text{kgl}\xspace}

\newcommand{\ku}{\text{ku}\xspace}
\newcommand{\MSL}{\text{MSL}\xspace}

\newcommand{\tf}{\tilde{f}}
\newcommand{\ts}{\tilde{s}}
\newcommand{\tGamma}{\tilde{\Gamma}}

\newcommand{\fil}{\text{fil}\xspace}

\newcommand{\Dec}{\text{Déc}\xspace}

\newcommand{\eff}{\text{eff}\xspace}

\newcommand{\nocontentsline}[3]{}
\newcommand\stoptoc{%
	\let\origcontentsline\addcontentsline
	\let\addcontentsline\nocontentsline
}
\newcommand\resumetoc{%
	\let\addcontentsline\origcontentsline
}

\title{Filtrations in $\mathbb{C}$-motivic stable homotopy theory}
\author{Konstantin Emming}
\address{Department of Mathematics, Wayne State University, Detroit, MI 48202, USA}
\email{konstantin.emming@gmail.com}

\keywords{$\mathbb{C}$-motivic homotopy theory, slice filtrations, filtered spectra, Adams-Novikov spectral sequence}

\subjclass{14F42, 55P42, 55T99}

\begin{document}

\begin{abstract}
    We study the effective, connective, and very effective filtrations in the $\mathbb{C}$-motivic, $2$-complete, cellular, stable homotopy category. We do so by using the filtered spectrum model for this category due to Gheorghe-Isaksen-Krause-Ricka, and in particular the motivic analogue functor $\Gamma_\star$. Then we can express the covers making up the respective filtrations of a nice motivic analogue $\Gamma_\star(X)$ via filtered spectra, and use these to compute the slices. Applying this in the case of $X$ being the sphere spectrum, $\MU$, $\ku$, or an Eilenberg-MacLane spectrum recovers a number of conjectures due to Voevodsky. Applying it to $\ko$ recovers a computation of Ananyevskiy-Röndigs-{\O}stv{\ae}r. We can also apply it to $\tmf$ and compute the effective slices of the motivic modular forms spectrum $\mmf$. We also study the effective slice spectral sequence for $\Gamma_\star(X)$, which turns out to contain the same information as the classical Adams-Novikov spectral sequence for $X$.
\end{abstract}

\maketitle

\setcounter{tocdepth}{2}
\tableofcontents

\section{Introduction}

\stoptoc

Motivic homotopy theory, as developed by Morel and Voevodsky \cite{Mor99} \cite{MV99}, combines ideas from algebraic geometry and algebraic topology. As such, it has seen many celebrated applications in both of these fields. We list some examples: On the side of algebraic geometry, motivic homotopy theory was the fundamental framework for the proof of the Milnor conjecture \cite{Voe03} and the Bloch-Kato conjecture \cite{Voe11}. On the topological side, we mention the progress on the computation of the classical stable homotopy groups of spheres \cite{Isa19} \cite{IWX}. Ideas inspired by motivic homotopy theory, namely synthetic homotopy theory, were also a main ingredient in the resolution of the last Kervaire invariant problem \cite{LWX25}.

\subsection{Voevodsky's slice conjectures}

In this article, we will study ideas in motivic homotopy theory motivated by algebraic geometry, but from a mostly topological viewpoint. Namely, in \cite{Voe02a} Voevodsky introduced what is now called the effective filtration in motivic homotopy theory. The aim was to obtain a motivic spectral sequence similar to the classical Atiyah-Hirzebruch spectral sequence in topology. The classical Atiyah-Hirzebruch spectral sequence relates topological $K$-theory and singular cohomology, whereas the motivic spectral sequence was intended to relate algebraic $K$-theory and motivic cohomology. In \cite{Voe02b} Voevodsky proposed that the motivic spectral sequence can be derived from the effective filtration. In order to study any spectral sequence arising from a filtration of spectra, one first needs to understand the slices of the filtration: Let $Y$ be a motivic spectrum and denote the effective filtration by
\[\dotsc \to f_{q + 1}(Y) \to f_q(Y) \to f_{q - 1}(Y) \to \dotsc .\]
The $q$-th effective slice is then defined via the cofiber sequence
\[f_{q + 1}(Y) \to f_q(Y) \to s_q(Y).\]
In \cite{Voe02a}, a number of conjectures about the structure of these slices are made. The conjectures state that the slices of certain motivic spectra $Y$ are given by purely topological data. Namely, their slices are wedge sums of motivic Eilenberg-MacLane spectra indexed by the $E_2$-page of the classical Adams-Novikov spectral sequence of the Betti realization $\Be(Y)$. We collect some of the conjectures here.

\begin{Conj}[Voevodsky]
    Let $Y$ be one of the following motivic spectra:
    \begin{itemize}
        \item The motivic Eilenberg-MacLane spectrum $M\mathbb{Z}$,
        \item the motivic algebraic cobordism spectrum $\MGL$,
        \item the motivic sphere spectrum $S$.
    \end{itemize}
    Let $E_2^{s, f}$ denote the classical Adams-Novikov spectral sequence $E_2$-page of $\Be(Y)$ in stem $s$ and filtration $f$. Then the $q$-th effective slice of $Y$ is given by
    \[s_q(Y) \simeq \bigvee_{s + f = 2q} \Sigma^{s, q} ME_2^{s, f},\]
    where $M$ is the functor taking a group to the associated motivic Eilenberg-MacLane spectrum.    
\end{Conj}

We note that the equation $s + f = 2q$ describes the $2q$-th antidiagonal in the $(s, f)$-plane, i.e. in a typical Adams-type chart.

These conjectures, and related results, have been an active point of research in past years:
\begin{itemize}
    \item The proof in the case of $\MGL$ is based on ideas of Hopkins-Morel and written up by Hoyois in \cite{Hoy15},
    \item working over an algebraically closed field of characteristic zero the case of the sphere spectrum is proven by Levine in \cite[Section 8]{Lev14}, and for more general base schemes by Röndigs-Spitzweck-{\O}stv{\ae}r in \cite[Theorem 2.12]{RSO19},
    \item for motivic Landweber exact spectra a proof is given by Spitzweck in \cite{Spi10} and \cite{Spi12},
    \item for $\kq$, the very effective cover of hermitian $K$-theory, a proof is given by Ananyevskiy-Röndigs-{\O}stv{\ae}r in \cite[Theorem 3.2]{ARO20},
    \item for $\MSL$, the special linear algebraic cobordism spectrum, a proof by Nandy-Röndigs-Zolotarev can be found in \cite{NRZ26}.
\end{itemize}

The original motivation of the author was to verify Voevodsky's slice conjecture in the case of $\mmf$, the motivic modular forms spectrum, as defined in \cite{GIKR}. Given that $\mmf$ is only known to exist in a $2$-complete, $\mathbb{C}$-motivic context, we will restrict to this setting. But most of \cite{GIKR} can probably also be done in a $p$-complete setting for any prime $p$, as mentioned in \cite[Section 1.1]{GIKR}.

The spectrum $\mmf = \Gamma_\star(\tmf\hspace{0.2mm})$ is defined using the $\mathbb{C}$-motivic analogue functor $\Gamma_\star$ from the classical stable homotopy category to the category of filtered spectra. This functor is symmetric lax monoidal, so it takes values in the subcategory $\text{Mod}_{\Gamma_\star(S)}$ of modules over $\Gamma_\star(S)$, where $S$ is the classical sphere spectrum. In \cite[Theorem 6.12]{GIKR} it is shown that the category $\text{Mod}_{\Gamma_\star(S)}$ is equivalent to the $2$-complete, cellular, $\mathbb{C}$-motivic stable homotopy category. That is why we refer to $\Gamma_\star$ as the $\mathbb{C}$-motivic analogue functor.

Instead of just studying $\mmf$, we will study spectra of the form $\Gamma_\star(X)$. Under certain assumptions, we can compute the slices of such motivic analogues.

\begin{Th}[Corollary \ref{Corollary on slices of motivic analogues}]\phantomsection\label{Theorem on slices of motivic analogues}
    Let $X$ be a bounded below classical spectrum with even $\normalfont{\MU}$ homology. Let $E_2^{s, f}$ denote the Adams-Novikov spectral sequence $E_2$-page of $X$ in stem $s$ and filtration $f$. Then the $q$-th effective slice of $\Gamma_\star(X)$ is given by
    \[s_q(\Gamma_\star(X)) \simeq \bigvee_{s + f = 2q} \Sigma^{s, q} ME_2^{s,f },\]
    where $M$ is the functor taking a group to the associated $\mathbb{C}$-motivic Eilenberg-MacLane spectrum.
\end{Th}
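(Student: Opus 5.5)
The plan is to work entirely in the filtered spectrum model of \cite{GIKR}, where $\Gamma_\star(X)$ is the décalage of the Adams--Novikov (cobar) tower. Concretely, $\Gamma_\star(X)$ is the filtered spectrum obtained from $\Tot$ of the cosimplicial $\MU$-resolution $\MU^{\wedge \bullet+1}\wedge X$. In weight $w$ it is $\lim_\Delta \tau_{\geq 2w - \bullet}(\MU^{\wedge\bullet+1}\wedge X)$, i.e.\ the décalage $\Dec$ of the Whitehead tower applied levelwise.

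\textbf{Step 1: translate the filtration.} I would first identify the effective filtration on $\text{Mod}_{\Gamma_\star(S)}$ with a filtration on filtered spectra. The effective cover $f_q$ is the colocalization onto the subcategory generated by $\Sigma^{a,b}\Gamma_\star(S)$ with $b\geq q$. In the filtered model, $\Sigma^{a,b}$ shifts both the underlying degree and the filtration weight. So $f_q$ should be the colocalization generated by objects of weight $\geq q$, roughly a shift of $\Gamma_\star(S)$ restricted to weights above $q$. I would compute this colocalization explicitly for $\Gamma_\star(X)$ when $X$ has even $\MU$-homology and is bounded below. The expected answer is that $f_q\Gamma_\star(X)$ is given by the same cosimplicial formula, but with $\MU^{\wedge n+1}\wedge X$ replaced by its "even-truncated" cover in the appropriate weight, so that $f_q$ agrees with $\Gamma_\star(X)$ in weights $\geq q$. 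The evenness of $\MU_*X$ is what makes the levelwise Postnikov pieces $\MU^{\wedge n+1}\wedge X$ sums of shifted $\MU$'s. This in turn makes the cover computable levelwise as a totalization of shifted $\Gamma_\star(\MU)$-modules. Bounded below is needed so that the totalization converges and commutes with the colimits in the colocalization.

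\textbf{Step 2: compute the slice.} Taking the cofiber $f_{q+1}\to f_q$ levelwise reduces, in each cosimplicial degree $n$, to a single homotopy group, namely a homotopy group of $\MU^{\wedge n+1}\wedge X$ in degree $2q$, placed in motivic weight $q$. Hence
\[s_q\Gamma_\star(X)\simeq \Tot\bigl(\Sigma^{2q,q} M\pi_{2q}(\MU^{\wedge\bullet+1}\wedge X)\bigr).\]
Here I use that $\Gamma_\star(\MU)$-modules concentrated in a single weight are $\Gamma_\star(S)/\tau$-type objects, and that the slice of the unit is $M\mathbb{Z}^\wedge_2 = \Gamma_\star(S)/\tau$-like. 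More precisely, $s_0$ of the sphere should be identified with the motivic $H\mathbb{Z}$ in this model, which I would check first as the base case. The cosimplicial abelian group $\pi_{2q}(\MU^{\wedge\bullet+1}\wedge X)$ is the cobar complex computing $\mathrm{Ext}_{\MU_*\MU}(\MU_*,\MU_*X)$ in internal degree $2q$.

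\textbf{Step 3: split.} Since $M$ turns the cobar complex into a cosimplicial object of $M\mathbb{Z}$-modules, and modules over a field-like $M\mathbb{Z}$ (2-complete) split as sums of shifted Eilenberg--MacLane spectra, $\Tot$ gives a spectrum whose homotopy is the cohomology $E_2^{s,f}$ with $s+f=2q$. A cohomological degree $f$ contributes a desuspension by $f$ in topological degree, giving $\Sigma^{2q-f,q}=\Sigma^{s,q}$. The splitting into a wedge needs $M\mathbb{Z}$-module formality, which I would justify via the module category being the derived category of $\mathbb{Z}_2$ (or via the cofiber-of-$\tau$ identification with $\MU_*\MU$-comodules of \cite{GIKR}). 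The splitting also requires $E_2$ to be finite in each degree, so that the product equals the wedge; bounded below plus evenness gives this.

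\textbf{Main obstacle.} I expect the main obstacle to be Step 1: proving that the abstract effective colocalization agrees with the naive levelwise weight-truncation. This requires showing that the proposed cover lies in the localizing subcategory generated by $\Sigma^{a,b}\Gamma_\star(S)$, $b\geq q$, and that the cofiber maps to zero from all such generators. I would try to do this by checking on bigraded homotopy, using that maps out of $\Sigma^{a,b}\Gamma_\star(S)$ compute weight-$b$ homotopy.
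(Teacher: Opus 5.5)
There is a genuine gap. Your outline has the same shape as the paper's: an explicit model for $f_q$, then the slice as a levelwise-truncated totalization, then a splitting. The formula in your Step~2 is also what one gets once the cover is known, although not for the reasons you give. But Step~1, which you flag yourself, is not carried out.

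\textbf{Step 1.} Comparing bigraded homotopy with $\Gamma_\star(X)$ in weights $\geq q$ only checks the mapping property against the generators $S^{a,b}$, $b\geq q$. It gives no reason why your candidate lies in the subcategory generated by those spheres, and you supply no mechanism for that. The missing ingredient is a homotopical criterion for $q$-effectivity, which the paper proves in two directions. First, a $q$-effective object has $\tau\colon\pi_{s,w}\to\pi_{s,w-1}$ bijective for all $w\leq q$. Conversely, an approximable object with this property is $q$-effective; here approximable means homotopy bounded below in stems and, in each stem, bounded above in weights. The converse is proved by a motivic cellular approximation that runs stem by stem and downward in weight, and never needs cells of weight $<q$ (Propositions~\ref{tau-iso on homotopy groups of q-effective spectra}, \ref{motivic cellular approximation} and \ref{characterization of q-th effective cover for nice spectra}). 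This is where bounded below is used: it makes $\Gamma_\star(X)$ $r$-connective, hence the candidate approximable.

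The criterion also pins down the candidate, which you never specify below weight $q$. It is $\Gamma^q_w(X)=\Gamma_w(X)$ for $w\geq q$ and $\Gamma^q_w(X)=\Gamma_q(X)$ with identity maps for $w\leq q$, with a $\Gamma_\star(S)$-module structure built by hand. Your heuristic for the cover rests on a false claim: even $\MU$-homology does not make $\MU^{\wedge n+1}\wedge X$ a wedge of shifted $\MU$'s. For $X=H\mathbb{Z}$ or $X=\ku$, its mod $2$ homology has odd-degree classes. Evenness is only needed to kill the odd antidiagonals. Also, $\Gamma_w(X)=\Tot(\tau_{\geq 2w}(X\wedge\MU^{\wedge\bullet+1}))$; the truncation does not depend on the cosimplicial degree.

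\textbf{Step 3.} Your justification of the splitting is wrong as stated. Cellular $\mathbb{C}$-motivic $M\mathbb{Z}$-modules are not $D(\mathbb{Z}_2)$ and need not split into $\Sigma^{a,b}MG$'s. For example, $M\mathbb{Z}/\tau$ is an $M\mathbb{Z}$-module whose homotopy is a single $\mathbb{Z}_2$ in one bidegree, whereas $\Sigma^{a,b}MG$ has homotopy $G[\tau]$. Likewise $\Gamma_\star(S)/\tau$ is not $M\mathbb{Z}$: its homotopy is the whole ANSS $E_2$-page of $S$. And the slice is not concentrated in one weight; it is constant in all weights $\leq q$.

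What makes the splitting possible is exactly that $\pi_{s,*}(s_q\Gamma_\star(X))\cong G_s[\tau]$ with all $G_s$ in the single weight $q$. The paper exploits this in a motivic Adams splitting (Proposition~\ref{MZ-modules are wedges of Eilenberg-MacLane spectra}). It realizes $G_s$ by a cofiber $C_s$ of a map between wedges of $S^{s,q}$, and maps $C_s$ into the slice inducing an isomorphism on $\pi_{s,*}$. It then smashes with $M\mathbb{Z}$, which identifies $M\mathbb{Z}\wedge C_s\simeq\Sigma^{s,q}MG_s$. Finally it composes with the action map coming from $s_q$ being an $s_0(S)=M\mathbb{Z}$-module.

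You could instead split the classical $H\mathbb{Z}$-module $\Tot(\Sigma^{2q}H\pi_{2q}(\MU^{\wedge\bullet+1}\wedge X))$ sitting in weights $\leq q$. You would then have to check that this splitting is compatible with the $\Gamma_\star(S)$-module structure. Finally, finiteness of $E_2$ is neither available in general nor needed: the summands lie in distinct stems, so the wedge agrees with the product.
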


Examples of spectra $X$ satisfying the hypotheses of Theorem \ref{Theorem on slices of motivic analogues} include the classical spectra $S$, $\ko$, $\tmf$, $\MU$, $\ku$, and Eilenberg-MacLane spectra. Their $\mathbb{C}$-motivic analogues $\Gamma_\star(X)$ are $S$, $\kq$, $\mmf$, $\MGL$, $\kgl$, and motivic Eilenberg-MacLane spectra. For a proof of this see \cite[Corollary 7.7]{CQ21}. So beyond computing the slices of $\mmf$, we also reprove Voevodsky's slice conjectures for the aforementioned spectra in the $2$-complete (and likely also $p$-complete) $\mathbb{C}$-motivic setting.

In order to compute the slices of motivic analogues as in Theorem \ref{Theorem on slices of motivic analogues}, we will model the effective covers making up the effective filtration via filtered spectra, see Section \ref{section: Filtered spectrum models for covers of C-motivic analogues}. To show that our models do in fact yield these effective covers, we will first give characterizations of these covers in terms of homotopy groups in Section \ref{section: Homotopy characterizations of C-motivic covers}. Our filtered models will satisfy the homotopy group characterizations by construction. We will also give models for the connective and very effective covers in this way.

Using our explicit model for the effective filtration, we can fully compute the effective slice spectral sequence.

\begin{Th}[Theorem \ref{Theorem on the effective slice spectral sequence of a nice motivic analogue}]\phantomsection\label{introduction version of Theorem on the effective slice spectral sequence of a nice motivic analogue}
	Let $X$ be a bounded below classical spectrum with even $\normalfont{\MU}$ homology. Then the $E_1$-page of the effective slice spectral sequence of $\Gamma_\star(X)$ is given by freely adjoining $\tau$ to the $E_2$-page of the Adams-Novikov spectral sequence of $X$.
	
	Furthermore, the differentials in the effective slice spectral sequence of $\Gamma_\star(X)$ and the Adams-Novikov spectral sequence of $X$ are related as follows: $d_r(x) = \tau^r y$ in the effective slice spectral sequence of $\Gamma_\star(X)$ if and only if $d_{2r + 1}(x) = y$ in the Adams-Novikov spectral sequence of $X$.
\end{Th}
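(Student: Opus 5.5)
The plan is to use the filtered spectrum model of $\Gamma_\star(X)$ from \cite{GIKR}. Recall that $\Gamma_\star(X)$ is the filtered spectrum given by the décalage of the Adams--Novikov tower: $\Gamma_\star(X) = \Tot(\tau_{\geq 2\star} (\MU^{\otimes \bullet+1}\otimes X))$. Here the bidegree $(s,w)$ corresponds to stem $s$ and filtration level $w$, and $\tau$ is the structure map lowering the filtration by one. The hypotheses (bounded below, even $\MU$ homology) ensure two things. First, the cosimplicial object consists of even spectra. Second, the associated graded of $\Gamma_\star(X)$, namely $\Gamma_\star(X)/\tau$, has homotopy equal to the Adams--Novikov $E_2$-page placed in the appropriate bidegrees, with $E_2^{s,f}$ sitting in motivic bidegree $(s, (s+f)/2)$.

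\textbf{Step 1: model the effective covers.} I will build an explicit filtered spectrum $f_q\Gamma_\star(X)$: the decalage truncation in which the weight is shifted. Concretely, I take the subobject of the cosimplicial Postnikov data whose terms in cosimplicial degree $f$ are connective covers $\tau_{\geq 2\max(\star, q)}$, or an equivalent reindexing. By construction this object lies in the effective subcategory generated by $\Sigma^{s,w}$ with $w\geq q$. I then check that the map $f_q\Gamma_\star(X)\to\Gamma_\star(X)$ is an isomorphism on homotopy in weights $\geq q$, in the sense of the homotopy characterization of effective covers from Section \ref{section: Homotopy characterizations of C-motivic covers}. Invoking that characterization identifies the model with the actual effective cover. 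Taking cofibers then gives $s_q\Gamma_\star(X)$. By Corollary \ref{Corollary on slices of motivic analogues}, this is $\bigvee_{s+f=2q}\Sigma^{s,q}ME_2^{s,f}$. Concretely, the slice is the weight-$q$ layer $\pi_{2q}$ of the cosimplicial object, which is the $\tau$-free polynomial extension $M\mathbb{Z}_2[\tau]$-module on the cobar cohomology.

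\textbf{Step 2: the $E_1$-page.} Applying $\pi_{*,*}$ to the slices and using $\pi_{*,*}M A \cong A[\tau]$ for $\mathbb{C}$-motivic Eilenberg--MacLane spectra of $2$-complete groups, I obtain $E_1 = E_2^{\mathrm{ANSS}}(X)[\tau]$. Here a class $x\in E_2^{s,f}$ sits in slice $q=(s+f)/2$, and $\tau$ raises the slice degree by $0$ while lowering the weight. This requires checking that no Ext-type contributions appear. They do not, since $\pi_{*,*}M\mathbb{Z}/2^k$ is already $\mathbb{Z}/2^k[\tau]$, and the wedge is over bounded-below pieces.

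\textbf{Step 3: differentials.} I identify the slice tower of $\Gamma_\star(X)$ with a reindexing of the décalage tower. The effective filtration in slice degree $q$ corresponds, after Betti realization, i.e.\ inverting $\tau$ or taking the colimit, to the filtration by $\tau_{\geq 2q}$ of the cosimplicial Postnikov tower. Deligne's décalage theorem says this is the Adams--Novikov filtration with pages reindexed by $r\mapsto 2r+1$. Thus a $d_r$ in the slice spectral sequence, which changes $q$ by $r$, corresponds to an ANSS $d_{2r+1}$: a differential from $(s,f)$ to $(s-1,f+2r+1)$ changes $s+f$ by $2r$, i.e.\ $q$ by $r$. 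The power $\tau^r$ is forced by weight bookkeeping. Since $x$ in weight $(s+f)/2$ and $y$ in weight $(s-1+f+2r+1)/2=(s+f)/2+r$ must be compared in a fixed motivic weight, the target must be $\tau^r y$. I then compare with the known relation, from \cite{GIKR}, between $\tau$-Bockstein differentials and ANSS differentials to conclude the "if and only if", using $\tau$-linearity of the slice spectral sequence to reduce to generators.

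I expect \textbf{the main obstacle} to be Step 1: verifying that the explicit filtered model is actually effective and has the right homotopy, especially the interaction of the connective covers with totalization. Totalization does not commute with truncation in general, so I would first try to use evenness to make the relevant $\lim^1$ terms vanish. The other delicate point is making the décalage comparison in Step 3 precise at the level of towers rather than just pages.
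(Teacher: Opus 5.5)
Your Steps 1 and 2 are essentially the paper's argument. The model $\Tot(\tau_{\geq 2\max(\star,q)}(X\wedge\MU^{\wedge\bullet+1}))$ is exactly $\Gamma^q_\star(X)$, and the $E_1$-page is read off from the slice computation. Two small corrections there. Effectivity of the model is not ``by construction''; it comes from $\tau$ acting isomorphically in weights $w\le q$ plus approximability, which uses that $\Gamma_\star(X)$ is $r$-connective. And the Tot-versus-truncation issue you flag as the main obstacle does not arise, since truncation is applied levelwise inside $\Tot$ and homotopy is read off the Bousfield--Kan spectral sequence.

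The genuine gap is in Step 3. Betti realization (equivalently, inverting $\tau$, or looking only at weights $w\ll 0$ where $\Gamma_w(X)\simeq X$) identifies only the $\tau$-localized slice spectral sequence with the reindexed ANSS. What you actually obtain is: $d_r(\tau^k x)=\tau^{k+r}y$ for $k\gg 0$ if and only if $d_{2r+1}(x)=y$. The theorem, however, is about $x$ in its own weight $q=(s+f)/2$, and the pages of the slice spectral sequence are not $\tau$-torsion free: every class hit by a $d_r$ becomes $\tau^r$-torsion. So from the localized statement you cannot conclude that $x$ survives to $E_r$ in weight $q$, since an earlier $d_{r'}(x)$ could a priori be a nonzero $\tau$-torsion class. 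Nor can you conclude that $d_r(x)$ equals $\tau^r y$ on the nose rather than after multiplying by a power of $\tau$. For that you would need an injectivity statement for $\tau$ on the relevant parts of the weight-$q$ pages, which you do not supply.

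Your fallback, comparing with a known relation between $\tau$-Bockstein and ANSS differentials, does not close this gap. You never relate the slice spectral sequence to the $\tau$-Bockstein spectral sequence. They do agree weight by weight: since $(\Gamma^q_\star X)_w=\Gamma_{\max(q,w)}(X)$, the slice tower in weight $w$ is the $\tau$-adic tower $\Gamma_{w+n}(X)$, $n\geq 0$. But then the $\tau$-Bockstein/ANSS correspondence is essentially the statement you are trying to prove, so citing it as a black box leaves the substantive part of Step 3 unproved.

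The missing idea is to run the d\'ecalage comparison separately in each weight rather than after Betti realization. In weight $w$ the slice tower is $\Tot(\tau_{\geq 2\max(q,w)}(X\wedge\MU^{\wedge\bullet+1}))$, $q\in\mathbb{Z}$. Because the cosimplicial object has even homotopy, each map $\Tot(\tau_{\geq 2i})\to\Tot(\tau_{\geq 2i-1})$ is an equivalence. Hence this tower is the d\'ecalage tower of $\tau_{\geq 2w}(X\wedge\MU^{\wedge\bullet+1})$ run at half speed. This halving is what turns Deligne's $r\mapsto r+1$ into $r\mapsto 2r+1$; you only use it implicitly. Levine's comparison then identifies the weight-$w$ slice $E_r$-page with the $E_{2r+1}$-page of the Bousfield--Kan spectral sequence of $\Gamma_w(X)$, i.e.\ the ANSS truncated below the $2w$-th antidiagonal. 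Differentials out of classes on or above that antidiagonal are exactly those of the ANSS. Taking $w=q$ gives the claim for $x$ in its own weight, with the factor $\tau^r$ then forced by weight bookkeeping exactly as you describe.
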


The differentials in the effective slice spectral sequence preserve the weight by construction, so that each weight can be seen as its own spectral sequence. In the case where $X$ is the sphere spectrum and the weight is zero, Theorem \ref{introduction version of Theorem on the effective slice spectral sequence of a nice motivic analogue} recovers a result of Levine \cite[Theorem 1]{Lev15}. We should mention that the cited result of Levine does not assume completion at a prime, whereas in all of our results $2$-completion is implicit.

\subsection{Motivic filtrations}

Because of Voevodsky's slice conjectures, we are mainly interested in studying the effective filtration. However, there are also other filtrations used in motivic homotopy theory. Namely, the connective filtration coming from Morel's homotopy $t$-structure \cite[Section 6.2]{Mor05}, and the very effective filtration introduced by Spitzweck-{\O}stv{\ae}r \cite[Definition 5.5]{SO12}. Since the definitions of all of these filtrations can be phrased in a similar way, see Section \ref{section: C-motivic filtrations}, we will consider all of them in this article.

Regarding the very effective filtration we mention the computation of the very effective slices of $\KQ$ due to Bachmann \cite{Bac17}. Using our filtered spectrum approach, we can also compute the homotopy groups of these very effective slices, see Remark \ref{Remark on Bachmann's computation of the slices of KQ}.

There is also \cite[Remark 2.5]{ARO20} which states that the effective and very effective covers of motivic Landweber exact spectra coincide. We can prove the same result for objects of the form $\Gamma_\star(X)$ where $X$ is Landweber exact in the classical sense, see Corollary \ref{effective and very effective covers of motivic analogues of Landweber exact spectra are equivalent}.

\subsection{The filtered spectrum model for \texorpdfstring{$\mathbb{C}$}{C}-motivic homotopy theory}

We will study objects of the form $\Gamma_\star(X)$. The functor $\Gamma_\star: \textbf{SH}^{\cl} \to \textbf{SH}^{\fil}$ takes as input a classical spectrum and outputs a filtered spectrum. By \cite[Theorem 6.12]{GIKR} there is an equivalence of categories
\[{\normalfont \textbf{SH}^{\mathbb{C}} \simeq \text{Mod}_{\Gamma_\star(S)}},\]
where the left-hand side is the $\mathbb{C}$-motivic cellular $2$-complete stable homotopy category, and the right-hand side is the category of modules over the filtered spectrum $\Gamma_\star(S)$. The functor $\Gamma_\star$ is lax symmetric monoidal. In particular, the objects $\Gamma_\star(X)$ are modules over $\Gamma_\star(S)$, so they correspond to $\mathbb{C}$-motivic spectra under the above equivalence.

We will discuss this filtered spectrum model for $\mathbb{C}$-motivic stable homotopy theory in more detail in Section \ref{section: The filtered spectrum model for the C-motivic stable homotopy category}. For now, let us give a rough idea of how this equivalence comes about: The filtered spectrum $\Gamma_\star(S)$ is constructed in such a way that its homotopy groups can be computed by a spectral sequence that is identical to the $\mathbb{C}$-motivic Adams-Novikov spectral sequence of the $\mathbb{C}$-motivic sphere spectrum. As such, $\Gamma_\star(S)$ and the $\mathbb{C}$-motivic sphere spectrum share the same homotopy groups. This, together with the fact that spheres generate the cellular $\mathbb{C}$-motivic category, leads to the above equivalence.

As we will see in Section \ref{section: Filtered spectrum models for covers of C-motivic analogues}, it is possible to obtain explicit filtered spectrum models for the effective, connective, and very effective covers of a motivic analogue $\Gamma_\star(X)$. Using those models in the effective case we can compute the slices and thus prove Voevodsky's slice conjecture for the class of motivic analogues $\Gamma_\star(X)$ where $X$ is bounded below and has even $\MU$ homology, see Theorem \ref{Theorem on slices of motivic analogues}. The bounded below assumption in Theorem \ref{Theorem on slices of motivic analogues} is essential, whereas the even $\MU$ homology assumption is only needed to get the closed form for the slices as stated. When dropping the even $\MU$ homology assumption we can still compute the homotopy groups of the slices, but there is no evident simple closed form.

Another model for $\mathbb{C}$-motivic stable homotopy theory can be given via $\MU$-synthetic spectra \cite[Theorem 7.34]{Pst23}. Similarly to the motivic analogue functor $\Gamma_\star$ for the filtered spectrum model, there is also an analogue functor $\nu$ for synthetic spectra. The motivic spectrum $\Gamma_\star(X)$ is the $\tau$-completion of $\nu X$ for classical spectra $X$ with even $\MU$ homology \cite[Example 5.42]{vN25}. The spectrum $\nu X$ is $\tau$-complete if and only if $X$ is $\MU$-nilpotent complete \cite[Theorem 4.71 (1)]{vN25}. In particular, this is true if $X$ is bounded below \cite[Example 2.111]{vN25}. So for bounded below $X$ with even $\MU$ homology we have $\Gamma_\star(X) \simeq \nu X$, meaning our results can also be interpreted in terms of synthetic analogues. It would be interesting to study motivic filtrations from the synthetic perspective, but we will not pursue that endeavor here.

\subsection{Future directions}

There are a number of interesting future problems suggested by our work.

\begin{?}
    This article is only concerned with the $\mathbb{C}$-motivic stable homotopy category. One could use the filtered spectrum model of \cite{BHS22} for the $\mathbb{R}$-motivic stable homotopy category to study the effective, connective, and very effective filtrations over $\mathbb{R}$.
\end{?}

\begin{?}\phantomsection\label{Problem on homotopy group characterization of covers over other fields}
    In Section \ref{section: Homotopy characterizations of C-motivic covers} we give characterizations of effective, connective, and very effective covers in terms of their homotopy groups. The first and third of these descriptions are specific to working over $\mathbb{C}$, as they require the map $\tau$ to exist, and they also use the structure of the homotopy of the sphere spectrum as a $\mathbb{Z}[\tau]$-module. It would be interesting to try to give such homotopy group characterizations over other fields.
\end{?}

\begin{?}
    In a similar vein but different direction to Problem \ref{Problem on homotopy group characterization of covers over other fields}, one could instead consider characterizations of effective, connective, or very effective covers not just in the cellular subcategory, but in the entire $\mathbb{C}$-motivic stable homotopy category. In our cellular setting, homotopy groups are enough to consider because in the cellular subcategory homotopy groups detect equivalences. In the entire $\mathbb{C}$-motivic homotopy category equivalences are detected by homotopy sheaves. Is it possible to characterize the covers of a general motivic spectrum by some property of its homotopy sheaf? One could consider this over $\mathbb{C}$ first, but also over other fields.
\end{?}

\subsection{Conventions}

Throughout this manuscript we will be working in the cellular subcategory \cite{DI05} of the $\mathbb{C}$-motivic stable homotopy category. We will use the work of Heard \cite{Hea19} setting up the effective and very effective filtrations in this subcategory. Note that by \cite[Theorem 3.16]{Hea19} the effective filtration of a cellular object defined on the cellular subcategory agrees with the effective filtration of that object when considered in the entire $\mathbb{C}$-motivic stable homotopy category. Therefore, we will not distinguish between the two. While the same statement has not been proven for the connective and very effective filtrations, we will not introduce separate notations for our cellular versions of these functors either.

Everything will be implicitly completed at the prime $2$ because we want to make use of \cite{GIKR}, which is also written in an implicitly $2$-completed context. The arguments of \cite{GIKR} are likely to have odd primary analogues, as is remarked in \cite[Section 1.1]{GIKR}. If such analogues do exist, then our results are likely to hold for odd primes too. Note that completion at a prime is essential for us, as we need the map $\tau$ to exist in the homotopy groups of the $\mathbb{C}$-motivic sphere spectrum, and that requires completion at a prime.

We will work in the setting of $\infty$-categories throughout.

\subsection{Organization}

The organization of this article is as follows:

In Section \ref{section: Notation} we collect some commonly used notation.

In Section \ref{section: The filtered spectrum model for the C-motivic stable homotopy category} we give the necessary background on the filtered spectrum model for $\mathbb{C}$-motivic homotopy theory.

In Section \ref{section: C-motivic filtrations} we recall the effective, connective and very effective filtrations in $\mathbb{C}$-motivic homotopy theory.

In Section \ref{section: Homotopy characterizations of C-motivic covers} we provide characterizations of the respective covers which make up the filtrations of Section \ref{section: C-motivic filtrations} in terms of their homotopy groups.

In Section \ref{section: Filtered spectrum models for covers of C-motivic analogues} we give filtered spectrum models for the covers defined in Section \ref{section: C-motivic filtrations}.

In Section \ref{section: Slices of C-motivic analogues} we analyze the slices of motivic analogues for the filtrations defined in Section \ref{section: C-motivic filtrations} and prove Theorem \ref{Theorem on slices of motivic analogues}.

In Section \ref{section: The effective slice spectral sequence} we compute the effective slice spectral sequence for motivic analogues and prove Theorem \ref{introduction version of Theorem on the effective slice spectral sequence of a nice motivic analogue}.

\subsection{Acknowledgements}

The author would like to thank Dan Isaksen for many helpful discussions on the topics of this article, as well as for comments on earlier drafts.

\resumetoc

\section{Notation}\label{section: Notation}

\begin{tabularx}{400pt}{l|X}
    $\textbf{SH}^{\cl}$ & Classical stable homotopy category\\
    $\textbf{SH}^{\mathbb{C}}$ & Stable homotopy category of $\mathbb{C}$-motivic cellular spectra\\
    $\textbf{SH}^{\fil}$ & Stable homotopy category of filtered spectra\\
    $\text{pt}$ & Trivial spectrum in any stable homotopy category\\
    $S$ & Sphere spectrum in any stable homotopy category\\
    $\tau_{\geq q}$ & classical $q$-connected cover functor\\
    $\pi_*$ & Classical homotopy groups\\
    $\pi_{*, *}$ & Homotopy groups for $\mathbb{C}$-motivic or filtered spectra\\
    $\Gamma_\star$ & $\mathbb{C}$-motivic analogue functor\\
    $M$ & $\mathbb{C}$-motivic Eilenberg-MacLane functor\\
    $\Tot$ & Totalization functor for cosimplicial objects\\
    BKSS & Bousfield-Kan spectral sequence\\
    ANSS & Adams-Novikov spectral sequence\\
    $f_q$ & $q$-th effective cellular $\mathbb{C}$-motivic cover functor\\
    $s_q$ & $q$-th effective cellular $\mathbb{C}$-motivic slice functor\\
    $(-)_{\geq q}$ & $q$-th connective cellular $\mathbb{C}$-motivic cover functor\\
    $(-)_{= q}$ & $q$-th connective cellular $\mathbb{C}$-motivic slice functor\\
    $\tf_q$ & $q$-th very effective cellular $\mathbb{C}$-motivic cover functor\\
    $\ts_q$ & $q$-th very effective cellular $\mathbb{C}$-motivic slice functor\\
    $\Be$ & Complex Betti realization functor\\
\end{tabularx}\\

We will attempt to use the letter $X$ only for objects in the classical stable homotopy category $\textbf{SH}^{\cl}$, and the letter $Y$ only for objects in the $\mathbb{C}$-motivic stable homotopy category $\textbf{SH}^{\mathbb{C}}$ or equivalently for certain filtered spectra.

When grading each page of the Adams-Novikov spectral sequence we use Adams grading, meaning we use bidegrees $(s, f)$ where the letter $s$ stands for stem and the letter $f$ stands for Adams-Novikov filtration. These are the usual coordinate axes in an Adams-type chart. That means $f$ is the homological degree and $s + f$ is the internal degree. The column corresponding to a fixed value of $s$ contributes to the homotopy group $\pi_s$ at $E_\infty$.

\section{The filtered spectrum model for the \texorpdfstring{$\mathbb{C}$}{C}-motivic stable homotopy category}\label{section: The filtered spectrum model for the C-motivic stable homotopy category}

In order to prove our results we need to have a good understanding of the filtered spectrum model for the $\mathbb{C}$-motivic, stable, cellular, $2$-complete homotopy category due to \cite{GIKR}. We recall the basic definitions and results here. Let $\textbf{SH}^{\fil}$ denote the $\infty$-category of filtered spectra as in \cite[Section 2.1]{GIKR}. An object $Y$ in this category consists of a sequence of classical spectra $Y_w \in \textbf{SH}^{\cl}$, $w \in \mathbb{Z}$, together with structure maps decreasing the filtration degree
\[\dotsc \to Y_{w + 1} \to Y_w \to Y_{w-1} \to \dotsc.\]
There is a functor
\[\Gamma_\star: \textbf{SH}^{\cl} \to \textbf{SH}^{\fil}\]
which we will discuss in more detail in a moment. For now, let us denote the subcategory of $\textbf{SH}^{\fil}$ given by those filtered spectra that are $\Gamma_\star(S)$-modules by $\text{Mod}_{\Gamma_\star(S)}$, where $S \in \textbf{SH}^{\cl}$ is the classical sphere spectrum. This subcategory contains for example $\Gamma_\star(X)$ for $X \in \textbf{SH}^{\cl}$ because $\Gamma_\star$ is lax symmetric monoidal. Now we can state the main theorem relating filtered spectra to $\mathbb{C}$-motivic homotopy theory as proven in \cite[Theorem 6.12]{GIKR}.

\begin{Th}[Gheorghe-Isaksen-Krause-Ricka]\phantomsection\label{Equivalence between C-motivic homotopy and filtered spectra}
    There is an equivalence of categories
    \[{\normalfont \textbf{SH}^{\mathbb{C}} \simeq \text{Mod}_{\Gamma_\star(S)}}.\]
\end{Th}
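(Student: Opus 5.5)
This is the Gheorghe--Isaksen--Krause--Ricka theorem, and I would follow the standard Schwede--Shipley/Lurie route: recognize both sides as presentable stable $\infty$-categories compactly generated by a set of bigraded spheres, and match the endomorphism data of those generators. On the motivic side, $\textbf{SH}^{\mathbb{C}}$ (cellular, $2$-complete) is generated by the spheres $S^{s,w}$. On the filtered side, $\text{Mod}_{\Gamma_\star(S)}$ is generated by the free modules $\Gamma_\star(S)\otimes \mathbb{S}^{s,w}$. Here $\mathbb{S}^{s,w}$ is the filtered sphere $\Sigma^{s-w}S$ placed in filtration $\le w$, with the bigrading chosen to match motivic grading. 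Both sides carry symmetric monoidal structures in which the spheres are invertible. It therefore suffices to construct a symmetric monoidal colimit-preserving functor between them that sends spheres to spheres and induces an isomorphism on bigraded homotopy of the unit.

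For the functor, I would realize $\Gamma_\star(S)$ motivically. Take the $\mathbb{C}$-motivic cobar (Adams--Novikov) resolution $\MGL^{\wedge \bullet+1}$ of the motivic sphere and apply the Postnikov-type "$\tau$-tower". Alternatively, construct a lax monoidal functor $\textbf{SH}^{\mathbb{C}}\to\textbf{SH}^{\fil}$, $Y\mapsto \{\mathrm{Map}(S^{0,w},Y)\}_w$, with transition maps given by $\tau: S^{0,-1}\to S^{0,0}$. This is the right adjoint to a symmetric monoidal left adjoint $\textbf{SH}^{\fil}\to\textbf{SH}^{\mathbb{C}}$ sending $\mathbb{S}^{s,w}$ to $S^{s,w}$. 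The image of the motivic unit is then an $E_\infty$-algebra $R$ in filtered spectra, and the adjunction lifts to $\text{Mod}_R\simeq\textbf{SH}^{\mathbb{C}}$ by Lurie's monoidal Barr--Beck (Schwede--Shipley). The conditions to check are that the right adjoint preserves colimits and is conservative on cellular objects. Both follow because spheres are compact generators and the right adjoint computes bigraded homotopy groups.

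The remaining and main step is to identify $R$ with $\Gamma_\star(S)$ as $E_\infty$-algebras. Here $\Gamma_\star(S)$ is the décalage of the totalization tower of the classical cobar complex $\MU^{\wedge\bullet+1}$, and my plan is as follows. First, use that $\MGL$ is even and cellular, with $\pi_{*,*}\MGL=\MU_*[\tau]$ placed as in the Chow-degree-zero computation. This makes the weight tower of $\MGL^{\wedge n}$ coincide with the décalaged (doubled-Postnikov) filtration of $\MU^{\wedge n}$, yielding a natural $E_\infty$ comparison $\Gamma_\star(\MU^{\wedge n})\to R_{\MGL^{\wedge n}}$ that is an equivalence levelwise. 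Second, totalize. The motivic sphere is $\MGL$-complete after $2$-completion, so $R\simeq\Tot R_{\MGL^{\wedge\bullet+1}}$, and totalization commutes with the right adjoint. As a consistency check, the bigraded homotopy of both sides is computed by the same algebraic spectral sequence with $E_2=\mathrm{Ext}_{\MU_*\MU}[\tau]$ and $d_{2r+1}\leftrightarrow\tau^r$. I expect this step to be the obstacle. It requires a genuinely multiplicative, coherent comparison of towers and a convergence argument for $\Tot$ in both categories. I would handle it via Chow $t$-structure or Postnikov-completeness, using that the relevant objects are bounded below in the appropriate sense.
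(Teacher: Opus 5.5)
The paper gives no proof of this statement. It quotes \cite[Theorem 6.12]{GIKR} and adds only the heuristic that $\Gamma_\star(S)$ and the motivic sphere have the same bigraded homotopy and that spheres generate.

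Your outline makes that heuristic concrete, but its first step asserts, rather than constructs, the structure that carries most of the weight. A homotopy class $\tau\in\pi_{0,-1}(S)$ does not give you a lax symmetric monoidal functor $Y\mapsto\{\mathrm{Map}(S^{0,w},Y)\}_w$. Nor does it give a symmetric monoidal left adjoint $\textbf{SH}^{\fil}\to\textbf{SH}^{\mathbb{C}}$. By the universal property of Day convolution, such a left adjoint is the same thing as a symmetric monoidal functor $(\mathbb{Z},\leq,+)\to\textbf{SH}^{\mathbb{C}}$. That means coherent equivalences $S^{0,a}\wedge S^{0,b}\simeq S^{0,a+b}$ under which $\tau\wedge 1$ and $1\wedge\tau$, and all composites of such maps, are coherently identified (the poset has only one morphism $a+b-1\to a+b$), together with all higher symmetric coherences.

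Even a non-monoidal equivalence via Schwede--Shipley needs $R=\{\mathrm{Map}(S^{0,w},S)\}_w$ to be an algebra in filtered spectra acting on every $G(Y)$. That already requires this structure at the $E_1$ level. Agreement of bigraded homotopy groups or rings does not produce it, and without it $R$ is just a filtered spectrum and Barr--Beck has nothing to apply to. You flag coherence as the obstacle in your last step, but the more serious coherence problem has already been assumed away in the first.

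The last step also lacks a bridge between the two worlds. Nothing in your setup maps the motivic object $G(\MGL^{\wedge n})$ to the classical $\tau_{\geq 2\star}(\MU^{\wedge n})=\Gamma_\star(\MU^{\wedge n})$. The natural bridge is Betti realization, which gives $G(\MGL^{\wedge n})\to\Be(\MGL^{\wedge n})=\MU^{\wedge n}$ (constant filtration). Since $G(\MGL^{\wedge n})_w$ is $2w$-connective, this map factors uniquely through the double-speed Whitehead tower. Note two things: the resulting map goes in the opposite direction to the one you wrote, and the argument presupposes that your tower is multiplicatively compatible with $\Be$.

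One way to close the gap is to replace $G$ by a functor whose multiplicativity is automatic, namely $\Phi(Y)=\{\Be(f_wY)\}_w$.
\begin{itemize}
\item The effective covers form a lax symmetric monoidal functor to filtered objects. They are right adjoint to the symmetric monoidal functor $\mathrm{colim}$ restricted to filtered objects whose $w$-th term is $w$-effective, and that subcategory is closed under Day convolution.
\item Since $\Be$ is symmetric monoidal, $\Phi$ is lax symmetric monoidal and comes with a multiplicative map $\Phi(Y)\to\Be(Y)$ to the constant filtered object.
\item Betti realization induces $\pi_{s,w}(Z)\cong\pi_s\Be(Z)$ for $w$-effective cellular $Z$. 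On the generators $S^{a,v}$ with $v\geq w$ this is the known comparison $\pi_{*,u}(S)\cong\pi_*(S)$ for $u\leq 0$, which you would cite. Hence $\pi_s\Phi(Y)_w\cong\pi_{s,w}(Y)$, with $\tau$ as transition maps.
\end{itemize}
So $\Phi$ is a canonical multiplicative model of your $G$, and the rest of your plan goes through. The generator argument gives $\textbf{SH}^{\mathbb{C}}\simeq\text{Mod}_{\Phi(S)}$, and $\MGL$-descent gives $\Phi(S)\simeq\Gamma_\star(S)$ as $E_\infty$-rings. The descent step uses $\pi_{*,*}(\MGL^{\wedge n})\cong\pi_*(\MU^{\wedge n})[\tau]$, with $\pi_{2k}$ in weight $k$. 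It also uses convergence of the motivic Adams--Novikov tower for the $2$-complete sphere, which should be cited rather than asserted.

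A minor slip: with your own $G$, the filtered sphere corepresenting $\pi_{s,w}$ is $\Sigma^sS$ in filtrations $\leq w$, not $\Sigma^{s-w}S$.
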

We chose the letter $w$ for the filtration degree of a filtered spectrum because it corresponds to the motivic weight under the above equivalence, and the motivic weight is also usually denoted by $w$.

The precise form of the equivalence of Theorem \ref{Equivalence between C-motivic homotopy and filtered spectra}, i.e. the functors giving rise to it, will not be important for us. The main construction we need to consider is the functor $\Gamma_\star$, whose definition we will now recall. For an object $X \in \textbf{SH}^{\cl}$ the filtration degree $w$ part of the filtered spectrum $\Gamma_\star(X)$ is defined to be
\[\Gamma_w(X) = \Tot(\tau_{\geq 2w}(X \wedge \MU^{\wedge \bullet + 1})).\]
We explain the notation:
\begin{itemize}
    \item $\MU^{\wedge \bullet + 1}$ is the usual cosimplicial object built by using the ring spectrum structure of $\MU$,
    \item $\Tot$ denotes totalization with respect to $\bullet$,
    \item $\tau_{\geq 2w}$ denotes the functor taking a classical spectrum to its $2w$-connective cover. It is to be applied for each value of $\bullet$ separately.
\end{itemize}
The object $\Gamma_\star(X)$ becomes a filtered spectrum via the maps $\Gamma_{w + 1}(X) \to \Gamma_w(X)$ induced by $\tau_{\geq 2w + 2} \to \tau_{\geq 2w}$.

In order to understand and study the spectrum $\Gamma_w(X)$, the following two observations are integral:
\begin{enumerate}
    \item Every totalization comes with a Bousfield-Kan spectral sequence (BKSS) computing its homotopy groups,
    \item if we ignore $\tau_{\geq 2w}$ then the BKSS associated to $\Tot(X \wedge \MU^{\wedge \bullet + 1})$ is the ANSS of $X$.
\end{enumerate}
Carefully writing out the definition of the BKSS in the first point, one sees that $\tau_{\geq 2w}$ has the effect of truncating the ANSS below the antidiagonal of index $2w$ in a typical Adams-type chart.

We will mainly be using this collection of BKSSs and the structure maps to understand $\Gamma_\star(X)$. We provide a schematic picture of this data:

\begin{center}
\resizebox{\textwidth}{!}
{
\begin{tikzpicture}
    \node (dots left) at (4 - 7, 1.5) {$\dotsc$};
    
    \node (arrow left) at (4.4 - 7, 1.5) {};
    \node (arrow right) at (5.6 - 7, 1.5) {};

    \draw[->] (arrow left) -- (arrow right) node[midway,above] {$\tau$};
    
    \node (s0) at (-1, 0) {};
    \node[label=below:{$s$}] (s1) at (4, 0) {};
    \node (f0) at (0, -1) {};
    \node[label=left:{$f$}] (f1) at (0, 4) {};
    
    \draw[->] (s0) to (s1);
    \draw[->] (f0) to (f1);
    
    \node (upper ad) at (-1, 3 + .5) {};
    \node (lower ad) at (3 + .5, -1) {};

    \draw[-, thick] (upper ad) to (lower ad);

    \node[label={[label distance = -.2cm]left:{$2w + 2$}}] (f-axis mark left) at (-.3, 2 + .5) {};
    \node (f-axis mark right) at (.3, 2 + .5) {};
    \node[label={[label distance = -.2cm]below:{$2w + 2\hspace{.6cm}$}}] (s-axis mark lower) at (2 + .5, -.3) {};
    \node (s-axis mark upper) at (2 + .5, .3) {};

    \draw[-] (f-axis mark left) to (f-axis mark right);
    \draw[-] (s-axis mark lower) to (s-axis mark upper);
    
    \node (zero) at (0.5, 0.5) {\Large $0$};
    \node (ANSS) at (2.5, 2) {\large ANSS for $X$};
    \node (weight) at (2, -1.5) {weight $w + 1$};

    \node (middle arrow left) at (4.4, 1.5) {};
    \node (middle arrow right) at (5.6, 1.5) {};

    \draw[->] (middle arrow left) -- (middle arrow right) node[midway,above] {$\tau$};

    \node (s0) at (-1 + 7, 0) {};
    \node[label=below:{$s$}] (s1) at (4 + 7, 0) {};
    \node (f0) at (0 + 7, -1) {};
    \node[label=left:{$f$}] (f1) at (0 + 7, 4) {};
    
    \draw[->] (s0) to (s1);
    \draw[->] (f0) to (f1);
    
    \node (upper ad) at (-1 + 7, 3) {};
    \node (lower ad) at (3 + 7, -1) {};

    \draw[-, thick] (upper ad) to (lower ad);

    \node[label={[label distance = -.2cm]left:{$2w$}}] (f-axis mark left) at (-.3 + 7, 2) {};
    \node (f-axis mark right) at (.3 + 7, 2) {};
    \node[label={[label distance = -.2cm]below:{$2w$}}] (s-axis mark lower) at (2 + 7, -.3) {};
    \node (s-axis mark upper) at (2 + 7, .3) {};

    \draw[-] (f-axis mark left) to (f-axis mark right);
    \draw[-] (s-axis mark lower) to (s-axis mark upper);
    
    \node (zero) at (0.5 + 7, 0.5) {\Large $0$};
    \node (ANSS) at (2.5 + 7, 2) {\large ANSS for $X$};
    \node (weight) at (2 + 7, -1.5) {weight $w$};

    \node (arrow left) at (4.4 + 7, 1.5) {};
    \node (arrow right) at (5.6 + 7, 1.5) {};

    \draw[->] (arrow left) -- (arrow right) node[midway,above] {$\tau$};

    \node (dots right) at (6 + 7, 1.5) {$\dotsc$};
\end{tikzpicture}
}
\end{center}

The horizontal axis denotes the stem $s$ and the vertical axis denotes the Adams-Novikov filtration $f$. We call the filtration degree in $\Gamma_\star(X)$ the weight, since that is what it corresponds to under the equivalence of Theorem \ref{Equivalence between C-motivic homotopy and filtered spectra}. So the left-hand picture describes the BKSS for $\Gamma_{w + 1}(X)$, and the right-hand picture the one for $\Gamma_w(X)$. Below the marked antidiagonals, the spectral sequences are trivial. Above and on the antidiagonals they agree with the ANSS for $X$. The structure maps have been marked with $\tau$. That is because these are maps of spectral sequences that induce $\tau$-multiplication on the homotopy groups of $\Gamma_\star(X)$, where we implicitly use the identification of $\Gamma_\star(X)$ with a $\mathbb{C}$-motivic object via Theorem \ref{Equivalence between C-motivic homotopy and filtered spectra}.

The differentials in the BKSSs are the same as in the ANSS, up to the following case distinction:
\begin{enumerate}
    \item Differentials whose source lies on or above the given antidiagonal occur in the BKSS just like they do in the ANSS,
    \item differentials whose source lies below the given antidiagonal do not occur in the BKSS because their source does not exist.
\end{enumerate}
We illustrate this with a picture:

\begin{center}
\resizebox{\textwidth}{!}
{
\begin{tikzpicture}
    \node (dots left) at (4 - 7, 1.5) {$\dotsc$};
    
    \node (left arrow left) at (4.4 - 7, 1.5) {};
    \node (left arrow right) at (5.6 - 7, 1.5) {};

    \draw[->] (left arrow left) -- (left arrow right) node[midway,above] {$\tau$};
    
    \node (s0) at (-1, 0) {};
    \node[label=below:{$s$}] (s1) at (4, 0) {};
    \node (f0) at (0, -1) {};
    \node[label=left:{$f$}] (f1) at (0, 4) {};
    
    \draw[->] (s0) to (s1);
    \draw[->] (f0) to (f1);
    
    \node (upper ad) at (-1, 3 + .5) {};
    \node (lower ad) at (3 + .5, -1) {};

    \draw[-, thick] (upper ad) to (lower ad);

    \node[label={[label distance = -.2cm]left:{$2w + 2$}}] (f-axis mark left) at (-.3, 2 + .5) {};
    \node (f-axis mark right) at (.3, 2 + .5) {};
    \node[label={[label distance = -.2cm]below:{$2w + 2\hspace{.6cm}$}}] (s-axis mark lower) at (2 + .5, -.3) {};
    \node (s-axis mark upper) at (2 + .5, .3) {};

    \draw[-] (f-axis mark left) to (f-axis mark right);
    \draw[-] (s-axis mark lower) to (s-axis mark upper);

    \node (weight) at (2, -1.5) {weight $w + 1$};
    
    \node (middle arrow left) at (4.4, 1.5) {};
    \node (middle arrow right) at (5.6, 1.5) {};

    \draw[->] (middle arrow left) -- (middle arrow right) node[midway,above] {$\tau$};

    \node (s0) at (-1 + 7, 0) {};
    \node[label=below:{$s$}] (s1) at (4 + 7, 0) {};
    \node (f0) at (0 + 7, -1) {};
    \node[label=left:{$f$}] (f1) at (0 + 7, 4) {};
    
    \draw[->] (s0) to (s1);
    \draw[->] (f0) to (f1);
    
    \node (upper ad) at (-1 + 7, 3) {};
    \node (lower ad) at (3 + 7, -1) {};

    \draw[-, thick] (upper ad) to (lower ad);

    \node[label={[label distance = -.2cm]left:{$2w$}}] (f-axis mark left) at (-.3 + 7, 2) {};
    \node (f-axis mark right) at (.3 + 7, 2) {};
    \node[label={[label distance = -.2cm]below:{$2w$}}] (s-axis mark lower) at (2 + 7, -.3) {};
    \node (s-axis mark upper) at (2 + 7, .3) {};

    \draw[-] (f-axis mark left) to (f-axis mark right);
    \draw[-] (s-axis mark lower) to (s-axis mark upper);

    \node (weight) at (2 + 7, -1.5) {weight $w$};

    \node (right arrow left) at (4.4 + 7, 1.5) {};
    \node (right arrow right) at (5.6 + 7, 1.5) {};

    \draw[->] (right arrow left) -- (right arrow right) node[midway,above] {$\tau$};

    \node (dots right) at (6 + 7, 1.5) {$\dotsc$};

    \node[label={[label distance = -.2cm]left:{$y$}}] (left chart target) at (1.25, 2) {$\bullet$};


    \node[label={[label distance = -.2cm]left:{$x$}}] (left chart source) at (1.5 + 7, .5) {$\bullet$};
    \node[label={[label distance = -.2cm]left:{$y$}}] (left chart target) at (1.25 + 7, 2) {$\bullet$};

    \draw[->] (left chart source) to (left chart target);
\end{tikzpicture}
}
\end{center}

In weight $w$ and below, the class $x$ supports a differential, so it disappears. The class $y$ gets hit by a differential, so it is set to $0$ starting on the next page of the BKSS. However, in weight $w + 1$ the element $y$ does not get hit by a differential because $x$ does not exist, so it survives the BKSS and represents a class in homotopy. This class is $\tau$-torsion as it maps to $y$ in weight $w$, which is equal to $0$. Note that $y$ may also exist in weights larger than $w + 1$, so it may be $\tau$-divisible. The largest weight where $y$ exists depends on which antidiagonal $y$ lives on. Equivalently, it depends on the length of the differential from $x$ to $y$.

The main idea of this article is to make adjustments to $\Gamma_\star$ which can be interpreted in terms of pictures similar to the above. From there, it will be relatively simple to deduce that these adjusted versions of $\Gamma_\star$ applied to a classical spectrum $X$ yield effective, connective, or very effective covers of $\Gamma_\star(X)$.

\section{\texorpdfstring{$\mathbb{C}$}{C}-motivic filtrations}\label{section: C-motivic filtrations}

We will recall the definitions of the effective, connective, and very effective filtrations in the context of cellular $\mathbb{C}$-motivic stable homotopy theory, see also \cite{Hea19}. Then, we will collect some basic ways in which one can prove properties of the covers making up these filtrations.

In the rest of this article, a filtration of an object $Y \in \textbf{SH}^{\mathbb{C}}$ will mean a sequence of objects $f_q(Y) \in \textbf{SH}^{\mathbb{C}}$, $q \in \mathbb{Z}$, together with maps decreasing the degree
\[\dotsc \to f_{q + 1}(Y) \to f_q(Y) \to f_{q - 1}(Y) \to \dotsc.\]
Each filtration has its associated slices $s_q(Y) \in \textbf{SH}^{\mathbb{C}}$, defined by the cofiber sequences
\[f_{q + 1}(Y) \to f_q(Y) \to s_q(Y).\]

\begin{Cons}\phantomsection\label{Construction of filtrations using subcategories}
To define our filtrations of interest, we will follow the setup of \cite[Section 2]{Hea19} and refer the reader there for more details. We will give a short account of the main ideas. To define each filtration, we start with a collection of full subcategories of $\textbf{SH}^{\mathbb{C}}$
\[\dotsc \subset \mathcal{C}_{q + 1} \subset \mathcal{C}_{q} \subset \mathcal{C}_{q - 1} \subset \dotsc.\]
For every $q \in \mathbb{Z}$ the inclusion $\mathcal{C}_q \xhookrightarrow{} \textbf{SH}^{\mathbb{C}}$ will have a right adjoint $\textbf{SH}^{\mathbb{C}} \to \mathcal{C}_q$. Consider the composite
\[\textbf{SH}^{\mathbb{C}} \to \mathcal{C}_q \xhookrightarrow{} \textbf{SH}^{\mathbb{C}}\]
and define the $q$-th cover $f_q(Y) \in \textbf{SH}^{\mathbb{C}}$ as the image of $Y \in \textbf{SH}^{\mathbb{C}}$ under this composition. Since $\mathcal{C}_q \xhookrightarrow{} \textbf{SH}^{\mathbb{C}}$, every cover comes with a natural map
\[f_q(Y) \to Y.\]
Since $\mathcal{C}_{q+1} \subset \mathcal{C}_q$, we also get natural maps
\[f_{q+1}(Y) \to f_q(Y).\]
These maps define a filtration of the object $Y$, so we also get a notion of slices by taking cofibers.
\end{Cons}

To provide some intuition, let us give an example of Construction \ref{Construction of filtrations using subcategories} in the classical stable homotopy category $\textbf{SH}^{\cl}$. Let $\mathcal{C}_q$ be the full subcategory of $\textbf{SH}^{\cl}$ generated under colimits and extensions by the set of spheres $\{S^n \mid n \geq q\}$. Then the $q$-th cover of $X \in \textbf{SH}^{\cl}$ defined in the above way is $\tau_{\geq q} X$, i.e. the $q$-th connective cover of $X$. The associated filtration of $X$ is sometimes called the Whitehead tower, or dual Postnikov tower, or just Postnikov tower.

Note how the connective cover in our example arose from restricting ourselves to a set of spheres with which we can build our spectra. In the motivic setting we have a bigraded set of spheres $S^{s, w}$, so there are different ways to restrict the indices. For example, we can restrict the weight $w$, or the coweight $s - w$, or both. Respectively, this defines the effective, connective, or very effective filtration. We provide a bit more detail:
\begin{itemize}
    \item For the effective filtration let $\mathcal{C}_q$ in Construction \ref{Construction of filtrations using subcategories} be the full subcategory of $\textbf{SH}^{\mathbb{C}}$ generated under colimits and extensions by the set
    \[\{S^{s, w} \mid w \geq q\}.\]
    An object $Y \in \mathcal{C}_q$ is called $q$-effective. The covers are typically denoted by $f_q(Y) \to Y$ and the slices by $s_q(Y)$.
    
    \item For the connective filtration let $\mathcal{C}_q$ in Construction \ref{Construction of filtrations using subcategories} be the full subcategory of $\textbf{SH}^{\mathbb{C}}$ generated under colimits and extensions by the set
    \[\{S^{s, w} \mid s - w \geq q\}.\]
    An object $Y \in \mathcal{C}_q$ is called $q$-connective. The covers are typically denoted by $Y_{\geq q} \to Y$ and the slices by $Y_{= q}$.
    
    \item For the very effective filtration let $\mathcal{C}_q$ in Construction \ref{Construction of filtrations using subcategories} be the full subcategory of $\textbf{SH}^{\mathbb{C}}$ generated under colimits and extensions by the set
    \[\{S^{s, w} \mid w \geq q \text{~and~} s - w \geq q\}.\]
    An object $Y \in \mathcal{C}_q$ is called $q$-very effective. The covers are typically denoted by $\tf_q(Y) \to Y$ and the slices by $\ts_q(Y)$.
\end{itemize}
Working with bigraded spheres can sometimes make it harder to see the underlying idea, so we provide a slight rephrasing of the above. Recall that in $S^{s, w}$ the coweight $s - w$ counts the number of topological (or simplicial) spheres and the weight $w$ counts the number of algebraic spheres. So $S^{a, 0} = S^a$ corresponds to a topological sphere of dimension $a$, and $S^{b, b}$ corresponds to a $b$-fold smash product of algebraic spheres $\mathbb{G}_m^b$. Then the above filtrations can also be understood as follows:
\begin{itemize}
    \item For the effective filtration we allow all topological spheres, but we restrict to algebraic spheres $\mathbb{G}_m^b$ with $b \geq q$.
    \item For the connective filtration we allow all algebraic spheres, but we restrict to topological spheres $S^a$ with $a \geq q$.
    \item For the very effective filtration we allow only topological and algebraic spheres of degree $\geq q$.
\end{itemize}

The following is a convenient characterization of covers defined using Construction \ref{Construction of filtrations using subcategories}. See also \cite[Lemma 2.7]{Hea19} and the surrounding discussion therein for another characterization.

\begin{Lemma}\phantomsection\label{Characterization of general covers defined using subcategories}
	Let $f_q(Y) \to Y$ be any cover defined using Construction \ref{Construction of filtrations using subcategories}. Then $f_q(Y) \to Y$ is characterized up to equivalence by the following properties:
	\begin{enumerate}[label=\normalfont{(\arabic*)}]
		\item $f_q(Y) \in \mathcal{C}_q$,
		\item for any $M \in \mathcal{C}_q$ the induced map $[M, f_q(Y)] \to [M, Y]$ is an isomorphism.
	\end{enumerate}
\end{Lemma}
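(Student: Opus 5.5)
This is the standard universal-property argument for a right adjoint to a full inclusion, followed by a Yoneda argument for uniqueness. Write $i\colon \mathcal{C}_q \hookrightarrow \textbf{SH}^{\mathbb{C}}$ for the inclusion and $R$ for its right adjoint from Construction \ref{Construction of filtrations using subcategories}. Then $f_q(Y) = iR(Y)$, and the map $f_q(Y) \to Y$ is the counit $\varepsilon_Y$. I will show three things: the counit has properties (1) and (2); any map $g\colon Z \to Y$ with these properties is equivalent to the counit; and the equivalence is compatible with the maps to $Y$.

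\textbf{Existence.} Property (1) holds by construction, since $f_q(Y) = iR(Y)$ lies in the essential image of $i$. For (2), take $M \in \mathcal{C}_q$. The adjunction gives a natural bijection
\[[iM, Y] \cong [M, RY]_{\mathcal{C}_q}.\]
Because $i$ is fully faithful, $[M, RY]_{\mathcal{C}_q} \cong [iM, iRY]$. Under this identification, the composite bijection $[iM, iRY] \to [iM, Y]$ is postcomposition with $\varepsilon_Y$, by the usual formula for the adjunction bijection in terms of the counit. So the map in (2) is an isomorphism. I would work at the level of mapping spaces in the $\infty$-categorical setting, which gives an equivalence of mapping spaces; taking $\pi_0$ then gives the statement about homotopy classes.

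\textbf{Uniqueness.} Suppose $g\colon Z \to Y$ also satisfies (1) and (2). Since $Z \in \mathcal{C}_q$, applying (2) for $f_q(Y)$ with $M = Z$ gives a unique class $h\colon Z \to f_q(Y)$ with $\varepsilon_Y \circ h = g$. Symmetrically, applying (2) for $Z$ with $M = f_q(Y)$ gives a unique $k\colon f_q(Y) \to Z$ with $g \circ k = \varepsilon_Y$.

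Now $\varepsilon_Y \circ (h \circ k) = \varepsilon_Y$. Since $f_q(Y) \in \mathcal{C}_q$, injectivity of $[f_q(Y), f_q(Y)] \to [f_q(Y), Y]$ forces $h \circ k = \mathrm{id}$. In the same way, $g \circ (k \circ h) = g$ together with injectivity of $[Z, Z] \to [Z, Y]$ forces $k \circ h = \mathrm{id}$. Hence $h$ is an equivalence over $Y$.

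\textbf{Expected obstacle.} The only delicate point is whether the characterization is meant at the level of homotopy classes or of mapping spaces in the $\infty$-category. The argument above only needs the homotopy category: an equivalence there is an equivalence in the $\infty$-category, and uniqueness up to equivalence is all that is claimed. So I expect no real obstacle beyond checking that the adjunction bijection is induced by the counit.
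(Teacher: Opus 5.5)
Your proposal is correct and takes the same approach as the paper, whose entire proof is ``Yoneda lemma.'' You spell that argument out: existence comes from the counit of the adjunction, and uniqueness over $Y$ comes from the representability statement in (2), via the mutually inverse maps $h$ and $k$ and the injectivity in (2); your observation that working in the homotopy category suffices is also right.
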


\begin{proof}
	Yoneda lemma.
\end{proof}

In our setting, it suffices to check the second condition of Lemma \ref{Characterization of general covers defined using subcategories} on the generators of the category $\mathcal{C}_q$.

\begin{Cor}\phantomsection\label{Suffices to check iso on mapping spaces on generators}
	Let $f_q(Y) \to Y$ be any cover defined using Construction \ref{Construction of filtrations using subcategories}. Assume that $\mathcal{C}_q$ is generated under colimits and extensions by a set of objects $\mathcal{S}$. Then $f_q(Y) \to Y$ is characterized up to equivalence by the following properties:
	\begin{enumerate}[label=\normalfont{(\arabic*)}]
		\item $f_q(Y) \in \mathcal{C}_q$,
		\item for any $S \in \mathcal{S}$ the induced map $[S, f_q(Y)] \to [S, Y]$ is an isomorphism.
	\end{enumerate}
\end{Cor}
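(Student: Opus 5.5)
We reduce the statement to Lemma \ref{Characterization of general covers defined using subcategories}. It suffices to show the following: if $f_q(Y) \in \mathcal{C}_q$ and the map $[S, f_q(Y)] \to [S, Y]$ is an isomorphism for all $S \in \mathcal{S}$, then $[M, f_q(Y)] \to [M, Y]$ is an isomorphism for every $M \in \mathcal{C}_q$. Condition (1) is the same in both statements. The converse direction is trivial, since $\mathcal{S} \subset \mathcal{C}_q$.

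First we upgrade the hypothesis from homotopy classes to mapping spectra. Let $F$ be the fiber of $f_q(Y) \to Y$. In the motivic setting the generating sets $\mathcal{S}$ consist of bigraded spheres $S^{s,w}$ subject to constraints on $w$ and/or $s-w$. Each of these sets is closed under the topological suspension $S^{1,0}$, and under $S^{-1,0}$ whenever the constraint involves only the weight. This is not the case for the connective or very effective constraints, which restrict $s - w$, so we should not rely on it.

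Instead we argue with the long exact sequence directly. Apply $[S,-]$ to the fiber sequence $F \to f_q(Y) \to Y$. Since $\Sigma S \in \mathcal{S}$ whenever $S \in \mathcal{S}$ (suspension by $S^{1,0}$ raises both $s$ and $s-w$), the hypothesis gives isomorphisms on $[S,-]$ and $[\Sigma S,-]$. Exactness then yields that $[S, F] = 0$ for all $S \in \mathcal{S}$ and all their positive suspensions. Here we use that injectivity on $[\Sigma S, -]$ and surjectivity on $[S,-]$ together kill $[S,F]$.

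Next, consider the full subcategory $\mathcal{D}$ of objects $M$ such that $[\Sigma^k M, F] = 0$ for all $k \geq 0$. This class contains $\mathcal{S}$. It is closed under colimits, since $\mathrm{Map}(\mathrm{colim}\, M_i, F) = \lim \mathrm{Map}(M_i, F)$ and the mapping spaces of the $M_i$ into $F$ are contractible. It is closed under extensions by the long exact sequence. Hence $\mathcal{C}_q \subset \mathcal{D}$.

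The main obstacle is closure under colimits. Vanishing of $[\Sigma^k M_i, F]$ alone does not control $\lim^1$ terms, so we must phrase everything in terms of connected mapping spaces. The condition $[\Sigma^k M, F] = 0$ for all $k \ge 0$ is exactly the statement that $\mathrm{Map}(M, F)$ is contractible, and contractibility is preserved by limits of spaces. The same remark handles extensions, via the fiber sequence of mapping spaces.

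Finally, for $M \in \mathcal{C}_q$, the mapping space $\mathrm{Map}(M,F)$ is contractible. The fiber sequence $\mathrm{Map}(M,F) \to \mathrm{Map}(M,f_q(Y)) \to \mathrm{Map}(M,Y)$ then shows that the induced map on $\pi_0$ is a bijection, i.e. $[M, f_q(Y)] \to [M,Y]$ is an isomorphism. Lemma \ref{Characterization of general covers defined using subcategories} now applies.
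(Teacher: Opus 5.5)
\textbf{The final step has a gap: it proves injectivity but not surjectivity.} Using the fiber $F$ and mapping spaces to handle colimits is sound, and more careful than the paper's proof, which invokes $[\varinjlim M_i,-]\cong\varprojlim[M_i,-]$ on homotopy classes plus a five-lemma step. But contractibility of $\mathrm{Map}(M,F)$, i.e.\ $[\Sigma^kM,F]=0$ for $k\ge 0$, only shows that $[M,f_q(Y)]\to[M,Y]$ is \emph{injective}. The fiber sequence of mapping spaces sees only the fiber over the zero map, and the long exact sequence continues as $[M,f_q(Y)]\to[M,Y]\to[\Sigma^{-1}M,F]$. Surjectivity needs $[\Sigma^{-1}M,F]=[M,C]=0$, where $C=\Sigma F$ is the cofiber of $f_q(Y)\to Y$. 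Your vanishing gives this only for $M\in\Sigma\mathcal{C}_q$. For instance, take $0\to H\mathbb{Z}$ and $M=S^0$: then $\mathrm{Map}(S^0,\Sigma^{-1}H\mathbb{Z})\simeq *$, but $0\to\mathbb{Z}$ is not onto. So you must run your closure argument with $C$ in place of $F$, and the new input required is $[S,C]=0$ for $S\in\mathcal{S}$. Exactness identifies $[S,C]\cong\ker\bigl([\Sigma^{-1}S,f_q(Y)]\to[\Sigma^{-1}S,Y]\bigr)$. This is exactly the desuspension by $S^{-1,0}$ that you decided not to rely on.

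\textbf{How to close it.} Property (1) has to enter here in a substantive way, not just tautologically. In the effective case $\Sigma^{-1}S\in\mathcal{S}$, so hypothesis (2) gives the needed injectivity. In the connective and very effective cases the same holds when $s-w>q$. For $S=S^{s,w}$ with $s-w=q$, you instead use $\pi_{s-1,w}(f_q(Y))=0$: $f_q(Y)$ is $q$-connective, and Proposition~\ref{homotopy groups of q-connective spectra vanish in low coweights} applies (its proof does not use this corollary). You then have $[\Sigma^kS,C]=0$ for all $k\ge 0$ and $S\in\mathcal{S}$. Your closure argument then gives $\mathrm{Map}(M,C)\simeq *$ for all $M\in\mathcal{C}_q$, hence $\mathrm{Map}(M,f_q(Y))\simeq\mathrm{Map}(M,Y)$.

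\textbf{This input cannot be avoided.} The statement is false for a general $\mathcal{S}$, even one closed under $\Sigma$. Take classical spectra with $\mathcal{S}=\{\Sigma^nS/p\}_{n\ge 0}$. The map $H\mathbb{F}_p\to H\mathbb{Z}/p^2$ is between objects of the generated subcategory. It induces isomorphisms on $[S,-]$ for all $S\in\mathcal{S}$: both sides are $\mathbb{Z}/p$ for $n=0$ and zero for $n\ge 1$. Yet it is not an equivalence. The paper's five-lemma step silently needs the same shifted terms.

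A small slip: killing $[S,F]$ uses surjectivity on $[\Sigma S,-]$ and injectivity on $[S,-]$, not the reverse. This is harmless, since both maps are isomorphisms.
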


\begin{proof}
	We want to invoke Lemma \ref{Characterization of general covers defined using subcategories}. By assumption $[S, f_q(Y)] \to [S, Y]$ is an isomorphism for any $S \in \mathcal{S}$. Since $\mathcal{S}$ generates $\mathcal{C}_q$ under colimits and extensions, it thus suffices to show that the property of $[M, f_q(Y)] \to [M, Y]$ being an isomorphism is preserved under colimits and extensions. For colimits this follows from the natural isomorphism
	\[[\varinjlim_{i \in I} M_i, -] \cong \varprojlim_{i \in I} [M_i, -].\]
	For extensions it follows from the long exact sequence associated to a cofiber sequence and the five lemma.
\end{proof}

We end this section by describing a general method to prove that the entire class of $q$-effective, $q$-connective, or $q$-very effective objects satisfies some given property. This is a standard result, but we include it here for convenience since we will make use of it multiple times.

\begin{Lemma}\phantomsection\label{Lemma on properties of full subcategories generated by a set of objects}
    Let $\mathcal{C}$ be a stable $\infty$-category, and let $\mathcal{D} \subset \mathcal{C}$ be a full subcategory generated under colimits and extensions by a set of objects $\mathcal{S} \subset \mathcal{C}$. Let $P$ be a property that an object of $\mathcal{C}$ can satisfy. Then $P$ holds for all objects of $\mathcal{D}$ if:
    \begin{itemize}
        \item $P$ holds for every object of $\mathcal{S}$,
        \item $P$ is preserved under cofibers, meaning if $X \to Y \to Z$ is a cofiber sequence, and $P$ holds for $X$ and $Y$, then $P$ also holds for $Z$,
        \item $P$ is preserved under extensions, meaning if $X \to Y \to Z$ is a cofiber sequence, and $P$ holds for $X$ and $Z$, then $P$ also holds for $Y$.
        \item $P$ is preserved under filtered colimits, meaning if $\{X_i\}_{i \in I}$ is a filtered system, and $P$ holds for each $X_i$, then $P$ holds for $\varinjlim X_i$.
    \end{itemize}
\end{Lemma}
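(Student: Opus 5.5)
Let $\mathcal{D}_P \subset \mathcal{C}$ denote the full subcategory of objects satisfying $P$. The plan is to show that $\mathcal{D}_P$ contains $\mathcal{S}$ and is closed under colimits and extensions. Since $\mathcal{D}$ is by definition the smallest full subcategory containing $\mathcal{S}$ with these closure properties, this gives $\mathcal{D} \subset \mathcal{D}_P$. The first hypothesis gives $\mathcal{S} \subset \mathcal{D}_P$ directly, and closure under extensions is exactly the third hypothesis. The remaining work is to deduce closure under all (small) colimits from the cofiber and filtered colimit hypotheses.

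To obtain all colimits, I would use the standard fact that in a stable $\infty$-category every small colimit can be built from finite coproducts, cofibers, and filtered colimits. First note some consequences of the hypotheses.
\begin{itemize}
\item The zero object lies in $\mathcal{D}_P$, provided $\mathcal{S}$ is nonempty: take the cofiber of the identity $X \to X$ for some $X \in \mathcal{S}$. If $\mathcal{S}$ is empty, then $\mathcal{D}$ consists of zero objects only, and one includes $0$ in the class being generated, as is implicit in the statement.
\item Finite coproducts lie in $\mathcal{D}_P$. For $X, Y \in \mathcal{D}_P$, the split cofiber sequence $X \to X \oplus Y \to Y$ exhibits $X \oplus Y$ as an extension.
\item Suspensions lie in $\mathcal{D}_P$, since $\Sigma X$ is the cofiber of $X \to 0$.
\end{itemize}
Arbitrary coproducts are filtered colimits of finite coproducts, so they lie in $\mathcal{D}_P$. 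Pushouts lie in $\mathcal{D}_P$ because in a stable category the pushout of $Y \leftarrow X \to Z$ is the cofiber of $X \to Y \oplus Z$. Coequalizers are handled the same way, as cofibers of a difference map $X \to Y$.

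With coproducts and pushouts available, a general colimit over $I$ is the coequalizer of the two maps $\bigoplus_{i \to j} X_i \rightrightarrows \bigoplus_i X_i$. In the $\infty$-categorical setting this should be replaced by the geometric realization of the simplicial bar construction. That geometric realization is the filtered colimit of its skeleta, and each skeleton is obtained from the previous one by a pushout along coproducts of the terms in the construction. Hence the colimit lies in $\mathcal{D}_P$.

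The main obstacle is making this $\infty$-categorical reduction rigorous. The cleanest route is to cite that an $\infty$-category with finite colimits and filtered colimits has all small colimits, and that a subcategory closed under both is closed under all small colimits (Lurie, HTT 4.2.3 / 4.4.2). In practice the author likely intends "colimits" to mean exactly these operations, so a brief argument along these lines, citing the standard generation of colimits by pushouts, coproducts and filtered colimits, should suffice.
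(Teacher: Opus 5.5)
Your proposal is correct and follows the paper's strategy: show that the objects satisfying $P$ contain $\mathcal{S}$ and are closed under extensions and colimits. You treat finite coproducts as iterated extensions, arbitrary coproducts as filtered colimits of finite ones, and the remaining colimit operations as cofibers; the paper reduces via HTT Proposition 4.4.3.2 to coproducts and coequalizers, taking the coequalizer of $f, g$ to be the cofiber of $f - g$, which is the same move as your pushout/coequalizer step. Your extra care about the zero object, and about the $\infty$-categorical passage from these closure properties to all colimits, is a harmless refinement the paper leaves implicit. Note that in the degenerate case $\mathcal{S} = \emptyset$ the statement as written genuinely needs $P(0)$ as an extra hypothesis.
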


\begin{proof}
    Since $P$ holds for every object of $\mathcal{S}$, and since those objects generate $\mathcal{D}$ via colimits and extensions, it suffices to check that $P$ is preserved under colimits and extensions. For extensions that is true by assumption. For colimits, recall the general $\infty$-categorical fact \cite[Proposition 4.4.3.2]{HTT} that every colimit can be written in terms of coequalizers and coproducts. The coequalizer of two maps $f, g$ is equivalently the cofiber of $f - g$, so that coequalizers preserve $P$ again by assumption. To see that arbitrary coproducts preserve $P$, recall that they can be written as filtered colimits of their finite sub-coproducts \cite[Section 4.2.3]{HTT}. Filtered colimits preserve $P$ by assumption. Finite coproducts can be expressed as a finite sequence of extensions, so they also preserve $P$. Altogether, we see that $P$ is preserved under colimits and extensions and hence $P$ holds for all objects of $\mathcal{D}$.
\end{proof}

\begin{Cor}\phantomsection\label{Corollary for proving properties of filtrations}
    Let $P$ be a property that an object of ${\normalfont \textbf{SH}^{\mathbb{C}}}$ can satisfy. Assume $P$ is preserved under cofibers, extensions, and filtered colimits. Then:
    \begin{itemize}
        \item $P$ holds for all $q$-effective objects if it holds for the set
        \[\{S^{s, w} \mid w \geq q\},\]
        \item $P$ holds for all $q$-connective objects if it holds for the set
        \[\{S^{s, w} \mid s - w \geq q\},\]
        \item $P$ holds for all $q$-very effective objects if it holds for the set
        \[\{{\normalfont S^{s, w} \mid w \geq q \text{~and~} s - w \geq q}\}.\]
    \end{itemize}
\end{Cor}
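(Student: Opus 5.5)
This is a direct application of Lemma \ref{Lemma on properties of full subcategories generated by a set of objects}, with $\mathcal{C} = \textbf{SH}^{\mathbb{C}}$. Recall from Construction \ref{Construction of filtrations using subcategories} and the definitions following it what $\mathcal{D}$ is in each case. It is the full subcategory $\mathcal{C}_q$ of $q$-effective, $q$-connective, or $q$-very effective objects. By definition, $\mathcal{C}_q$ is generated under colimits and extensions by the corresponding set
\[\mathcal{S} = \{S^{s, w} \mid w \geq q\}, \quad \{S^{s, w} \mid s - w \geq q\}, \quad \text{or} \quad \{S^{s, w} \mid w \geq q \text{~and~} s - w \geq q\}.\]
Each of these is a set of objects of the stable $\infty$-category $\textbf{SH}^{\mathbb{C}}$, as required by the lemma.

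The hypotheses of the corollary say that $P$ is preserved under cofibers, extensions, and filtered colimits. These are exactly the last three bullet points of Lemma \ref{Lemma on properties of full subcategories generated by a set of objects}. The remaining hypothesis of that lemma is that $P$ holds for every object of $\mathcal{S}$, and this is precisely the assumption in each of the three bullet points of the corollary.

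So I would treat the three cases in parallel. In each, I invoke Lemma \ref{Lemma on properties of full subcategories generated by a set of objects} with the relevant $\mathcal{S}$ and $\mathcal{D} = \mathcal{C}_q$, and conclude that $P$ holds for all objects of $\mathcal{C}_q$. That is, $P$ holds for all $q$-effective, $q$-connective, or $q$-very effective objects respectively.

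The only point needing care is that "generated under colimits and extensions" is used in the same sense in the definitions of the filtrations and in the lemma. Since the paper uses the same phrase for both, I would simply note this, and the argument is then complete. I expect no real obstacle here.
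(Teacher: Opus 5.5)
Your proposal is correct and matches the paper's proof, which is a one-line application of Lemma~\ref{Lemma on properties of full subcategories generated by a set of objects} to the subcategories $\mathcal{C}_q \subset \textbf{SH}^{\mathbb{C}}$ defining the $q$-effective, $q$-connective, and $q$-very effective objects. You simply spell out the hypothesis matching in more detail.
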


\begin{proof}
    Apply Lemma \ref{Lemma on properties of full subcategories generated by a set of objects} to the full subcategories $\mathcal{C}_q \subset \textbf{SH}^{\mathbb{C}}$ used in the definitions of the terms $q$-effective, $q$-connective, or $q$-very effective.
\end{proof}

\begin{Rem}
	By Theorem \ref{Equivalence between C-motivic homotopy and filtered spectra} we can interpret the $\mathbb{C}$-motivic homotopy category as a category of modules of filtered spectra. We could extend the definitions of the filtrations in this section to the entire category $\textbf{SH}^{\fil}$ by replacing the motivic spheres with spheres in filtered spectra. Note that under the equivalence of Theorem \ref{Equivalence between C-motivic homotopy and filtered spectra} the motivic sphere spectrum corresponds to $\Gamma_\star(S)$, which is not a sphere spectrum in $\textbf{SH}^{\fil}$. So using the evident similar looking definitions for the effective, connective, or very effective filtrations with filtered sphere spectra instead of motivic sphere spectra would yield different filtrations, a priori. However, by \cite[Lemma 2.15]{Hea19}, these filtered sphere versions do in fact induce the motivic sphere filtrations on the category of $\Gamma_\star(S)$-modules. For this, one needs to check that $\Gamma_\star(S)$ is effective, connective, or very effective when defining these terms using filtered spheres. It suffices to check that $\Gamma_\star(S)$ is very effective, and that in turn follows by cellular approximation in filtered spectra. For cellular approximation in the $\mathbb{C}$-motivic stable homotopy category see Section \ref{section: cellular approximation in C-motivic stable homotopy theory}. For filtered spectra similar ideas work.
\end{Rem}

\section{Homotopy characterizations of \texorpdfstring{$\mathbb{C}$}{C}-motivic covers}\label{section: Homotopy characterizations of C-motivic covers}

In this section we will determine characterizations of each of the three covers defined in Section \ref{section: C-motivic filtrations} via properties of their homotopy groups. For the classical connective covers such a characterization is well-known. We will give a short proof of this classical fact in Section \ref{section: Characterizing the classical connective cover}. This will help us understand the similarities and differences for the characterizations of our motivic covers. Specifically, the classical characterization makes use of cellular approximation (also sometimes called CW-approximation). On the level of spaces one can always form a cellular approximation by starting with a set of $0$-cells, then attaching $1$-cells, $2$-cells, and so on to build a space with the correct homotopy groups. Using this method for a spectrum requires that it is bounded below, so that one has a place to start the inductive process. The starting point corresponds to the lowest degree non-trivial homotopy group of the spectrum to be approximated. In the motivic setting, homotopy groups are bigraded. To structure an induction we need to put some restrictions on the possible shape of the non-trivial homotopy groups in the $(s, w)$-plane. Classically, the bounded below assumption, i.e. restricting $s$, is enough. Motivically, just restricting $s$ is not enough, as a spectrum could still have arbitrary complexity in the $w$-direction, so that there is still no evident place to start an induction. In Section \ref{section: cellular approximation in C-motivic stable homotopy theory} we will give a sufficient condition for the homotopy groups of a $\mathbb{C}$-motivic spectrum $Y$ so that cellular approximation is possible.

As it turns out, the homotopy groups of a $\mathbb{C}$-motivic very effective cover always take on a form such that cellular approximation is possible. However, for effective and connective covers that is not the case. This isn't surprising, considering the effective and connective filtrations each only restrict one (linear combination) of the two values $s$ and $w$, whereas the very effective filtration restricts two. For the connective filtration one can work around this issue by writing it as the colimit of its effective covers, and those covers can then be built using cellular approximation. For the effective filtration we can put an additional restriction on the spectrum to be covered so that cellular approximation can be used. We will see later that this restriction is satisfied for objects of the form $\Gamma_*(X)$ where $X \in \textbf{SH}^{\cl}$ is bounded below. So, in our case of interest, we can characterize every cover by some properties of its homotopy groups.

We should also mention that there is a homotopy sheaf characterization of the motivic connective filtration that holds over any field \cite[Theorem 2.3]{Hoy15}. Nevertheless, we give a proof in our $\mathbb{C}$-motivic cellular setting here for completeness' sake.

The first subsection will recall the characterization of classical connective covers in terms of their homotopy groups. The second subsection will describe how we can structure a cellular approximation method in the $\mathbb{C}$-motivic stable homotopy category. In the remaining subsections we will characterize motivic covers in terms of their homotopy groups.

\subsection{Characterizing the classical connective cover}\label{section: Characterizing the classical connective cover}

The following Lemma \ref{characterization of classical q-th connective cover} is a well-known characterization of the classical $q$-connective cover. We provide a short proof, because slight adjustments to the arguments will eventually lead to similar characterizations of our $\mathbb{C}$-motivic covers.

Recall that we say a classical spectrum is $q$-connective if it is contained in the full subcategory $\mathcal{C}_q$ of $\textbf{SH}^{\cl}$ generated under colimits and extensions by $\{S^n \mid n \geq q\}$. The $q$-th connective cover of a spectrum $X$ was then defined via the composition
\[\textbf{SH}^{\cl} \to \mathcal{C}_q \xhookrightarrow{} \textbf{SH}^{\cl}\]
where the second map is the inclusion and the first map is its right adjoint.

\begin{Lemma}\phantomsection\label{Homotopy groups of classical q-connective spectra}
	Let $X \in {\normalfont \textbf{SH}^{\cl}}$. Then the following are equivalent:
	\begin{enumerate}[label=\normalfont{(\arabic*)}]
		\item $X$ is $q$-connective,
		\item $\pi_s(X) \cong 0$ for $s < q$.
	\end{enumerate}
\end{Lemma}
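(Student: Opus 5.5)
The implication (1) $\Rightarrow$ (2) is a closure argument via Corollary \ref{Corollary for proving properties of filtrations}, or rather its classical analogue, Lemma \ref{Lemma on properties of full subcategories generated by a set of objects} applied to $\mathcal{C}_q \subset \textbf{SH}^{\cl}$. Take $P$ to be the property ``$\pi_s(-) \cong 0$ for all $s < q$''. It holds for every generator $S^n$ with $n \geq q$, because $\pi_s(S^n) = 0$ for $s < n$. The long exact sequence of homotopy groups shows that $P$ is preserved under extensions. It is also preserved under cofibers: if $X \to Y \to Z$ is a cofiber sequence with $X, Y$ satisfying $P$, then for $s < q$ the sequence $\pi_s(Y) \to \pi_s(Z) \to \pi_{s-1}(X)$ has both outer terms zero, using $s - 1 < q$. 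Finally, homotopy groups commute with filtered colimits of spectra, since spheres are compact, so $P$ is preserved under filtered colimits. Lemma \ref{Lemma on properties of full subcategories generated by a set of objects} then gives that every $q$-connective spectrum satisfies $P$.

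For (2) $\Rightarrow$ (1) I will use cellular approximation. Suppose $\pi_s(X) = 0$ for $s < q$. I will build a sequence $X_q \to X_{q+1} \to \dotsb$ of spectra in $\mathcal{C}_q$, each equipped with a compatible map $X_n \to X$, with the following property: $X_n \to X$ is an isomorphism on $\pi_s$ for $s < n$ and a surjection on $\pi_n$.

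The construction goes as follows.
\begin{itemize}
    \item Start with $X_q = \bigvee S^q$, a wedge indexed by generators of $\pi_q(X)$.
    \item Given $X_n$, let $K$ denote the kernel of $\pi_n(X_n) \to \pi_n(X)$. Attach cells $S^{n+1}$ along a wedge $\bigvee_{K} S^n \to X_n$ to kill $K$.
    \item Then wedge on copies of $S^{n+1}$ indexed by generators of $\pi_{n+1}(X)$ to force surjectivity on $\pi_{n+1}$.
\end{itemize}
The maps to $X$ extend because the attaching maps become null in $X$. Each $X_{n+1}$ is built from $X_n$ by a cofiber and a wedge with spheres of dimension $\geq q$, so it lies in $\mathcal{C}_q$.

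The key step, and the only one needing care, is verifying the homotopy of $X_{n+1}$. The cofiber sequence $\bigvee S^n \to X_n \to X_n' $ together with the vanishing of $\pi_s(\bigvee S^n)$ for $s < n$ shows three things:
\begin{itemize}
    \item $\pi_s$ is unchanged for $s < n$;
    \item $\pi_n(X'_n) = \pi_n(X_n)/K \cong \pi_n(X)$;
    \item the added $(n+1)$-spheres give surjectivity on $\pi_{n+1}$.
\end{itemize}
To compute $\pi_s$ of $\bigvee S^n$ in this range we use that stably $\pi_s(\bigvee S^n) = \bigoplus \pi_s(S^n)$, and this vanishes for $s < n$. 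Let $X_\infty = \varinjlim X_n$. It lies in $\mathcal{C}_q$, and since homotopy commutes with this colimit, $X_\infty \to X$ is a $\pi_*$-isomorphism in all degrees. Here we use that $\pi_s(X) = 0 = \pi_s(X_\infty)$ for $s < q$. By Whitehead's theorem for spectra, $X \simeq X_\infty$ is $q$-connective.
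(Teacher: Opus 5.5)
Your proposal is correct and follows the same route as the paper. For (1)$\Rightarrow$(2) it uses the closure lemma applied to the vanishing property, checked on generators, cofibers, extensions, and filtered colimits via compactness of spheres; for (2)$\Rightarrow$(1) it uses cellular approximation starting in degree $q$, where the paper only gives a sketch and your explicit inductive construction of $X_q \to X_{q+1} \to \dotsb$ inside $\mathcal{C}_q$, followed by the colimit and Whitehead argument, fills in the details.
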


\begin{proof}
	We start with the implication from (1) to (2). Apply Lemma \ref{Lemma on properties of full subcategories generated by a set of objects} to the property $\pi_s(X) \cong 0$ for $s < q$. This property is obviously satisfied by the spheres generating the $q$-connective subcategory. That the property is preserved under cofibers and extensions follows from the long exact sequence of homotopy groups induced by a cofiber sequence. That it is preserved under filtered colimits follows from the compactness of the sphere spectrum. It follows that every $q$-connective object satisfies property (2).
	
	For the reverse direction, note that property (2) says that $X$ is bounded below. As such, it can be built using cellular approximation. Due to the structure of the homotopy groups of $X$, this process does not need cells of degree less than $q$. This implies $X$ is $q$-connective.
\end{proof}

\begin{Lemma}\phantomsection\label{characterization of classical q-th connective cover}
	Let $X \in {\normalfont \textbf{SH}^{\cl}}$ and let $\tau_{\geq q}(X) \to X$ denote the $q$-th connective cover of $X$. Then this datum is characterized up to equivalence by the following properties:
	\begin{enumerate}[label=\normalfont{(\arabic*)}]
		\item The spectrum $\tau_{\geq q}(X)$ is $q$-connective,
		\item the induced map $\pi_s(\tau_{\geq q}(X)) \to \pi_s(X)$ is an isomorphism for $s \geq q$.
	\end{enumerate}
\end{Lemma}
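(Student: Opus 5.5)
The plan is to reduce the statement to the abstract characterization of Corollary \ref{Suffices to check iso on mapping spaces on generators}. We apply it with $\mathcal{C}_q$ the $q$-connective subcategory of $\textbf{SH}^{\cl}$ and generating set $\mathcal{S} = \{S^n \mid n \geq q\}$. That corollary says the cover $\tau_{\geq q}(X) \to X$ is characterized up to equivalence by two conditions. The first is that $\tau_{\geq q}(X)$ lies in $\mathcal{C}_q$. The second is that $[S^n, \tau_{\geq q}(X)] \to [S^n, X]$ is an isomorphism for all $n \geq q$. It therefore suffices to identify these two conditions with properties (1) and (2) of the statement.

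Condition (1) of the corollary is literally property (1), since ``$q$-connective'' was defined as membership in $\mathcal{C}_q$. For condition (2), note that $[S^n, Y] = \pi_n(Y)$ by definition of the stable homotopy groups. The induced map on these sets is the map $\pi_n(\tau_{\geq q}(X)) \to \pi_n(X)$ induced by the structure map. Hence condition (2) of the corollary is exactly property (2).

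Two directions need to be recorded.
\begin{itemize}
\item The actual connective cover satisfies (1) and (2). This holds because it satisfies the two conditions of Lemma \ref{Characterization of general covers defined using subcategories}, and in particular those of the corollary.
\item Any map $Z \to X$ with $Z$ $q$-connective and inducing isomorphisms on $\pi_s$ for $s \geq q$ agrees with the cover. By Corollary \ref{Suffices to check iso on mapping spaces on generators}, such a map induces isomorphisms $[M, Z] \to [M, X]$ for all $M \in \mathcal{C}_q$. The Yoneda argument then gives an equivalence $Z \simeq \tau_{\geq q}(X)$ compatible with the maps to $X$.
\end{itemize}

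I do not expect a genuine obstacle here; the only point requiring care is the passage from generators to all of $\mathcal{C}_q$, which the corollary already handles via compactness and the five lemma. Optionally, I would combine this with Lemma \ref{Homotopy groups of classical q-connective spectra} to restate (1) as $\pi_s(\tau_{\geq q}(X)) \cong 0$ for $s < q$. This shows the cover is determined purely by homotopy groups, which is the form to be imitated in the motivic setting later.
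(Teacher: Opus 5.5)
Your proposal is correct and matches the paper's proof, which just invokes the classical analogue of Corollary~\ref{Suffices to check iso on mapping spaces on generators} with generators $S^n$, $n \geq q$, and the identification $[S^n,-]=\pi_n$. One small correction: the step from generators to all of $\mathcal{C}_q$ in that corollary does not use compactness of spheres. It uses that $[-,Y]$ turns colimits in the source into limits, together with the five lemma for extensions.
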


\begin{proof}
	Follows immediately from a classical analogue of Corollary \ref{Suffices to check iso on mapping spaces on generators}.
\end{proof}

Given that property (1) is equivalent to a property on the homotopy groups of $X$, one can also check that some map $Z \to X$ constitutes a $q$-th connective cover purely in terms of homotopy groups.

\begin{Prop}\phantomsection\label{characterization of classical q-th connective cover v2}
	Let $Z \to X$ in ${\normalfont \textbf{SH}^{\cl}}$ satisfy the following properties:
	\begin{enumerate}[label=\normalfont{(\arabic*)}]
		\item The induced map $\pi_s(Z) \to \pi_s(X)$ is an isomorphism for $s \geq q$,
		\item we have $\pi_s(Z) \cong 0$ for $s < q$.
	\end{enumerate}
	Then $Z \to X$ exhibits $Z$ as the $q$-th connective cover of $X$.
\end{Prop}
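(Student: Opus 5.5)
The plan is to reduce the proposition to Lemma \ref{characterization of classical q-th connective cover}, which characterizes $\tau_{\geq q}(X) \to X$ up to equivalence by two conditions. The first is that the source is $q$-connective. The second is that the induced map on $\pi_s$ is an isomorphism for $s \geq q$. It therefore suffices to check that the given map $Z \to X$ satisfies these two conditions.

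For the first condition, hypothesis (2) says $\pi_s(Z) \cong 0$ for $s < q$. By the implication from (2) to (1) in Lemma \ref{Homotopy groups of classical q-connective spectra}, $Z$ is $q$-connective, i.e. $Z \in \mathcal{C}_q$. The second condition is literally hypothesis (1).

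To make the appeal to the characterization concrete, I would argue directly as follows. Since $Z \in \mathcal{C}_q$ and $\tau_{\geq q}$ is right adjoint to the inclusion $\mathcal{C}_q \hookrightarrow \textbf{SH}^{\cl}$, the map $Z \to X$ factors uniquely (up to homotopy) as $Z \to \tau_{\geq q}(X) \to X$.

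\begin{itemize}
\item In degrees $s \geq q$, both $\pi_s(Z) \to \pi_s(X)$ (hypothesis (1)) and $\pi_s(\tau_{\geq q}(X)) \to \pi_s(X)$ (Lemma \ref{characterization of classical q-th connective cover}) are isomorphisms. Hence $\pi_s(Z) \to \pi_s(\tau_{\geq q}(X))$ is an isomorphism by two-out-of-three.
\item In degrees $s < q$, both groups vanish: for $Z$ by hypothesis (2), and for $\tau_{\geq q}(X)$ by Lemma \ref{Homotopy groups of classical q-connective spectra}, since it is $q$-connective.
\end{itemize}

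So $Z \to \tau_{\geq q}(X)$ is a $\pi_*$-isomorphism. Homotopy groups detect equivalences of spectra, so it is an equivalence compatible with the maps to $X$.

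There is no serious obstacle here. The only nontrivial input is the cellular-approximation direction of Lemma \ref{Homotopy groups of classical q-connective spectra}, which we may assume.
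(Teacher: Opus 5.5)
Your proposal is correct and follows the paper's own proof. The paper likewise uses Lemma \ref{Homotopy groups of classical q-connective spectra} to turn hypothesis (2) into $q$-connectivity of $Z$, then invokes Lemma \ref{characterization of classical q-th connective cover}. Your explicit factorization $Z \to \tau_{\geq q}(X) \to X$, followed by the $\pi_*$-isomorphism check, just spells out what ``characterized up to equivalence'' means in that lemma.
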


\begin{proof}
	By Lemma \ref{Homotopy groups of classical q-connective spectra} property (2) here is equivalent to property (1) in Lemma \ref{characterization of classical q-th connective cover}. The claim follows.
\end{proof}

\subsection{Cellular approximation in \texorpdfstring{$\mathbb{C}$}{C}-motivic stable homotopy theory}\label{section: cellular approximation in C-motivic stable homotopy theory}

Classically, to approximate a spectrum $X$ by a cellular one up to equivalence, we only need to assume that $X$ is bounded below. Then a cellular approximation is constructed inductively by attaching cells to form the correct homotopy groups. This happens in the realm of abelian groups because homotopy groups are modules over $\pi_0(S) = \mathbb{Z}$.

In the $\mathbb{C}$-motivic case we observe some differences: 
\begin{enumerate}
	\item We have two gradings, so we need to explain what type of boundedness condition we want,
	\item homotopy groups are modules over $\pi_{0, *}(S) = \mathbb{Z}[\tau]$, so attaching a cell changes more than just one weight in homotopy,
	\item we need to think about how to structure our induction, again because of the bigrading.
\end{enumerate}
The third point will shape our answers to the other two, so we consider it first. Classically, when constructing a cellular approximation of some spectrum, we start with the lowest stem because the homotopy groups of the sphere spectrum $S$ are concentrated in stems $s \geq 0$. So after attaching cells to form the correct lowest stem we can start changing the next stem while keeping the lowest stem unchanged. Now looking at the $\mathbb{C}$-motivic sphere we have $\pi_{s, w}(S) \cong 0$ when $s < 0$ or when $s - w < 0$. Both of these facts are immediate from the model $\Gamma_\star(S)$ for the $\mathbb{C}$-motivic sphere spectrum. We provide a schematic picture of the homotopy groups of $S$ in the $(s, w)$-plane to help visualize these conditions. The area shaded in gray is where the homotopy groups of the sphere spectrum are potentially non-trivial.

\begin{center}
	\begin{tikzpicture}
		\node (origin) at (0, 0) {};
		\node (top right) at (3.7, 3.7) {};
		\node (bottom right) at (3.7, -1.7) {};
		\node (bottom left) at (0, -1.7) {};
		
		\fill[gray, opacity = 0.2] (origin.center) -- (top right.center) -- (bottom right.center) -- (bottom left.center) -- cycle;
		
		\node (s0) at (-1, 0) {};
		\node[label=below:{$s$}] (s1) at (4, 0) {};
		\node (w0) at (0, -2) {};
		\node[label=left:{$w$}] (w1) at (0, 4) {};
		
		\draw[->] (s0) to (s1);
		\draw[->] (w0) to (w1);
	\end{tikzpicture}
\end{center}

The vertical cutoff is the condition $s < 0$ and the diagonal cutoff is the condition $s - w < 0$. In particular, in each given stem there is an upper bound to the weights which can have non-trivial homotopy groups. Based on these observations, we can structure our induction as follows:
\begin{itemize}
	\item Start by moving from small stems to large stems,
	\item inside each stem go from large weights to small weights.
\end{itemize}
Notice also that $\tau$ preserves the stem and decreases the weight by one, so by moving downwards in weights we can build the homotopy groups in each stem as $\mathbb{Z}[\tau]$-modules by just attaching cells for $\mathbb{Z}[\tau]$-generators and relations.

In conclusion, if we want to use cellular approximation for a $\mathbb{C}$-motivic spectrum $Y$ then we need to assume that its homotopy groups satisfy the following properties.
\begin{Def}\phantomsection\label{Definition of approximable}
	We say that $Y \in \textbf{SH}^{\mathbb{C}}$ is \textit{approximable} if the following two conditions are satisfied:
	\begin{enumerate}[label=\normalfont{(\arabic*)}]
		\item $\pi_{s, w}(Y)$ is bounded below in stems $s$,
		\item for each stem $s$, $\pi_{s, w}(Y)$ is bounded above in weights $w$.
	\end{enumerate}
\end{Def}
The second condition is for example satisfied if $Y$ is bounded below in coweights. In turn, this is satisfied if $Y$ is $r$-connective for some $r \in \mathbb{Z}$, as we will see in Proposition \ref{homotopy groups of q-connective spectra vanish in low coweights}.

The following Proposition \ref{motivic cellular approximation} will be integral for proving the homotopy group characterizations of the motivic covers considered in this article. Note that the conditions (1), (2) and (3) therein are satisfied by $q$-effective, $q$-connective, and $q$-very effective spectra, respectively, as we will see in Propositions \ref{tau-iso on homotopy groups of q-effective spectra}, \ref{homotopy groups of q-connective spectra vanish in low coweights}, and \ref{Homotopy characterization of q-very effective spectra}. So Proposition \ref{motivic cellular approximation} states that those properties are sufficient characterizations, assuming $Y$ is approximable. As it turns out, if $Y$ satisfies condition (3) then it is always approximable. We will prove that in Proposition \ref{Very effective spectra are always approximable} below.

\begin{Prop}\phantomsection\label{motivic cellular approximation}
	Let $Y \in {\normalfont \textbf{SH}^{\mathbb{C}}}$ be approximable, and let $q \in \mathbb{Z}$. Then there exists a spectrum $Z \in {\normalfont \textbf{SH}^{\mathbb{C}}}$ together with an equivalence $Z \to Y$ such that:
	\begin{enumerate}[label=\normalfont{(\arabic*)}]
		\item If the map
		\[\tau: \pi_{s, w}(Y) \to \pi_{s, w - 1}(Y)\]
		is an isomorphism for all $w \leq q$ then $Z$ is $q$-effective,
		\item if $\pi_{s, w}(Y) \cong 0$ for $s - w < q$ then $Z$ is $q$-connective,
		\item If the map
		\[\tau: \pi_{s, w}(Y) \to \pi_{s, w - 1}(Y)\]
		is an isomorphism for all $w \leq q$ and $\pi_{s, w}(Y) \cong 0$ for $s - w < q$ then $Z$ is $q$-very effective.
	\end{enumerate}
	It follows in the respective cases that $Y$ is also $q$-effective, $q$-connective, or $q$-very effective, as these properties are closed under equivalences.
\end{Prop}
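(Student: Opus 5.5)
We build $Z$ as a cellular approximation: a colimit $Z = \varinjlim Z_n$ of spectra obtained by attaching spheres $S^{s,w}$ along the order fixed above. Stems are treated from small to large, and within a stem weights from large to small. We then check in each of the three cases that only spheres from the relevant generating set are needed. Since $\mathcal{C}_q$ is closed under colimits and extensions, this shows $Z \in \mathcal{C}_q$. Because we work in the cellular category, where bigraded homotopy groups detect equivalences, the map $Z \to Y$ is an equivalence as soon as it is a $\pi_{*,*}$-isomorphism.

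\textbf{Construction.} By approximability, let $s_0$ be the lowest stem with $\pi_{s_0,*}(Y)\neq 0$. Suppose inductively that we have $Z^{(s-1)} \to Y$ which is an isomorphism on $\pi_{t,*}$ for $t<s$ and zero in lower stems. The homotopy of $Z^{(s-1)}$ in stem $t$ is bounded above in weight, since spheres $S^{t,w}$ have homotopy only in weights $\le$ stem minus coweight.

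We first make $\pi_{s,*}$ surjective. As $\mathbb{Z}[\tau]$-modules, the groups $\pi_{s,*}(Y)$ are bounded above in weight. Going down in weight, wedge on spheres $S^{s,w}$ mapping to generators in weight $w$ that are not already in the image of $\tau$ from weight $w+1$ or from $Z$. This step uses $\pi_{0,*}(S)=\mathbb{Z}[\tau]$, and the fact that $S^{s,w}$ contributes nothing in stems $<s$.

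Next we kill the kernel in stem $s$ by attaching cells $S^{s+1,w}$ along $\mathbb{Z}[\tau]$-generators of the kernel, again going from high to low weight. This does not disturb stems $<s+1$ except for killing the kernel, by the long exact sequence and the vanishing of $\pi_{t,*}(S^{s+1,w})$ for $t<s+1$. Iterating the kernel-killing within stem $s$ (a countable colimit) makes stem $s$ an isomorphism. Passing to the colimit over $s$, and using compactness of spheres, gives $Z\to Y$ a $\pi_{*,*}$-isomorphism.

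\textbf{Which cells are used.} The heart of the proof is bookkeeping. Cells $S^{s,w}$ are attached only for generators of $\pi_{s,w}(Y)$ or of kernels in $\pi_{s-1,w}$. In case (2), $\pi_{s,w}(Y)=0$ for $s-w<q$, so generators only occur in coweight $\ge q$. Relation cells $S^{s+1,w}$ kill kernel elements in $\pi_{s,w}(Z_n)$; these have coweight $\ge q$ by an inductive claim that $Z_n$ is built from $q$-connective cells, which by the easy direction have vanishing homotopy in coweight $<q$. So relation cells have coweight $s+1-w \ge q+1$.

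In case (1), the hypothesis that $\tau$ is an isomorphism for $w\le q$ means that generators of $\pi_{s,*}(Y)$ as a $\mathbb{Z}[\tau]$-module can be chosen in weights $\ge q$. Everything in weight $\le q$ is $\tau$-divisible from weight $q$, and the module is bounded above in weight. For the relation cells we need the same property for kernels. I plan to prove inductively that each $Z_n$ built from cells of weight $\ge q$ has $\tau$ an isomorphism in weights $\le q$. This holds because it holds for $S^{s,w}$ with $w\ge q$, since $\pi_{*,*}(S)$ is $\tau$-periodic below the relevant range, and it is stable under cofibers by the five lemma. Then the kernel of a map between modules on which $\tau$ is an isomorphism in low weights inherits this property, so its generators also sit in weights $\ge q$.

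Case (3) combines the two bookkeepings.

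\textbf{Main obstacle.} I expect the main obstacle to be the effective case. There we must verify that $\tau$-isomorphism in weights $\le q$ holds for spheres $S^{s,w}$ with $w\ge q$. In the model $\Gamma_\star(S)$, $\tau^{-1}$ recovers the classical sphere, so $\tau$ is an isomorphism in weights $\le$ the weight below which the antidiagonal truncation is vacuous. I would first try to establish this from the BKSS picture, and then handle the interplay of kernel generators with the $\tau$-structure carefully.
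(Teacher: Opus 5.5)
Your proposal is correct and follows essentially the same route as the paper. Both build an inductive cellular approximation stem by stem, from small stems to large, and within each stem from large weights to small. In case (1) the bookkeeping rests on the inductive claim that anything built from cells of weight $\geq q$ has $\tau$ an isomorphism in weights $\leq q$, so no cells of weight $< q$ are needed; in case (2) it rests on the inductive vanishing of homotopy in coweights $< q$. Your version, which chooses $\mathbb{Z}[\tau]$-module generators of the cokernel and kernel in weights $\geq q$ and checks that the kernel inherits the $\tau$-isomorphism property, is a slightly more explicit form of the paper's step of stopping the weight-descent at $Z_{a,q}$.
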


\begin{proof}
	We start by giving a general construction of a cellular approximation $Z \to Y$ inspired by cellular approximation in the classical setting. That $Z$ has the claimed properties given the respective hypotheses will be a consequence of the construction.
	
	By induction, we will build a compatible system of spectra $Z_{m, n}$ for $m, n \in \mathbb{Z}$ together with maps $Z_{m, n} \to Y$ such that
	\begin{equation}
		\pi_{s, w}(Z_{m, n}) \to \pi_{s, w}(Y) \text{ is an isomorphism if either $s = m$ and $w \geq n$, or $s < m$.} \tag{$\star$} \label{inductive hypothesis}
	\end{equation}
	Taking the colimit as $m \to \infty$ and $n \to -\infty$ will yield a map $Z \to Y$ with the desired properties.
	
	Since $Y$ is assumed approximable, there exists a smallest value $a$ such that $\pi_{a, *}(Y) \neq 0$. Moreover, there exists a largest value $b$ such that $\pi_{a, b}(Y) \neq 0$. So we can let $Z_{a, b + 1}$ be trivial and the unique map $Z_{a, b + 1} \to Y$ satisfies the desired property \eqref{inductive hypothesis}.
	
	Next we describe the inductive step to get from $Z_{a, n}$ to $Z_{a, n - 1}$. Assume we have $Z_{a, n} \to Y$ satisfying \eqref{inductive hypothesis}. Just as in the classical setting, attach a wedge of spheres $S^{a, n - 1}$ in order to produce a map that induces a surjection in $\pi_{a, n - 1}$. Then attach further cells to make the kernel trivial. That the spectrum constructed in this way comes with a map satisfying \eqref{inductive hypothesis} follows just as in the classical construction. Let $Z_{a, -\infty}$ denote the colimit over the $Z_{a, n}$ as $n \to -\infty$. By construction $Z_{a, -\infty}$ satisfies \eqref{inductive hypothesis}.
	
	Then continue with stem $s = a + 1$. There is a largest weight $c$ such that $\pi_{a + 1, c}(Y) \neq 0$ since $Y$ is approximable. Note however that we may need to start attaching cells in an even larger weight since the cells we attached for $Z_{a, -\infty}$ can produce non-trivial homotopy groups in larger weights. But since $\pi_{a, w}(Y) = 0$ for $w > b$, the largest possible weight where we may need to start attaching cells is given by $\max\{b + 1, c\}$. That is because the homotopy groups of the $\mathbb{C}$-motivic sphere $S$ have a vanishing line of slope $1$ in the $(s, w)$-plane. In particular $\max\{b + 1, c\}$ is a finite number, so that we can repeat the inductive process as before to build $Z_{a + 1, -\infty}$. Iterating this procedure we can then form $Z_{m, -\infty}$ satisfying \eqref{inductive hypothesis} for every $m \in \mathbb{Z}$. Let $Z = Z_{\infty, -\infty}$ denote the colimit as $m \to \infty$. By construction we have a map $Z \to Y$ inducing an isomorphism on all homotopy groups, making it the desired equivalence.
	
	Now we consider the different hypotheses:
	\begin{enumerate}[label=\normalfont{(\arabic*)}]
		\item If there is a $q \in \mathbb{Z}$ such that $\tau: \pi_{s, w}(Y) \to \pi_{s, w - 1}(Y)$ is an isomorphism for all $w \leq q$, then the inductive process to build $Z_{m, n}$ will not need spheres of weights less than $q$ because $Z_{m, q}$ already has the correct homotopy groups in those weights by construction. To explain why, recall that in the homotopy groups of the sphere $\tau: \pi_{s, w}(S) \to \pi_{s, w - 1}(S)$ is an isomorphism for $w \leq 0$. We claim that $\tau: \pi_{s, w}(Z_{a, n - 1}) \to \pi_{s, w - 1}(Z_{a, n - 1})$ is an isomorphism for $w \leq n - 1$. We can inductively assume it is true for $Z_{a, n}$. Then $Z_{a, n - 1}$ is constructed by attaching spheres $S^{a, n - 1}$ to $Z_{a, n}$, and $\tau$ acts as an isomorphism on the homotopy groups of $S^{a, n - 1}$ starting in weight $n - 1$. The long exact sequences of homotopy groups we get when attaching cells, together with the naturality of $\tau$, and the five lemma imply the claim. In particular, this means when we form $Z_{a, q}$ we already have an isomorphism $\pi_{a, *}(Z_{a, q}) \to \pi_{a, *}(Y)$ as this map is an isomorphism for weights $w \geq q$ by construction, and for $w \leq q$ it is then also an isomorphism by naturality of $\tau$ and the fact that $\tau$ acts as an isomorphism on both sides. So we can set $Z_{a, -\infty} = Z_{a, q}$. Inductively repeating this argument shows that $\tau$ will always be an isomorphism in weights $w \leq q$ for every $Z_{m, n}$. So the approximation $Z$ that we are building does not need cells of weight less than $q$. It follows that $Z$ is $q$-effective.
		
		\item By assumption we have $\pi_{s, w}(Y) \cong 0$ if $s - w < q$ for some $q \in \mathbb{Z}$. Since the homotopy groups of the sphere vanish in coweights $s - w < 0$, it follows that the inductive process does not need spheres of coweights $s - w < q$. That is because $Z_{m, n}$ will also satisfy $\pi_{s, w}(Z_{m, n}) \cong 0$ if $s - w < q$ by induction. In consequence, $Z$ is $q$-connective.
		
		\item From the proofs of the other two cases, it follows that the approximation $Z$ does not need cells of weights $w < q$ or of coweights $s - w < q$. Therefore, $Z$ is $q$-very effective.
	\end{enumerate}
\end{proof}

\begin{Prop}\phantomsection\label{Very effective spectra are always approximable}
	Let $Y \in {\normalfont \textbf{SH}^{\mathbb{C}}}$ and let $q \in \mathbb{Z}$. Assume $Y$ satisfies the following properties:
	\begin{enumerate}[label=\normalfont{(\arabic*)}]
		\item The map
		\[\tau: \pi_{s, w}(Y) \to \pi_{s, w - 1}(Y)\]
		is an isomorphism for all $w \leq q$,
		\item we have $\pi_{s, w}(Y) \cong 0$ for $s - w < q$.
	\end{enumerate}
	Then $Y$ is approximable.
\end{Prop}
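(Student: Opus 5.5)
We verify the two conditions of Definition \ref{Definition of approximable} directly from hypotheses (1) and (2).

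First we show that for each stem $s$ the groups $\pi_{s, w}(Y)$ are bounded above in weights. By hypothesis (2), $\pi_{s, w}(Y) \cong 0$ whenever $w > s - q$. So in stem $s$ only weights $w \leq s - q$ can carry non-trivial homotopy, which gives condition (2) of approximability at once.

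Next we show that $\pi_{s, w}(Y)$ is bounded below in stems. Fix a stem $s$ with $s < 2q$; we claim $\pi_{s, *}(Y) = 0$. Consider any weight $w$.
\begin{itemize}
\item If $w > s - q$, then $\pi_{s, w}(Y) \cong 0$ by hypothesis (2).
\item If $w \leq q$, then by hypothesis (1) the iterated $\tau$-multiplication gives isomorphisms
\[\pi_{s, q}(Y) \cong \pi_{s, q-1}(Y) \cong \dotsb \cong \pi_{s, w}(Y).\]
So it suffices to know $\pi_{s, q}(Y) = 0$. Since $s < 2q$ we have $s - q < q$, so the coweight of $(s, q)$ is negative relative to $q$, and hypothesis (2) gives $\pi_{s, q}(Y) \cong 0$.
\end{itemize}
Because $s - q < q$, the ranges $w > s - q$ and $w \leq q$ cover all of $\mathbb{Z}$. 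Hence every $\pi_{s, w}(Y)$ with $s < 2q$ vanishes, and $\pi_{*, *}(Y)$ is concentrated in stems $s \geq 2q$. This is condition (1) of approximability.

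The only point needing care is indexing in the $\tau$ chain. The map $\tau \colon \pi_{s, w} \to \pi_{s, w-1}$ is an isomorphism for $w \leq q$, so the chain starts at weight $q$ itself and runs downward. This is exactly what is needed to transport the vanishing of $\pi_{s, q}(Y)$ to all weights $w \leq q$.
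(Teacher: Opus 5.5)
Your proof is correct and follows the paper's argument. Weights are bounded above in each stem by hypothesis (2), and $\pi_{s,*}(Y)=0$ for $s<2q$ follows by combining $\pi_{s,q}(Y)\cong 0$ from (2) with the $\tau$-isomorphisms of (1). You also spell out the case $w>q$, handled by (2) since then $s-w<q$, which the paper leaves implicit.
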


\begin{proof}
	To show that $Y$ is approximable it suffices to prove that the homotopy groups of $Y$ are bounded below in stems and in coweights. By assumption they are bounded below in coweights. We claim that properties (1) and (2) together imply that the homotopy groups $\pi_{s, w}(Y)$ are trivial for $s < 2q$, regardless of $w$. To see this, note that for such values of $s$ we have $\pi_{s, q}(Y) \cong 0$ by property (2) since $s - q < 2q - q = q$. The $\tau$-isomorphism property (1) then implies that $\pi_{s, w}(Y) \cong 0$ for $s < 2q$ and any $w \in \mathbb{Z}$. So $Y$ is approximable.
\end{proof}

\begin{Rem}\phantomsection\label{Larger zero region in homotopy groups of C-motivic very effective cover}
	We provide a visualization of the structure of the homotopy groups of $Y$ in Proposition \ref{Very effective spectra are always approximable}. In the proof we saw that the homotopy groups of $Y$ are trivial in all stems $s < 2q$. This means the potentially non-trivial homotopy groups of $Y$ are contained in the gray region depicted below. The sketch provided here should remind the reader of the one at the beginning of Section \ref{section: cellular approximation in C-motivic stable homotopy theory}. It is the same shape, but the origin is shifted to the point $(2q, q)$.
	
	\begin{center}
		\begin{tikzpicture}
			\node (shaded area upper left) at (2, 1) {};
			\node (shaded area lower left) at (2, -1.7) {};
			\node (shaded area upper right) at (3.7, 2.7) {};
			\node (shaded area lower right) at (3.7, -1.7) {};
			
			\fill[gray, opacity = 0.2] (shaded area upper left.center) -- (shaded area lower left.center) -- (shaded area lower right.center) -- (shaded area upper right.center) -- cycle;
			
			\node (s0) at (-1, 0) {};
			\node[label=below:{$s$}] (s1) at (4, 0) {};
			\node (w0) at (0, -2) {};
			\node[label=left:{$w$}] (w1) at (0, 3) {};
			
			\draw[->] (s0) to (s1);
			\draw[->] (w0) to (w1);
			
			\node[label={[label distance = -.2cm]left:{$q$}}] (w-axis mark left) at (-.3, 1) {};
			\node (w-axis mark right) at (.3, 1) {};
			\node (s-axis mark q upper) at (1, .3) {};
			\node[label={[label distance = -.2cm]below:{$2q$}}] (s-axis mark 2q lower) at (2, -.3) {};
			\node (s-axis mark 2q upper) at (2, .3) {};
			
			\draw[-] (w-axis mark left) to (w-axis mark right);
			\draw[-] (s-axis mark 2q lower) to (s-axis mark 2q upper);
			
		\end{tikzpicture}
	\end{center}
	We will see later in Proposition \ref{Homotopy characterization of q-very effective spectra} that this is also the shape of the homotopy groups of any $\mathbb{C}$-motivic $q$-very effective spectrum because such spectra satisfy the hypotheses of Proposition \ref{Very effective spectra are always approximable}. This shape is specific to $\mathbb{C}$-motivic homotopy theory. For example, over $\mathbb{R}$ we do not have $\tau$ as a map on homotopy groups, but instead the sphere spectrum has a self-map $\rho$ with degree $(s, w) = (-1, -1)$ giving rise to non-trivial homotopy groups in arbitrarily small stems.

\end{Rem}

\subsection{Characterizing the effective filtration}

In this section, we will give a characterization of effective covers via properties of their homotopy groups. As was discussed in the introduction to Section \ref{section: Homotopy characterizations of C-motivic covers}, we cannot give a characterization purely in terms of homotopy groups for effective covers of every spectrum, as there is no evident starting point for the inductive process of cellular approximation. However, we will see that restricting to spectra $Y$ that are $r$-connective for some $r \in \mathbb{Z}$ will imply that cellular approximation is possible for the cover of $Y$, so that we can describe the cover purely via properties of its homotopy groups.

Let us first take note of a general property that the homotopy groups of all $q$-effective spectra satisfy.

\begin{Prop}\phantomsection\label{tau-iso on homotopy groups of q-effective spectra}
	Let $Y \in {\normalfont \textbf{SH}^{\mathbb{C}}}$ be $q$-effective. Then the map
	\[\tau: \pi_{s, w}(Y) \to \pi_{s,w - 1}(Y)\]
	is an isomorphism for all $w \leq q$.
\end{Prop}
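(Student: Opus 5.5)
We apply Corollary \ref{Corollary for proving properties of filtrations} to the property $P(Y)$: ``the map $\tau\colon \pi_{s,w}(Y) \to \pi_{s,w-1}(Y)$ is an isomorphism for all $s \in \mathbb{Z}$ and all $w \leq q$.'' This property must be checked on the generators $S^{s',w'}$ with $w' \geq q$, and it must be shown to be preserved under cofibers, extensions and filtered colimits.

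\textbf{Closure properties.} All three closure conditions follow from naturality of $\tau$. Multiplication by $\tau$ is induced by the self-map $\tau\colon S^{0,-1} \to S^{0,0}$, so for a cofiber sequence $X \to Y \to Z$ we get a map of long exact sequences
\[\dotsb \to \pi_{s,w}(X) \to \pi_{s,w}(Y) \to \pi_{s,w}(Z) \to \pi_{s-1,w}(X) \to \dotsb\]
into the corresponding sequence in weight $w-1$, with vertical maps $\tau$. The connecting map shifts the stem but not the weight, so every weight occurring in the relevant five-term window is still $w \leq q$. The five lemma therefore gives preservation under both cofibers and extensions. For filtered colimits we use that the bigraded spheres are compact in $\textbf{SH}^{\mathbb{C}}$. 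Hence $\pi_{s,w}$ commutes with filtered colimits, and a filtered colimit of isomorphisms is an isomorphism.

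\textbf{Generators.} It remains to treat $Y = S^{s',w'}$ with $w' \geq q$. We have $\pi_{s,w}(S^{s',w'}) = \pi_{s-s',\,w-w'}(S)$, and $\tau$ acts compatibly with this shift. The condition $w \leq q \leq w'$ gives $w - w' \leq 0$. So we need the fact that on the sphere, $\tau\colon \pi_{s,w}(S) \to \pi_{s,w-1}(S)$ is an isomorphism for $w \leq 0$; this was already used in the proof of Proposition \ref{motivic cellular approximation}.

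This fact about the sphere is the only step with real content. I would read it off from the model $\Gamma_\star(S)$ of Theorem \ref{Equivalence between C-motivic homotopy and filtered spectra}, under which $\pi_{s,w}$ is $\pi_s\Gamma_w(S)$. For $w \leq 0$ we have $2w \leq 0$, and every term $S \wedge \MU^{\wedge n+1}$ is connective. Hence $\tau_{\geq 2w}(\MU^{\wedge \bullet+1}) = \MU^{\wedge\bullet+1}$ levelwise. It follows that $\Gamma_w(S) = \Tot(\MU^{\wedge\bullet+1})$ for all $w \leq 0$, and the structure maps $\Gamma_w(S) \to \Gamma_{w-1}(S)$ are equivalences in this range. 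Since the structure maps induce $\tau$-multiplication, the claim follows. In the language of the BKSS pictures, no truncation occurs below the zeroth antidiagonal.

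I expect the main subtlety to be bookkeeping the indexing: the weight shift for $\tau$, and the convention that $\pi_{s,w}$ corresponds to $\pi_s \Gamma_w$ rather than some reindexed version. The overall structure of the argument is routine.
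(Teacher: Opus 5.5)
Your proof is correct and follows the paper's argument: reduce via Corollary \ref{Corollary for proving properties of filtrations} to the spheres $S^{s',w'}$ with $w' \geq q$. Closure under cofibers, extensions and filtered colimits then comes from the long exact sequence, the five lemma and compactness of spheres, and the sphere case is obtained by shifting from $\Gamma_\star(S)$. Your justification of the sphere case is slightly more direct than the paper's remark that the ANSS of the sphere has nothing on negative antidiagonals: you observe that $\tau_{\geq 2w}$ acts trivially on the connective spectra $\MU^{\wedge n+1}$ for $w \leq 0$, so the structure maps $\Gamma_w(S) \to \Gamma_{w-1}(S)$ are equivalences of spectra.
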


\begin{proof}
	By Corollary \ref{Corollary for proving properties of filtrations} it suffices to prove that $\tau$ is an isomorphism in the given range for the set of $\mathbb{C}$-motivic spheres $\{S^{s, w} \mid w \geq q\}$, and that this property is preserved under cofibers, extensions, and filtered colimits.
	
	To see that $\tau$ is an isomorphism for these spheres we can consider the model $\Gamma_\star(S)$ for the motivic sphere spectrum. Then $\tau$ is an isomorphism
	\[\tau: \pi_{s, w}(\Gamma_\star(S)) \to \pi_{s, w - 1}(\Gamma_\star(S))\]
	for all $w \leq 0$ because there are no elements in the ANSS for the classical sphere spectrum on negative-indexed antidiagonals. Shifting this result implies the claim for the set $\{S^{s, w} \mid w \geq q\}$.
	
	Now we consider the different preservation conditions. For cofibers and extensions the result is immediate from the long exact sequence of homotopy groups induced by a cofiber sequence and the five lemma. For filtered colimits it follows from the fact that spheres are compact objects in the motivic stable homotopy category, see \cite[Theorem 9.1]{DI05}, meaning homotopy groups commute with filtered colimits.
\end{proof}

\begin{Rem}
	It is known that the effective slice spectral sequence does not converge for all spectra. That is because there are spectra that are $q$-effective for all $q \in \mathbb{Z}$. In \cite[Remark 2.1]{Voe02a} Voevodsky gives the example
	\[M\mathbb{Z}/p[\tau^{-1}] = \varinjlim(M\mathbb{Z}/p \xrightarrow[]{\tau} M\mathbb{Z}/p \xrightarrow[]{\tau} \dotsc)\]
	of a spectrum that is $q$-effective for all $q \in \mathbb{Z}$, where $p$ is any prime not equal to the characteristic of the field one is working over. It follows from Proposition \ref{tau-iso on homotopy groups of q-effective spectra} that any $\mathbb{C}$-motivic spectrum that is $q$-effective for all $q \in \mathbb{Z}$ must have homotopy groups where $\tau$ is an isomorphism in all degrees.
\end{Rem}

Next, we provide a characterization of the covers making up the $\mathbb{C}$-motivic effective filtration. This characterization works for any spectrum $Y \in \textbf{SH}^{\mathbb{C}}$, but it requires the assumption that the $q$-th effective cover $f_q(Y)$ is $q$-effective. We will provide another characterization afterwards that replaces the $q$-effective assumption on the cover by an assumption on $Y$ itself.

\begin{Lemma}\phantomsection\label{characterization of q-th effective cover}
	Let $Y \in {\normalfont \textbf{SH}^{\mathbb{C}}}$ and let $f_q(Y) \to Y$ denote the $q$-th effective cover of $Y$. Then this datum is characterized up to equivalence by the following properties:
	\begin{enumerate}[label=\normalfont{(\arabic*)}]
		\item The spectrum $f_q(Y)$ is $q$-effective,
		\item the induced map $\pi_{s, w}(f_q(Y)) \to \pi_{s, w}(Y)$ is an isomorphism for $w \geq q$.
	\end{enumerate}
\end{Lemma}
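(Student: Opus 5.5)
The plan is to deduce this from Corollary \ref{Suffices to check iso on mapping spaces on generators}, applied to the effective filtration. There $\mathcal{C}_q$ is generated under colimits and extensions by the set $\mathcal{S} = \{S^{s, w} \mid w \geq q\}$. That corollary says $f_q(Y) \to Y$ is characterized up to equivalence by two properties: $f_q(Y) \in \mathcal{C}_q$, and $[S, f_q(Y)] \to [S, Y]$ is an isomorphism for every $S \in \mathcal{S}$. The first of these is verbatim property (1) of the statement, since being $q$-effective means lying in $\mathcal{C}_q$.

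For the second, I will use the definition of bigraded homotopy groups, $\pi_{s, w}(-) = [S^{s, w}, -]$. With this, the requirement that $[S^{s, w}, f_q(Y)] \to [S^{s, w}, Y]$ be a bijection for all $s$ and all $w \geq q$ is literally property (2). The homotopy groups in property (2) are ordinary homotopy classes of maps, i.e.\ $\pi_0$ of mapping spectra. This is exactly what Lemma \ref{Characterization of general covers defined using subcategories} and its corollary use, so no upgrade to mapping spaces is needed. Since $s$ ranges over all integers, the shifts $S^{s+1,w}$ are also in $\mathcal{S}$, which takes care of the five-lemma argument in the corollary's proof.

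Concretely, I will argue as follows. Given $Z \to Y$ with $Z$ $q$-effective and inducing isomorphisms on $\pi_{s, w}$ for $w \geq q$, the argument of Corollary \ref{Suffices to check iso on mapping spaces on generators} shows that $[M, Z] \to [M, Y]$ is an isomorphism for every $M \in \mathcal{C}_q$. Lemma \ref{Characterization of general covers defined using subcategories} then identifies $Z \to Y$ with $f_q(Y) \to Y$ via the Yoneda lemma. Conversely, $f_q(Y) \to Y$ itself satisfies both properties by the adjunction.

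The only point that needs care is that the corollary's proof passes through colimits using $[\varinjlim M_i, -] \cong \varprojlim [M_i, -]$. I will take this from the paper as given: the property of inducing an iso on $[M,-]$ is closed under colimits in $M$, for instance by working with mapping spectra so that the limits behave. Beyond this, I expect no real obstacle; the lemma is essentially a restatement of the corollary in the language of homotopy groups.
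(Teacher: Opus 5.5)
Your proposal is correct and is the paper's argument: the paper proves this lemma in one line as an immediate consequence of Corollary~\ref{Suffices to check iso on mapping spaces on generators}, applied to the generating set $\{S^{s,w} \mid w \geq q\}$ together with $\pi_{s,w}(-) = [S^{s,w}, -]$. Your observation that this generating set is closed under all simplicial shifts is what allows the upgrade to equivalences of mapping spectra. That upgrade makes the colimit step rigorous, whereas the $\pi_0$-level identification $[\varinjlim M_i,-]\cong\varprojlim[M_i,-]$ would otherwise run into a $\lim^1$ issue.
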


\begin{proof}
	This is an immediate consequence of Corollary \ref{Suffices to check iso on mapping spaces on generators} and the definition of the $q$-effective subcategory.
\end{proof}

If we want to use Lemma \ref{characterization of q-th effective cover} to see that some datum $Z \to Y$ constitutes a $q$-effective cover, then we will have to show that $Z$ is $q$-effective. Under a reasonable hypothesis on $Y$ we can guarantee that a spectrum $Z$ satisfying property (2) of Lemma \ref{characterization of q-th effective cover} as well as the property of Proposition \ref{tau-iso on homotopy groups of q-effective spectra} is $q$-effective. Our specific use case will be where $Y = \Gamma_\star(X)$ with $X \in \textbf{SH}^{\cl}$ bounded below.

\begin{Prop}\phantomsection\label{characterization of q-th effective cover for nice spectra}
	Let $Z \to Y$ in ${\normalfont \textbf{SH}^{\mathbb{C}}}$ satisfy the following properties:
	\begin{enumerate}[label=\normalfont{(\arabic*)}]
		\item The induced map $\pi_{s, w}(Z) \to \pi_{s, w}(Y)$ is an isomorphism for $w \geq q$,
		\item the map
		\[\tau: \pi_{s, w}(Z) \to \pi_{s,w - 1}(Z)\]
		is an isomorphism for all $w \leq q$.
	\end{enumerate}
	If $Y$ is $r$-connective for some $r \in \mathbb{Z}$, then $Z \to Y$ exhibits $Z$ as the $q$-th effective cover of $Y$.
\end{Prop}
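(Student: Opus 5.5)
The plan is to reduce to Lemma \ref{characterization of q-th effective cover}. Property (2) of that lemma is exactly hypothesis (1) here, so the only thing left to show is that $Z$ is $q$-effective. For this I will use Proposition \ref{motivic cellular approximation}(1). Hypothesis (2) supplies the $\tau$-isomorphism condition for $w \leq q$, so it suffices to show that $Z$ is approximable in the sense of Definition \ref{Definition of approximable}.

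First I need to know what $r$-connectivity of $Y$ gives. By Proposition \ref{homotopy groups of q-connective spectra vanish in low coweights}, which is referred to earlier and proved by Corollary \ref{Corollary for proving properties of filtrations} together with the vanishing of $\pi_{s,w}(S)$ for $s-w<0$, we have $\pi_{s,w}(Y)\cong 0$ whenever $s-w<r$. By hypothesis (1), $\pi_{s,w}(Z)\cong\pi_{s,w}(Y)$ for $w\geq q$. Hence $\pi_{s,w}(Z)\cong 0$ whenever $w\geq q$ and $s-w<r$.

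Next I will run the argument from Proposition \ref{Very effective spectra are always approximable}. Fix a stem $s$ with $s< q+r$. Then $s-q<r$, so $\pi_{s,q}(Z)\cong 0$. Since $\tau$ is an isomorphism $\pi_{s,w}(Z)\to\pi_{s,w-1}(Z)$ for all $w\leq q$, it follows that $\pi_{s,w}(Z)\cong 0$ for every $w\leq q$. In weights $w\geq q$ the group also vanishes, because $s-w\leq s-q<r$. So $\pi_{s,*}(Z)=0$ for all $s<q+r$, which means $Z$ is bounded below in stems.

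Now fix an arbitrary stem $s$. For weights $w\geq q$ with $w> s-r$ the group $\pi_{s,w}(Z)$ vanishes, so in each stem the homotopy is bounded above in weight (by $\max\{q, s-r\}$). Thus $Z$ is approximable. Proposition \ref{motivic cellular approximation}(1) then makes $Z$ $q$-effective, and Lemma \ref{characterization of q-th effective cover} finishes the proof.

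The only step that is not formal is making sure the coweight vanishing for $r$-connective spectra is available. If Proposition \ref{homotopy groups of q-connective spectra vanish in low coweights} is not yet proven at this point, I would prove it inline via Corollary \ref{Corollary for proving properties of filtrations}: the vanishing holds for the generating spheres, is preserved by the long exact sequence under cofibers and extensions, and is preserved under filtered colimits because spheres are compact. Everything else is bookkeeping in the $(s,w)$-plane.
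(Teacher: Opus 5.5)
Your proposal is correct and matches the paper's proof essentially step for step. The paper also reduces to approximability, propagates $\pi_{s,q}(Z)=0$ downward via $\tau$ to get vanishing for $s<q+r$, and then gets the per-stem weight bound. The only cosmetic difference is that the paper phrases the last step as vanishing of $\pi_{s,w}(Z)$ in coweights $s-w<r$, while you bound the weight directly by $\max\{q,s-r\}$.
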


\begin{proof}
	Given Lemma \ref{characterization of q-th effective cover} it suffices to show that $Z$ is $q$-effective. This will follow from Proposition \ref{motivic cellular approximation} once we have shown that $Z$ is approximable as in Definition \ref{Definition of approximable}. For that, it suffices to show that $\pi_{s, w}(Z)$ is bounded below in stems $s$ and in coweights $s - w$.
	
	First we claim that $\pi_{s, w}(Z) \cong 0$ for any $s < q + r$. Because $Y$ is assumed $r$-connective, we have $\pi_{s, w}(Y) \cong 0$ for $s - w < r$ by Proposition \ref{homotopy groups of q-connective spectra vanish in low coweights}\footnote{We prove Proposition \ref{homotopy groups of q-connective spectra vanish in low coweights} in the next section to preserve the usual ordering of effective, connective, and very effective. The proof of Proposition \ref{homotopy groups of q-connective spectra vanish in low coweights} is independent of this one, so there is no circularity.}. Equivalently $\pi_{s, w}(Y) \cong 0$ for $s < w + r$. By property (1) this implies $\pi_{s, w}(Z) \cong 0$ for $s < w + r$ and $w \geq q$. In particular when $w = q$ we get $\pi_{s, q}(Z) \cong 0$ for $s < q + r$. Then property (2) implies that it is also true for $s < q + r$ and $w \leq q$. Together with property (1) that means $\pi_{s, w}(Z) \cong 0$ for $s < q + r$ and any $w \in \mathbb{Z}$.
	
	Lastly we claim that the homotopy groups of $Z$ are trivial in coweights $s - w < r$: If $w \geq q$ then this follows from the fact that it is true for $Y$ as we saw above. If $w \leq q$ then note that $s - w < r$ implies $s < q + r$. But we saw that the homotopy groups of $Z$ are trivial for such values of $s$. Altogether we have shown that $Z$ is approximable and therefore $q$-effective by Proposition \ref{motivic cellular approximation}.
\end{proof}

\subsection{Characterizing the connective filtration}

In this section, we will give a characterization of $\mathbb{C}$-motivic connective covers via properties of their homotopy groups. As was mentioned in the introduction to Section \ref{section: Homotopy characterizations of C-motivic covers}, this characterization holds more generally in the $k$-motivic stable homotopy category for any field $k$, see \cite[Theorem 2.3]{Hoy15}.

Let us first take note of a general property that the homotopy groups of all $q$-connective spectra satisfy.

\begin{Prop}\phantomsection\label{homotopy groups of q-connective spectra vanish in low coweights}
	Let $Y \in {\normalfont \textbf{SH}^{\mathbb{C}}}$ be $q$-connective. Then we have $\pi_{s, w}(Y) \cong 0$ for $s - w < q$.
\end{Prop}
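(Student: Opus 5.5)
The plan is to mirror the proof of Proposition \ref{tau-iso on homotopy groups of q-effective spectra}, using Corollary \ref{Corollary for proving properties of filtrations}. Take $P$ to be the property ``$\pi_{s, w}(Y) \cong 0$ for all $(s, w)$ with $s - w < q$''. It then suffices to check three things: $P$ holds for every sphere $S^{a, b}$ with $a - b \geq q$; $P$ is preserved under cofibers and extensions; and $P$ is preserved under filtered colimits.

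For the generators, I would use the model $\Gamma_\star(S)$ of the motivic sphere. Under the identification of Theorem \ref{Equivalence between C-motivic homotopy and filtered spectra}, $\pi_{s, w}(\Gamma_\star(S))$ is computed by the Bousfield--Kan spectral sequence for $\Gamma_w(S)$. This is the ANSS of $S$ truncated below the antidiagonal $s + f = 2w$. Only the region $s + f \geq 2w$ contributes to stem $s$, so $f \geq 2w - s$. The classical ANSS $E_2$-page of $S$ is concentrated in $f \leq s$: its $E_2$-term $\mathrm{Ext}^{f, s+f}$ vanishes for $s < f$ in Adams grading, since the cobar complex is concentrated in even internal degrees with a vanishing line of slope one. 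Hence a nonzero entry needs $2w - s \leq f \leq s$, that is $s - w \geq 0$. (The paper already uses this vanishing, $\pi_{s,w}(S) = 0$ for $s - w < 0$, in Section \ref{section: cellular approximation in C-motivic stable homotopy theory}, so I would simply cite that discussion.) Since $\pi_{s, w}(S^{a, b}) \cong \pi_{s - a, w - b}(S)$, it is zero when $(s - w) - (a - b) < 0$. This holds whenever $s - w < q \leq a - b$.

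The preservation steps are routine. For a cofiber sequence $X \to Y \to Z$, the long exact sequence $\pi_{s, w}(Y) \to \pi_{s, w}(Z) \to \pi_{s - 1, w}(X)$ handles cofibers: if $s - w < q$ then also $(s - 1) - w < q$, so both outer groups vanish. For extensions, $\pi_{s, w}(X) \to \pi_{s, w}(Y) \to \pi_{s, w}(Z)$ gives vanishing in the middle. For filtered colimits, I would use compactness of the motivic spheres \cite[Theorem 9.1]{DI05}, so that $\pi_{s, w}$ commutes with filtered colimits.

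The only potentially delicate point is the vanishing for the sphere. This amounts to justifying the coweight vanishing line of $\pi_{*, *}(\Gamma_\star(S))$ from the Adams--Novikov vanishing $E_2^{s, f} = 0$ for $f > s$ (and for $s < 0$). I expect it to follow directly from the description of the truncated BKSS given in Section \ref{section: The filtered spectrum model for the C-motivic stable homotopy category}.
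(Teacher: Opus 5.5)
Your proposal is correct and follows the paper's proof essentially step for step. You reduce via Corollary \ref{Corollary for proving properties of filtrations} to the generators $S^{a,b}$ with $a-b\geq q$, and you obtain the vanishing of $\pi_{s,w}(\Gamma_\star(S))$ for $s-w<0$ by intersecting the ANSS vanishing line $f\leq s$ with the truncation $s+f\geq 2w$ and then shifting. The closure under cofibers, extensions, and filtered colimits comes from the long exact sequences and compactness of spheres \cite[Theorem 9.1]{DI05}, exactly as in the paper.
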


\begin{proof}
	By Corollary \ref{Corollary for proving properties of filtrations} it suffices to prove that the homotopy groups vanish in the given region for the set of objects $\{S^{a, b} \mid a - b \geq q\}$, and that this property is preserved under cofibers, extensions, and filtered colimits.
	
	To see that the homotopy groups vanish for these spheres in coweights less than $q$, first consider the model $\Gamma_\star(S)$ for the $\mathbb{C}$-motivic sphere spectrum. The ANSS for the classical sphere has the main diagonal $s = f$ as a vanishing line, meaning the ANSS is trivial above that line. Recall that in the BKSS for $\Gamma_w(S)$ every degree below the $2w$-th antidiagonal given by $s + f = 2w$ is trivial. The intersection point of these two lines is at $(w, w)$. In particular, for $s < w$ we have $\pi_s(\Gamma_w(S)) = 0$ because any point $(s, f)$ with $s < w$ is either above the main diagonal, below the $2w$-th antidiagonal, or both. The inequality $s < w$ can be rewritten as $s - w < 0$, so we have shown that the homotopy groups of the motivic sphere spectrum are trivial in negative coweights. Then $\pi_{s, w}(S^{a, b}) = \pi_{s - a, w - b}(S)$, so if $a - b \geq q$ and $s - w < q$, then
	\[s - a - (w - b) = s - w - (a - b) < q - q = 0,\]
	which implies $\pi_{s, w}(S^{a, b}) = \pi_{s - a, w - b}(S) = \pi_{s - a}(\Gamma_{w - b}(S)) = 0$. This shows that the set of objects $\{S^{a, b} \mid a - b \geq q\}$ satisfies the necessary property.
	
	It remains to check that the homotopy group vanishing condition is preserved under cofibers, extensions, and filtered colimits. For cofibers and extensions it follows from the long exact sequence of homotopy groups induced by a cofiber sequence. For filtered colimits it follows because spheres are compact objects, see \cite[Theorem 9.1]{DI05}, meaning homotopy groups commute with filtered colimits.
\end{proof}

Next, we provide a characterization of the covers making up the $\mathbb{C}$-motivic connective filtration. This characterization uses the assumption that the $q$-th connective cover $f_q(Y)$ is $q$-connective. We will provide another characterization afterwards that replaces the $q$-connective assumption on the cover by a vanishing property for the homotopy groups of the cover in certain coweights.

\begin{Lemma}\phantomsection\label{characterization of motivic q-th connective cover}
	Let $Y \in {\normalfont \textbf{SH}^{\mathbb{C}}}$ and let $Y_{\geq q} \to Y$ denote the $q$-th connective cover of $Y$. Then this datum is characterized up to equivalence by the following properties:
	\begin{enumerate}[label=\normalfont{(\arabic*)}]
		\item The spectrum $f_q(Y)$ is $q$-connective,
		\item the induced map $\pi_{s, w}(Y_{\geq q}) \to \pi_{s, w}(Y)$ is an isomorphism for $s - w \geq q$.
	\end{enumerate}
\end{Lemma}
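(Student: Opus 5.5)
This is the connective analogue of Lemma \ref{characterization of q-th effective cover}, and I would prove it the same way, by applying Corollary \ref{Suffices to check iso on mapping spaces on generators} to the connective filtration. (In property (1) the symbol $f_q(Y)$ is to be read as $Y_{\geq q}$.)

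Take $\mathcal{C}_q$ to be the full subcategory of $q$-connective objects. By definition it is generated under colimits and extensions by the set $\mathcal{S} = \{S^{s, w} \mid s - w \geq q\}$. Corollary \ref{Suffices to check iso on mapping spaces on generators} then says that the cover $Y_{\geq q} \to Y$ is characterized up to equivalence by two conditions:
\begin{itemize}
	\item $Y_{\geq q}$ lies in $\mathcal{C}_q$;
	\item for every $S^{s, w} \in \mathcal{S}$ the induced map $[S^{s, w}, Y_{\geq q}] \to [S^{s, w}, Y]$ is an isomorphism.
\end{itemize}
The first condition is exactly property (1).

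For the second, note that $[S^{s, w}, -] = \pi_{s, w}(-)$ by definition of bigraded homotopy groups. Hence the second condition, ranging over all $(s, w)$ with $s - w \geq q$, is precisely property (2). This gives the equivalence of the two characterizations.

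There is no real obstacle here. The only care needed is that the isomorphism on generators really extends to all of $\mathcal{C}_q$, and that is exactly the content of Corollary \ref{Suffices to check iso on mapping spaces on generators}: the condition is stable under colimits, via $[\varinjlim M_i, -] \cong \varprojlim [M_i, -]$, and under extensions, via the five lemma. Then Lemma \ref{Characterization of general covers defined using subcategories} and the Yoneda lemma give uniqueness up to equivalence.
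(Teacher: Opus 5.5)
Your proposal is correct and is exactly the paper's proof. The paper also gives the one-line appeal to Corollary~\ref{Suffices to check iso on mapping spaces on generators} with generators $\{S^{s,w} \mid s-w\geq q\}$ and $[S^{s,w},-]=\pi_{s,w}(-)$, and you are right that $f_q(Y)$ in (1) should read $Y_{\geq q}$.

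One caution about your last paragraph, which repeats the paper's justification of that Corollary. The identity $[\varinjlim M_i,-]\cong\varprojlim[M_i,-]$ does not hold for homotopy classes in general, and the five-lemma step for extensions silently needs injectivity on $[\Sigma^{-1}C,-]$, which is not automatic. The cleanest way to finish is as follows. For any $Z\to Y$ satisfying (1) and (2), the map $Z\to Y_{\geq q}$ through which it factors is an isomorphism on $\pi_{s,w}$ for $s-w\geq q$. For $s-w<q$ both sides vanish by Proposition~\ref{homotopy groups of q-connective spectra vanish in low coweights}. So it is a $\pi_{*,*}$-isomorphism of cellular spectra, hence an equivalence.
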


\begin{proof}
	This is an immediate consequence of Corollary \ref{Suffices to check iso on mapping spaces on generators} and the definition of the $q$-connective subcategory.
\end{proof}

If we want to use Lemma \ref{characterization of motivic q-th connective cover} to see that some datum $Z \to Y$ constitutes a $q$-connective cover, then we will have to show that $Z$ is $q$-connective. This can equivalently be expressed in terms of a condition on the homotopy groups of $Z$.

\begin{Prop}\phantomsection\label{characterization of motivic q-th connective cover v2}
	Let $Z \to Y$ in ${\normalfont \textbf{SH}^{\mathbb{C}}}$ satisfy the following properties:
	\begin{enumerate}[label=\normalfont{(\arabic*)}]
		\item The induced map $\pi_{s, w}(Z) \to \pi_{s, w}(Y)$ is an isomorphism for $s - w \geq q$,
		\item we have $\pi_{s, w}(Z) \cong 0$ for $s - w < q$.
	\end{enumerate}
	Then $Z \to Y$ exhibits $Z$ as the $q$-th connective cover of $Y$.
\end{Prop}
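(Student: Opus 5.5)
The plan is to reduce to Lemma \ref{characterization of motivic q-th connective cover}. Property (1) here is exactly property (2) of that lemma, so the whole task is to show that $Z$ is $q$-connective. For this I will use Proposition \ref{motivic cellular approximation}, case (2). Property (2) of the present statement gives precisely its hypothesis, namely $\pi_{s,w}(Z) \cong 0$ for $s - w < q$. The difficulty is that Proposition \ref{motivic cellular approximation} requires $Z$ to be approximable. Vanishing in low coweights gives condition (2) of Definition \ref{Definition of approximable}: in a fixed stem $s$, the groups vanish for $w > s - q$. It gives no lower bound in stems, however, so in general $Z$ need not be approximable. This is the main obstacle.

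To get around it, I follow the strategy announced at the start of Section \ref{section: Homotopy characterizations of C-motivic covers}: write $Z$ as a colimit of its effective covers. Concretely, consider the tower $f_p(Z) \to f_{p-1}(Z) \to \dotsb \to Z$ for $p \in \mathbb{Z}$, and use that $\varinjlim_{p \to -\infty} f_p(Z) \simeq Z$ in the cellular category. To verify this, compute $\pi_{s,w}$ using compactness of spheres \cite[Theorem 9.1]{DI05}. For fixed $(s,w)$, Lemma \ref{characterization of q-th effective cover} says $\pi_{s,w}(f_p(Z)) \to \pi_{s,w}(Z)$ is an isomorphism once $p \leq w$. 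So the colimit map is an isomorphism on all homotopy groups, and since homotopy groups detect equivalences in the cellular setting, it is an equivalence.

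Since $q$-connective objects are closed under colimits, it then suffices to show that each $f_p(Z)$ is $q$-connective. Here I will show that $f_p(Z)$ is approximable with homotopy vanishing in coweights below $q$, and then apply Proposition \ref{motivic cellular approximation}(2). The argument mirrors the proof of Proposition \ref{characterization of q-th effective cover for nice spectra}, with $r = q$.

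1. For $w \geq p$, $\pi_{s,w}(f_p(Z)) \cong \pi_{s,w}(Z)$ by Lemma \ref{characterization of q-th effective cover}, which vanishes when $s - w < q$.
2. By Proposition \ref{tau-iso on homotopy groups of q-effective spectra}, $\tau$ is an isomorphism on $\pi_{*,w}(f_p(Z))$ for $w \leq p$. So $\pi_{s,w}(f_p(Z)) \cong \pi_{s,p}(f_p(Z))$ for $w \leq p$, and this vanishes when $s < p + q$.
3. Combining the two cases, $\pi_{s,w}(f_p(Z))$ vanishes for all $s < p + q$. This gives the stem lower bound.
4. It also vanishes whenever $s - w < q$. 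For $w \geq p$ this is step 1. For $w \leq p$, the inequality $s - w < q$ forces $s < p + q$, so step 2 applies.

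Hence $f_p(Z)$ is approximable and satisfies the hypothesis of Proposition \ref{motivic cellular approximation}(2), so it is $q$-connective. Passing to the colimit, $Z$ is $q$-connective, and Lemma \ref{characterization of motivic q-th connective cover} finishes the proof.
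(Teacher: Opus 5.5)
Your proposal is correct and follows the paper's proof essentially step for step. Both reduce to showing $Z$ is $q$-connective, write $Z$ as the colimit of its effective covers $f_p(Z)$, and use the $\tau$-isomorphism in weights $w \leq p$ to show each $f_p(Z)$ vanishes for $s < p + q$ and for $s - w < q$, so that Proposition \ref{motivic cellular approximation}(2) applies. Your explicit check that $\varinjlim_{p} f_p(Z) \simeq Z$, via compactness of spheres and Lemma \ref{characterization of q-th effective cover}, fills in a detail the paper only asserts.
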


\begin{proof}
	Given Lemma \ref{characterization of motivic q-th connective cover} it suffices to show that $Z$ is $q$-connective. We can use effective covers to write $Z$ as the colimit of $f_r(Z)$ as $r \to -\infty$. Then for $Z$ to be $q$-connective it suffices to show that $f_r(Z)$ is $q$-connective for all $r \in \mathbb{Z}$ since the $q$-connective subcategory is by definition closed under colimits.
	
	By Proposition \ref{motivic cellular approximation} $q$-connectivity of $f_r(Z)$ follows once we have shown
	\begin{enumerate}[label=\normalfont{(\alph*)}]
		\item $f_r(Z)$ is approximable,
		\item we have $\pi_{s, w}(f_r(Z)) \cong 0$ if $s - w < q$.
	\end{enumerate}
	Note that by definition and by Proposition \ref{tau-iso on homotopy groups of q-effective spectra} the cover $f_r(Z) \to Z$ satisfies the two properties
	\begin{enumerate}[label=\normalfont{(\roman*)}]
		\item $\pi_{s, w}(f_r(Z)) \to \pi_{s, w}(Z)$ is an isomorphism for $w \geq r$,
		\item $\tau: \pi_{s, w}(f_r(Z)) \to \pi_{s, w - 1}(f_r(Z))$ is an isomorphism for $w \leq r$.
	\end{enumerate}
	Properties (2) and (i) imply $\pi_{s, w}(f_r(Z)) \cong 0$ if $s - w < q$ and $w \geq r$. In particular, this means $\pi_{s, r}(f_r(Z)) \cong 0$ if $s - r < q$, or equivalently if $s < r + q$. Then property (ii) further implies $\pi_{s, w}(f_r(Z)) \cong 0$ whenever $s < r + q$ and $w \leq r$. We can summarize these vanishing conditions as $\pi_{s, w}(f_r(Z)) \cong 0$ if $s - w < q$ or $s < r + q$. In particular this implies properties (a) and (b), i.e. that $f_r(Z)$ is approximable and that its homotopy groups vanish in coweights less than $q$. So by Proposition \ref{motivic cellular approximation} every cover $f_r(Z)$ is $q$-connective, and therefore the colimit $Z$ is too.
\end{proof}

\begin{Rem}
	Comparing Proposition \ref{characterization of motivic q-th connective cover v2} to Proposition \ref{characterization of q-th effective cover for nice spectra}, the reader might notice an asymmetry in the assumptions: For the effective cover, we need $Y$ to be $r$-connective, but for the connective cover we do not need any assumption on $Y$. This asymmetry also appears in the compositions of the different covers. We have
	\[f_q \circ (-)_{\geq q} \simeq \tf_q \not\simeq (-)_{\geq q} \circ f_q.\]
	For the first equivalence, which holds over perfect fields $k$, see \cite[Lemma 10]{Bac17}. For the second non-equivalence one can see this for example by using the homotopy group characterizations provided in this article.
\end{Rem}

\subsection{Characterizing the very effective filtration}

In this section, we will give a characterization of $\mathbb{C}$-motivic very effective covers via properties of their homotopy groups. 

It turns out that being $q$-very effective can be characterized purely in terms of homotopy groups. That is because the structure of the non-trivial homotopy groups of a $q$-very effective spectrum $Y$ always implies that $Y$ is approximable.

\begin{Prop}\phantomsection\label{Homotopy characterization of q-very effective spectra}
	Let $Y \in {\normalfont \textbf{SH}^{\mathbb{C}}}$. Then $Y$ is $q$-very effective if and only if the following conditions are satisfied:
	\begin{enumerate}[label=\normalfont{(\arabic*)}]
		\item The map
		\[\tau: \pi_{s, w}(Y) \to \pi_{s,w - 1}(Y)\]
		is an isomorphism for all $w \leq q$,
		\item we have $\pi_{s, w}(Y) \cong 0$ for $s - w < q$.
	\end{enumerate}
\end{Prop}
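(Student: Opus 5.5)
The proof splits into the two implications.

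For the forward direction, assume $Y$ is $q$-very effective. The $q$-very effective subcategory is generated by $\{S^{s,w} \mid w \geq q \text{ and } s - w \geq q\}$. This set is contained both in the generating set of the $q$-effective subcategory and in that of the $q$-connective subcategory. Hence every $q$-very effective spectrum is both $q$-effective and $q$-connective, since those subcategories are closed under colimits and extensions and contain the generators. Proposition \ref{tau-iso on homotopy groups of q-effective spectra} then gives condition (1), and Proposition \ref{homotopy groups of q-connective spectra vanish in low coweights} gives condition (2). Alternatively, one can run Corollary \ref{Corollary for proving properties of filtrations} directly on the conjunction of the two properties. Each property separately is preserved under cofibers, extensions and filtered colimits, and both hold for the generating spheres by the computations in those two propositions.

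For the reverse direction, assume $Y$ satisfies (1) and (2). By Proposition \ref{Very effective spectra are always approximable}, $Y$ is approximable. We are then exactly in the situation of case (3) of Proposition \ref{motivic cellular approximation}. That case produces an equivalence $Z \to Y$ with $Z$ being $q$-very effective, and the $q$-very effective subcategory is closed under equivalences, so $Y$ is $q$-very effective.

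There is no substantial obstacle, since all the work has been done in the cited propositions. The only point deserving care is case (3) of Proposition \ref{motivic cellular approximation}. There we must confirm that the cellular approximation simultaneously avoids cells of weight $<q$ and cells of coweight $<q$. The inductive arguments for (1) and (2) in that proof are independent and run on the same construction, so they combine. If more detail were wanted, I would check that when attaching a cell $S^{m,n}$ with $n\ge q$ in the induction, the coweight condition $m-n\ge q$ is automatic. This holds because $\pi_{m,n}(Y)=0$ whenever $m-n<q$, so no cells are needed there.
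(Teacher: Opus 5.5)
Your proposal is correct and essentially identical to the paper's proof. The forward direction uses that the $q$-very effective generators form a subset of both the $q$-effective and $q$-connective generators, then applies Propositions \ref{tau-iso on homotopy groups of q-effective spectra} and \ref{homotopy groups of q-connective spectra vanish in low coweights}; the reverse direction combines Proposition \ref{Very effective spectra are always approximable} with case (3) of Proposition \ref{motivic cellular approximation}. Your optional extra check is already contained in that cited case (3), so you can simply cite it. Stated fully, it needs the inductive fact that the partial approximations also have vanishing homotopy in coweights $<q$, so no kernel-killing cells of low coweight arise; the vanishing of $\pi_{m,n}(Y)$ alone is not enough.
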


\begin{proof}
	Assume first that $Y$ is $q$-very effective. This implies $Y$ is $q$-effective and $q$-connective since the $q$-very effective subcategory is generated by a subset of the generators for the $q$-effective and $q$-connective subcategories. Then $Y$ satisfies properties (1) and (2) by Proposition \ref{tau-iso on homotopy groups of q-effective spectra} and Proposition \ref{homotopy groups of q-connective spectra vanish in low coweights}, respectively.
	
	For the reverse direction, we want to use Proposition \ref{motivic cellular approximation}. It thus suffices to show that $Y$ is approximable. That follows from Proposition \ref{Very effective spectra are always approximable}.
\end{proof}

Next, we provide two characterizations of the covers making up the $\mathbb{C}$-motivic very effective filtration. The first will use the assumption that the covering spectrum is $q$-very effective. The second characterization will replace this assumption by properties of the homotopy groups of the cover to be.

\begin{Lemma}\phantomsection\label{characterization of q-th very effective cover}
	Let $Y \in {\normalfont \textbf{SH}^{\mathbb{C}}}$ and let $\tf_q(Y) \to Y$ denote the $q$-th very effective cover of $Y$. Then this datum is characterized up to equivalence by the following properties:
	\begin{enumerate}[label=\normalfont{(\arabic*)}]
		\item The spectrum $\tf_q(Y)$ is $q$-very effective,
		\item the induced map $\pi_{s, w}(\tf_q(Y)) \to \pi_{s, w}(Y)$ is an isomorphism if both $w \geq q$ and $s - w \geq q$.
	\end{enumerate}
\end{Lemma}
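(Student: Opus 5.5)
The plan is to deduce this directly from Corollary \ref{Suffices to check iso on mapping spaces on generators}, exactly as was done for Lemma \ref{characterization of q-th effective cover} and Lemma \ref{characterization of motivic q-th connective cover}. The $q$-very effective subcategory $\mathcal{C}_q$ is, by definition, generated under colimits and extensions by the set
\[\mathcal{S} = \{S^{a, b} \mid b \geq q \text{~and~} a - b \geq q\}.\]
Hence Corollary \ref{Suffices to check iso on mapping spaces on generators} says that $\tf_q(Y) \to Y$ is characterized up to equivalence by two conditions. The first is that $\tf_q(Y)$ lies in $\mathcal{C}_q$, i.e.\ is $q$-very effective; this is precisely property (1). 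The second is that $[S^{a,b}, \tf_q(Y)] \to [S^{a,b}, Y]$ is an isomorphism for every $S^{a,b} \in \mathcal{S}$.

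The only translation needed is to identify the second condition with property (2). By definition $[S^{s,w}, -] = \pi_{s,w}(-)$, and the map induced on these groups by $\tf_q(Y) \to Y$ is the map on homotopy groups. The indexing set $\mathcal{S}$ consists exactly of the bidegrees $(s,w)$ with $w \geq q$ and $s - w \geq q$. So the generator condition says word for word that $\pi_{s,w}(\tf_q(Y)) \to \pi_{s,w}(Y)$ is an isomorphism whenever both $w \geq q$ and $s - w \geq q$.

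Before invoking the corollary, I would also check that the setup of Construction \ref{Construction of filtrations using subcategories} applies here. The required right adjoint to $\mathcal{C}_q \hookrightarrow \textbf{SH}^{\mathbb{C}}$ exists because $\mathcal{C}_q$ is generated by a set of compact objects (see \cite{Hea19}), and this is already built into how $\tf_q$ was defined.

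There is no real obstacle. The one point to be careful about is that the unstable-looking set $[S, -]$ in Corollary \ref{Suffices to check iso on mapping spaces on generators} means homotopy classes in the stable category, so that it agrees with the bigraded homotopy groups $\pi_{s,w}$.
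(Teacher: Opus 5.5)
Your proposal is correct and matches the paper's proof: the paper also deduces the lemma immediately from Corollary \ref{Suffices to check iso on mapping spaces on generators}, applied to the generating set $\{S^{s,w} \mid w \geq q \text{ and } s - w \geq q\}$ of the $q$-very effective subcategory, after identifying $[S^{s,w},-]$ with $\pi_{s,w}(-)$. Your extra remarks, on the existence of the right adjoint and on reading $[S,-]$ as stable homotopy classes, are harmless bookkeeping that is already built into Construction \ref{Construction of filtrations using subcategories}.
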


\begin{proof}
	This is an immediate consequence of Corollary \ref{Suffices to check iso on mapping spaces on generators} and the definition of the $q$-very effective subcategory.
\end{proof}

\begin{Prop}\phantomsection\label{characterization of q-th very effective cover v2}
	Let $Z \to Y$ in ${\normalfont \textbf{SH}^{\mathbb{C}}}$ satisfy the following properties:
	\begin{enumerate}[label=\normalfont{(\arabic*)}]
		\item The induced map $\pi_{s, w}(Z) \to \pi_{s, w}(Y)$ is an isomorphism if both $w \geq q$ and $s - w \geq q$,
		\item the map
		\[\tau: \pi_{s, w}(Z) \to \pi_{s,w - 1}(Z)\]
		is an isomorphism for all $w \leq q$,
		\item we have $\pi_{s, w}(Z) \cong 0$ for $s - w < q$.
	\end{enumerate}
	Then $Z \to Y$ exhibits $Z$ as the $q$-th very effective cover of $Y$.
\end{Prop}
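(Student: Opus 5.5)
The plan is to reduce the statement to Lemma \ref{characterization of q-th very effective cover}. Property (2) of that lemma is exactly hypothesis (1) here. So the only thing left to prove is that $Z$ is $q$-very effective.

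For that I would invoke Proposition \ref{Homotopy characterization of q-very effective spectra}, which characterizes $q$-very effective spectra purely by their homotopy groups. Its condition (1), that $\tau$ is an isomorphism $\pi_{s,w}(Z) \to \pi_{s,w-1}(Z)$ for $w \leq q$, is hypothesis (2) here. Its condition (2), the vanishing of $\pi_{s,w}(Z)$ for $s - w < q$, is hypothesis (3) here. Hence $Z$ is $q$-very effective, and Lemma \ref{characterization of q-th very effective cover} identifies $Z \to Y$ with $\tf_q(Y) \to Y$ up to equivalence.

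The only point that might look like an obstacle is the approximability of $Z$, which is needed to run cellular approximation (Proposition \ref{motivic cellular approximation}). This is already handled inside Proposition \ref{Homotopy characterization of q-very effective spectra} via Proposition \ref{Very effective spectra are always approximable}. The two homotopy conditions force $\pi_{s,w}(Z) = 0$ for $s < 2q$, and they bound the weights above in each stem through the coweight vanishing.

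Unlike Proposition \ref{characterization of q-th effective cover for nice spectra}, no connectivity assumption on $Y$ is required. Unlike Proposition \ref{characterization of motivic q-th connective cover v2}, no colimit over effective covers is needed. The argument is therefore a direct two-step citation, with no real obstacle.
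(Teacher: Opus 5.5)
Your proposal is correct and is essentially the paper's proof: the paper also uses Proposition \ref{Homotopy characterization of q-very effective spectra} to see that hypotheses (2) and (3) make $Z$ $q$-very effective, and then applies Lemma \ref{characterization of q-th very effective cover} together with hypothesis (1). Your remark that approximability is already handled inside that proposition, via Proposition \ref{Very effective spectra are always approximable}, is accurate.
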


\begin{proof}
	By Proposition \ref{Homotopy characterization of q-very effective spectra} properties (2) and (3) here are equivalent to property (1) in Lemma \ref{characterization of q-th very effective cover}. The claim follows.
\end{proof}

\section{Filtered spectrum models for covers of \texorpdfstring{$\mathbb{C}$}{C}-motivic analogues}\label{section: Filtered spectrum models for covers of C-motivic analogues}

In this section, we give filtered spectrum models for the effective, connective and very effective filtrations in $\mathbb{C}$-motivic homotopy theory in the case where our spectrum $Y \in \textbf{SH}^\mathbb{C}$ is of the form $Y = \Gamma_\star(X)$ for a spectrum $X \in \textbf{SH}^{\cl}$ that is bounded below. Here, $\Gamma_\star$ is the $\mathbb{C}$-motivic analogue functor of \cite{GIKR}, as recalled in Section \ref{section: The filtered spectrum model for the C-motivic stable homotopy category}. Examples of spectra $Y = \Gamma_\star(X)$ include $S$, $\kq$, $\mmf$, $\kgl$, and $\MGL$, where $X$ is $S$, $\ko$, $\tmf$, $\ku$, and $\MU$, respectively.

All of our models will be defined by making slight changes to the definition of $\Gamma_\star$ itself. The notation used for these modified versions is meant to be reminiscent of the notation for the different motivic covers as in Section \ref{section: C-motivic filtrations}. In order to prove that our new filtered spectra defined in this way do in fact model the respective covers we will use the characterizations via homotopy groups spelled out in Section \ref{section: Homotopy characterizations of C-motivic covers}. Since we define our spectra in a similar way to $\Gamma_\star$, they will come with Bousfield-Kan spectral sequences computing their homotopy groups. Comparing those spectral sequences with the BKSS for $\Gamma_\star(X)$ itself will make it easy to show that our models satisfy the respective homotopy group characterizations.

In this section, we will ignore the difference between a filtered spectrum and its corresponding $\mathbb{C}$-motivic spectrum under the equivalence of Theorem \ref{Equivalence between C-motivic homotopy and filtered spectra}. By this we mean that we will think of $\Gamma_\star(X)$ as both a filtered spectrum with a BKSS, and a $\mathbb{C}$-motivic spectrum such that the notions of filtrations and covers as discussed in the previous sections can be applied.

We start by making the following observation about $\Gamma_\star(X)$ when the input spectrum $X \in \textbf{SH}^{\cl}$ is bounded below.

\begin{Prop}\phantomsection\label{Motivic analogues of bounded below spectra are very effective}
    Let $X \in {\normalfont \textbf{SH}^\cl}$ be $2q$-connective so that $\pi_s(X) \cong 0$ for $s < 2q$. Then $\Gamma_\star(X)$ is $q$-very effective.
\end{Prop}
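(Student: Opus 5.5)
The plan is to apply Proposition \ref{Homotopy characterization of q-very effective spectra}, so we must verify two properties of the homotopy groups $\pi_{s,w}(\Gamma_\star(X)) = \pi_s(\Gamma_w(X))$:
\begin{enumerate}[label=\normalfont{(\arabic*)}]
\item $\tau$ is an isomorphism $\pi_{s,w} \to \pi_{s,w-1}$ for $w \leq q$;
\item $\pi_{s,w}$ vanishes for $s - w < q$.
\end{enumerate}
Both will be read off from the Bousfield--Kan spectral sequences for $\Gamma_w(X) = \Tot(\tau_{\geq 2w}(X \wedge \MU^{\wedge \bullet+1}))$, as pictured in Section \ref{section: The filtered spectrum model for the C-motivic stable homotopy category}.

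For (1), the key observation is that $X \wedge \MU^{\wedge n+1}$ is $2q$-connective for every $n$, since $X$ is $2q$-connective and $\MU$ is connective. Hence for $w \leq q$ we have $2w \leq 2q$, so $\tau_{\geq 2w}$ acts as the identity on each cosimplicial level. Therefore $\Gamma_w(X) \simeq \Tot(X \wedge \MU^{\wedge \bullet+1})$ for all $w \leq q$, and the structure maps $\Gamma_w(X) \to \Gamma_{w-1}(X)$ are equivalences for $w \leq q$. This works directly at the level of cosimplicial objects, so no spectral sequence comparison is needed. It gives the $\tau$-isomorphism property, because $\tau$ is induced by exactly these structure maps.

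For (2), I will argue as in Proposition \ref{homotopy groups of q-connective spectra vanish in low coweights}, but with the vanishing line shifted. In the BKSS for $\Gamma_w(X)$, the $E_1$/$E_2$ term in position $(s,f)$ comes from $\pi_{s+f}$ of $\tau_{\geq 2w}(X\wedge \MU^{\wedge f+1})$. This group is zero when $s + f < 2w$. It is also zero when $s + f < 2q$, by the connectivity observed above. The ANSS-type vanishing line needs more care. I expect the $E_2$-page is $\operatorname{Ext}^{f}(\MU_*, \MU_*X)$ with internal degree $s+f$; since $\MU_*X$ is concentrated in degrees $\geq 2q$, this vanishes when $s < f + 2q$... but that estimate is too crude, and this is the main obstacle. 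What I want is $E_2^{s,f} = 0$ whenever $s - f < 2q - f$ fails appropriately, namely that the line $s = f$ shifted to pass through $(2q, 0)$, i.e.\ $s < f + 2q - f$... Concretely, the plan is to use the cobar complex: its term in degree $f$ is built from $\MU_*X \otimes \overline{\MU_*\MU}^{\otimes f}$, and $\overline{\MU_*\MU}$ is concentrated in degrees $\geq 2$. So the internal degree $s+f$ is at least $2q + 2f$, giving vanishing for $s < 2q + f$. Working at $E_1$ (the normalized cochains of the cosimplicial object, or equivalently using the cofibers $\overline{\MU}$), we get vanishing for $s + f < 2q + 2f$, i.e.\ $s < f + 2q$.

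Now combine this with truncation. A class in stem $s$ of weight $w$ requires both $s + f \geq 2w$ and $s \geq f + 2q$. Adding these gives $2s \geq 2w + 2q$, i.e.\ $s - w \geq q$. Hence when $s - w < q$ the $E_1$-page is zero in stem $s$, and so $\pi_s(\Gamma_w(X)) = 0$. Convergence of the BKSS needs only a brief remark: for a fixed stem the relevant range is controlled, and in this paper convergence is implicitly granted since the BKSS is used throughout.

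Finally, with (1) and (2) in hand, Proposition \ref{Homotopy characterization of q-very effective spectra} yields that $\Gamma_\star(X)$ is $q$-very effective. As a sanity check, the case $X = S$, $q = 0$ recovers the statements about $\Gamma_\star(S)$ used in Propositions \ref{tau-iso on homotopy groups of q-effective spectra} and \ref{homotopy groups of q-connective spectra vanish in low coweights}.
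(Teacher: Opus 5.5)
Your proposal is correct and follows the paper's proof. Both verify the two conditions of Proposition \ref{Homotopy characterization of q-very effective spectra}: (1) because the truncation $\tau_{\geq 2w}$ has no effect for $w \leq q$ (you see this directly at the cosimplicial level, while the paper reads it off the ANSS vanishing line), and (2) by intersecting the shifted vanishing line $s = f + 2q$ with the antidiagonal $s + f = 2w$ to get vanishing for $s < w + q$. Your normalized cobar complex argument actually justifies that shifted vanishing line, which the paper only asserts, so you should simply delete the garbled false start in the middle of your treatment of (2).
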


\begin{proof}
    We will use the characterization of $q$-very effective given in Proposition \ref{Homotopy characterization of q-very effective spectra}. For property (1), meaning that $\tau$ is an isomorphism on homotopy groups in weights $w \leq q$, consider the BKSS for $\Gamma_\star(X)$. The  ANSS for a $0$-connective spectrum has the main diagonal as a vanishing line, meaning it is trivial to the left and above this line. So the ANSS for a $2q$-connective spectrum will have a diagonal vanishing line $s - 2q = f$ through the point $(2q, 0)$. In weights $w \leq q$ this implies that $\tau$ will be an isomorphism, as the truncation below the $2w$-th antidiagonal has the same trivial effect for all $w \leq q$.
    
    Lastly we need to check property (2) of Proposition \ref{Homotopy characterization of q-very effective spectra} that the homotopy groups of $\Gamma_\star(X)$ are trivial in coweights $s - w < q$. In the ANSS for $X$ we have the diagonal vanishing line $s - 2q = f$ from above. In weight $w$ the BKSS has the additional vanishing line given by the $2w$-th antidiagonal $s + f = 2w$. The intersection point of these two lines is $(w + q, w - q)$. In particular, in weight $w$ the homotopy groups will be trivial in stems $s < w + q$. Rewriting this as $s - w < q$ shows that $\Gamma_\star(X)$ satisfies property (2).
\end{proof}

An alternative approach to the above proof would be to prove first that the motivic analogue of a connective spectrum is very effective and then shift the result by using the suspension functor together with \cite[Lemma 3.13]{GIKR}.

\subsection{Filtered spectrum models for effective covers}\label{section: filtered spectrum models for effective covers}

Let $X \in \textbf{SH}^{\cl}$ be bounded below. For $q \in \mathbb{Z}$ define a filtered spectrum $\Gamma_\star^q(X)$ by letting the degree $w$-part be
\begin{align*}
\Gamma_w^q(X) &=
\begin{dcases}
\Tot(\tau_{\geq 2w} (X \wedge \MU^{\wedge \bullet + 1})), & w \geq q\\
\Tot(\tau_{\geq 2q} (X \wedge \MU^{\wedge \bullet + 1})), & w \leq q
\end{dcases}\\[.5em]
 &=
\begin{dcases}
\Gamma_w(X), & w \geq q\\
\Gamma_q(X), & w \leq q.
\end{dcases}
\end{align*}
The maps $\Gamma_w^q(X) \to \Gamma_{w-1}^q(X)$ are induced by $\tau_{\geq 2w} \to \tau_{\geq 2w - 2}$ in filtrations $w > q$, and they are the identity otherwise. Note that there is a map
\[\Gamma_\star^q(X) \to \Gamma_\star(X)\]
given by the identity in filtrations $w \geq q$ and induced by $\tau_{\geq 2q} \to \tau_{\geq 2w}$ for $w \leq q$.

\begin{Lemma}\phantomsection\label{Gamma_star^q(X) is a Gamma_star(S)-module}
    The filtered spectrum $\Gamma_\star^q(X)$ is a $\Gamma_\star(S)$-module.
\end{Lemma}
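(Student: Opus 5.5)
The plan is to exhibit $\Gamma_\star^q(X)$ as a functor of $\Gamma_\star(X)$ that is visibly compatible with the $\Gamma_\star(S)$-action. The cleanest route uses the Day convolution structure on filtered spectra. Let $\mathbb{1}_{\geq q}$ be the filtered spectrum with $(\mathbb{1}_{\geq q})_w = S$ for $w \le q$ and $\text{pt}$ for $w > q$, with identity structure maps where possible. Then left Kan extension along the inclusion $\{w \ge q\} \subset \mathbb{Z}$ (viewing $\mathbb{Z}$ as the poset indexing filtrations) followed by restriction gives a colocalization functor
\[ L_q \colon \textbf{SH}^{\fil} \to \textbf{SH}^{\fil}. \]
Concretely, $(L_q Y)_w = Y_w$ for $w \ge q$ and $(L_q Y)_w = Y_q$ for $w \le q$. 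The colimit formula for the left Kan extension from $\{w' \ge \max(w,q)\}$ computes exactly this, because $\max(w,q)$ is the initial object of that indexing poset. By construction $\Gamma_\star^q(X) = L_q \Gamma_\star(X)$, and the counit $L_q Y \to Y$ is the comparison map described before the lemma.

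The key step is to show that $L_q$ is $\Gamma_\star(S)$-linear in a way that produces a module structure. The inclusion of the subposet $\{w \ge q\}$ is not monoidal, but it is a module over the monoid $\{w \ge 0\}$ under addition. Equivalently, the essential image of $L_q$ consists of those filtered spectra whose structure maps $Y_w \to Y_{w-1}$ are equivalences for $w \le q$. I would check that this full subcategory $\mathcal{E}_q$ is a $\textbf{SH}^{\fil}_{\ge 0}$-submodule for Day convolution, where $\textbf{SH}^{\fil}_{\ge 0}$ means filtered spectra with $Y_w \to Y_{w-1}$ an equivalence for $w \le 0$. The check is
\[ (A \otimes B)_w = \varinjlim_{i + j \ge w} A_i \wedge B_j . \]
When $w \le q$, cofinality shows this colimit is unchanged if $w$ is replaced by $q$, provided $A$ is constant below $0$ and $B$ is constant below $q$. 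A general fact then applies: a colocalization $L_q$ onto a subcategory that is closed under tensoring with objects of a monoidal subcategory $\mathcal{M}$ is itself $\mathcal{M}$-linear, since $L_q(M \otimes Y) \simeq M \otimes L_q Y$ for $M \in \mathcal{M}$. Hence $L_q$ sends $\mathcal{M}$-module objects to module objects over the same algebra, as long as the algebra lies in $\mathcal{M}$.

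It remains to see that $\Gamma_\star(S)$ lies in $\textbf{SH}^{\fil}_{\ge 0}$, i.e. $\Gamma_w(S) \to \Gamma_{w-1}(S)$ is an equivalence for $w \le 0$. This holds because $S \wedge \MU^{\wedge n+1}$ is connective, so $\tau_{\ge 2w}$ acts as the identity on it for $w \le 0$. In fact, since $\Gamma_\star(X)$ is already a $\Gamma_\star(S)$-module, the simplest formulation avoids a general linearity theorem. Write
\[ L_q Y \simeq \mathbb{1}_{\ge q}' \otimes Y, \]
where $\mathbb{1}_{\ge q}'$ is the filtered spectrum that is $S$ in degrees $\le 0$ (constant) and $\text{pt}$ above $0$, shifted by $q$. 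This formula is false in general, so I would avoid it. Instead I would use the submodule-closure argument above, or show directly that the smashing-type colimit formula for the action map $\Gamma_\star(S) \otimes \Gamma^q_\star(X) \to \Gamma^q_\star(X)$ factors through the truncated degrees.

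The main obstacle is making the module structure honest at the $\infty$-categorical level, with coherent associativity rather than just a multiplication map in the homotopy category. The linearity of the colocalization handles this, so the real work is the cofinality computation verifying that $\mathcal{E}_q$ is closed under $\otimes$ with $\textbf{SH}^{\fil}_{\ge 0}$, together with citing the standard fact that a colocalization compatible with a module action is lax linear.
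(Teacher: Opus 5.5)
Your route works in outline and differs from the paper's, but one step as written is false and has to be replaced.

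\textbf{The false step.} The claim $L_q(M\otimes Y)\simeq M\otimes L_qY$ for $M\in\mathcal{M}$ fails. Take $M$ with $M_w=S$ for $w\le 1$ and $M_w=\text{pt}$ for $w\ge 2$, i.e. the unit shifted up by one. It lies in $\mathcal{M}$ and acts by $(M\otimes Y)_w\simeq Y_{w-1}$. Then $(L_q(M\otimes Y))_q\simeq Y_{q-1}$, while $(M\otimes L_qY)_q\simeq Y_q$. The comparison map is the structure map $Y_q\to Y_{q-1}$, so $L_q$ is not strongly $\mathcal{M}$-linear, and in particular not smashing.

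\textbf{The repair.} Closure of $\mathcal{E}_q$ under $M\otimes-$ only makes the inclusion $\mathcal{E}_q\hookrightarrow\textbf{SH}^{\fil}$ strictly $\mathcal{M}$-linear. Its right adjoint is therefore \emph{lax} $\mathcal{M}$-linear. The structure map $M\otimes L_qY\to L_q(M\otimes Y)$ is the factorization of $M\otimes L_qY\to M\otimes Y$ through the counit, which exists because $M\otimes L_qY\in\mathcal{E}_q$. Lax linearity is exactly what transports $\Gamma_\star(S)$-module objects. So the argument goes through if you keep only the lax statement from your last sentence and drop the strong one. You should also delete the unused $\mathbb{1}_{\geq q}$ and the retracted smashing formula.

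\textbf{Comparison with the paper.} The paper works by hand. It defines maps $\Gamma_i(S)\wedge\Gamma^q_j(X)\to\Gamma^q_w(X)$ for $i+j\ge w$ in four cases, according to whether $j$ and $w$ lie above or below $q$. When it must land in $\Gamma_q(X)$ it uses $\Gamma_i(S)\simeq\Gamma_0(S)$ for $i\le 0$, and it simply asserts compatibility.

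Unwinding your lax map at a level $w\le q$ gives $(M\otimes L_qY)_w\simeq(M\otimes L_qY)_q\to(M\otimes Y)_q$, which reproduces exactly those case-by-case maps. Both constructions rest on the same input: $\Gamma_\star(S)$ is constant in weights $\le 0$. You correctly note this needs only connectivity of $\MU^{\wedge n+1}$, not convergence of the ANSS.

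What your version buys:
\begin{itemize}
\item a genuinely coherent $\infty$-categorical module structure, where the paper produces only an action map and leaves higher coherences unaddressed;
\item the counit $\Gamma^q_\star(X)\to\Gamma_\star(X)$ is automatically a map of $\Gamma_\star(S)$-modules, which the paper tacitly needs so that it corresponds to a motivic map;
\item compatibility in $q$ comes for free.
\end{itemize}
The paper's version, in exchange, is elementary and fully explicit.
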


\begin{proof}
    Recall from \cite[Proposition 3.8]{GIKR} that $\Gamma_\star(X)$ has a $\Gamma_\star(S)$-module structure. The module structure for $\Gamma_\star^q(X)$ is derived from this one via a careful case distinction.
    
    We need a map
    \[\Gamma_\star(S) \wedge \Gamma_\star^q(X) \to \Gamma_\star^q(X).\]
    In weight $w$ we have by definition
    \[(\Gamma_\star(S) \wedge \Gamma_\star^q(X))_w =\varinjlim_{i + j \geq w} \Gamma_i(S) \wedge \Gamma_j^q(X).\]
    So we need to define a compatible system of maps
    \[\Gamma_i(S) \wedge \Gamma_j^q(X) \to \Gamma_w^q(X)\]
    for all $i + j \geq w$. To do this, we need to distinguish between different cases for the values $j$, $w$, and $q$. Compatibility of the maps will follow by construction.
    
    If $j \geq q$ and $w \geq q$, then we have
    \[\Gamma_i(S) \wedge \Gamma_j(X) \to \Gamma_w(X),\]
    and we can use the same map as in the $\Gamma_\star(S)$-module structure of $\Gamma_\star(X)$.

    If $j \leq q$ and $w \geq q$, then we need
    \[\Gamma_i(S) \wedge \Gamma_q(X) \to \Gamma_w(X).\]
    Since $i + q \geq i + j \geq w$, we can again use a map from the $\Gamma_\star(S)$-module structure of $\Gamma_\star(X)$ for this.
    
    If $j \geq q$ and $w \leq q$, then we need a map
    \[\Gamma_i(S) \wedge \Gamma_j(X) \to \Gamma_q(X).\]
    If additionally $i \geq 0$, then $i + j \geq j \geq q$, so we can reuse the map from the $\Gamma_\star(S)$-module structure of $\Gamma_\star(X)$. If $i \leq 0$, then we have $\Gamma_i(S) \simeq S \simeq \Gamma_0(S)$ because the ANSS for the sphere converges and is trivial on negative-indexed antidiagonals. So we can form the composite
    \[\Gamma_i(S) \wedge \Gamma_j(X) \simeq \Gamma_0(S) \wedge \Gamma_j(X) \to \Gamma_q(X),\]
    where the last map comes from the $\Gamma_\star(S)$-module structure of $\Gamma_\star(X)$, which exists because $j \geq q$.

    If $j \leq q$ and $w \leq q$, then we need
    \[\Gamma_i(S) \wedge \Gamma_q(X) \to \Gamma_q(X).\]
    If $i \geq 0$, then $i + q \geq q$, so we can reuse the map from the $\Gamma_\star(S)$-module structure of $\Gamma_\star(X)$. If $i \leq 0$, then we have $\Gamma_i(S) \simeq S \simeq \Gamma_0(S)$, so that we can form the composite
    \[\Gamma_i(S) \wedge \Gamma_q(X) \simeq \Gamma_0(S) \wedge \Gamma_q(X) \to \Gamma_q(X),\]
    where the last map again comes from the $\Gamma_\star(S)$-module structure of $\Gamma_\star(X)$.
\end{proof}

\begin{Rem}
    The $\Gamma_\star(S)$-module structure for $\Gamma_\star^q(X)$ defined in Lemma \ref{Gamma_star^q(X) is a Gamma_star(S)-module} is compatible with changes in $q$. In particular, upon taking the colimit $q \to -\infty$ we recover the $\Gamma_\star(S)$-module structure of $\Gamma_\star(X)$ because then only the case $w \geq q$ from the proof of Lemma \ref{Gamma_star^q(X) is a Gamma_star(S)-module} is relevant, and in that case we used the same maps as in the $\Gamma_\star(S)$-module structure of $\Gamma_\star(X)$.
\end{Rem}

Because of Lemma \ref{Gamma_star^q(X) is a Gamma_star(S)-module} the filtered spectrum $\Gamma_\star^q(X)$ corresponds to a $\mathbb{C}$-motivic object under the equivalence of Theorem \ref{Equivalence between C-motivic homotopy and filtered spectra}. By abuse of notation, $\Gamma_\star^q(X)$ will denote both the filtered spectrum and the corresponding $\mathbb{C}$-motivic object.

The object $\Gamma_\star^q(X)$ comes with a sequence of BKSSs computing its homotopy groups. We provide a schematic picture of a part of this sequence

\noindent\resizebox{\textwidth}{!}
{
\begin{tikzpicture}
    \node (dots left) at (4 - 7, 1.5) {$\dotsc$};
    
    \node (arrow left) at (4.4 - 7, 1.5) {};
    \node (arrow right) at (5.6 - 7, 1.5) {};

    \draw[->] (arrow left) -- (arrow right) node[midway,above] {$\tau$};
    
    \node (s0) at (-1, 0) {};
    \node[label=below:{$s$}] (s1) at (4, 0) {};
    \node (f0) at (0, -1) {};
    \node[label=left:{$f$}] (f1) at (0, 4) {};
    
    \draw[->] (s0) to (s1);
    \draw[->] (f0) to (f1);
    
    \node (upper ad) at (-1, 3 + .5) {};
    \node (lower ad) at (3 + .5, -1) {};

    \draw[-, thick] (upper ad) to (lower ad);

    \node[label={[label distance = -.2cm]left:{$2q + 2$}}] (f-axis mark left) at (-.3, 2 + .5) {};
    \node (f-axis mark right) at (.3, 2 + .5) {};
    \node[label={[label distance = -.2cm]below:{$2q + 2\hspace{.6cm}$}}] (s-axis mark lower) at (2 + .5, -.3) {};
    \node (s-axis mark upper) at (2 + .5, .3) {};

    \draw[-] (f-axis mark left) to (f-axis mark right);
    \draw[-] (s-axis mark lower) to (s-axis mark upper);
    
    \node (zero) at (0.5, 0.5) {\Large $0$};
    \node (ANSS) at (2.5, 2) {\large ANSS for $X$};
    \node (weight) at (2, -1.5) {weight $q + 1$};

    \node (middle arrow left) at (4.4, 1.5) {};
    \node (middle arrow right) at (5.6, 1.5) {};

    \draw[->] (middle arrow left) -- (middle arrow right) node[midway,above] {$\tau$};

    \node (s0) at (-1 + 7, 0) {};
    \node[label=below:{$s$}] (s1) at (4 + 7, 0) {};
    \node (f0) at (0 + 7, -1) {};
    \node[label=left:{$f$}] (f1) at (0 + 7, 4) {};
    
    \draw[->] (s0) to (s1);
    \draw[->] (f0) to (f1);
    
    \node (upper ad) at (-1 + 7, 3) {};
    \node (lower ad) at (3 + 7, -1) {};

    \draw[-, thick] (upper ad) to (lower ad);

    \node[label={[label distance = -.2cm]left:{$2q$}}] (f-axis mark left) at (-.3 + 7, 2) {};
    \node (f-axis mark right) at (.3 + 7, 2) {};
    \node[label={[label distance = -.2cm]below:{$2q$}}] (s-axis mark lower) at (2 + 7, -.3) {};
    \node (s-axis mark upper) at (2 + 7, .3) {};

    \draw[-] (f-axis mark left) to (f-axis mark right);
    \draw[-] (s-axis mark lower) to (s-axis mark upper);
    
    \node (zero) at (0.5 + 7, 0.5) {\Large $0$};
    \node (ANSS) at (2.5 + 7, 2) {\large ANSS for $X$};
    \node (weight) at (2 + 7, -1.5) {weight $q$};

    \node (right arrow left) at (4.4 + 7, 1.5) {};
    \node (right arrow right) at (5.6 + 7, 1.5) {};

    \draw[->] (right arrow left) -- (right arrow right) node[midway,above] {$\tau$} node[midway,below] {$=$};

    \node (s0) at (-1 + 14, 0) {};
    \node[label=below:{$s$}] (s1) at (4 + 14, 0) {};
    \node (f0) at (0 + 14, -1) {};
    \node[label=left:{$f$}] (f1) at (0 + 14, 4) {};
    
    \draw[->] (s0) to (s1);
    \draw[->] (f0) to (f1);
    
    \node (upper ad) at (-1 + 14, 3) {};
    \node (lower ad) at (3 + 14, -1) {};

    \draw[-, thick] (upper ad) to (lower ad);

    \node[label={[label distance = -.2cm]left:{$2q$}}] (f-axis mark left) at (-.3 + 14, 2) {};
    \node (f-axis mark right) at (.3 + 14, 2) {};
    \node[label={[label distance = -.2cm]below:{$2q$}}] (s-axis mark lower) at (2 + 14, -.3) {};
    \node (s-axis mark upper) at (2 + 14, .3) {};

    \draw[-] (f-axis mark left) to (f-axis mark right);
    \draw[-] (s-axis mark lower) to (s-axis mark upper);
    
    \node (zero) at (0.5 + 14, 0.5) {\Large $0$};
    \node (ANSS) at (2.5 + 14, 2) {\large ANSS for $X$};
    \node (weight) at (2 + 14, -1.5) {weight $q - 1$};

    \node (right arrow left) at (4.4 + 14, 1.5) {};
    \node (right arrow right) at (5.6 + 14, 1.5) {};

    \draw[->] (right arrow left) -- (right arrow right) node[midway,above] {$\tau$} node[midway,below] {$=$};

    \node (dots right) at (6 + 14, 1.5) {$\dotsc$};
\end{tikzpicture}
}

Recall that the filtration degrees in $\Gamma_\star(X)$ correspond to the motivic weights. The BKSS for $\Gamma_\star^q(X)$ agrees with the one for $\Gamma_\star(X)$ in weights $w \geq q$. In weights $w \leq q$ we "freeze" the picture. This has the effect of making $\tau$ the identity on homotopy groups. Based on this observation we can prove the following Proposition.

\begin{Prop}\phantomsection\label{Filtered spectrum model for q-th effective cover}
    The datum $\Gamma_\star^q(X) \to \Gamma_\star(X)$ exhibits $\Gamma_\star^q(X)$ as the $q$-th effective cover of $\Gamma_\star(X)$.
\end{Prop}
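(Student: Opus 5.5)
We will verify the two hypotheses of Proposition \ref{characterization of q-th effective cover for nice spectra} for the map $Z = \Gamma_\star^q(X) \to \Gamma_\star(X) = Y$. That proposition also requires $Y$ to be $r$-connective for some $r$. This holds because $X$ is bounded below, say $2r$-connective after possibly decreasing $r$. By Proposition \ref{Motivic analogues of bounded below spectra are very effective}, $\Gamma_\star(X)$ is then $r$-very effective. Since the $r$-very effective generators $S^{s,w}$ (with $w \geq r$ and $s-w \geq r$) form a subset of the $r$-connective generators, $\Gamma_\star(X)$ is in particular $r$-connective.

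The central input is how bigraded homotopy groups are read off in the filtered model: $\pi_{s,w}$ of a $\Gamma_\star(S)$-module $Y$ is $\pi_s(Y_w)$. Under this identification, $\tau \colon \pi_{s,w} \to \pi_{s,w-1}$ is the map induced by the structure map $Y_w \to Y_{w-1}$. This is how the paper already reads off homotopy and $\tau$-multiplication for $\Gamma_\star(X)$ and $\Gamma_\star(S)$, for instance in the proofs of Proposition \ref{tau-iso on homotopy groups of q-effective spectra} and Proposition \ref{homotopy groups of q-connective spectra vanish in low coweights}. I will take it as given from \cite{GIKR}, where the bigraded spheres correspond to shifted filtered spheres.

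With this identification both conditions follow directly from the construction.

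For condition (1), fix $w \geq q$. The map $\Gamma_w^q(X) \to \Gamma_w(X)$ is the identity, so it induces an isomorphism $\pi_{s,w}(\Gamma_\star^q(X)) \to \pi_{s,w}(\Gamma_\star(X))$.

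For condition (2), fix $w \leq q$. The structure map $\Gamma_w^q(X) \to \Gamma_{w-1}^q(X)$ is by definition the identity of $\Gamma_q(X)$. Hence $\tau$ acts as the identity, and in particular as an isomorphism, on $\pi_{s,w}(\Gamma_\star^q(X))$ for all $w \leq q$.

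Proposition \ref{characterization of q-th effective cover for nice spectra} then shows that $\Gamma_\star^q(X) \to \Gamma_\star(X)$ is the $q$-th effective cover.

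The main obstacle is making sure the map $\Gamma_\star^q(X) \to \Gamma_\star(X)$ really is a morphism of $\Gamma_\star(S)$-modules, so that it corresponds to a morphism in $\textbf{SH}^{\mathbb{C}}$ rather than just a map of filtered spectra. I would check this against the case analysis in the proof of Lemma \ref{Gamma_star^q(X) is a Gamma_star(S)-module}:

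\begin{itemize}
\item In weights $w \geq q$, the module structure maps on $\Gamma_\star^q(X)$ are the ones of $\Gamma_\star(X)$, and the comparison map is the identity, so compatibility is automatic.
\item In weights $w \leq q$, the structure maps for $\Gamma_\star^q(X)$ were built from the module maps of $\Gamma_\star(X)$ landing in $\Gamma_q(X)$, together with the equivalence $\Gamma_i(S) \simeq \Gamma_0(S)$ for $i \leq 0$. Compatibility with the maps $\Gamma_q(X) \to \Gamma_w(X)$ should then follow from naturality of the module structure of $\Gamma_\star(X)$ with respect to the truncation maps.
\end{itemize}

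If this coherence becomes unwieldy, a fallback is available. The map $\Gamma_\star^q(X) \to \Gamma_\star(X)$ is the counit for the restriction of filtered spectra to weights $\geq q$, extended constantly below $q$. One can then argue that this construction is a colocalization compatible with the $\Gamma_\star(S)$-action, because $\Gamma_\star(S)$ is concentrated in nonnegative weights up to constants.
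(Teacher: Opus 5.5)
Your proposal is correct and follows essentially the same route as the paper. Like the paper, you verify conditions (1) and (2) of Proposition \ref{characterization of q-th effective cover for nice spectra} directly from the construction, and you obtain $r$-connectivity of $\Gamma_\star(X)$ from Proposition \ref{Motivic analogues of bounded below spectra are very effective} via ``very effective implies connective.'' Your extra check that the comparison map respects the $\Gamma_\star(S)$-module structures is left implicit in the paper, which only builds the module structure in Lemma \ref{Gamma_star^q(X) is a Gamma_star(S)-module}; it is added care, not a deviation.
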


\begin{proof}
    We want to use Proposition \ref{characterization of q-th effective cover for nice spectra}. So we need to check that $\Gamma_\star^q(X) \to \Gamma_\star(X)$ satisfies properties (1) and (2) therein, and that $\Gamma_\star(X)$ is $r$-connective for some $r \in \mathbb{Z}$. Properties (1) and (2) are immediate from the BKSS as discussed above. Regarding $r$-connectivity of $\Gamma_\star(X)$, recall that we are assuming that $X$ is bounded below throughout this section. So Proposition \ref{Motivic analogues of bounded below spectra are very effective} implies $\Gamma_\star(X)$ is $r$-very effective for some $r \in \mathbb{Z}$. This in turn implies it is $r$-connective because the $r$-very effective subcategory is generated by a subset of the $r$-connective one. So $\Gamma_\star^q(X) \to \Gamma_\star(X)$ satisfies all necessary conditions of Proposition \ref{characterization of q-th effective cover for nice spectra}, making it the $q$-th effective cover.
\end{proof}

\subsection{Filtered spectrum models for connective covers}\label{section: filtered spectrum models for connective covers}

Let $X \in \textbf{SH}^{\cl}$ be bounded below. For $q \in \mathbb{Z}$ define a filtered spectrum $\Gamma_\star^{\geq q}(X)$ by letting the degree $w$-part be
\begin{align*}
    \Gamma_w^{\geq q}(X) &= \tau_{\geq w + q}(\Tot(\tau_{\geq 2w}(X \wedge \MU^{\wedge \bullet + 1})))\\
     &= \tau_{\geq w + q}(\Gamma_w(X)).
\end{align*}
The maps $\Gamma_w^{\geq q}(X) \to \Gamma_{w - 1}^{\geq q}(X)$ are induced by $\tau_{\geq w + q} \to \tau_{\geq w - 1 + q}$ and $\Gamma_w(X) \to \Gamma_{w - 1}(X)$. Note that there is a map
\[\Gamma_\star^{\geq q}(X) \to \Gamma_\star(X)\]
induced in filtration $w$ by $\tau_{\geq w + q} \to \tau_{\geq -\infty} = \text{id}$.

\begin{Lemma}\phantomsection\label{Gamma_star^geq q(X) is a Gamma_star(S)-module}
    The filtered spectrum $\Gamma_\star^{\geq q}(X)$ is a $\Gamma_\star(S)$-module.
\end{Lemma}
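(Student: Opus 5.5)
The plan is to build the action maps weight by weight, exactly as in the proof of Lemma \ref{Gamma_star^q(X) is a Gamma_star(S)-module}, starting from the $\Gamma_\star(S)$-module structure of $\Gamma_\star(X)$ from \cite[Proposition 3.8]{GIKR}. In weight $w$ we have
\[(\Gamma_\star(S) \wedge \Gamma_\star^{\geq q}(X))_w = \varinjlim_{i + j \geq w} \Gamma_i(S) \wedge \tau_{\geq j + q}(\Gamma_j(X)).\]
So we need a compatible family of maps
\[\Gamma_i(S) \wedge \tau_{\geq j + q}(\Gamma_j(X)) \to \tau_{\geq w + q}(\Gamma_w(X))\]
for all $i + j \geq w$.

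To obtain these, first compose the inclusion $\tau_{\geq j+q}\Gamma_j(X) \to \Gamma_j(X)$ with the existing action map $\Gamma_i(S) \wedge \Gamma_j(X) \to \Gamma_w(X)$. This gives a map
\[\Gamma_i(S) \wedge \tau_{\geq j + q}(\Gamma_j(X)) \to \Gamma_w(X).\]
It then suffices to show that the source is $(w+q)$-connective as a classical spectrum. By the universal property of the connective cover, the map then factors uniquely, up to a contractible space of choices, through $\tau_{\geq w+q}\Gamma_w(X)$.

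The connectivity claim rests on the fact, already seen in the proof of Proposition \ref{homotopy groups of q-connective spectra vanish in low coweights}, that $\pi_s(\Gamma_i(S)) = 0$ for $s < i$. This holds for $i \geq 0$ by the vanishing-line argument. For $i \leq 0$ we have $\Gamma_i(S) \simeq S$, which is $0$-connective and hence $i$-connective. So $\Gamma_i(S)$ is $\max(i,0)$-connective, in particular $i$-connective. The smash product of an $i$-connective and a $(j+q)$-connective spectrum is $(i+j+q)$-connective, and since $i+j \geq w$ the source is $(w+q)$-connective, as needed. Note that no case distinction in $i$ is needed beyond this connectivity statement, which makes this simpler than the $\Gamma^q_\star$ case.

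The main obstacle is coherence: we need a genuine $\infty$-categorical module structure, not just homotopy-compatible maps. For this I would argue structurally rather than by hand. The functor $\tau_{\geq \star}$, which sends a filtered spectrum $Y_\star$ to $\tau_{\geq w+q}Y_w$, is the right adjoint to the inclusion of the full subcategory of filtered spectra whose weight-$w$ part is $(w+q)$-connective. This inclusion is compatible with the Day convolution action of the subcategory of filtered spectra whose weight-$i$ part is $i$-connective, by the connectivity computation above. Since $\Gamma_\star(S)$ lies in that subcategory, the right adjoint is lax linear for its action, and therefore carries $\Gamma_\star(S)$-modules to $\Gamma_\star(S)$-modules. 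Applying this to $\Gamma_\star(X)$ yields the module structure on $\Gamma_\star^{\geq q}(X)$. It also makes the map $\Gamma_\star^{\geq q}(X) \to \Gamma_\star(X)$ a module map, namely the counit.
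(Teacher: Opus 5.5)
Your proposal is correct, and its levelwise part is exactly the paper's proof. The paper also uses that $\Gamma_i(S)$ is $i$-connective, so that $\Gamma_i(S)\wedge\tau_{\geq j+q}\Gamma_j(X)$ is $(w+q)$-connective when $i+j\geq w$. It then lifts the composite $\Gamma_i(S)\wedge\tau_{\geq j+q}\Gamma_j(X)\to\Gamma_i(S)\wedge\Gamma_j(X)\to\Gamma_w(X)$ through $\tau_{\geq w+q}\Gamma_w(X)$, just as you do.

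The genuine difference is your last paragraph. The paper stops once these maps are written down, as homotopy classes with compatibility simply asserted. You instead recognize $\Gamma^{\geq q}_\star(-)$ as the coreflector onto $\{Y : Y_w \text{ is } (w+q)\text{-connective}\}$. You observe that this subcategory is stable under Day convolution with the monoidal subcategory $\{E : E_i \text{ is } i\text{-connective}\}$, which contains both the unit ($S$ in weights $\leq 0$) and $\Gamma_\star(S)$. From this you conclude that the coreflector is lax linear.

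This buys three things:
\begin{itemize}
\item an honest $\infty$-categorical module structure, which is what is needed to land in $\text{Mod}_{\Gamma_\star(S)}$ and invoke Theorem \ref{Equivalence between C-motivic homotopy and filtered spectra};
\item a structure whose action maps are precisely your levelwise maps;
\item the fact that $\Gamma^{\geq q}_\star(X)\to\Gamma_\star(X)$ is a module map (the counit), which the paper uses tacitly when it treats this as a map in $\textbf{SH}^{\mathbb{C}}$ in Proposition \ref{Filtered spectrum model for q-th connective cover}.
\end{itemize}

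The paper's hands-on route has the merit of running parallel to its case-by-case treatment of $\Gamma^q_\star(X)$ and $\tGamma^q_\star(X)$. Your coreflection argument would handle those cases uniformly as well. For that, use the filtered spectra whose transition maps are equivalences in weights $\leq q$. That subcategory is stable under convolution with those that are constant in weights $\leq 0$, a class that contains $\Gamma_\star(S)$.
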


\begin{proof}
    We need a map
    \[\Gamma_\star(S) \wedge \Gamma_\star^{\geq q}(X) \to \Gamma_\star^{\geq q}(X).\]
    In weight $w$ we have by definition
    \[(\Gamma_\star(S) \wedge \Gamma_\star^{\geq q}(X))_w =\varinjlim_{i + j \geq w} \Gamma_i(S) \wedge \Gamma_j^{\geq q}(X).\]
    So we need to define maps
    \[\Gamma_i(S) \wedge \Gamma_j^{\geq q}(X) \to \Gamma_w^{\geq q}(X)\]
    for all $i + j \geq w$. Using the definition of $\Gamma_\star^{\geq q}$, we can rewrite this as
    \[\Gamma_i(S) \wedge \tau_{\geq j + q} \Gamma_j(X) \to \tau_{\geq w + q} \Gamma_w(X).\]
    From the BKSS for $\Gamma_i(S)$ it follows that it is $i$-connective. Then $i + j + q \geq w + q$ implies that we have an isomorphism
    \[[\Gamma_i(S) \wedge \tau_{\geq j + q} \Gamma_j(X), \tau_{\geq w + q} \Gamma_w(X)] \xrightarrow[]{\cong} [\Gamma_i(S) \wedge \tau_{\geq j + q} \Gamma_j(X), \Gamma_w(X)]\]
    given by post-composition with the map $\tau_{\geq w + q} \Gamma_w(X) \to \tau_{\geq -\infty} \Gamma_w(X) \simeq \Gamma_w(X)$. So it suffices to produce a map into $\Gamma_w(X)$. We obtain this desired map as the composite
    \[\Gamma_i(S) \wedge \tau_{\geq j + q} \Gamma_j(X) \to \Gamma_i(S) \wedge \Gamma_j(X) \to \Gamma_w(X),\]
    where the first map is the identity smashed with $\tau_{\geq j + q} \Gamma_j(X) \to \tau_{\geq -\infty} \Gamma_j(X) \simeq \Gamma_j(X)$, and the second map is the one coming from the $\Gamma_\star(S)$-module structure of $\Gamma_\star(X)$.
\end{proof}

\begin{Rem}\phantomsection\label{Gamma_star(S)-module structure for q-connective covers recovers structure of Gamma_star(X)}
    The $\Gamma_\star(S)$-module structure for $\Gamma_\star^{\geq q}(X)$ defined in Lemma \ref{Gamma_star^geq q(X) is a Gamma_star(S)-module} is compatible with changes in $q$. In particular, upon taking the colimit $q \to -\infty$ we recover the $\Gamma_\star(S)$-module structure of $\Gamma_\star(X)$ because then $\tau_{\geq w + q}$ becomes $\tau_{\geq -\infty} = \text{id}$.
\end{Rem}

Because of Lemma \ref{Gamma_star^geq q(X) is a Gamma_star(S)-module} the filtered spectrum $\Gamma_\star^{\geq q}(X)$ corresponds to a $\mathbb{C}$-motivic object under the equivalence of Theorem \ref{Equivalence between C-motivic homotopy and filtered spectra}. By abuse of notation, $\Gamma_\star^{\geq q}(X)$ will denote both the filtered spectrum and the corresponding $\mathbb{C}$-motivic object.

\begin{Prop}\phantomsection\label{Filtered spectrum model for q-th connective cover}
    The datum $\Gamma_\star^{\geq q}(X) \to \Gamma_\star(X)$ exhibits $\Gamma_\star^{\geq q}(X)$ as the $q$-th connective cover of $\Gamma_\star(X)$.
\end{Prop}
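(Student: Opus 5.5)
The plan is to apply Proposition \ref{characterization of motivic q-th connective cover v2} to the map $\Gamma_\star^{\geq q}(X) \to \Gamma_\star(X)$. That result needs no hypothesis on the target, so it suffices to check two conditions on homotopy groups. First, $\pi_{s,w}(\Gamma_\star^{\geq q}(X)) \to \pi_{s,w}(\Gamma_\star(X))$ should be an isomorphism for $s - w \geq q$. Second, $\pi_{s,w}(\Gamma_\star^{\geq q}(X)) \cong 0$ for $s - w < q$.

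To carry this out, I first identify the bigraded homotopy groups of the filtered spectrum with the homotopy of its components. For a $\Gamma_\star(S)$-module $Y$ corresponding to a motivic spectrum, I will use $\pi_{s,w}(Y) \cong \pi_s(Y_w)$. This identification is already used implicitly throughout the paper, for instance in the proof of Proposition \ref{homotopy groups of q-connective spectra vanish in low coweights}, where $\pi_{s,w}(S)$ is computed as $\pi_s(\Gamma_w(S))$. It is compatible with the map $\Gamma_\star^{\geq q}(X) \to \Gamma_\star(X)$, since that map is a map of $\Gamma_\star(S)$-modules: it is induced componentwise by $\tau_{\geq w+q}\Gamma_w(X) \to \Gamma_w(X)$, which is compatible with the module maps constructed in Lemma \ref{Gamma_star^geq q(X) is a Gamma_star(S)-module}.

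Given this identification, both conditions reduce to a classical fact about connective covers. In weight $w$ the map is the classical connective cover $\tau_{\geq w+q}\Gamma_w(X) \to \Gamma_w(X)$. This map is an isomorphism on $\pi_s$ for $s \geq w + q$, which is the range $s - w \geq q$, and its source has $\pi_s = 0$ for $s < w+q$, which is the range $s - w < q$. Both conditions of Proposition \ref{characterization of motivic q-th connective cover v2} therefore hold, and the statement follows.

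The main obstacle is justifying carefully that the map of filtered spectra is genuinely a map of $\Gamma_\star(S)$-modules, so that it transports to a motivic map, and that bigraded homotopy is computed weightwise as $\pi_s$ of the $w$-th component. For the module map I would check that the composites used in Lemma \ref{Gamma_star^geq q(X) is a Gamma_star(S)-module} commute with the truncation maps. This should hold by construction, since the module action on $\Gamma_\star^{\geq q}(X)$ was defined by lifting the action on $\Gamma_\star(X)$ through the truncation. For the homotopy identification I would rely on the fact that the bigraded sphere $S^{s,w}$ corresponds to $\Gamma_\star(S)$ smashed with the filtered sphere in degree $(s,w)$, so that maps out of it are computed by the free–forgetful adjunction as $\pi_s$ of the weight-$w$ component.
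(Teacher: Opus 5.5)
Your proposal is correct and is essentially the paper's proof: the paper also applies Proposition \ref{characterization of motivic q-th connective cover v2}. It reads off $\pi_{s,w}(\Gamma_\star^{\geq q}(X))$ weightwise from the classical cover $\tau_{\geq w+q}\Gamma_w(X) \to \Gamma_w(X)$, getting $\pi_{s,w}(\Gamma_\star(X))$ for $s \geq w+q$ and $0$ otherwise. Your additional remarks on the module map and on the identification $\pi_{s,w}(Y) \cong \pi_s(Y_w)$ just make explicit what the paper treats as ``by definition.''
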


\begin{proof}
    We want to use Proposition \ref{characterization of motivic q-th connective cover v2}. So we need to check that $\Gamma_\star^{\geq q}(X) \to \Gamma_\star(X)$ satisfies properties (1) and (2) therein. By definition we have
    \[\pi_{s, w}(\Gamma_\star^{\geq q}(X)) \cong \begin{dcases}
        \pi_{s, w}(\Gamma_\star(X)), & s \geq w + q\\
        0, & s < w + q.
    \end{dcases}\]
    Taking $w$ to the other side of each inequality shows that $\Gamma_\star^{\geq q}(X) \to \Gamma_\star(X)$ satisfies properties (1) and (2) of Proposition \ref{characterization of motivic q-th connective cover v2}.
\end{proof}

\subsection{Filtered spectrum models for very effective covers}\label{section: filtered spectrum models for very effective covers}

Let $X \in \textbf{SH}^{\cl}$ be bounded below. For $q \in \mathbb{Z}$ define a filtered spectrum $\tGamma_\star^q(X)$ by letting the degree $w$-part be
\begin{align*}
\tGamma_w^q(X) &=
\begin{dcases}
\tau_{\geq w + q} (\Tot(\tau_{\geq 2w} (X \wedge \MU^{\wedge \bullet + 1}))), & w \geq q\\
\tau_{\geq 2q} (\Tot(\tau_{\geq 2q} (X \wedge \MU^{\wedge \bullet + 1}))), & w \leq q
\end{dcases}\\[.5em]
 &=
\begin{dcases}
\tau_{\geq w + q} (\Gamma_w(X)), & w \geq q\\
\tau_{\geq 2q} (\Gamma_q(X)), & w \leq q
\end{dcases}\\[.5em]
 &=
\begin{dcases}
\Gamma_w^{\geq q}(X), & w \geq q\\
\Gamma_q^{\geq q}(X), & w \leq q.
\end{dcases}
\end{align*}
The maps $\tGamma_w^q(X) \to \tGamma_{w-1}^q(X)$ are induced by $\tau_{\geq w + q} \to \tau_{\geq w - 1 + q}$ and $\Gamma_w(X) \to \Gamma_{w - 1}(X)$ in filtrations $w > q$, and the identity otherwise. Note that there is a map
\[\tGamma_\star^q(X) \to \Gamma_\star(X)\]
induced by $\tau_{\geq w + q} \to \tau_{\geq -\infty} = \text{id}$ in filtrations $w \geq q$ and induced by $\tau_{\geq 2q} \to \tau_{\geq -\infty} = \text{id}$ and $\Gamma_q(X) \to \Gamma_w(X)$ for $w \leq q$.

\begin{Rem}\phantomsection\label{very effective cover in small weights}
	The spectrum $\tau_{\geq 2q} (\Tot(\tau_{\geq 2q} (X \wedge \MU^{\wedge \bullet + 1}))) = \tau_{\geq 2q} (\Gamma_q(X)) = \Gamma_q^{\geq q}(X)$, which forms the weight $w \leq q$ part of $\tGamma_\star^q(X)$, is naturally equivalent to $\tau_{\geq 2q}(X)$. To see this, consider the BKSS for $\Tot(\tau_{\geq 2q} (X \wedge \MU^{\wedge \bullet + 1}))$. It is the ANSS for $X$ truncated below the $2q$-th antidiagonal. In stems $s \geq 2q$ this spectral sequence is exactly the same as the ANSS for $X$. Then applying the truncation $\tau_{\geq 2q}$ after running the BKSS yields a spectrum whose homotopy groups are naturally isomorphic to those of $X$ in stems $s \geq 2q$, and trivial in stems $s < 2q$. The claim follows.
\end{Rem}

\begin{Lemma}\phantomsection\label{tGamma_star^q(X) is a Gamma_star(S)-module}
    The filtered spectrum $\tGamma_\star^q(X)$ is a $\Gamma_\star(S)$-module.
\end{Lemma}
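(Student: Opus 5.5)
We build the module structure exactly as in Lemmas \ref{Gamma_star^q(X) is a Gamma_star(S)-module} and \ref{Gamma_star^geq q(X) is a Gamma_star(S)-module}, combining their two mechanisms. We need a compatible system of maps
\[\Gamma_i(S) \wedge \tGamma_j^q(X) \to \tGamma_w^q(X), \qquad i + j \geq w,\]
and we split into the four cases according to whether $j \geq q$ or $j \leq q$, and whether $w \geq q$ or $w \leq q$. In each case we set $j' = \max\{j, q\}$ and $w' = \max\{w, q\}$. Then the source is $\Gamma_i(S) \wedge \tau_{\geq j' + q}\Gamma_{j'}(X)$ and the target is $\tau_{\geq w' + q}\Gamma_{w'}(X)$, using that $\tGamma_w^q(X) = \Gamma_{w'}^{\geq q}(X)$ (note $2q = q + q$).

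The key step is a connectivity estimate. $\Gamma_i(S)$ is $i$-connective by its BKSS, and for $i \leq 0$ we have $\Gamma_i(S) \simeq \Gamma_0(S) \simeq S$, so it is in fact $\max\{i,0\}$-connective. The smash product is therefore $(\max\{i,0\} + j' + q)$-connective. We then check that
\[\max\{i,0\} + j' \geq w'.\]
If $w' = w$, this follows from $i + j \geq w$. If $w' = q$, it follows from $j' \geq q$. Hence, exactly as in the proof of Lemma \ref{Gamma_star^geq q(X) is a Gamma_star(S)-module}, post-composition with $\tau_{\geq w'+q}\Gamma_{w'}(X) \to \Gamma_{w'}(X)$ induces a bijection on homotopy classes of maps out of the source. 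This reduces the problem to producing a map
\[\Gamma_i(S) \wedge \Gamma_{j'}(X) \to \Gamma_{w'}(X)\]
and precomposing with $\tau_{\geq j'+q} \to \mathrm{id}$.

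We produce that map as in Lemma \ref{Gamma_star^q(X) is a Gamma_star(S)-module}. If $i + j' \geq w'$, we use the $\Gamma_\star(S)$-module structure of $\Gamma_\star(X)$ directly. Otherwise $i < 0$, and we replace $\Gamma_i(S)$ by $\Gamma_0(S)$ and use $j' \geq w'$ in that case. This inequality holds because $w' = w$ would force $i + j' \geq i + j \geq w$; so in this branch $w' = q \leq j'$.

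The main obstacle is compatibility: the maps must be coherent as $i, j, w$ vary and under the structure maps. This includes the identity maps in weights $\leq q$ and the identifications $\Gamma_i(S) \simeq \Gamma_0(S)$. We handle it as the earlier lemmas do. Every map is obtained from the single coherent module structure of $\Gamma_\star(X)$ by restriction along the canonical maps $\Gamma_i(S) \to \Gamma_0(S)$ (which are equivalences for $i \leq 0$) and by the truncation adjunction, which is natural. Hence compatibility follows by construction. In the $\infty$-categorical setting this amounts to realizing $\tGamma_\star^q(X)$ via functorial operations (connective covers and reindexing) from modules. A cleaner route, if needed, is to exhibit $\tGamma^q_\star(X)$ as $\Gamma^{\geq q}$ applied within the weight-frozen model of Lemma \ref{Gamma_star^q(X) is a Gamma_star(S)-module}, and to repeat the argument of Lemma \ref{Gamma_star^geq q(X) is a Gamma_star(S)-module} verbatim with $\Gamma^q_\star(X)$ in place of $\Gamma_\star(X)$.
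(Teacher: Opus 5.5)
Your main argument is correct and is essentially the paper's proof. The paper does the same four-case split ($j \geq q$ or $j \leq q$, and $w \geq q$ or $w \leq q$). In each case it invokes the $\Gamma_\star(S)$-module structure of $\Gamma^{\geq q}_\star(X)$, which is exactly your connectivity and truncation-adjunction step, and it replaces $\Gamma_i(S)$ by $\Gamma_0(S)$ when $i \leq 0$. Your bookkeeping with $j' = \max\{j,q\}$, $w' = \max\{w,q\}$ and the single inequality $\max\{i,0\} + j' \geq w'$ just compresses those four cases.

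Your closing ``cleaner route'' is wrong, however, because it applies the two operations in the reverse order. Applying $\tau_{\geq w+q}$ weightwise to the frozen model $\Gamma^q_\star(X)$ gives $\tau_{\geq w+q}\Gamma_q(X)$ in weights $w<q$, not $\tau_{\geq 2q}\Gamma_q(X)$. That object models $(f_q Y)_{\geq q}$, which in general is not $\tf_q(Y)$. For example, with $X = S$ and $q=1$, weight $0$ retains $\tau\eta$ in stem $1$. The correct variant freezes the weights of $\Gamma^{\geq q}_\star(X)$, i.e.\ it reruns the argument of Lemma~\ref{Gamma_star^q(X) is a Gamma_star(S)-module} with $\Gamma^{\geq q}_\star(X)$ in place of $\Gamma_\star(X)$. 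That is precisely how the paper's proof is organized.
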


\begin{proof}
    Combine the ideas for the $q$-effective and $q$-connective covers from the proofs of Lemma \ref{Gamma_star^q(X) is a Gamma_star(S)-module} and Lemma \ref{Gamma_star^geq q(X) is a Gamma_star(S)-module}. In more detail, we need a map
    \[\Gamma_\star(S) \wedge \tGamma_\star^q(X) \to \tGamma_\star^q(X).\]
    In weight $w$ we have by definition
    \[(\Gamma_\star(S) \wedge \tGamma_\star^q(X))_w =\varinjlim_{i + j \geq w} \Gamma_i(S) \wedge \tGamma_j^q(X).\]
    So we need to define maps
    \[\Gamma_i(S) \wedge \tGamma_j^q(X) \to \tGamma_w^q(X)\]
    for all $i + j \geq w$.

    If $j \geq q$ and $w \geq q$, then this becomes
    \[\Gamma_i(S) \wedge \Gamma_j^{\geq q}(X) \to \Gamma_w^{\geq q}(X),\]
    and we can use the same map as in the $\Gamma_\star(S)$-module structure of $\Gamma_\star^{\geq q}(X)$.

    If $j \leq q$ and $w \geq q$, then we need
    \[\Gamma_i(S) \wedge \Gamma_q^{\geq q}(X) \to \Gamma_w^{\geq q}(X).\]
    Since $i + q \geq i + j \geq w$, we can again use a map from the $\Gamma_\star(S)$-module structure of $\Gamma_\star^{\geq q}(X)$.

    If $j \geq q$ and $w \leq q$, then we need a map
    \[\Gamma_i(S) \wedge \Gamma_j^{\geq q}(X) \to \Gamma_q^{\geq q}(X).\]
    If additionally $i \geq 0$, then $i + j \geq j \geq q$, so we can reuse the map from the $\Gamma_\star(S)$-module structure of $\Gamma_\star^{\geq q}(X)$. If $i \leq 0$, then we have $\Gamma_i(S) \simeq S \simeq \Gamma_0(S)$ because the ANSS for the sphere converges and is trivial on negative-indexed antidiagonals. So we can form the composite
    \[\Gamma_i(S) \wedge \Gamma_j^{\geq q}(X) \simeq \Gamma_0(S) \wedge \Gamma_j^{\geq q}(X) \to \Gamma_q^{\geq q}(X),\]
    where the last map comes from the $\Gamma_\star(S)$-module structure of $\Gamma_\star^{\geq q}(X)$, which exists because $j \geq q$.

    If $j \leq q$ and $w \leq q$, then we need
    \[\Gamma_i(S) \wedge \Gamma_q^{\geq q}(X) \to \Gamma_q^{\geq q}(X).\]
    If $i \geq 0$, then $i + q \geq q$, so we can reuse the map from the $\Gamma_\star(S)$-module structure of $\Gamma_\star^{\geq q}(X)$. If $i \leq 0$, then we have $\Gamma_i(S) \simeq S \simeq \Gamma_0(S)$, so that we can form the composite
    \[\Gamma_i(S) \wedge \Gamma_q^{\geq q}(X) \simeq \Gamma_0(S) \wedge \Gamma_q^{\geq q}(X) \to \Gamma_q^{\geq q}(X),\]
    where the last map comes from the $\Gamma_\star(S)$-module structure of $\Gamma_\star^{\geq q}(X)$.
\end{proof}

\begin{Rem}
    The $\Gamma_\star(S)$-module structure for $\tGamma_\star^q(X)$ defined in Lemma \ref{tGamma_star^q(X) is a Gamma_star(S)-module} is compatible with changes in $q$. In particular, upon taking the colimit $q \to -\infty$ we recover the $\Gamma_\star(S)$-module structure of $\Gamma_\star(X)$ because then only the case $w \geq q$ from the proof of Lemma \ref{tGamma_star^q(X) is a Gamma_star(S)-module} is relevant, and in that case we used the same maps as in the $\Gamma_\star(S)$-module structure of $\Gamma_\star^{\geq q}(X)$, which are compatible with changes in $q$ by Remark \ref{Gamma_star(S)-module structure for q-connective covers recovers structure of Gamma_star(X)}.
\end{Rem}

Because of Lemma \ref{tGamma_star^q(X) is a Gamma_star(S)-module} the filtered spectrum $\tGamma_\star^q(X)$ corresponds to a $\mathbb{C}$-motivic object under the equivalence of Theorem \ref{Equivalence between C-motivic homotopy and filtered spectra}. By abuse of notation, $\tGamma_\star^q(X)$ will denote both the filtered spectrum and the corresponding $\mathbb{C}$-motivic object.

To compute the homotopy groups of $\tGamma_\star^q(X)$, one first runs a BKSS similar to the one for the model $\Gamma_\star^q(X)$ for the $q$-th effective cover. We include a schematic picture here again

\noindent\resizebox{\textwidth}{!}
{
\begin{tikzpicture}
    \node (dots left) at (4 - 7, 1.5) {$\dotsc$};
    
    \node (arrow left) at (4.4 - 7, 1.5) {};
    \node (arrow right) at (5.6 - 7, 1.5) {};

    \draw[->] (arrow left) -- (arrow right) node[midway,above] {$\tau$};
    
    \node (s0) at (-1, 0) {};
    \node[label=below:{$s$}] (s1) at (4, 0) {};
    \node (f0) at (0, -1) {};
    \node[label=left:{$f$}] (f1) at (0, 4) {};
    
    \draw[->] (s0) to (s1);
    \draw[->] (f0) to (f1);
    
    \node (upper ad) at (-1, 3 + .5) {};
    \node (lower ad) at (3 + .5, -1) {};

    \draw[-, thick] (upper ad) to (lower ad);

    \node[label={[label distance = -.2cm]left:{$2q + 2$}}] (f-axis mark left) at (-.3, 2 + .5) {};
    \node (f-axis mark right) at (.3, 2 + .5) {};
    \node[label={[label distance = -.2cm]below:{$2q + 2\hspace{.6cm}$}}] (s-axis mark lower) at (2 + .5, -.3) {};
    \node (s-axis mark upper) at (2 + .5, .3) {};

    \draw[-] (f-axis mark left) to (f-axis mark right);
    \draw[-] (s-axis mark lower) to (s-axis mark upper);
    
    \node (zero) at (0.5, 0.5) {\Large $0$};
    \node (ANSS) at (2.5, 2) {\large ANSS for $X$};
    \node (weight) at (2, -1.5) {weight $q + 1$};

    \node (middle arrow left) at (4.4, 1.5) {};
    \node (middle arrow right) at (5.6, 1.5) {};

    \draw[->] (middle arrow left) -- (middle arrow right) node[midway,above] {$\tau$};

    \node (s0) at (-1 + 7, 0) {};
    \node[label=below:{$s$}] (s1) at (4 + 7, 0) {};
    \node (f0) at (0 + 7, -1) {};
    \node[label=left:{$f$}] (f1) at (0 + 7, 4) {};
    
    \draw[->] (s0) to (s1);
    \draw[->] (f0) to (f1);
    
    \node (upper ad) at (-1 + 7, 3) {};
    \node (lower ad) at (3 + 7, -1) {};

    \draw[-, thick] (upper ad) to (lower ad);

    \node[label={[label distance = -.2cm]left:{$2q$}}] (f-axis mark left) at (-.3 + 7, 2) {};
    \node (f-axis mark right) at (.3 + 7, 2) {};
    \node[label={[label distance = -.2cm]below:{$2q$}}] (s-axis mark lower) at (2 + 7, -.3) {};
    \node (s-axis mark upper) at (2 + 7, .3) {};

    \draw[-] (f-axis mark left) to (f-axis mark right);
    \draw[-] (s-axis mark lower) to (s-axis mark upper);
    
    \node (zero) at (0.5 + 7, 0.5) {\Large $0$};
    \node (ANSS) at (2.5 + 7, 2) {\large ANSS for $X$};
    \node (weight) at (2 + 7, -1.5) {weight $q$};

    \node (right arrow left) at (4.4 + 7, 1.5) {};
    \node (right arrow right) at (5.6 + 7, 1.5) {};

    \draw[->] (right arrow left) -- (right arrow right) node[midway,above] {$\tau$} node[midway,below] {$=$};

    \node (s0) at (-1 + 14, 0) {};
    \node[label=below:{$s$}] (s1) at (4 + 14, 0) {};
    \node (f0) at (0 + 14, -1) {};
    \node[label=left:{$f$}] (f1) at (0 + 14, 4) {};
    
    \draw[->] (s0) to (s1);
    \draw[->] (f0) to (f1);
    
    \node (upper ad) at (-1 + 14, 3) {};
    \node (lower ad) at (3 + 14, -1) {};

    \draw[-, thick] (upper ad) to (lower ad);

    \node[label={[label distance = -.2cm]left:{$2q$}}] (f-axis mark left) at (-.3 + 14, 2) {};
    \node (f-axis mark right) at (.3 + 14, 2) {};
    \node[label={[label distance = -.2cm]below:{$2q$}}] (s-axis mark lower) at (2 + 14, -.3) {};
    \node (s-axis mark upper) at (2 + 14, .3) {};

    \draw[-] (f-axis mark left) to (f-axis mark right);
    \draw[-] (s-axis mark lower) to (s-axis mark upper);
    
    \node (zero) at (0.5 + 14, 0.5) {\Large $0$};
    \node (ANSS) at (2.5 + 14, 2) {\large ANSS for $X$};
    \node (weight) at (2 + 14, -1.5) {weight $q - 1$};

    \node (right arrow left) at (4.4 + 14, 1.5) {};
    \node (right arrow right) at (5.6 + 14, 1.5) {};

    \draw[->] (right arrow left) -- (right arrow right) node[midway,above] {$\tau$} node[midway,below] {$=$};

    \node (dots right) at (6 + 14, 1.5) {$\dotsc$};
\end{tikzpicture}
}
Then, after running this BKSS, one has to take the respective connective cover depending on the weight $w$ to get the final answer for the homotopy groups of $\tGamma_\star^q(X)$. In essence, one removes all non-trivial classes from the $E_\infty$-page to the left of the vertical line $s = w + q$ or $s = 2q$, depending on the weight. In weights $w \leq q$ we also force $\tau$ to be the identity on homotopy groups again just as in our model for the $q$-th effective cover. Note that the outermost connected cover in the definition of $\tGamma_\star^q(X)$ doesn't affect this because for $w \leq q$ we take the same cover in every weight.

\begin{Prop}\phantomsection\label{Filtered spectrum model for q-th very effective cover}
    The datum $\tGamma_\star^q(X) \to \Gamma_\star(X)$ exhibits $\tGamma_\star^q(X)$ as the $q$-th very effective cover of $\Gamma_\star(X)$.
\end{Prop}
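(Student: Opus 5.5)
We verify the three hypotheses of Proposition \ref{characterization of q-th very effective cover v2} for the map $\tGamma_\star^q(X) \to \Gamma_\star(X)$. The first step is to write down the homotopy groups of $\tGamma_\star^q(X)$ weight by weight, reading them off from the definition and Remark \ref{very effective cover in small weights}. For $w \geq q$ we have $\tGamma_w^q(X) = \tau_{\geq w+q}(\Gamma_w(X))$, so
\[\pi_{s,w}(\tGamma_\star^q(X)) \cong \begin{dcases} \pi_{s,w}(\Gamma_\star(X)), & s \geq w + q,\\ 0, & s < w+q, \end{dcases}\]
and the map to $\Gamma_\star(X)$ induces the identification in the first case. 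For $w \leq q$ we have $\tGamma_w^q(X) = \tau_{\geq 2q}(\Gamma_q(X)) \simeq \tau_{\geq 2q}(X)$. Its homotopy groups agree with $\pi_{s,q}(\tGamma_\star^q(X))$ and vanish for $s < 2q$. The structure maps in weights $\leq q$ are identities.

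With this in hand, we check the three properties in turn.
\begin{enumerate}[label=\normalfont{(\arabic*)}]
\item If $w \geq q$ and $s - w \geq q$, we are in the first case of the formula above. There the map on $\pi_{s,w}$ is the isomorphism induced by the cover $\tau_{\geq w+q}\Gamma_w(X) \to \Gamma_w(X)$.
\item For $w \leq q$, the map $\tau\colon \pi_{s,w} \to \pi_{s,w-1}$ is induced by a structure map $\tGamma_w^q(X) \to \tGamma_{w-1}^q(X)$. When $w \leq q$ this structure map is the identity of $\tau_{\geq 2q}\Gamma_q(X)$, so $\tau$ is an isomorphism.
\item For the vanishing condition in coweights $s - w < q$, first take $w \geq q$: then $s < w+q$, and $\pi_{s,w}$ vanishes by the connective cover. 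Now take $w \leq q$: then $s < w + q \leq 2q$, and $\pi_{s,w} = \pi_s(\tau_{\geq 2q}\Gamma_q(X)) = 0$.
\end{enumerate}
Proposition \ref{characterization of q-th very effective cover v2} then gives the claim.

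The main point requiring care is the identification of $\tau$-multiplication on $\pi_{*,*}$ of the $\Gamma_\star(S)$-module $\tGamma_\star^q(X)$ with the map induced by the filtration structure maps. This must be compatible with the module structure constructed in Lemma \ref{tGamma_star^q(X) is a Gamma_star(S)-module}, since $\tau$ comes from $\Gamma_\star(S)$. I would argue this as in Proposition \ref{Filtered spectrum model for q-th effective cover}: under the equivalence of Theorem \ref{Equivalence between C-motivic homotopy and filtered spectra}, the bigraded homotopy of a module is computed by the filtered spectrum itself, and $\tau$ acts through the transition maps. A second point is that the map $\tGamma_\star^q(X) \to \Gamma_\star(X)$ must be a map of $\Gamma_\star(S)$-modules. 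This follows because the module structures in all cases were built from those of $\Gamma_\star(X)$ by precomposition with covers. Neither point requires any new spectral sequence input; no approximability argument is needed, since Proposition \ref{Homotopy characterization of q-very effective spectra} already handles it.
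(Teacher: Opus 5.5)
Your proposal is correct and follows essentially the same route as the paper. Both verify the three hypotheses of Proposition \ref{characterization of q-th very effective cover v2}: property (1) because $\tau_{\geq w+q}$ has no effect when $s \geq w+q$, property (2) because the structure maps are identities in weights $w \leq q$, and property (3) by the same case split on $w$ versus $q$ together with $w+q \leq 2q$; your remarks about $\tau$ acting through the transition maps and about the cover map being a module map only make explicit what the paper leaves implicit.
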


\begin{proof}
    We have to check that $\tGamma_\star^q(X) \to \Gamma_\star(X)$ satisfies the properties of Proposition \ref{characterization of q-th very effective cover v2}. Property (1) holds because in the case where $w \geq q$ the homotopy group $\pi_{s, w}(\tGamma_\star^q(X))$ is computed by running the same BKSS as for $\Gamma_\star(X)$, and then the truncation $\tau_{\geq w + q}$ has no effect because of $s - w \geq q$, i.e. $s \geq w + q$. The second property, that $\tau$ is an isomorphism on certain homotopy groups, is implied by the discussion on the BKSS just above this Proposition. Property (3), meaning that the homotopy groups of the cover vanish in coweights $s - w < q$, needs a case distinction: If $w \geq q$ then this property is implied by the truncation $\tau_{\geq w + q}$. If $w < q$ then we have $\pi_{s, w}(\tGamma_\star^q(X)) \cong  0$ when $s < 2q$. But $2q > w + q$, so in particular we have $\pi_{s, w}(\tGamma_\star^q(X)) \cong  0$ when $s < w + q$. Rearranging the inequality to $s - w < q$ yields the claim.
\end{proof}

\begin{Rem}
    By definition, the homotopy groups $\pi_{s,w}(\tGamma_\star^q(X))$ are trivial in all stems $s < 2q$ regardless of the weight. That is more than what is needed for property (3) of Proposition \ref{characterization of q-th very effective cover v2}. However, this should in fact be the case, as was discussed in Proposition \ref{Very effective spectra are always approximable} and Remark \ref{Larger zero region in homotopy groups of C-motivic very effective cover}.
\end{Rem}

\begin{Rem}
	By \cite[Lemma 7.3]{CQ21} complex Betti realization agrees with taking the colimit $w \to -\infty$ under the equivalence between motivic and filtered spectra, assuming that this colimit is attained for a finite value of $w$. It follows from Remark \ref{very effective cover in small weights} and Proposition \ref{Filtered spectrum model for q-th very effective cover} that the Betti realization of the $q$-th very effective cover of $\Gamma_\star(X)$ is $\tau_{\geq 2q}(X)$. This result is also implied by the claim in \cite[Section 3.3]{GRSO12}.
\end{Rem}

\begin{Rem}
    Over perfect fields $k$, \cite[Lemma 10]{Bac17} shows that there is an equivalence $\tf_q(Y) \simeq f_q (Y_{\geq q})$ for every motivic spectrum $Y$. Using our filtered spectrum methods, we could make the same observation if we phrased our constructions as endofunctors in filtered spectra. For example the effective cover $\Gamma_\star^q(X)$ can be obtained by applying to $\Gamma_\star(X)$ the functor that sends a filtered spectrum to the filtered spectrum that is itself in weights $w \geq q$ and constant for $w \leq q$. Similarly the connective cover $\Gamma_\star^{\geq q}(X)$ can be obtained from $\Gamma_\star(X)$ by composing with the functor that sends a filtered spectrum to the filtered spectrum where we apply $\tau_{\geq w + q}$ in weight $w$. The very effective cover $\tGamma_\star^q(X)$ can be defined by applying first the endofunctor for the connective cover to $\Gamma_\star(X)$, and then applying the effective cover functor to the resulting filtered spectrum. From this perspective, the equivalence $\tf_q(Y) \simeq f_q (Y_{\geq q})$ holds by definition for $Y = \Gamma_\star(X)$.
\end{Rem}

In \cite[Remark 2.5]{ARO20} it is stated that the effective and very effective covers of motivic Landweber exact spectra coincide. Here, motivic Landweber exactness should be understood in the sense of \cite{NSO09}. We can prove the same result for objects of the form $\Gamma_\star(X)$ where $X$ is Landweber exact in the classical sense.

\begin{Cor}\phantomsection\label{effective and very effective covers of motivic analogues of Landweber exact spectra are equivalent}
    Let $X \in {\normalfont \textbf{SH}^{\cl}}$ be bounded below and Landweber exact. Then there is a canonical equivalence
    \[\tf_q(\Gamma_\star(X)) \xrightarrow[]{\simeq} f_q(\Gamma_\star(X)).\]
\end{Cor}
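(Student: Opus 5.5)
The plan is to compare the two explicit filtered models: $\tGamma_\star^q(X)$ for $\tf_q(\Gamma_\star(X))$ from Proposition \ref{Filtered spectrum model for q-th very effective cover}, and $\Gamma_\star^q(X)$ for $f_q(\Gamma_\star(X))$ from Proposition \ref{Filtered spectrum model for q-th effective cover}. I will show that the evident comparison map between them is a levelwise equivalence of filtered spectra.

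\textbf{The comparison map.} By definition $\tGamma_w^q(X)$ is obtained from $\Gamma_w^q(X)$ by applying a connective cover in each weight: $\tau_{\geq w+q}$ for $w \geq q$, and $\tau_{\geq 2q}$ for $w \leq q$. The natural maps $\tau_{\geq n} \to \mathrm{id}$ therefore assemble into a map of filtered spectra $\tGamma_\star^q(X) \to \Gamma_\star^q(X)$. This map is compatible with the module structures of Lemmas \ref{Gamma_star^q(X) is a Gamma_star(S)-module} and \ref{tGamma_star^q(X) is a Gamma_star(S)-module}, since both are built from the structure on $\Gamma_\star(X)$ by postcomposing with such covers. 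It also commutes with the two maps to $\Gamma_\star(X)$. Hence the map is the canonical one $\tf_q \to f_q$ given by the universal property: $\tf_q(\Gamma_\star(X))$ is $q$-effective, and Lemma \ref{characterization of q-th effective cover} applies. It thus suffices to show that, in each weight $w$, the truncation does nothing on homotopy groups.

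\textbf{The Landweber exact input.} This is the key step. For Landweber exact $X$ we have $\MU_*X \cong \MU_*\MU \otimes_{\MU_*} X_*$. This is an extended comodule, so
\[\mathrm{Ext}^{f}_{\MU_*\MU}(\MU_*, \MU_*X) = 0 \quad \text{for } f > 0, \qquad \mathrm{Ext}^0_{\MU_*\MU}(\MU_*, \MU_*X) = X_*.\]
So the ANSS of $X$ is concentrated on the line $f = 0$ and collapses. The step I expect to need the most care is justifying this vanishing of higher Ext, i.e.\ the change-of-rings identification for extended comodules, and checking that it holds in the $2$-complete setting. Granting it, the BKSS for $\Gamma_w(X)$ is the ANSS with everything below the antidiagonal $s + f = 2w$ removed, which leaves exactly $\pi_s(X)$ for $s \geq 2w$ on the $f=0$ line. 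Since there are no differentials, $\pi_s(\Gamma_w(X)) \cong \pi_s(X)$ for $s \geq 2w$ and is $0$ for $s < 2w$; that is, $\Gamma_w(X) \simeq \tau_{\geq 2w}X$.

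\textbf{Conclusion.} For $w \geq q$ we have $2w \geq w + q$, so $\Gamma_w(X)$ is already $(w+q)$-connective and $\tau_{\geq w+q}\Gamma_w(X) \to \Gamma_w(X)$ is an equivalence. For $w \leq q$ the relevant map is $\tau_{\geq 2q}\Gamma_q(X) \to \Gamma_q(X)$, and $\Gamma_q(X) \simeq \tau_{\geq 2q}X$ is $2q$-connective, so this is an equivalence as well. Therefore the comparison map is an equivalence in every weight. Since equivalences of filtered spectra are detected weightwise, we obtain an equivalence of $\Gamma_\star(S)$-modules, hence of $\mathbb{C}$-motivic spectra, $\tf_q(\Gamma_\star(X)) \xrightarrow{\simeq} f_q(\Gamma_\star(X))$.
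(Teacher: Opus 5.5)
Your proposal is correct and follows the paper's proof. The paper also uses the comparison map $\tGamma^q_\star(X) \to \Gamma^q_\star(X)$ induced by $\tau_{\geq n} \to \mathrm{id}$, and identifies the Adams--Novikov $E_2$-page of $X$ as Ext of an extended comodule (phrased with $\BP$, citing Ravenel A1.2.8(b)); it then concludes from the collapse onto filtration $0$ that the outer truncations have no effect, and your explicit connectivity check ($2w \geq w+q$ for $w \geq q$, and $\Gamma_q(X)$ being $2q$-connective) together with your canonicity argument simply spell out the paper's final sentence in more detail.
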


\begin{proof}
    We use the models $\Gamma_\star^q(X)$ and $\tGamma_\star^q(X)$ for the $q$-th effective and very effective covers, respectively. There is a map
    \[\tGamma_\star^q(X) \to \Gamma_\star^q(X)\]
    induced by the transformations $\tau_{\geq w + q} \to \tau_{\geq -\infty} = \text{id}$ and $\tau_{\geq 2q} \to \tau_{\geq -\infty} = \text{id}$ in weights $w \geq q$ and $w \leq q$, respectively.

    By definition of Landweber exactness, we have
    \[X_* \BP \cong \BP_* \BP \otimes_{\BP_*} X_*.\]
    So, in the language of \cite[Appendix A]{Rav86}, $X_* \BP$ is an extended $\BP_*\BP$-comodule. Also in that language, the $E_2$-page of the Adams-Novikov spectral sequence for $X$ is
    \[\text{Cotor}_{\BP_* \BP}^{*, *}(\BP_*, \BP_* X).\]
    Then we can use $\BP_* X \cong X_* \BP$ and \cite[A1.2.8, (b)]{Rav86} to see that this $E_2$-page is entirely concentrated in filtration $0$. So the ANSS for $X$ collapses on $E_2$ and the filtration $0$ line is isomorphic to the homotopy groups of $X$. This implies that the outermost truncations in the definition of $\tGamma_\star^q(X)$ have no effect. The result follows since the map we defined above then induces isomorphisms on all homotopy groups.
\end{proof}

\section{Slices of \texorpdfstring{$\mathbb{C}$}{C}-motivic analogues}\label{section: Slices of C-motivic analogues}

In this section we will use the models for the effective, connective, and very effective covers of a $\mathbb{C}$-motivic analogue $\Gamma_\star(X)$ given in Section \ref{section: Filtered spectrum models for covers of C-motivic analogues} to compute its respective slices. Section \ref{section: effective slices of C-motivic analogues} contains the proof of Theorem \ref{Theorem on slices of motivic analogues} from the introduction.

To make the computations for the effective slices work out nicely as claimed in Theorem \ref{Theorem on slices of motivic analogues} we will have to put some restrictions on the spectrum $X \in \textbf{SH}^{\cl}$, namely that it is bounded below and has even $\MU$ homology. More generally, we can compute the homotopy groups of the effective slices for any bounded below $X$, but that will not lead to a description of the effective slices as simple as in Theorem \ref{Theorem on slices of motivic analogues}.

To give some example use cases of Theorem \ref{Theorem on slices of motivic analogues}, it can be applied to compute the slices of the $\mathbb{C}$-motivic spectra $S$, $\kq$, $\mmf$, $\MGL$, and $\kgl$. These are of the form $\Gamma_\star(X)$ where $X$ is $S$, $\ko$, $\tmf$, $\MU$, and $\ku$, respectively.

\subsection{Effective slices of \texorpdfstring{$\mathbb{C}$}{C}-motivic analogues}\label{section: effective slices of C-motivic analogues}

Let $X \in \textbf{SH}^{\cl}$ and assume $X$ is bounded below so that we can make use of the results in Section \ref{section: Filtered spectrum models for covers of C-motivic analogues}.

\begin{Prop}\phantomsection\label{Filtered spectrum model for effective slice}
    Let $X \in {\normalfont \textbf{SH}^{\cl}}$ be bounded below. Then the $q$-th effective slice of $\Gamma_\star(X)$ is given by the filtered spectrum whose degree $w$ part is
    \[(s_q(\Gamma_\star(X)))_w = \begin{dcases}
    {\normalfont \text{pt}}, & w \geq q + 1\\
    {\normalfont \Tot(\tau_{\geq 2q}^{< 2q + 2}(X \wedge \MU^{\wedge \bullet + 1}))}, & w \leq q
    \end{dcases}\]
    and whose structure maps are trivial for $w \geq q + 1$, and the identity for $w \leq q$.
\end{Prop}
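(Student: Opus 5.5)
We will compute the slice as the cofiber of the map of filtered models $\Gamma_\star^{q+1}(X) \to \Gamma_\star^q(X)$, using Proposition \ref{Filtered spectrum model for q-th effective cover}. That proposition identifies $\Gamma_\star^{q+1}(X)$ and $\Gamma_\star^q(X)$ with $f_{q+1}(\Gamma_\star(X))$ and $f_q(\Gamma_\star(X))$ respectively. The first step is to produce the comparison map $\Gamma_\star^{q+1}(X) \to \Gamma_\star^q(X)$ on the level of filtered spectra and check it models the canonical map $f_{q+1} \to f_q$. In weights $w \geq q+1$ both sides are $\Gamma_w(X)$, and we take the identity. In weights $w \leq q$ the source is $\Gamma_{q+1}(X)$ and the target is $\Gamma_q(X)$, and we take the map induced by $\tau_{\geq 2q+2} \to \tau_{\geq 2q}$. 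This is compatible with the structure maps and with the $\Gamma_\star(S)$-module structures of Lemma \ref{Gamma_star^q(X) is a Gamma_star(S)-module}; we use the remark that those structures are compatible in $q$.

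To see this map is the canonical one, note that it commutes with the maps to $\Gamma_\star(X)$. By the universal property of the cover (Lemma \ref{Characterization of general covers defined using subcategories}), any map $f_{q+1}(Y) \to f_q(Y)$ over $Y$ is the canonical one. This is because $f_{q+1}(Y)$ is $q$-effective, so maps from it into $f_q(Y)$ correspond bijectively to maps into $Y$. Hence the cofiber of our map of $\Gamma_\star(S)$-modules is $s_q(\Gamma_\star(X))$. Cofibers in $\mathrm{Mod}_{\Gamma_\star(S)}$ are computed underlying, hence levelwise in filtered spectra, so we only need the levelwise cofibers.

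For $w \geq q+1$ the map is the identity, so the cofiber is $\mathrm{pt}$. For $w \leq q$ the cofiber is $\operatorname{cofib}(\Gamma_{q+1}(X) \to \Gamma_q(X))$. Since $\Tot$ is a limit in a stable $\infty$-category, it commutes with cofibers. Therefore this cofiber equals
\[\Tot\bigl(\operatorname{cofib}(\tau_{\geq 2q+2} \to \tau_{\geq 2q})(X \wedge \MU^{\wedge \bullet+1})\bigr) = \Tot(\tau_{\geq 2q}^{<2q+2}(X \wedge \MU^{\wedge \bullet + 1})),\]
where we use the natural identification of the cofiber of consecutive connective covers with the Postnikov section $\tau_{\geq 2q}^{<2q+2}$. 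Both the structure maps and the levelwise cofibers are functorial. On the structure maps this gives two things. In weights $w \leq q$ the induced maps are identities, since the maps on both source and target are identities there. For $w \geq q+1$ the maps are into or out of $\mathrm{pt}$, hence trivial. At the transition from weight $q+1$ to weight $q$ the map is $\mathrm{pt} \to$ (slice), also trivial.

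The main obstacle is the first step: making the levelwise map genuinely a map of $\Gamma_\star(S)$-modules, rather than just of filtered spectra, and justifying that it realizes the canonical map of covers. The universal property argument reduces the second point to the first. For the first, we would check case by case, as in the proof of Lemma \ref{Gamma_star^q(X) is a Gamma_star(S)-module}, that the action maps are natural in the truncation level. That naturality should follow because all maps there are built from the module structure of $\Gamma_\star(X)$ and the natural transformations $\tau_{\geq 2q+2} \to \tau_{\geq 2q}$.
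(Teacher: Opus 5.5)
Your proposal is correct and takes essentially the same approach as the paper. You realize the slice as the levelwise cofiber of $\Gamma_\star^{q+1}(X) \to \Gamma_\star^q(X)$ and commute $\Tot$ past the cofiber (both are limits in the stable setting) to get $\Tot(\tau_{\geq 2q}^{<2q+2}(X \wedge \MU^{\wedge \bullet + 1}))$ in weights $w \leq q$ and $\text{pt}$ above; your extra checks that the comparison map lies over $\Gamma_\star(X)$ (hence is the canonical map $f_{q+1} \to f_q$) and that cofibers of $\Gamma_\star(S)$-modules are computed levelwise are left implicit in the paper, so they only add rigor.
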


\begin{proof}
    By Proposition \ref{Filtered spectrum model for q-th effective cover} the $q$-th effective cover of $\Gamma_\star(X)$ is modeled by $\Gamma_\star^q(X) \to \Gamma_\star(X)$. Then the $q$-th effective slice $s_q(\Gamma_\star(X))$ is defined via the cofiber sequence
    \[\Gamma_\star^{q + 1}(X) \to \Gamma_\star^q(X) \to s_q(\Gamma_\star(X)).\]
    The first two filtered objects are given by totalizations in each degree. We claim that the third object is too: In our stable setting cofibers are the same as fibers. But fibers are limits, and so too is totalization. So the claim follows from limits commuting with limits.
    
    Therefore $s_q(\Gamma_\star(X))$ is the filtered object given in each degree by the totalization of the cofiber of the underlying map of cosimplicial objects. That yields the claimed description of the slices.
\end{proof}

Using this description we can compute the homotopy groups of $s_q(\Gamma_\star(X))$ via a BKSS. It will be the ANSS for $X$ restricted to the two antidiagonals $2q$ and $2q + 1$. The $d_1$-differentials occur as usual because they start and end on the same antidiagonal. So the BKSS $E_2$-page is the ANSS $E_2$-page restricted to the two antidiagonals. The only other differentials that can occur for degree reasons are $d_2$-differentials whose source lies on the $2q$-th antidiagonal. In general, this complicates matters as the resulting BKSS $E_3$-page is not guaranteed to be the ANSS $E_3$-page. To be more precise, there are three relevant cases to consider:
\begin{itemize}
    \item ANSS $E_2$-page elements on the $2q$-th antidiagonal which support a $d_2$ will also do so in the BKSS,
    \item ANSS $E_2$-page elements on the $2q$-th antidiagonal which get hit by a $d_2$ will survive the BKSS,
    \item ANSS $E_2$-page elements on the $(2q + 1)$-st antidiagonal which support a $d_2$ will be cycles in the BKSS.
\end{itemize}
This leads to a description of the homotopy groups of $s_q(\Gamma_\star(X))$ which is a mix of the $E_2$- and $E_3$-page of the ANSS of $X$. However, under certain hypotheses the ANSS of $X$ avoids many of these cases because it has no $d_2$-differentials. Namely, if one assumes that $X$ has even $\MU$ homology, then by \cite[4.4.2. Proposition]{Rav86} there are no $d_2$-differentials in the ANSS for $X$. More precisely, all odd antidiagonals in the ANSS are trivial. Now the above discussion leads to the following description of the homotopy groups of $s_q(\Gamma_\star(X))$.

\begin{Prop}\phantomsection\label{Homotopy groups of effective slices of motivic analogues}
    Let $X \in {\normalfont \textbf{SH}^{\cl}}$ be bounded below with even $\MU$ homology. Let $E_2^{s, f}$ denote the Adams-Novikov spectral sequence $E_2$-page of $X$ in stem $s$ and filtration $f$. Then
    \[\pi_{*, *}(s_q(\Gamma_\star(X))) \cong \bigoplus_{s + f = 2q} E_2^{s, f}[\tau],\]
    where $E_2^{s, f}$ is placed in stem $s$ and weight $q$.
\end{Prop}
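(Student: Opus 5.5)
We compute $\pi_{*,*}$ of the filtered model from Proposition \ref{Filtered spectrum model for effective slice} weight by weight. In weights $w \geq q+1$ the slice is $\text{pt}$, so all homotopy vanishes there. In each weight $w \leq q$ it is the same spectrum
\[T := \Tot(\tau_{\geq 2q}^{< 2q + 2}(X \wedge \MU^{\wedge \bullet + 1})),\]
and the structure maps, which induce $\tau$, are identities. Hence $\pi_{s,w}(s_q(\Gamma_\star(X))) \cong \pi_s(T)$ for all $w \leq q$, and multiplication by $\tau$ is an isomorphism $\pi_{s,w} \to \pi_{s,w-1}$ for $w \leq q$. As a $\mathbb{Z}[\tau]$-module, the homotopy is therefore $\pi_*(T)$ placed in weight $q$ with $\tau$ adjoined freely. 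This reduces the claim to showing
\[\pi_s(T) \cong \bigoplus_{s+f=2q} E_2^{s,f}.\]

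To compute $\pi_*(T)$, run the BKSS for the totalization. By the discussion preceding the statement, its $E_1$-page is the ANSS $E_1$-page of $X$ restricted to the antidiagonals $s+f = 2q$ and $s+f = 2q+1$. The $d_1$-differentials preserve the antidiagonal, so they agree with those of the ANSS, and the $E_2$-page is the ANSS $E_2$-page on these two antidiagonals. Since $X$ has even $\MU$ homology, \cite[4.4.2. Proposition]{Rav86} shows that the odd antidiagonals vanish already at $E_2$. So the $E_2$-page of the BKSS is exactly $\bigoplus_{s+f=2q} E_2^{s,f}$, concentrated on the single antidiagonal $2q$.

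A differential $d_r$ with $r \geq 2$ raises the antidiagonal index by $r-1 \geq 1$, and every other antidiagonal is zero. Hence the BKSS collapses at $E_2$. On a given stem $s$ there is exactly one nonzero group, $E_2^{s,2q-s}$, so there are no extension problems and $\pi_s(T) \cong E_2^{s,2q-s}$.

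The step I expect to need the most care is convergence of this BKSS together with the identification of its $d_1$ with the ANSS $d_1$. I would settle it as follows. The cosimplicial spectrum $\tau_{\geq 2q}^{<2q+2}(X \wedge \MU^{\wedge \bullet+1})$ has homotopy in only two degrees, $2q$ and $2q+1$, in each cosimplicial level. So in each total degree only finitely many cosimplicial degrees contribute, which gives strong convergence. The $E_1$-page is the normalized cochain complex of the layers, and this matches the ANSS $E_1$ restricted to those two antidiagonals, as already recorded before Proposition \ref{Homotopy groups of effective slices of motivic analogues}.
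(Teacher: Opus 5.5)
Your proposal is correct and follows the paper's proof. You compute weight by weight from the filtered model of the slice, use that it is constant with identity structure maps in weights $w \leq q$ to get $\tau$-freeness, and use even $\MU$ homology to make the BKSS for $\Tot(\tau_{\geq 2q}^{< 2q + 2}(X \wedge \MU^{\wedge \bullet + 1}))$ collapse at $E_2$ onto the $2q$-th antidiagonal; your remarks on strong convergence and on the absence of extension problems supply details the paper leaves implicit.
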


\begin{proof}
    Run the BKSS for the model of the $q$-th slice given in Proposition \ref{Filtered spectrum model for effective slice}
    \[(s_q(\Gamma_\star(X)))_w = \begin{dcases}
    \text{pt}, & w \geq q + 1\\
    \Tot(\tau_{\geq 2q}^{< 2q + 2}(X \wedge \MU^{\wedge \bullet + 1})), & w \leq q.
    \end{dcases}\]
    As $X$ is assumed to have even $\MU$ homology, the BKSS computing the homotopy groups of the slice will degenerate on $E_2$ and all non-trivial elements live on the $2q$-th antidiagonal. Since $(s_q(\Gamma_\star(X)))_w$ is constant in $w$ for $w \leq q$, it follows that the homotopy groups will be free on $\tau$ as claimed.
\end{proof}

Note that $s_0(S) = M\mathbb{Z}$, cf. \cite[Theorem 6.6]{Voe04} or \cite[section 8]{Lev14} or \cite{BE21}, and that any slice $s_q(Y)$ is a module over $s_0(S)$, cf. \cite[Theorem 3.6.22]{Pel11} or \cite[section 6, property (v)]{GRSO12}. Then, to get Theorem \ref{Theorem on slices of motivic analogues} from the above, we would like to invoke a $\mathbb{C}$-motivic analogue of Adams' theorem that a module over the classical Eilenberg-MacLane spectrum $H\mathbb{Z}$ splits as a wedge of generalized Eilenberg-MacLane spectra indexed by its homotopy groups. This motivic analogue of Adams' theorem is Proposition \ref{MZ-modules are wedges of Eilenberg-MacLane spectra}. The following series of lemmas builds up to that.

\begin{Lemma}\phantomsection\label{Realize a group in homotopy}
    Fix $s, w \in \mathbb{Z}$ and let $G$ be an abelian group. Then there exists a spectrum $C \in {\normalfont \textbf{SH}^{\mathbb{C}}}$ such that
    \[\pi_{s, *}(C) \cong G[\tau],\]
    where $G$ is placed in weight $w$.
\end{Lemma}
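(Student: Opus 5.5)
The plan is to build $C$ directly as a filtered spectrum that is a $\Gamma_\star(S)$-module, mimicking the shape of the slice models from Proposition \ref{Filtered spectrum model for effective slice}. Let $HG$ denote the classical Eilenberg-MacLane spectrum of $G$. Consider the classical spectrum $\Sigma^{s}HG$. Its $\MU$ homology is not necessarily even, so rather than applying $\Gamma_\star$, I would take the filtered spectrum $C$ with
\[C_v = \begin{dcases} \text{pt}, & v \geq w + 1\\ \Sigma^{s} HG, & v \leq w, \end{dcases}\]
with identity structure maps for $v \leq w$ and trivial maps otherwise. By construction $\pi_s(C_v) \cong G$ for $v \leq w$ and $0$ for $v > w$. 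The structure maps induce $\tau$-multiplication, so $\pi_{s,*}(C) \cong G[\tau]$ with $G$ in weight $w$, as required, provided $C$ is a $\Gamma_\star(S)$-module.

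The main step is to equip $C$ with a $\Gamma_\star(S)$-module structure. I would do this by exhibiting $C$ as a restriction of scalars along a ring map $\Gamma_\star(S) \to R$. The natural candidate is the filtered ring spectrum $R$ with $R_v = H\mathbb{Z}$ for $v \leq 0$ and $R_v = \text{pt}$ for $v > 0$. This is the filtered object concentrated in filtration $\leq 0$ and constant there, and it is a commutative algebra in filtered spectra, being the ``associated graded in degree $0$, made constant'' of the trivially filtered $H\mathbb{Z}$. A map of filtered $\mathbb{E}_\infty$-rings $\Gamma_\star(S) \to R$ can be produced by composing the truncation $\Gamma_\star(S) \to \Gamma_\star(S)/\Gamma_{\star}(S)_{\geq 1}$, which kills positive filtrations and is a ring map since positive filtration forms an ideal, with the map on $\Gamma_0(S) \simeq S \to H\mathbb{Z}$. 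Using $\Gamma_i(S) \simeq S$ for $i \leq 0$, as in the proof of Lemma \ref{Gamma_star^q(X) is a Gamma_star(S)-module}, the quotient is the constant filtered sphere in nonpositive degrees. Then $C$ is the $R$-module $\Sigma^{s}HG$ shifted in filtration by $w$. A filtration shift is a module-level operation, namely smashing with the filtered sphere of degree $w$, so $C$ is an $R$-module and hence a $\Gamma_\star(S)$-module.

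Alternatively, and more in the spirit of the paper, I would write the action maps $\Gamma_i(S) \wedge C_j \to C_v$ for $i+j \geq v$ by hand with the same case distinction as in Lemma \ref{Gamma_star^q(X) is a Gamma_star(S)-module}. When $v > w$ the target is trivial. When $j > w$ the source is trivial. When $i > 0$ and $j \leq w$ the map must be null. When $i \leq 0$ we use $\Gamma_i(S) \simeq S$ and the unit action. The expected obstacle is coherence. Specifying maps levelwise does not automatically give an $\infty$-categorical module structure, and one must check that the map for $i>0$ being zero is compatible. This is why I prefer the ring-map argument. The one point needing care there is that $\Gamma_\star(S) \to \tau_{\leq 0}^{\text{fil}}\Gamma_\star(S)$ is a ring map. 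I would justify it via the $t$-structure or filtration-truncation on filtered spectra, whose connective part $\{Y_v \simeq \text{pt}\ \text{for}\ v\leq 0\}$ is closed under Day convolution with anything concentrated in positive filtration.

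Finally, Theorem \ref{Equivalence between C-motivic homotopy and filtered spectra} transports $C$ to $\textbf{SH}^{\mathbb{C}}$. Bigraded homotopy groups are computed levelwise, since $\pi_{s,v}$ of a $\Gamma_\star(S)$-module is $\pi_s$ of its weight-$v$ part. This gives the claimed answer.
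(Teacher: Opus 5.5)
Your choice of $C$ works and gives a genuinely different proof from the paper's. However, the step you use to get the $\Gamma_\star(S)$-module structure is wrong as written.

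The quotient of $\Gamma_\star(S)$ by its filtration-$\geq 1$ part is not the constant filtered sphere in degrees $\leq 0$. Suppose a map of filtered spectra from $\Gamma_\star(S)$ to that object were an equivalence in degree $0$. Since the target is $\mathrm{pt}$ in degree $1$, this would force $\Gamma_1(S)\to\Gamma_0(S)\simeq S$ to be null. But on homotopy this map is $\tau$, and $\tau\eta\neq 0$.

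The correct quotient has $\mathrm{cofib}(\Gamma_1(S)\to S)\simeq H\mathbb{Z}$ in each degree $\leq 0$; this is the paper's model for $s_0(\Gamma_\star(S))=M\mathbb{Z}$. So it is already your $R$, and the extra step $S\to H\mathbb{Z}$ is spurious. Likewise, your ``ideal'' claim only holds among filtered spectra that are constant in degrees $\leq 0$.

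All of this can be bypassed. Your $C$ is exactly $\Sigma^{s,w}\Gamma_\star(HG)$, where $\Sigma^{s,w}$ acts by levelwise $\Sigma^s$ and a filtration shift by $w$. Since $\Gamma_\star$ is lax symmetric monoidal, $\Gamma_\star(X)$ is a $\Gamma_\star(S)$-module for every $X$. So evenness of $\MU$ homology is irrelevant to the module structure. In any case $\MU_*(HG)\cong H_*(\MU)\otimes G$ is even, and the ANSS $E_2$-page of $HG$ is $G$ in bidegree $(0,0)$. That is why $\Gamma_v(HG)$ is $HG$ for $v\leq 0$ and $\mathrm{pt}$ for $v>0$. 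This is precisely the model of $MG$ used in the proof of Lemma \ref{Maps between motivic Eilenberg-MacLane spectra are the same as group homomorphisms}. Your argument therefore amounts to taking $C=\Sigma^{s,w}MG$, whose homotopy is given by Corollary \ref{Homotopy groups of Eilenberg-MacLane spectra}.

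The paper instead takes $C$ to be the cofiber of a map $\bigvee_J S^{s,w}\to\bigvee_I S^{s,w}$ realizing a free resolution $0\to\bigoplus_J\mathbb{Z}\to\bigoplus_I\mathbb{Z}\to G\to 0$. It then reads off $\pi_{s,*}(C)\cong G[\tau]$ from the long exact sequence, using only $\pi_{s,*}(S^{s,w})\cong\mathbb{Z}[\tau]$ and $\pi_{s-1,*}(S^{s,w})=0$. This is more elementary: it needs no module structures and no model for $MG$.

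More importantly, it produces a cell complex, which is what the next steps exploit. Lemma \ref{Abstract isomorphism can be realized by a map} computes $[C,Y]$ for an arbitrary $Y$ from the cofiber sequence. Lemma \ref{MZ smash C is a generalized Eilenberg-MacLane spectrum} then shows $M\mathbb{Z}\wedge C\simeq\Sigma^{s,w}MG$, so your $C$ is the free $M\mathbb{Z}$-module on the paper's.

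Your $C$ has cleaner homotopy (concentrated in stem $s$) and suffices for the lemma as stated. It could not replace the paper's $C$ downstream, though, because maps out of $\Sigma^{s,w}MG$ are not controlled by $\pi_{s,*}(Y)$. For example, with $G=\mathbb{Z}$ and $Y=S^{s,w}$, a map inducing an isomorphism on $\pi_{s,*}$ would make $S^{s,w}$ a retract of $\Sigma^{s,w}M\mathbb{Z}$, which is impossible since $\eta\neq 0$.
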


\begin{proof}
    Choose a free resolution of $G$
    \[0 \to \bigoplus_{j \in J} \mathbb{Z} \xrightarrow[]{r} \bigoplus_{i \in I} \mathbb{Z} \xrightarrow[]{g} G \to 0.\]
    We have
    \[[\bigvee_{j \in J} S^{s,w}, \bigvee_{i \in I} S^{s,w}] \cong \prod_J \bigoplus_I [S^{s, w}, S^{s, w}] \cong \prod_J \bigoplus_I \mathbb{Z} \cong \prod_J \bigoplus_I \text{Hom}_{\mathbb{Z}}(\mathbb{Z}, \mathbb{Z}) \cong \text{Hom}_{\mathbb{Z}}(\bigoplus_J \mathbb{Z}, \bigoplus_I \mathbb{Z}).\]
    Then consider the map of spheres
    \[\bigvee_{j \in J} S^{s, w} \xrightarrow[]{\rho} \bigvee_{i \in I} S^{s,w},\]
    corresponding to $r$. Then by construction we have $\pi_{s, w}(\rho) = r$. Let $C$ be defined by the cofiber sequence
    \[\bigvee_{j \in J} S^{s, w} \xrightarrow[]{\rho} \bigvee_{i \in I} S^{s,w} \xrightarrow[]{\gamma} C.\]
    Because $\pi_{s, *}(S^{s, w}) = \mathbb{Z}[\tau]$ with $\mathbb{Z}$ in weight $w$, the isomorphism $\pi_{s, *}(C) \cong G[\tau]$ with $G$ in weight $w$ follows from the long exact sequence of homotopy groups associated to the cofiber sequence.
\end{proof}

\begin{Lemma}\phantomsection\label{Abstract isomorphism can be realized by a map}
    Fix $s, w \in \mathbb{Z}$. Let $Y \in {\normalfont \textbf{SH}^{\mathbb{C}}}$ and assume $\pi_{s, *}(Y) \cong G[\tau]$ for some abelian group $G$ placed in weight $w$. Let $C$ denote the spectrum from Lemma \ref{Realize a group in homotopy} with $\pi_{s, *}(C) \cong G[\tau]$, again with $G$ in weight $w$. Then there exists a map $\alpha: C \to Y$ such that $\pi_{s, *}(\alpha): G[\tau] \to G[\tau]$ is an isomorphism.
\end{Lemma}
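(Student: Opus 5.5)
We construct $\alpha$ by extending a map off the wedge of spheres along the cofiber sequence
\[\bigvee_{j \in J} S^{s, w} \xrightarrow[]{\rho} \bigvee_{i \in I} S^{s,w} \xrightarrow[]{\gamma} C\]
from the proof of Lemma \ref{Realize a group in homotopy}. Fix an isomorphism $\phi: G[\tau] \to \pi_{s,*}(Y)$ of $\mathbb{Z}[\tau]$-modules. Restricted to weight $w$, it is an isomorphism $\phi_w: G \to \pi_{s,w}(Y)$.

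First we define a map out of the wedge of spheres. Each basis element $e_i$ of $\bigoplus_I \mathbb{Z}$ maps under $g$ to an element $g(e_i) \in G$. Choose a representative $\beta_i: S^{s,w} \to Y$ of $\phi_w(g(e_i)) \in \pi_{s,w}(Y)$, and let $\beta = \bigvee_i \beta_i : \bigvee_{i \in I} S^{s,w} \to Y$. On $\pi_{s,w}$, $\beta$ induces $\phi_w \circ g$.

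Next we check that $\beta \circ \rho$ is null. Since $[\bigvee_J S^{s,w}, Y] \cong \prod_J \pi_{s,w}(Y)$, it suffices to test on each summand. There $\beta\circ\rho$ induces $\phi_w \circ g \circ r = 0$ on $\pi_{s,w}$, because $g \circ r = 0$. Hence $\beta \circ \rho \simeq 0$. The cofiber sequence then gives an extension $\alpha: C \to Y$ with $\alpha \circ \gamma \simeq \beta$.

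It remains to show that $\pi_{s,*}(\alpha)$ is an isomorphism. In weight $w$, the map $\pi_{s,w}(\gamma)$ is the surjection $g$ onto $\pi_{s,w}(C) \cong G$. Since $\pi_{s,w}(\alpha)\circ g = \phi_w \circ g$ and $g$ is surjective, we get $\pi_{s,w}(\alpha) = \phi_w$, which is an isomorphism. Both $\pi_{s,*}(C)$ and $\pi_{s,*}(Y)$ are of the form $G[\tau]$, so each is generated as a $\mathbb{Z}[\tau]$-module by weight $w$ and is $\tau$-torsion free, with $\tau^k$ mapping weight $w$ isomorphically onto weight $w-k$; in weights above $w$ both vanish. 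Because $\pi_{s,*}(\alpha)$ is $\mathbb{Z}[\tau]$-linear (naturality of $\tau$-multiplication) and an isomorphism in weight $w$, it is an isomorphism in every weight.

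The main subtle point is the identification of $\pi_{s,*}(C)$ with $G[\tau]$ with $\gamma$ inducing $g$ in weight $w$. This needs $\pi_{s-1,*}$ of the wedge over $J$ to not interfere. It follows from $\pi_{-1,*}(S)=0$ together with injectivity of $r$ (tensored with $\mathbb{Z}[\tau]$), as in the long exact sequence argument of Lemma \ref{Realize a group in homotopy}. The remaining steps are formal.
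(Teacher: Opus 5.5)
Your proposal is correct and is essentially the paper's argument: the paper also applies $[-,Y]$ to the cofiber sequence $\bigvee_{J} S^{s,w} \to \bigvee_{I} S^{s,w} \to C$, identifies the image of $[C,Y]$ with $\text{Hom}_{\mathbb{Z}}(G,G)$ via the free resolution, and picks $\alpha$ lifting the identity. It then concludes with the five lemma in weight $w$ and naturality of $\tau$. Your version makes the lift explicit by building $\beta$ from representatives of $\phi_w(g(e_i))$ and checking $\beta\circ\rho \simeq 0$ summand by summand; this is the same exactness argument written out by hand.
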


\begin{proof}
    Recall that $C$ was defined as a cofiber
    \[\bigvee_{j \in J} S^{s, w} \xrightarrow[]{\rho} \bigvee_{i \in I} S^{s,w} \xrightarrow[]{\gamma} C.\]
    Apply the functor $[-, Y]$ to this cofiber sequence to obtain the long exact sequence
    \[\dotsc \to [C, Y] \xrightarrow[]{- \circ \gamma} [\bigvee_{i \in I} S^{s,w}, Y] \xrightarrow[]{- \circ \rho} [\bigvee_{j \in J} S^{s,w}, Y] \to \dotsc.\]
    We can rewrite $[\bigvee_{i \in I} S^{s,w}, Y]$ as follows:
    \[[\bigvee_{i \in I} S^{s,w}, Y] \cong \prod_{i \in I}[S^{s, w}, Y] \cong \prod_{i \in I} \pi_{s, w}(Y) \cong \prod_{i \in I} G \cong \prod_{i \in I} \text{Hom}_{\mathbb{Z}}(\mathbb{Z}, G) \cong \text{Hom}_{\mathbb{Z}}(\bigoplus_{i \in I} \mathbb{Z}, G),\]
    and similarly for the set indexed by $J$. Then the long exact sequence is of the form
    \[\dotsc \to [C, Y] \xrightarrow[]{\psi} \text{Hom}_{\mathbb{Z}}(\bigoplus_{i \in I} \mathbb{Z}, G) \xrightarrow[]{\phi} \text{Hom}_{\mathbb{Z}}(\bigoplus_{j \in J} \mathbb{Z}, G) \to \dotsc.\]
    
    We claim that the image of $\psi$ is given by $\text{Hom}_{\mathbb{Z}}(G, G)$. To see this, first apply $\text{Hom}_{\mathbb{Z}}(-, G)$ to the free resolution of $G$
    \[0 \to \text{Hom}_{\mathbb{Z}}(G, G) \xrightarrow[]{g} \text{Hom}_{\mathbb{Z}}(\bigoplus_{i \in I} \mathbb{Z}, G) \xrightarrow[]{f} \text{Hom}_{\mathbb{Z}}(\bigoplus_{j \in J} \mathbb{Z}, G) \to \dotsc.\]
    The map $f$ can be identified with $\phi$ because $f$ is induced by the map $r$ in the free resolution
    \[0 \to \bigoplus_{j \in J} \mathbb{Z} \xrightarrow[]{r} \bigoplus_{i \in I} \mathbb{Z} \xrightarrow[]{g} G \to 0,\]
    and $\phi$ comes from $\rho$ which induces $r$ on homotopy groups. The image of $\psi$ is then determined by exactness
    \[\text{Hom}_{\mathbb{Z}}(G, G) \cong \text{im}(g) = \text{ker}(f) \cong \text{ker}(\phi) = \text{im}(\psi).\]

    Now choose $\alpha \in [C, Y]$ so that it maps to the identity under
    \[[C, Y] \xrightarrow[]{\psi} \text{Hom}_{\mathbb{Z}}(G,G).\]
    We claim that $\alpha$ induces an isomorphism
    \[\pi_{s,w}(\alpha): \pi_{s, w}(C) \xrightarrow[]{\cong} \pi_{s, w}(Y).\]
    To see this, one can check that by construction the map $\pi_{s,w}(\alpha)$ makes the following diagram commute
    \[
    \begin{tikzcd}
        0 \ar[r] & \bigoplus_{j \in J} \mathbb{Z} \ar[r] & \bigoplus_{i \in I} \mathbb{Z} \ar[r] & \pi_{s, w}(Y) \ar[r] & 0\\
        0 \ar[r] & \bigoplus_{j \in J} \mathbb{Z} \ar[r] \ar[u, "="] & \bigoplus_{i \in I} \mathbb{Z} \ar[r] \ar[u, "="] & \pi_{s, w}(C) \ar[r] \ar[u, "\pi_{s, w}(\alpha)"] & 0
    \end{tikzcd}
    \]
    By the five lemma $\pi_{s,w}(\alpha)$ is an isomorphism. Note that by naturality of $\tau$ we get that $\alpha$ more so induces an isomorphism
    \[\pi_{s, *}(\alpha): \pi_{s, *}(C) \xrightarrow[]{\cong} \pi_{s, *}(Y),\]
    as desired.
\end{proof}

\begin{Lemma}\phantomsection\label{Smashing with MZ preserves iso in homotopy}
    Let $C$, $Y$, and $\alpha$ be as in Lemma \ref{Abstract isomorphism can be realized by a map}. Assume $Y$ is an $M\mathbb{Z}$-module. Then
    \[\pi_{s, *}(\normalfont{\text{id}} \wedge \alpha): \pi_{s, *}(M\mathbb{Z} \wedge C) \to \pi_{s, *}(M\mathbb{Z} \wedge Y)\]
    is an isomorphism.
\end{Lemma}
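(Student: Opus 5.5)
The plan is to compute both sides of $\pi_{s,*}(\text{id} \wedge \alpha)$ and compare them through the unit map $S \to M\mathbb{Z}$ and the module action $\mu \colon M\mathbb{Z} \wedge Y \to Y$. For the source, smash the cofiber sequence defining $C$ from Lemma \ref{Realize a group in homotopy} with $M\mathbb{Z}$. This gives
\[\bigvee_{j \in J} \Sigma^{s,w} M\mathbb{Z} \xrightarrow{\text{id} \wedge \rho} \bigvee_{i \in I} \Sigma^{s,w} M\mathbb{Z} \to M\mathbb{Z} \wedge C.\]
Here $\pi_{*,*}(M\mathbb{Z}) = \mathbb{Z}[\tau]$ is concentrated in stem $0$ and weights $\leq 0$. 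The map induced by $\rho$ in stem $s$ is $r[\tau]$, which is injective. The long exact sequence therefore gives $\pi_{s,*}(M\mathbb{Z} \wedge C) \cong G[\tau]$ with $G$ in weight $w$. The unit $C \to M\mathbb{Z} \wedge C$ is an isomorphism on $\pi_{s,*}$, since it is one on the two wedges of spheres and we can apply the five lemma.

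For the target, naturality of the unit gives a commutative square. Its two rows are $C \to M\mathbb{Z} \wedge C$ and $Y \to M\mathbb{Z} \wedge Y$. Its two columns are $\alpha$ and $\text{id} \wedge \alpha$. Moreover $\mu \circ (\text{id} \wedge \alpha)$, precomposed with the unit of $C$, equals $\alpha$. On $\pi_{s,*}$ the unit of $C$ is an isomorphism and $\alpha$ is an isomorphism by Lemma \ref{Abstract isomorphism can be realized by a map}. Hence $\pi_{s,*}(\mu)\circ\pi_{s,*}(\text{id}\wedge\alpha)$ is an isomorphism. This already shows that $\pi_{s,*}(\text{id} \wedge \alpha)$ is injective and that $\pi_{s,*}(\mu)$ is surjective.

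The remaining and main step is surjectivity of $\pi_{s,*}(\text{id}\wedge\alpha)$ itself. I would try to get it by showing $\pi_{s,*}(\mu)$ is also injective, so that both maps are isomorphisms. The tool is the Künneth (universal coefficient) spectral sequence for $M\mathbb{Z} \wedge Y \simeq (M\mathbb{Z} \wedge M\mathbb{Z}) \wedge_{M\mathbb{Z}} Y$. It has $E_2 = \text{Tor}^{\mathbb{Z}[\tau]}(\pi_{*,*}(M\mathbb{Z} \wedge M\mathbb{Z}), \pi_{*,*}(Y))$. The stem-$0$ part of $\pi_{*,*}(M\mathbb{Z} \wedge M\mathbb{Z})$ is $\mathbb{Z}[\tau]$, and that part recovers $\pi_{s,*}(Y)$ via $\mu$. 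Positive-stem parts (the motivic dual Steenrod algebra) could contribute $\pi_{s-k,*}(Y)$ for $k > 0$.

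I expect this to be the real obstacle. The statement only controls $\pi_{s,*}(Y)$, so these lower-stem contributions have to be ruled out. In the intended application, Proposition \ref{MZ-modules are wedges of Eilenberg-MacLane spectra}, I would run an induction on stems: first split off the lowest stem, so that $Y$ may be assumed to have vanishing homotopy below stem $s$. Under that assumption the extra terms vanish and the argument closes. If no such reduction is available, I would state this connectivity hypothesis explicitly, since without it the lower-stem terms need not vanish and the comparison may not be an isomorphism.
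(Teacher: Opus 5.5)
Everything you actually prove is correct, and it is the same argument as the paper's. The paper uses the diagram $C\simeq S\wedge C\xrightarrow{h\wedge\text{id}}M\mathbb{Z}\wedge C\xrightarrow{\text{id}\wedge\alpha}M\mathbb{Z}\wedge Y\xrightarrow{\mu}Y$, whose composite is $\alpha$. Your five-lemma computation on the smashed cofiber sequence is a cleaner justification than the paper's cell-attachment remark for why $h\wedge\text{id}_C$ is an isomorphism on $\pi_{s,*}$.

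The step you could not close is exactly where the paper's proof breaks down. The paper asserts that $\pi_{s,*}(\mu)$ is an isomorphism ``because it is true for $h\wedge\text{id}$''. But the unit axiom $\mu\circ(h\wedge\text{id}_Y)=\text{id}_Y$ only gives surjectivity. The cellular argument for $h\wedge\text{id}$ relies on $C$ being built from cells in stem $s$, and says nothing about $Y$.

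Your suspicion is right: as stated, the lemma is false. Take $Y=\Sigma^{s-2,w}M\mathbb{Z}$. Then $\pi_{s,*}(Y)=0$, so $G=0$ and $C\simeq\mathrm{pt}$. On the other hand, $\pi_{s,*}(M\mathbb{Z}\wedge Y)$ is a weight shift of $\pi_{2,*}(M\mathbb{Z}\wedge M\mathbb{Z})\neq 0$. This group is the motivic counterpart of $\pi_2(H\mathbb{Z}\wedge H\mathbb{Z})\cong\mathbb{Z}/2$; mod $2$ it is detected by $\xi_1$ in bidegree $(2,1)$ of $H_{*,*}(M\mathbb{Z};\mathbb{Z}/2)$. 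Wedging on $\Sigma^{s,w}M\mathbb{Z}$ gives a counterexample with $G\neq 0$ as well.

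Your proposed repair is heavier than needed. The lemma is only used in Proposition \ref{MZ-modules are wedges of Eilenberg-MacLane spectra}. There, all that is required is that the composite $\mu\circ(\text{id}\wedge\alpha_s)$ induces an isomorphism on $\pi_{s,*}$. Your second paragraph already proves this unconditionally: precomposed with the $\pi_{s,*}$-isomorphism $h\wedge\text{id}_{C_s}$, the composite equals $\alpha_s$.

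So you can drop the K\"unneth spectral sequence and the stem induction. The induction would not even be available for that Proposition as stated, since $G_s$ may be nonzero for all $s\in\mathbb{Z}$, leaving no lowest stem to split off.

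The statement worth proving is therefore: $\pi_{s,*}(\text{id}\wedge\alpha)$ is injective and $\pi_{s,*}(\mu\circ(\text{id}\wedge\alpha))$ is an isomorphism. Bijectivity of $\pi_{s,*}(\text{id}\wedge\alpha)$ itself genuinely requires an extra connectivity hypothesis on $Y$, such as the one you suggest.
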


\begin{proof}
    Consider the following commutative diagram
    \[
    \begin{tikzcd}
        C \ar[r, "\alpha"] \ar[d, "\simeq"'] & Y\\
        S \wedge C \ar[d, "h \wedge \text{id}"'] & \\
        M\mathbb{Z} \wedge C \ar[d, "\text{id} \wedge \alpha"'] & \\
        M\mathbb{Z} \wedge Y \ar[ruuu, "\mu"', bend right] & 
    \end{tikzcd}
    \]
    where $h: S \to M\mathbb{Z}$ is the Hurewicz map (the unit of the ring-spectrum structure of $M\mathbb{Z}$), and $\mu: M\mathbb{Z} \wedge Y \to Y$ is the $M\mathbb{Z}$-module multiplication map. All of the maps in this diagram induce isomorphisms on $\pi_{s, *}$:
    \begin{itemize}
        \item For $\alpha$ it is true by construction,
        \item for $h \wedge \text{id}$ it follows because when building a cellular approximation for $M\mathbb{Z}$ we start with $S$ and then continue by attaching a cell to kill the Hopf map $\eta$ (and then inductively attaching cells in larger stems), which keeps $\pi_{0, *}$ unchanged,
        \item for $\mu$ it follows because it is true for $h \wedge \text{id}$,
        \item for $\text{id} \wedge \alpha$ it then follows by commutativity of the diagram.\qedhere
    \end{itemize}
\end{proof}

The following Lemma \ref{Maps between motivic Eilenberg-MacLane spectra are the same as group homomorphisms} is stated without proof in \cite[Lemma A.3]{RSO19}. As we will need to use it, we provide a short proof in the $\mathbb{C}$-motivic setting using filtered spectra.

\begin{Lemma}\phantomsection\label{Maps between motivic Eilenberg-MacLane spectra are the same as group homomorphisms}
    Let $G, G'$ be abelian groups. Then there is an isomorphism
    \[[MG, MG'] \cong \text{\normalfont Hom}_{\mathbb{Z}}(G, G').\]
\end{Lemma}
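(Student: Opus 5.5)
The plan is to realize $MG$ in the filtered model and compute the mapping group through the universal property of a cofiber. By \cite[Corollary 7.7]{CQ21}, $MG \simeq \Gamma_\star(HG)$, where $HG$ is the classical Eilenberg-MacLane spectrum. The ANSS for $HG$ is concentrated in filtration $0$ and stem $0$, since $HG$ has even $\MU$ homology and $E_2^{0,0} = G$. The BKSS for $\Gamma_w(HG)$ then shows that $\pi_{s,w}(MG) \cong G$ for $s=0$ and $w \leq 0$, and vanishes otherwise. In other words, $\pi_{*,*}(MG) \cong G[\tau]$ with $G$ in bidegree $(0,0)$.

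I would build $MG$ cellularly in the style of Proposition \ref{motivic cellular approximation}. First take the cofiber $C$ of Lemma \ref{Realize a group in homotopy} for $s=w=0$. Lemma \ref{Abstract isomorphism can be realized by a map} gives a map $\alpha\colon C \to MG$ that is an isomorphism on $\pi_{0,*}$. Next attach cells $S^{s,w}$ with $s \geq 1$, in the order used in Proposition \ref{motivic cellular approximation} (increasing stem, and within a stem decreasing weight), to kill the higher homotopy of $C$. This produces $MG \simeq \varinjlim_n C_n$ with $C_0 = C$, where each $C_{n+1}$ is a cofiber of a wedge of spheres $S^{s,w}$ with $s \geq n+1$ mapping to $C_n$.

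Now compute $[MG, MG']$. Since $\pi_{s,*}(MG') = 0$ for $s \neq 0$, attaching a cell of stem $s \geq 1$ changes nothing in the long exact sequence for $[-,MG']$. The relevant terms are $[S^{s,w}, MG'] = \pi_{s,w}(MG') = 0$ and $[\Sigma^{1,0} S^{s-1,w}, MG'] = \pi_{s+1,w}(MG') = 0$ for $s \geq 1$, up to indexing bookkeeping. So each restriction $[C_{n+1}, MG'] \to [C_n, MG']$ is an isomorphism. The Milnor $\varprojlim^1$ term vanishes because the tower is constant, and therefore $[MG,MG'] \cong [C, MG']$.

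Then apply the exact sequence of Lemma \ref{Abstract isomorphism can be realized by a map}:
\[ [\Sigma^{1,0}\textstyle\bigvee_J S^{0,0}, MG'] \to [C,MG'] \to \mathrm{Hom}(\textstyle\bigoplus_I \mathbb{Z}, G') \to \mathrm{Hom}(\textstyle\bigoplus_J\mathbb{Z}, G'). \]
The left term is $\prod_J \pi_{1,0}(MG') = 0$. The kernel of the last map is $\mathrm{Hom}(G,G')$, by left exactness of $\mathrm{Hom}(-,G')$ applied to the free resolution. This gives $[C,MG'] \cong \mathrm{Hom}_{\mathbb{Z}}(G,G')$. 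Finally I would check that the isomorphism is the natural map $f \mapsto \pi_{0,0}(f)$, by comparing with the diagram in the proof of Lemma \ref{Abstract isomorphism can be realized by a map}.

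The main obstacle is the cellular step. One must make sure the cells attached after $C$ all have stem at least $1$, and that none of them needs a cell of stem $0$ in a lower weight. This holds because $\pi_{0,*}(C) \cong G[\tau]$ is already correct in all weights (the $\tau$-isomorphism argument of Proposition \ref{motivic cellular approximation}(1)), and because $\pi_{s,*}(C) = 0$ for $s < 0$. One also needs $\pi_{1,w}(MG') = 0$ so that the map into $\mathrm{Hom}$ is injective; this is immediate from the homotopy computation in the first paragraph.
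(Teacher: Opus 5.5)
Your argument is correct, but it takes a different route from the paper. The paper's proof is essentially two lines. Because $\Gamma_w(HG)$ is $\text{pt}$ for $w>0$ and constant equal to $HG$ for $w\le 0$ (and likewise for $G'$), it reads off $[MG,MG']\cong[HG,HG']$ directly from the filtered models. It then quotes the classical fact $[HG,HG']\cong\mathrm{Hom}_{\mathbb{Z}}(G,G')$.

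You instead carry the classical proof of that classical fact into $\textbf{SH}^{\mathbb{C}}$. You present $MG$ as the Moore-type object $C$ of Lemma \ref{Realize a group in homotopy} with cells of stem $\ge 1$ attached. These cells are invisible to $[-,MG']$ because $\pi_{s,*}(MG')=0$ for $s\ge 1$, and you then compute $[C,MG']$ from the free resolution.

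The paper's route is shorter and lets classical homotopy theory do the work. As written, however, it computes maps of underlying filtered spectra, whereas $[MG,MG']$ means maps of $\Gamma_\star(S)$-modules under Theorem \ref{Equivalence between C-motivic homotopy and filtered spectra}; it leaves that comparison implicit. It also does not say that the isomorphism is $f\mapsto\pi_{0,0}(f)$. That naturality is exactly what the proof of Lemma \ref{MZ smash C is a generalized Eilenberg-MacLane spectrum} tacitly uses when it identifies $-\circ\tilde\rho$ with precomposition by $r$.

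Your route stays inside $\textbf{SH}^{\mathbb{C}}$ and uses only bigraded homotopy groups and lemmas already in the paper. It delivers that naturality, and it actually proves more: $[MG,Y]\cong\mathrm{Hom}_{\mathbb{Z}}(G,\pi_{0,0}(Y))$ for any $Y$ with $\pi_{s,*}(Y)=0$ for all $s\ge 1$. You can also simplify one step. Since $\pi_{s,*}(MG)=0$ for $s\ge 1$, you may kill all of $\pi_{n+1,*}(C_n)$ at once with a wedge over all weights, so no weight ordering is needed.

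Two small repairs are needed.

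\begin{itemize}
\item Evenness of $\text{MU}_*HG$ only kills the odd antidiagonals. The ANSS $E_2$-page of $HG$ is concentrated in filtration $0$ (hence at $(0,0)$) for a different reason. $HG$ is a homotopy $\text{MU}$-module, hence a retract of $\text{MU}\wedge HG$, so the cobar complex is acyclic in positive filtration. The paper relies on the same fact when it writes down $\Gamma_w(HG)$.
\item The $\varprojlim^1$ term in the Milnor sequence is that of the tower $[\Sigma^{1,0}C_n,MG']$, not $[C_n,MG']$. This tower also consists of isomorphisms by the same argument, since $\pi_{s,*}(MG')=0$ for $s\ge 2$, so the term still vanishes.
\end{itemize}
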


\begin{proof}
    Use the models $\Gamma_\star(HG)$ and $\Gamma_\star(HG')$ for $MG$ and $MG'$, respectively. We have
    \[\Gamma_w(HG) = \begin{dcases}
        \text{pt}, & w > 0\\
        HG, & w \leq 0
    \end{dcases}\]
    and similarly for $\Gamma_\star(HG')$. So there is a canonical isomorphism
    \[[MG, MG'] \cong [HG, HG'].\]
    The claim then follows from the classical fact
    \[[HG, HG'] \cong \text{Hom}_{\mathbb{Z}}(G, G').\qedhere\]
\end{proof}

\begin{Lemma}\phantomsection\label{MZ smash C is a generalized Eilenberg-MacLane spectrum}
    Let $C$ be as in Lemma \ref{Realize a group in homotopy}. Then there is an equivalence
    \[M\mathbb{Z} \wedge C \xrightarrow[]{\simeq} \Sigma^{s, w} MG.\]
\end{Lemma}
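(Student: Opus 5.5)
Smashing the defining cofiber sequence of $C$ with $M\mathbb{Z}$ gives a cofiber sequence
\[\bigvee_{j \in J} \Sigma^{s,w} M\mathbb{Z} \xrightarrow{\mathrm{id} \wedge \rho} \bigvee_{i \in I} \Sigma^{s,w} M\mathbb{Z} \to M\mathbb{Z} \wedge C.\]
Separately, applying $M$ (that is, $\Gamma_\star \circ H$, or directly via Lemma \ref{Maps between motivic Eilenberg-MacLane spectra are the same as group homomorphisms}) to the free resolution $0 \to \bigoplus_J \mathbb{Z} \xrightarrow{r} \bigoplus_I \mathbb{Z} \xrightarrow{g} G \to 0$ gives maps $\bigvee_J \Sigma^{s,w} M\mathbb{Z} \to \bigvee_I \Sigma^{s,w} M\mathbb{Z} \to \Sigma^{s,w} MG$. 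I will show that this is also a cofiber sequence, and that its first map agrees with $\mathrm{id} \wedge \rho$. Then the cofibers are equivalent, and $M\mathbb{Z}\wedge C\simeq \Sigma^{s,w}MG$.

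\emph{Identifying the maps.} Use the filtered models $\Gamma_\star(H\mathbb{Z})$ and note that $\Gamma_\star$ commutes with wedges here. Weightwise, $\Gamma_w(HA)$ is $HA$ for $w \le 0$ and $\mathrm{pt}$ for $w>0$, so a map between wedges of shifted $M\mathbb{Z}$'s is determined by a map between wedges of $H\mathbb{Z}$'s. By Lemma \ref{Maps between motivic Eilenberg-MacLane spectra are the same as group homomorphisms}, together with the product/sum description as in Lemma \ref{Realize a group in homotopy}, such maps correspond to matrices in $\prod_J\bigoplus_I \mathbb{Z}$, that is, to homomorphisms $\bigoplus_J\mathbb{Z}\to\bigoplus_I\mathbb{Z}$. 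The map $\mathrm{id}\wedge\rho$ has component $(j,i)$ equal to $\mathrm{id}_{M\mathbb{Z}} \wedge$ (an integer multiple $n_{ji}$ of the identity of $S^{s,w}$), which is $n_{ji}$ times the identity. So it corresponds to the matrix of $r$, as does $Mr$. Hence $\mathrm{id}\wedge\rho \simeq \Sigma^{s,w} Mr$.

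\emph{Cofiber sequence.} On the classical side, $H\bigoplus_J \mathbb{Z} \to H\bigoplus_I\mathbb{Z}\to HG$ is a cofiber sequence because $r$ is injective with cokernel $G$. In filtered spectra this sequence is computed weightwise (pt in positive weights, the classical sequence otherwise), so it remains a cofiber sequence of $\Gamma_\star(S)$-modules, hence of motivic spectra. Suspending by $\Sigma^{s,w}$ preserves it. The equivalence is then the induced map on cofibers.

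An alternative route is to compare homotopy. Lemma \ref{Smashing with MZ preserves iso in homotopy} and the long exact sequence give $\pi_{*,*}(M\mathbb{Z}\wedge C)$ as $G[\tau]$ placed at $(s,w)$, and $\Sigma^{s,w}MG$ has the same homotopy groups. Cellularity would then upgrade a map inducing an isomorphism to an equivalence. Constructing that map, however, still requires the argument above.

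\emph{Main obstacle.} The delicate step is the identification of $\mathrm{id}\wedge\rho$ with $\Sigma^{s,w}Mr$ when $I,J$ are infinite. There I must check that maps out of an infinite wedge into an infinite wedge of $M\mathbb{Z}$'s are still computed entrywise as $\prod_J\bigoplus_I\mathbb{Z}$. The $\prod_J$ part is automatic. The $\bigoplus_I$ part requires compactness of $S^{s,w}$ (via \cite[Theorem 9.1]{DI05}), applied through the adjunction $[\Sigma^{s,w}M\mathbb{Z}, -]_{M\mathbb{Z}\text{-mod}} = \pi_{s,w}(-)$ in $M\mathbb{Z}$-modules. It is cleaner to work in $M\mathbb{Z}$-modules throughout, since both maps are $M\mathbb{Z}$-linear, and I would make that choice.
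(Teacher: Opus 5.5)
Your proposal is correct and follows essentially the paper's route. Both arguments compare the $M\mathbb{Z}$-smashed defining cofiber sequence of $C$ with the Eilenberg--MacLane cofiber sequence $\bigvee_J \Sigma^{s,w} M\mathbb{Z} \to \bigvee_I \Sigma^{s,w} M\mathbb{Z} \to \Sigma^{s,w} MG$ built from the free resolution, and both use Lemma~\ref{Maps between motivic Eilenberg-MacLane spectra are the same as group homomorphisms} to match $\mathrm{id}\wedge\rho$ with $r$. The paper obtains the comparison map $y$ by lifting the class realizing $g$ through the long exact sequence of $[-,\Sigma^{s,w}MG]$, whereas you identify the first maps and take the induced map on cofibers; you also justify two points the paper leaves implicit, namely that the Eilenberg--MacLane sequence is a cofiber sequence and the bookkeeping for infinite wedges.
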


\begin{proof}
    Take the defining cofiber sequence for $C$ from Lemma \ref{Realize a group in homotopy} and smash it with $M\mathbb{Z}$ to obtain the cofiber sequence
    \[\bigvee_{j \in J} \Sigma^{s, w} M\mathbb{Z} \xrightarrow[]{\tilde{\rho}} \bigvee_{i \in I} \Sigma^{s, w} M\mathbb{Z} \xrightarrow[]{\tilde{\gamma}} M\mathbb{Z}\wedge C.\]
    Apply $[-, \Sigma^{s, w} MG]$ to obtain a long exact sequence
    \[
    \dotsc \to [M\mathbb{Z}\wedge C, \Sigma^{s, w} MG] \xrightarrow[]{- \circ \tilde{\gamma}} [\bigvee_{i \in I} \Sigma^{s, w} M\mathbb{Z}, \Sigma^{s, w} MG] \xrightarrow[]{- \circ \tilde{\rho}} [\bigvee_{j \in J} \Sigma^{s, w} M\mathbb{Z}, \Sigma^{s, w} MG] \to \dotsc.
    \]
    The two right-hand sets can be identified with group homomorphisms of the respective underlying groups by Lemma \ref{Maps between motivic Eilenberg-MacLane spectra are the same as group homomorphisms}. In particular, the middle set contains a class $x$ realizing the projection $\bigoplus_{i \in I} \mathbb{Z} \xrightarrow[]{g} G$ from the free resolution of $G$. Then $x$ maps to $0$ on the right because $- \circ \tilde{\rho}$ can be identified with composition by the other map $\bigoplus_{j \in J} \mathbb{Z} \xrightarrow[]{r} \bigoplus_{i \in I} \mathbb{Z}$ from the free resolution of $G$. So $x$ has a preimage $y$ coming from the left. That preimage $y$ fits into the diagram of cofiber sequences
    \[
    \begin{tikzcd}
        \bigvee_{j \in J} \Sigma^{s, w} M\mathbb{Z} \ar[r, "\tilde{\rho}"] \ar[d, "="] & \bigvee_{i \in I} \Sigma^{s, w} M\mathbb{Z} \ar[r, "\tilde{\gamma}"] \ar[d, "="] & M\mathbb{Z} \wedge C \ar[d, "y"] \\
        \bigvee_{j \in J} \Sigma^{s, w} M\mathbb{Z} \ar[r] & \bigvee_{i \in I} \Sigma^{s, w} M\mathbb{Z} \ar[r] & \Sigma^{s ,w} MG
    \end{tikzcd}
    \]
    which implies that it is an equivalence.
\end{proof}

\begin{Prop}\phantomsection\label{MZ-modules are wedges of Eilenberg-MacLane spectra}
    Let $Y \in {\normalfont \textbf{SH}^{\mathbb{C}}}$ be an $M\mathbb{Z}$-module. Fix $w \in \mathbb{Z}$ and for each $s \in \mathbb{Z}$ let $G_s$ be an abelian group. Assume the homotopy groups of $Y$ are of the form
    \[\pi_{s, *}(Y) \cong G_s[\tau]\]
    for every $s \in \mathbb{Z}$, where $G_s$ is placed in weight $w$. Then
    \[Y \simeq \bigvee_{s \in \mathbb{Z}} \Sigma^{s, w} MG_s.\]
\end{Prop}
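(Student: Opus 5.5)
The plan is to assemble the lemmas just proved, one stem at a time, and then wedge the results together. For each $s \in \mathbb{Z}$, apply Lemma \ref{Realize a group in homotopy} to $G_s$ and the fixed weight $w$ to obtain a spectrum $C_s$ with $\pi_{s,*}(C_s) \cong G_s[\tau]$, with $G_s$ in weight $w$. Since $\pi_{s,*}(Y) \cong G_s[\tau]$ by hypothesis, Lemma \ref{Abstract isomorphism can be realized by a map} produces a map $\alpha_s \colon C_s \to Y$ inducing an isomorphism on $\pi_{s,*}$.

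Next I pass to $M\mathbb{Z}$-modules. Because $Y$ is an $M\mathbb{Z}$-module, each $\alpha_s$ extends to the $M\mathbb{Z}$-linear map
\[\beta_s \colon M\mathbb{Z} \wedge C_s \xrightarrow{\text{id} \wedge \alpha_s} M\mathbb{Z} \wedge Y \xrightarrow{\mu} Y.\]
The diagram in the proof of Lemma \ref{Smashing with MZ preserves iso in homotopy} shows that $\beta_s$ still induces an isomorphism on $\pi_{s,*}$. By Lemma \ref{MZ smash C is a generalized Eilenberg-MacLane spectrum}, $M\mathbb{Z} \wedge C_s \simeq \Sigma^{s,w} MG_s$. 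Summing over $s$ then gives a map
\[\beta \colon \bigvee_{s \in \mathbb{Z}} \Sigma^{s,w} MG_s \to Y.\]

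It remains to show that $\beta$ is an equivalence. Since we work in the cellular category, it suffices to check that $\beta$ is an isomorphism on all $\pi_{t,*}$. This requires computing $\pi_{*,*}(\Sigma^{s,w} MG)$. Using the model $\Gamma_\star(HG)$ from the proof of Lemma \ref{Maps between motivic Eilenberg-MacLane spectra are the same as group homomorphisms}, $\pi_{*,*}(MG) = G[\tau]$ concentrated in stem $0$ and starting in weight $0$. Hence $\Sigma^{s,w} MG_s$ contributes only $G_s[\tau]$ in stem $s$ from weight $w$ downward. Because homotopy groups commute with arbitrary wedges (spheres are compact), $\pi_{t,*}$ of the wedge is exactly $G_t[\tau]$, coming from the $t$-th summand. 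The map $\beta$ restricted to that summand is $\beta_t$, which is an isomorphism on $\pi_{t,*}$, while the other summands contribute nothing in stem $t$. So $\beta$ is a $\pi_{*,*}$-isomorphism and therefore an equivalence.

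The step I expect to need the most care is the passage from $\alpha_s$ to an isomorphism $\Sigma^{s,w}MG_s \to Y$. One has to check that the equivalence of Lemma \ref{MZ smash C is a generalized Eilenberg-MacLane spectrum} composed with $\mu \circ (\text{id} \wedge \alpha_s)$ is compatible on $\pi_{s,*}$. This is exactly what Lemma \ref{Smashing with MZ preserves iso in homotopy}, together with the fact that $\mu$ is an isomorphism on $\pi_{s,*}$, supplies. The vanishing of $\pi_{*,*}(MG)$ outside stem $0$ is the other input, and it is read off directly from the BKSS of $\Gamma_\star(HG)$.
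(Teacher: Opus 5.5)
Your proposal is correct and follows the paper's proof essentially step for step: realize $G_s$ by $C_s$, map it to $Y$, pass along the module action to $\Sigma^{s,w}MG_s\simeq M\mathbb{Z}\wedge C_s\to Y$, and wedge over $s$ using that $\pi_{*,*}(MG)\cong G[\tau]$ is concentrated in stem $0$. One caution: keep your second-paragraph argument, which gets the $\pi_{s,*}$-isomorphism for $\beta_s=\mu\circ(\mathrm{id}\wedge\alpha_s)$ directly from $\beta_s\circ(h\wedge\mathrm{id}_{C_s})=\alpha_s$. Drop the closing appeal, also made in the paper's write-up, to $\mu$ and $\mathrm{id}\wedge\alpha_s$ each being isomorphisms on $\pi_{s,*}$; this fails in general and is not needed. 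For example, take $Y=M\mathbb{Z}\vee\Sigma^{-2,0}M\mathbb{Z}$ and $s=0$: the nonzero group $\pi_{2,*}(M\mathbb{Z}\wedge M\mathbb{Z})$ sits inside $\pi_{0,*}(M\mathbb{Z}\wedge Y)$ and is killed by $\mu$.
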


\begin{proof}
    Fix $s \in \mathbb{Z}$. By Lemmas \ref{Realize a group in homotopy} and \ref{Abstract isomorphism can be realized by a map} there is a spectrum $C_s$ such that
    \[\pi_{s, *}(C_s) \cong G_s[\tau],\]
    and a map $\alpha_s: C_s \to Y$ such that
    \[\pi_{s, *}(\alpha_s): G_s[\tau] \to G_s[\tau]\]
    is an isomorphism.

    Since $Y$ is an $M\mathbb{Z}$-module, Lemma \ref{Smashing with MZ preserves iso in homotopy} implies that
    \[\pi_{s, *}(\text{id} \wedge \alpha_s): \pi_{s, *}(M\mathbb{Z} \wedge C_s) \to \pi_{s,*}(M\mathbb{Z} \wedge Y)\]
    is also an isomorphism.

    By Lemma \ref{MZ smash C is a generalized Eilenberg-MacLane spectrum} we have an equivalence
    \[M\mathbb{Z} \wedge C_s \simeq \Sigma^{s, w} MG_s.\]
    Now we can form the map $\tilde{\alpha}_s$ as the composite
    \[\Sigma^{s, w} MG_s \xrightarrow[]{\simeq} M\mathbb{Z} \wedge C_s \xrightarrow[]{\text{id} \wedge \alpha_s} M\mathbb{Z} \wedge Y \xrightarrow[]{\mu} Y,\]
    where $\mu$ is the $M\mathbb{Z}$-module structure map of $Y$. Recall from the proof of Lemma \ref{Smashing with MZ preserves iso in homotopy} that $\mu$ also induces an isomorphism on $\pi_{s, *}$. Then $\tilde{\alpha}_s$ induces an isomorphism on $\pi_{s, *}$ by construction. Taking the wedge sum over all $s \in \mathbb{Z}$
    \[\bigvee_{s \in \mathbb{Z}} \tilde{\alpha}_s: \bigvee_{s \in \mathbb{Z}} \Sigma^{s, w} MG_s \to Y\]
    yields a map that induces isomorphisms on all homotopy groups because $\pi_{*, *}(MG) \cong G[\tau]$ with $G$ in bidegree $(0, 0)$, as noted in Corollary \ref{Homotopy groups of Eilenberg-MacLane spectra} below. So the map we constructed is the desired equivalence.
\end{proof}

At the end of the proof of Proposition \ref{MZ-modules are wedges of Eilenberg-MacLane spectra} we used the well-known structure of the homotopy groups of $\mathbb{C}$-motivic, $2$-complete Eilenberg-MacLane spectra. We provide a proof of these facts here using a filtered spectrum approach.

\begin{Lemma}\phantomsection\label{homotopy groups of MZ}
    The homotopy groups of $M\mathbb{Z}$ are given by
    \[\pi_{*, *}(M\mathbb{Z}) \cong \mathbb{Z}[\tau],\]
    where $\mathbb{Z}$ is placed in stem $0$ and weight $0$.
\end{Lemma}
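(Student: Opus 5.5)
We will work with the model $M\mathbb{Z} \simeq \Gamma_\star(H\mathbb{Z})$, as in the proof of Lemma \ref{Maps between motivic Eilenberg-MacLane spectra are the same as group homomorphisms}, and read off the homotopy groups from the filtered spectrum, using $\pi_{s,w}(\Gamma_\star(X)) = \pi_s(\Gamma_w(X))$. The first step is to compute the ANSS of $H\mathbb{Z}$. Since $H\mathbb{Z}$ is bounded below with even $\MU$ homology ($\MU_* H\mathbb{Z}$ is concentrated in even degrees), its ANSS $E_2$-page is $\text{Ext}_{\MU_*\MU}(\MU_*, \MU_* H\mathbb{Z})$. Here $\MU_* H\mathbb{Z} \cong \MU_*\MU \otimes_{\MU_*} \mathbb{Z}$ (more precisely, $H\mathbb{Z} \wedge \MU$ is a wedge of even suspensions of $H\mathbb{Z}$, and the comodule is the one obtained by base change along $\MU_* \to \mathbb{Z}$). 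The intended conclusion is that $E_2$ is concentrated in filtration $0$ and stem $0$, with value $\mathbb{Z}$.

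I would justify this by the change-of-rings isomorphism, which identifies the Ext with $\text{Ext}$ over the Hopf algebroid $(\mathbb{Z}, \mathbb{Z}\otimes_{\MU_*}\MU_*\MU\otimes_{\MU_*}\mathbb{Z}) = (\mathbb{Z}, \mathbb{Z}[b_1,b_2,\dots])$, together with the known vanishing of the higher cohomology of the additive formal group Hopf algebra rationally. Integrally this is less clean, however, so I expect this to be the main obstacle. A cleaner route, which I would try first, avoids the Ext computation entirely. Since $H\mathbb{Z} \wedge \MU^{\wedge n+1}$ has homotopy concentrated in even nonnegative degrees and is an $H\mathbb{Z}$-module, the cosimplicial object $H\mathbb{Z}\wedge\MU^{\wedge\bullet+1}$ admits an extra codegeneracy coming from the $H\mathbb{Z}$-module structure. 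The extra codegeneracy is given by the Thom class $\MU \to H\mathbb{Z}$, which is a ring map, and multiplication in $H\mathbb{Z}$. This makes the augmented cosimplicial object $H\mathbb{Z} \to H\mathbb{Z}\wedge \MU^{\wedge\bullet+1}$ split.

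Consequently, applying any functor levelwise, in particular $\tau_{\geq 2w}$, preserves the splitting. We get $\Gamma_w(H\mathbb{Z}) \simeq \tau_{\geq 2w}(H\mathbb{Z})$, which is $H\mathbb{Z}$ for $w \leq 0$ and trivial for $w > 0$. This recovers exactly the description of $\Gamma_\star(HG)$ already used in the proof of Lemma \ref{Maps between motivic Eilenberg-MacLane spectra are the same as group homomorphisms}, and I would simply cite that description if it is accepted there.

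Given this, for $w \leq 0$ we have $\pi_{s,w}(M\mathbb{Z}) = \pi_s(H\mathbb{Z})$, which is $\mathbb{Z}$ for $s = 0$ and $0$ otherwise; for $w > 0$ all groups vanish. The structure maps $\Gamma_w \to \Gamma_{w-1}$ are the identity of $H\mathbb{Z}$ for $w \leq 0$, so $\tau$ acts isomorphically $\pi_{0,w}\to\pi_{0,w-1}$ for $w \leq 0$. Hence $\pi_{*,*}(M\mathbb{Z}) \cong \mathbb{Z}[\tau]$ with generator in bidegree $(0,0)$, as a $\mathbb{Z}[\tau]$-module. The same argument with $HG$ in place of $H\mathbb{Z}$ gives $G[\tau]$, which is presumably the content of the following corollary.
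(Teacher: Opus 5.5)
Your proof is correct. It uses the same model as the paper, $M\mathbb{Z}\simeq\Gamma_\star(H\mathbb{Z})$ with $\pi_{s,w}=\pi_s(\Gamma_w)$, but it establishes the key input by a different route.

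\textbf{The paper's route.} The paper reads the answer off the BKSS for $\Gamma_w(H\mathbb{Z})$, i.e.\ the ANSS of $H\mathbb{Z}$ truncated below the $2w$-th antidiagonal. This tacitly uses that this ANSS is $\mathbb{Z}$ in bidegree $(0,0)$ and zero elsewhere.

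\textbf{Your route.} You avoid spectral sequences altogether. $H\mathbb{Z}$ is an $\MU$-module via the $E_\infty$ orientation $\MU\to\tau_{\leq 0}\MU\simeq H\mathbb{Z}$, so the augmented cobar object $H\mathbb{Z}\to H\mathbb{Z}\wedge\MU^{\wedge\bullet+1}$ is split. Any levelwise functor, in particular $\tau_{\geq 2w}$, preserves split augmented cosimplicial objects and hence limit diagrams. Therefore $\Gamma_w(H\mathbb{Z})\simeq\tau_{\geq 2w}H\mathbb{Z}$, naturally in $w$.

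\textbf{What your route buys.}
\begin{itemize}
\item It identifies filtered spectra, not just homotopy groups.
\item It applies verbatim to any $\MU$-module $X$, giving $\Gamma_w(X)\simeq\tau_{\geq 2w}X$.
\item It needs no convergence or evenness input.
\item It justifies the description of $\Gamma_\star(HG)$ that the paper uses without proof in Lemma~\ref{Maps between motivic Eilenberg-MacLane spectra are the same as group homomorphisms}.
\item Granting $MG\simeq\Gamma_\star(HG)$ as the paper does, it yields Corollary~\ref{Homotopy groups of Eilenberg-MacLane spectra} without a free resolution.
\end{itemize}

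\textbf{Two corrections to your write-up.}
\begin{itemize}
\item The extra codegeneracy comes from an honest, coherent $\MU$-module structure on the factor $H\mathbb{Z}$. The evenness of $\pi_*(H\mathbb{Z}\wedge\MU^{\wedge n+1})$ plays no role.
\item Your worry about the first route is unnecessary. $\MU_*H\mathbb{Z}\cong\MU_*\MU\otimes_{\MU_*}\mathbb{Z}$ is an extended comodule, so \cite[A1.2.8]{Rav86} gives the collapse at once, exactly as in the proof of Corollary~\ref{effective and very effective covers of motivic analogues of Landweber exact spectra are equivalent}. No computation over the additive formal group Hopf algebra is required.
\end{itemize}
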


\begin{proof}
	Using $M\mathbb{Z} \simeq \Gamma_\star(H\mathbb{Z})$ the claim follows immediately from the BKSS computing the homotopy groups of $\Gamma_\star(H\mathbb{Z})$.
\end{proof}

\begin{Cor}\phantomsection\label{Homotopy groups of Eilenberg-MacLane spectra}
    For any abelian group $G$ the homotopy groups of $MG$ are given by
    \[\pi_{*, *}(MG) \cong G[\tau],\]
    where $G$ is placed in stem $0$ and weight $0$.
\end{Cor}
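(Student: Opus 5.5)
The plan is to reduce to Lemma \ref{homotopy groups of MZ} by modeling $MG$ as $\Gamma_\star(HG)$, exactly as in the proof of Lemma \ref{Maps between motivic Eilenberg-MacLane spectra are the same as group homomorphisms}, and then reading off the homotopy groups from the filtered spectrum. First we need that $MG$, the $\mathbb{C}$-motivic Eilenberg--MacLane spectrum, is indeed represented by $\Gamma_\star(HG)$; for this we rely on the same identification used above, namely \cite[Corollary 7.7]{CQ21}. Alternatively, and more self-containedly, we can take a free resolution $0 \to \bigoplus_J \mathbb{Z} \xrightarrow{r} \bigoplus_I \mathbb{Z} \to G \to 0$. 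Since $M(-)$ is exact in the sense that $MG$ is the cofiber of $\bigvee_J M\mathbb{Z} \to \bigvee_I M\mathbb{Z}$ (the map realizing $r$ via Lemma \ref{Maps between motivic Eilenberg-MacLane spectra are the same as group homomorphisms}), the question then reduces to the case $G = \mathbb{Z}$.

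In the first approach we compute the BKSS for $\Gamma_w(HG) = \Tot(\tau_{\geq 2w}(HG \wedge \MU^{\wedge \bullet + 1}))$. The $E_2$-page of the ANSS of $HG$ is $\text{Ext}_{\MU_*\MU}(\MU_*, \MU_* HG)$. Since $HG \wedge \MU$ is an $H\mathbb{Z}$-module with homotopy concentrated in nonnegative even degrees, it is more convenient to argue directly with the cosimplicial object $HG \wedge \MU^{\wedge \bullet+1}$, which is split. It has an extra codegeneracy coming from the $H\mathbb{Z}$-module structure through the orientation $\MU \to H\mathbb{Z}$, so the ANSS for $HG$ is concentrated on the line $f=0$, $s=0$, with value $G$. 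Truncating below the $2w$-th antidiagonal then gives $\pi_s(\Gamma_w(HG)) = G$ for $s=0$ and $w \leq 0$, and $0$ otherwise, as already used in the proof of Lemma \ref{Maps between motivic Eilenberg-MacLane spectra are the same as group homomorphisms}. The structure maps are the identity for $w \leq 0$, so $\tau$ acts isomorphically there. Hence $\pi_{0,w}(MG) \cong G$ for $w \leq 0$ and vanishes elsewhere, which is exactly $G[\tau]$ with $G$ in bidegree $(0,0)$.

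The main obstacle is the justification that the truncated BKSS really has this shape. The truncation $\tau_{\geq 2w}$ applied levelwise could a priori create classes in the totalization that do not come from the $f=0$ line. This is handled by noting that $\pi_*(HG \wedge \MU^{\wedge n+1}) = G \otimes \MU_*\MU^{\otimes n}$ (for torsion-free $\MU_*$, $\text{Tor}$ terms vanish). Truncation therefore removes entire internal degrees, and the normalized cochain complex in each internal degree $2k$ remains the standard cobar complex of $G \otimes (\MU_*\MU)$ in that degree. That complex is acyclic in positive cohomological degree by the contracting homotopy for $HG$, and in degree $0$ it is $G$ exactly when $k=0$. For the resolution approach the obstacle is instead checking exactness on $\pi_{*,*}$: the long exact sequence gives $0 \to \bigoplus_J \mathbb{Z}[\tau] \to \bigoplus_I \mathbb{Z}[\tau] \to \pi_{0,*}(MG) \to 0$, using injectivity of $r \otimes \mathbb{Z}[\tau]$. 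This yields $G[\tau]$ by flatness of $\mathbb{Z}[\tau]$ over $\mathbb{Z}$ and Lemma \ref{homotopy groups of MZ}.
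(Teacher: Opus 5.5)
Your proposal is correct. Its second route is exactly the paper's proof: take a free resolution, form the cofiber sequence $\bigvee_J M\mathbb{Z} \to \bigvee_I M\mathbb{Z} \to MG$, and read off the long exact sequence using the lemma computing $\pi_{*,*}(M\mathbb{Z})$.

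Your primary route is genuinely different. Instead of reducing to $G=\mathbb{Z}$, you compute $\Gamma_\star(HG)$ directly, using two facts:
\begin{enumerate}
\item $HG$ is an $\mathrm{MU}$-module via $\mathrm{MU}\to H\mathbb{Z}$, so its ANSS $E_2$-page is $G$ at $(s,f)=(0,0)$ and zero elsewhere.
\item Levelwise $\tau_{\geq 2w}$ removes whole internal degrees $t$ of the $E_1$-page $\pi_t(HG\wedge \mathrm{MU}^{\wedge f+1})$. Hence the BKSS $E_2$-page of $\Gamma_w(HG)$ is literally the ANSS $E_2$-page cut off below the $2w$-th antidiagonal.
\end{enumerate}

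This is the paper's one-line argument for $M\mathbb{Z}$ carried out for arbitrary $G$. It also supplies a justification for the description $\Gamma_w(HG)\simeq \mathrm{pt}$ for $w>0$ and $\Gamma_w(HG)\simeq HG$ for $w\le 0$, which the paper asserts without argument in the lemma on maps between motivic Eilenberg--MacLane spectra. Once you have that description, the corollary is immediate, no resolution is needed, and you determine the filtered spectrum itself rather than just its homotopy.

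The cost is that you need the identification $MG\simeq\Gamma_\star(HG)$ for every abelian group $G$. The paper's route needs that identification only for $G=\mathbb{Z}$. In exchange, it takes for granted two things: that $M$ turns the resolution into a cofiber sequence, and that bigraded homotopy commutes with the infinite wedges, which follows from compactness of the motivic spheres.
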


\begin{proof}
    Choose a free resolution
    \[0 \to \bigoplus_{j \in J} \mathbb{Z} \to \bigoplus_{i \in I} \mathbb{Z} \to G \to 0.\]
    Then consider the associated cofiber sequence of Eilenberg-MacLane spectra
    \[\bigvee_{j \in J} M\mathbb{Z} \to \bigvee_{i \in I} M\mathbb{Z} \to MG.\]
    The induced long exact sequence on homotopy groups implies the claim by Lemma \ref{homotopy groups of MZ}.
\end{proof}

\begin{Cor}[Theorem \ref{Theorem on slices of motivic analogues}]\phantomsection\label{Corollary on slices of motivic analogues}
    Let $X$ be a bounded below classical spectrum with even $\MU$ homology. Let $E_2^{s, f}$ denote the Adams-Novikov spectral sequence $E_2$-page of $X$ in stem $s$ and filtration $f$. Then the $q$-th effective slice of $\Gamma_\star(X)$ is given by
    \[s_q(\Gamma_\star(X)) \simeq \bigvee_{s + f = 2q} \Sigma^{s, q} ME_2^{s,f },\]
    where $M$ is the functor taking a group to the associated $\mathbb{C}$-motivic Eilenberg-MacLane spectrum.
\end{Cor}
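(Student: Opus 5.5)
The plan is to combine Proposition \ref{Homotopy groups of effective slices of motivic analogues} with the motivic Adams-type splitting of Proposition \ref{MZ-modules are wedges of Eilenberg-MacLane spectra}.

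First, we record that $s_q(\Gamma_\star(X))$ is an $M\mathbb{Z}$-module. Any effective slice is a module over $s_0(S)$ (\cite[Theorem 3.6.22]{Pel11}), and $s_0(S)\simeq M\mathbb{Z}$ (\cite[Theorem 6.6]{Voe04}). Here $s_q(\Gamma_\star(X))$ denotes the slice of the cellular object $\Gamma_\star(X)$. By \cite[Theorem 3.16]{Hea19}, the cellular effective filtration agrees with the one in the full category, so these module structures apply. If one wants to stay inside filtered spectra, one can instead note that $s_0(\Gamma_\star(S))$ is modeled by Proposition \ref{Filtered spectrum model for effective slice}, which gives $\Gamma_\star(H\mathbb{Z})$ when $X=S$. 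I would cite the literature rather than reprove this.

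Second, we read off the homotopy groups. By Proposition \ref{Homotopy groups of effective slices of motivic analogues},
\[\pi_{s,*}(s_q(\Gamma_\star(X)))\cong G_s[\tau], \qquad G_s := E_2^{s,2q-s},\]
with $G_s$ placed in weight $q$ for every stem $s$. Since $X$ has even $\MU$ homology, each stem $s$ contributes exactly one group $G_s$ lying on the $2q$-th antidiagonal, which may be zero. This is precisely the hypothesis of Proposition \ref{MZ-modules are wedges of Eilenberg-MacLane spectra} with $w=q$.

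Third, we apply Proposition \ref{MZ-modules are wedges of Eilenberg-MacLane spectra}. It gives
\[s_q(\Gamma_\star(X))\simeq \bigvee_{s\in\mathbb{Z}} \Sigma^{s,q} MG_s = \bigvee_{s+f=2q}\Sigma^{s,q} ME_2^{s,f},\]
which is the claimed formula after reindexing by $f=2q-s$.

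The main obstacle is the first step: justifying the $M\mathbb{Z}$-module structure in the $2$-complete cellular setting, together with the identification $s_0(S)\simeq M\mathbb{Z}$ there. $2$-completion commutes with the slice functors because these are exact and commute with the relevant limits, so the $2$-completed identification should follow from the integral one. The remaining steps are direct applications of the earlier propositions.
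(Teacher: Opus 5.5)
Your proposal is correct and is essentially the paper's proof: the paper also cites $s_0(S)\simeq M\mathbb{Z}$ \cite{Voe04} and the $s_0(S)$-module structure on slices \cite{Pel11}, then combines Proposition \ref{Homotopy groups of effective slices of motivic analogues} with Proposition \ref{MZ-modules are wedges of Eilenberg-MacLane spectra}, and it does not address the $2$-completion subtlety you flag. One small correction: each stem meets the antidiagonal $s+f=2q$ exactly once regardless of hypotheses, and the even $\MU$ homology assumption was already used inside Proposition \ref{Homotopy groups of effective slices of motivic analogues} to rule out contributions from the $(2q+1)$-st antidiagonal.
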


\begin{proof}
    Combine Proposition \ref{Homotopy groups of effective slices of motivic analogues} with Proposition \ref{MZ-modules are wedges of Eilenberg-MacLane spectra}.
\end{proof}

Using the above, we obtain many results that have previously been established, and also new results. We list some known computations first that are also implied by Corollary \ref{Corollary on slices of motivic analogues}. Note that the cited results hold in more generality than just in the $\mathbb{C}$-motivic, $2$-complete stable homotopy category, which is where they follow from Corollary \ref{Corollary on slices of motivic analogues}.
\begin{itemize}
    \item The effective slices of $\MGL$ as computed by Hopkins-Morel and written up in \cite{Hoy15},
    \item the effective slices of the sphere spectrum \cite{Lev14},
    \item the effective slices of $\kq$ \cite{ARO20}.
\end{itemize}
More generally than just $\MGL$, we can compute the slices of the motivic analogue of any Landweber exact spectrum satisfying the hypotheses of Corollary \ref{Corollary on slices of motivic analogues}. Slices of motivic Landweber exact spectra have been computed before, see \cite{Spi10} and \cite{Spi12}. Again, those computations hold in more generality than just in the $\mathbb{C}$-motivic, $2$-complete context.

\begin{Cor}
    Let $X \in {\normalfont \textbf{SH}^\mathbb{C}}$ be a Landweber exact spectrum satisfying the hypotheses of Corollary \ref{Corollary on slices of motivic analogues}. Then
    \[s_q(\Gamma_\star(X)) \simeq \Sigma^{2q, q} M(\pi_{2q}(X)).\]
\end{Cor}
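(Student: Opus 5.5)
The statement follows from Corollary \ref{Corollary on slices of motivic analogues} once we identify the $E_2$-page of the Adams-Novikov spectral sequence of a Landweber exact $X$. (The hypothesis should be read as $X \in \textbf{SH}^{\cl}$, bounded below with even $\MU$ homology.) The plan is to show that $E_2^{s,f}$ vanishes for $f > 0$ and that $E_2^{s,0} \cong \pi_s(X)$. With that in hand, the only term on the $2q$-th antidiagonal $s + f = 2q$ that can be nonzero is $(s,f) = (2q, 0)$. The wedge in Corollary \ref{Corollary on slices of motivic analogues} then collapses to the single summand $\Sigma^{2q,q} M(\pi_{2q}(X))$.

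For the vanishing, I would reuse the argument from the proof of Corollary \ref{effective and very effective covers of motivic analogues of Landweber exact spectra are equivalent}. Landweber exactness gives $\BP_*X \cong X_*\BP \cong \BP_*\BP \otimes_{\BP_*} X_*$, so $\BP_*X$ is an extended $\BP_*\BP$-comodule. By \cite[A1.2.8, (b)]{Rav86}, $\text{Cotor}^{f,*}_{\BP_*\BP}(\BP_*, \BP_*X)$ is zero for $f > 0$. In filtration $0$ it equals the primitives, which is $X_*$. Hence the ANSS for $X$ is concentrated on the line $f = 0$, collapses at $E_2$, and satisfies $E_2^{s,0} \cong \pi_s(X)$.

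For bookkeeping, note that the $2$-complete, $\BP$-based ANSS has the same $E_2$-page as the $\MU$-based one used to define $\Gamma_\star$, so this identification is legitimate. There is also a consistency check: since $X$ has even $\MU$ homology, $\pi_*(X) \cong E_2^{*,0}$ is concentrated in even degrees. This matches the fact that the slices are indexed by $\pi_{2q}$.

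The only point needing care is this identification of the filtration-$0$ line with $\pi_*(X)$, together with the collapse of the ANSS. Both are already established in the proof of Corollary \ref{effective and very effective covers of motivic analogues of Landweber exact spectra are equivalent}, so I expect no real obstacle. The remainder is substitution into Corollary \ref{Corollary on slices of motivic analogues}.
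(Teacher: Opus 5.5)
Your proposal is correct and is essentially the paper's own argument: reuse the proof of Corollary \ref{effective and very effective covers of motivic analogues of Landweber exact spectra are equivalent} to see that the ANSS $E_2$-page of $X$ is concentrated in filtration $0$ with $E_2^{*,0} \cong \pi_*(X)$. Then only the summand $(s,f) = (2q,0)$ survives in Corollary \ref{Corollary on slices of motivic analogues}. Your reading of the hypothesis as $X \in \textbf{SH}^{\cl}$ is also the intended one.
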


\begin{proof}
    For any Landweber exact spectrum $X$ the ANSS $E_2$-page is concentrated in filtration $0$ as we saw in the proof of Corollary \ref{effective and very effective covers of motivic analogues of Landweber exact spectra are equivalent}. So the ANSS collapses on $E_2$ and the filtration $0$ line agrees with the homotopy groups of $X$. Then Corollary \ref{Corollary on slices of motivic analogues} implies the claim.
\end{proof}

We also obtain new results on effective slices. For example, we can compute the slices of the motivic modular forms spectrum $\mmf$. Recall that $\mmf$ is by definition the motivic analogue $\mmf = \Gamma_\star(\tmf\,)$.

\begin{Cor}
    Let $E_2^{s, f}$ denote the Adams-Novikov spectral sequence $E_2$-page of ${\normalfont \tmf}$ in stem $s$ and filtration $f$. Then the $q$-th effective slice of ${\normalfont \mmf}$ is given by
    \[s_q({\normalfont{\mmf\,}}) \simeq \bigvee_{s + f = 2q} \Sigma^{s, q} ME_2^{s, f}.\]
\end{Cor}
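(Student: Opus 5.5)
The plan is to apply Corollary \ref{Corollary on slices of motivic analogues} with $X = \tmf$. Since $\mmf$ is defined as $\Gamma_\star(\tmf\,)$, the statement follows once the two hypotheses of that corollary are checked for $\tmf$: that it is bounded below, and that it has even $\MU$ homology. No new filtered-spectrum input should be needed; all the work is in the hypothesis check.

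\emph{Bounded below.} This is immediate because $\tmf$ is connective. It is the connective cover of $\text{Tmf}$, so $\pi_s(\tmf) = 0$ for $s < 0$.

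\emph{Even $\MU$ homology.} This is the step I expect to be the only real obstacle, since $\pi_*(\tmf)$ itself has plenty of odd torsion classes and the claim concerns $\MU_*\tmf$ instead. Working $2$-completely, as we do throughout, I would proceed in two steps.
\begin{enumerate}
\item Use the $2$-local splitting $\MU_{(2)} \simeq \bigvee \Sigma^{2k}\BP$. It reduces the claim to showing that $\BP_*\tmf$ is concentrated in even degrees.
\item Use the Hopkins--Mahowald description of $\tmf \wedge \MU$. Alternatively, use the standard equivalence $\tmf_{(2)} \wedge DA(1) \simeq \text{tmf}_1(3)$, where $DA(1)$ is a $2$-local complex with even cells and $\text{tmf}_1(3)$ is a form of $\BP\langle 2\rangle$. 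From this I would deduce that $\BP_*\tmf$ is even. This is the justification already implicitly invoked in the paper's list of examples, where $\tmf$ is said to satisfy the hypotheses of Theorem \ref{Theorem on slices of motivic analogues}. The cited source is \cite[Corollary 7.7]{CQ21}, and I would cite it rather than reprove it.
\end{enumerate}

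With both hypotheses verified, Corollary \ref{Corollary on slices of motivic analogues} applied to $X = \tmf$ gives exactly
\[s_q(\mmf\,) \simeq \bigvee_{s+f=2q} \Sigma^{s,q} ME_2^{s,f},\]
where $E_2^{s,f}$ is the Adams-Novikov $E_2$-page of $\tmf$. This is the claimed formula.
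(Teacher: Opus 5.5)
Your proposal is correct and matches the paper's proof: apply Corollary~\ref{Corollary on slices of motivic analogues} to $X = \tmf$, since $\tmf$ is a connective cover and has even $\MU$ homology.

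The one fix is the reference for evenness. The paper cites \cite[Corollary 5.2]{Mat16} or \cite[Lemma 5.6]{GIKR} for it, whereas \cite[Corollary 7.7]{CQ21} appears in the introduction attached to identifying the motivic analogues $\Gamma_\star(X)$, not to $\MU_*\tmf$ being even. If you want your $DA(1)$ sketch to stand on its own, use that $\BP \wedge DA(1)$ splits as a wedge of even suspensions of $\BP$. Then $\BP_*\tmf$ is a summand of the even group $\BP_*\text{tmf}_1(3)$.
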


\begin{proof}
    Apply Corollary \ref{Corollary on slices of motivic analogues} in the case $\Gamma_\star(\tmf\,) = \mmf$. We check the hypotheses: $\tmf$ is bounded below since it is by definition a connective cover. It has even $\MU$ homology for example by \cite[Corollary 5.2]{Mat16}, or by \cite[Lemma 5.6]{GIKR}.
\end{proof}

A computation of the Adams-Novikov spectral sequence of $\tmf$ can be found in \cite{Bau08}. A chart of the Adams-Novikov $E_2$-page of $\tmf$ can be found in \cite[Figure 3]{IKL+24}. Technically, that chart depicts the $E_2$-page of the $\mathbb{C}$-motivic Adams-Novikov spectral sequence of $\mmf$. However, as discussed in \cite[Section 2.3]{IKL+24}, the $\mathbb{C}$-motivic $E_2$-page is obtained by taking the classical Adams-Novikov $E_2$-page of $\tmf$ and adjoining $\tau$ freely. Since $\tau$-multiples are not plotted in the chart, it can also be interpreted as a chart of the classical $E_2$-page of $\tmf$. The lines in the chart not colored in gray can be ignored, as they describe motivic Adams-Novikov differentials.

\subsection{Connective slices of \texorpdfstring{$\mathbb{C}$}{C}-motivic analogues}\label{section: connective slices of C-motivic analogues}

Let $X \in \textbf{SH}^{\cl}$ and assume $X$ is bounded below so that we can make use of the results in Section \ref{section: Filtered spectrum models for covers of C-motivic analogues}. By Proposition \ref{Filtered spectrum model for q-th connective cover} the $q$-th connective cover of $\Gamma_\star(X)$ is modeled by $\Gamma_\star^{\geq q}(X) \to \Gamma_\star(X)$. Then the $q$-th connective slice $\Gamma_\star(X)_{= q}$ is defined via the cofiber sequence
\[\Gamma_\star^{\geq q + 1}(X) \to \Gamma_\star^{\geq q}(X) \to \Gamma_\star(X)_{= q}.\]

\begin{Prop}\phantomsection\label{Filtered spectrum model for connective slice}
	Let $X \in {\normalfont \textbf{SH}^{\cl}}$ be bounded below. Then the $q$-th connective slice of $\Gamma_\star(X)$ is given by the filtered spectrum whose degree $w$ part is
	\[(\Gamma_\star(X)_{= q})_w = \tau_{= w + q}(\Gamma_w(X))\]
	and whose structure maps are trivial.
\end{Prop}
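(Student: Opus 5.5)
The cofiber sequence defining the slice will be computed levelwise in filtered spectra, mirroring the proof of Proposition \ref{Filtered spectrum model for effective slice}. By Proposition \ref{Filtered spectrum model for q-th connective cover}, the $q$-th and $(q+1)$-st connective covers of $\Gamma_\star(X)$ are modeled by $\Gamma_\star^{\geq q}(X)$ and $\Gamma_\star^{\geq q+1}(X)$. The first step is to check that the natural map $\Gamma_\star^{\geq q+1}(X) \to \Gamma_\star^{\geq q}(X)$, given in weight $w$ by $\tau_{\geq w+q+1}\Gamma_w(X) \to \tau_{\geq w+q}\Gamma_w(X)$, is the map of covers. It should be compatible with the maps to $\Gamma_\star(X)$, and it should be a map of $\Gamma_\star(S)$-modules, since both module structures of Lemma \ref{Gamma_star^geq q(X) is a Gamma_star(S)-module} are induced from that of $\Gamma_\star(X)$. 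By the universal property in Lemma \ref{characterization of motivic q-th connective cover}, the map between covers is unique up to homotopy, so this identifies it with the canonical one.

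Cofibers of filtered spectra (and of $\Gamma_\star(S)$-modules) are computed levelwise. In weight $w$, the cofiber of $\tau_{\geq w+q+1}\Gamma_w(X) \to \tau_{\geq w+q}\Gamma_w(X)$ is the classical Postnikov section $\tau_{= w+q}(\Gamma_w(X))$, which is the single homotopy group $\pi_{w+q}\Gamma_w(X)$ placed in degree $w+q$. This gives the claimed description in each degree, with induced structure maps
\[\tau_{=w+q}(\Gamma_w(X)) \to \tau_{=w+q-1}(\Gamma_{w-1}(X)).\]

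It remains to see that these structure maps are trivial. The source is an Eilenberg--MacLane spectrum concentrated in degree $w+q$, and the target is concentrated in degree $w+q-1$. So a map between them is a class in $H^{-1}$ of an Eilenberg--MacLane spectrum, namely
\[[\Sigma^{w+q}HA, \Sigma^{w+q-1}HB] = [HA, \Sigma^{-1}HB] = 0,\]
since $H^n(HA;B)=0$ for $n<0$. Hence the structure maps are nullhomotopic.

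The main obstacle is a point of interpretation. The claim that the structure maps are ``trivial'' is only a statement up to homotopy in the homotopy category. A filtered spectrum in the $\infty$-categorical sense is more than its levels and homotopy classes of structure maps. I would therefore phrase the result as the identification of the levels and of the homotopy classes of the transition maps. If a coherent statement is wanted, I would try to build the splitting directly: the filtered spectrum is a module over $\Gamma_\star(S)$ whose homotopy is concentrated on the line $s-w=q$, and I would aim to compare it with the associated graded object. I would not expect this to be needed for computing homotopy groups, where $\pi_{s,w}$ is just $\pi_{w+q}\Gamma_w(X)$ in stem $s=w+q$ and zero elsewhere, with $\tau$ acting by zero.
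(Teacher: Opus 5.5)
Your argument is correct and is the paper's argument. The paper says only that the slice is ``immediate from the defining cofiber sequence'' of the levelwise maps $\tau_{\geq w+q+1}\Gamma_w(X) \to \tau_{\geq w+q}\Gamma_w(X)$; your identification of the map of covers and your vanishing of $[\Sigma^{w+q}HA, \Sigma^{w+q-1}HB]$ make explicit what it leaves implicit. The coherence worry is unnecessary, however: the poset $\mathbb{Z}$ is freely generated by its successor arrows (the spine inclusion into its nerve is inner anodyne), so a filtered spectrum is determined up to equivalence by its levels and the homotopy classes of its consecutive structure maps, and nullhomotopic structure maps already determine the object.
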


\begin{proof}
	Recall the definition
	\begin{align*}
		\Gamma_w^{\geq q}(X) &= \tau_{\geq w + q}(\Tot(\tau_{\geq 2w}(X \wedge \MU^{\wedge \bullet + 1})))\\
		&= \tau_{\geq w + q}(\Gamma_w(X))
	\end{align*}
	for the model of the $q$-th connective cover of $\Gamma_\star(X)$. The claimed structure of the $q$-th slice is then immediate from the defining cofiber sequence.
\end{proof}

\subsection{Very effective slices of \texorpdfstring{$\mathbb{C}$}{C}-motivic analogues}

Let $X \in \textbf{SH}^{\cl}$ and assume $X$ is bounded below so that we can make use of the results in Section \ref{section: Filtered spectrum models for covers of C-motivic analogues}. By Proposition \ref{Filtered spectrum model for q-th very effective cover} the $q$-th very effective cover of $\Gamma_\star(X)$ is modeled by $\tGamma_\star^q(X) \to \Gamma_\star(X)$. Then the $q$-th very effective slice $\ts_q(\Gamma_\star(X))$ is defined via the cofiber sequence
\[\tGamma_\star^{q + 1}(X) \to \tGamma_\star^q(X) \to \ts_q(\Gamma_\star(X)).\]

\begin{Prop}\phantomsection\label{Filtered spectrum model for very effective slice}
    Let $X \in {\normalfont \textbf{SH}^{\cl}}$ be bounded below. Then its $q$-th very effective slice is given by the filtered spectrum whose degree $w$ part is
    \[(\ts_q(\Gamma_\star(X)))_w = \begin{dcases}
    {\normalfont \tau_{= w + q}\Tot(\tau_{\geq 2w}(X \wedge \MU^{\wedge \bullet + 1}))}, & w \geq q + 1\\
    {\normalfont \tau_{\geq 2q}^{< 2q + 2} (X)}, & w \leq q
    \end{dcases}\]
    and whose structure maps are trivial for degrees $w \geq q + 2$, the identity for degrees $w \leq q$, and the structure map $(\ts_q(\Gamma_\star(X)))_{q + 1} \to (\ts_q(\Gamma_\star(X)))_{q}$ is the lower horizontal map in the diagram of cofiber sequences
    \[
    \normalfont{
    \begin{tikzcd}
    	\tau_{\geq 2q + 2}(\Tot(\tau_{\geq 2q + 2}(X \wedge \MU^{\wedge \bullet  + 1}))) \ar[r, "="] \ar[d] & \tau_{\geq 2q + 2}(\Tot(\tau_{\geq 2q + 2}(X \wedge \MU^{\wedge \bullet  + 1}))) \ar[d]\\
    	\tau_{\geq 2q + 1}(\Tot(\tau_{\geq 2q + 2}(X \wedge \MU^{\wedge \bullet  + 1}))) \ar[r] \ar[d] & \tau_{\geq 2q}(\Tot(\tau_{\geq 2q}(X \wedge \MU^{\wedge \bullet  + 1}))) \ar[d]\\
    	\tau_{= 2q + 1}(\Tot(\tau_{\geq 2q + 2}(X \wedge \MU^{\wedge \bullet  + 1}))) \ar[r] & \tau_{\geq 2 q}^{< 2q + 2} (X).
    \end{tikzcd}
	}
    \]
\end{Prop}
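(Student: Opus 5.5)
The plan is to compute the cofiber of $\tGamma_\star^{q+1}(X) \to \tGamma_\star^q(X)$ levelwise. Cofibers in filtered spectra are computed degreewise, and the structure maps of the cofiber are the induced maps on cofibers. Proposition \ref{Filtered spectrum model for q-th very effective cover} identifies this cofiber with $\ts_q(\Gamma_\star(X))$. I will split into three ranges of $w$: $w \geq q+2$, $w = q+1$, and $w \leq q$. These correspond to the case distinctions in the definitions of $\tGamma^{q+1}_\star$ and $\tGamma^q_\star$.

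For $w \geq q+2$, the map in degree $w$ is $\tau_{\geq w+q+1}\Gamma_w(X) \to \tau_{\geq w+q}\Gamma_w(X)$, with cofiber $\tau_{=w+q}\Gamma_w(X)$. For $w = q+1$, the source is still $\tau_{\geq 2q+2}\Gamma_{q+1}(X)$, since $w = q+1$ lies in the ``$w \geq q+1$'' branch of $\tGamma^{q+1}$. The target is $\tau_{\geq 2q+1}\Gamma_{q+1}(X)$, so the cofiber is again $\tau_{=2q+1}\Gamma_{q+1}(X)$. This matches the first line of the formula.

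For $w \leq q$, both sides are in their frozen ranges. The map is $\tau_{\geq 2q+2}\Gamma_{q+1}(X) \to \tau_{\geq 2q}\Gamma_q(X)$. By Remark \ref{very effective cover in small weights} these are naturally $\tau_{\geq 2q+2}X$ and $\tau_{\geq 2q}X$. I will check that under these identifications the map is the canonical one. Both identifications come from the comparison to $X = \Tot(X\wedge \MU^{\wedge\bullet+1})$, and the maps are compatible with the truncation maps $\tau_{\geq 2q+2}\to\tau_{\geq 2q}$. Given this, the cofiber is $\tau^{<2q+2}_{\geq 2q}X$. The structure maps for $w \leq q$ are identities because both filtered spectra have identity structure maps there.

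For the structure maps: when $w \geq q+2$, the map $\tau_{=w+q}\Gamma_w \to \tau_{=w+q-1}\Gamma_{w-1}$ lands in a spectrum concentrated in a different single degree. A map between Eilenberg–MacLane-type spectra $\Sigma^{n}HA \to \Sigma^{n-1}HB$ need not be null in general, since $[\Sigma HA, HB]$ can be nonzero. So I will instead argue that it factors through $\tau_{=w+q}$ of the map $\tau_{\geq w+q}\Gamma_w \to \tau_{\geq w+q-1}\Gamma_{w-1}$ composed with the projection. More precisely, the induced map on cofibers is computed from the square of truncations, and the map $\tau_{\geq w+q}\to\tau_{\geq w+q-1}$ sends the $(w+q)$-th layer into $\tau_{\geq w+q}$ of the next stage. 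That stage is killed in the cofiber $\tau_{=w+q-1}$, so the composite is canonically null. The degree $q+1 \to q$ map is, by naturality of cofibers, exactly the induced map in the displayed $3\times 3$ diagram of cofiber sequences, whose columns are the two degreewise cofiber sequences.

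The main obstacle is the naturality claim in the $w\le q$ range: identifying the map $\tau_{\geq 2q+2}\Gamma_{q+1}(X)\to\tau_{\geq 2q}\Gamma_q(X)$ with the canonical truncation map of $X$. I would handle this by noting that both arise from the natural transformations $\tau_{\geq 2q+2}\to\tau_{\geq 2q}\to\mathrm{id}$ applied inside $\Tot$, together with the $\MU$-nilpotent completeness of bounded below $X$.
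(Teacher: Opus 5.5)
Your proposal is correct and matches the paper's proof. Like the paper, you compute the cofiber of $\tGamma^{q+1}_\star(X) \to \tGamma^q_\star(X)$ degreewise, split into the ranges $w \geq q+1$ and $w \leq q$ (using Remark \ref{very effective cover in small weights} for the latter), and read off the $q+1 \to q$ structure map from the map of cofiber sequences. One correction: your side remark is backwards, because $[\Sigma HA, HB] = 0$ always ($\Sigma HA$ is $1$-connective and $HB$ is $0$-truncated), whereas it is $[HA, \Sigma HB]$ that can be nonzero. So for $w \geq q+2$ the map $\tau_{= w+q}\Gamma_w(X) \to \tau_{= w+q-1}\Gamma_{w-1}(X)$ is null, with contractible mapping space, purely for t-structure reasons. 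That is cleaner than your factorization argument, which also silently needs precomposition with $\tGamma^q_w(X) \to \tau_{=w+q}\Gamma_w(X)$ to be injective on maps into $\tau_{=w+q-1}\Gamma_{w-1}(X)$, itself a connectivity fact.
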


\begin{proof}
    Recall the filtered spectrum model for the $q$-th very effective cover of a motivic analogue as given in Proposition \ref{Filtered spectrum model for q-th very effective cover}
    \[\tGamma_w^q(X) = \begin{dcases}
    \tau_{\geq w + q} (\Tot(\tau_{\geq 2w} (X \wedge \MU^{\wedge \bullet + 1}))), & w \geq q\\
    \tau_{\geq 2q} (\Tot(\tau_{\geq 2q} (X \wedge \MU^{\wedge \bullet + 1}))), & w \leq q.
    \end{dcases}\]
    The $q$-th very effective slice is given by the cofiber of $\tGamma_w^{q + 1}(X) \to \tGamma_w^q(X)$. The case $w \geq q + 1$ is then immediate.
    
    In the case $w \leq q$ recall from Remark \ref{very effective cover in small weights} that $\tGamma_w^q(X) \cong \tau_{\geq 2q}(X)$ and $\tGamma_w^{q + 1}(X) \cong \tau_{\geq 2q + 2}(X)$. The claimed structure of the slice is again immediate.
    
    The filtered spectrum structure maps are obtained by explicitly writing out the maps between the cofiber sequences defining the slice in the weights under consideration.
\end{proof}

\begin{Rem}
	The filtered spectrum structure map of $\ts_q(\Gamma_\star(X))$ in degrees $q + 1 \to q$ can be described on homotopy groups as follows: In stems $s \neq 2q + 1$ it is trivial. In stem $2q + 1$ it is an injection with image the set of homotopy classes with Adams-Novikov filtration at least $1$. This follows from considering the BKSS associated to $(\ts_q(\Gamma_\star(X)))_{q + 1}$.
\end{Rem}

\begin{Rem}
    A direct consequence of Proposition \ref{Filtered spectrum model for very effective slice} is that the homotopy groups of the $q$-th very effective slice of a motivic analogue are concentrated only in stems $2q$ and $2q + 1$ for weights $w \leq q$, and only in stem $w + q$ for weights $w \geq q + 1$.
\end{Rem}

\begin{Rem}
    Another direct consequence of Proposition \ref{Filtered spectrum model for very effective slice} is that all elements in the homotopy groups of the $q$-th very effective slice in weights $w \leq q + 1$ are $\tau$-torsion-free, and all elements in weights $w \geq q + 2$ are simple $\tau$-torsion.
\end{Rem}

\begin{Rem}\phantomsection\label{Remark on Bachmann's computation of the slices of KQ}
    In \cite{Bac17} the very effective slices of the Hermitian $K$-theory spectrum $\KQ$ are computed. It is a nice exercise to recover the $\mathbb{C}$-motivic case of the computation of the homotopy groups of these slices by using Proposition \ref{Filtered spectrum model for very effective slice} and $\kq = \Gamma_\star(\ko)$, where $\kq$ denotes the very effective cover of $\KQ$. We have to use $\kq$ instead of $\KQ$ so that we can write it as the analogue of a bounded below classical spectrum in order to make use of Proposition \ref{Filtered spectrum model for very effective slice}. But since the very effective slices of $\KQ$ are $4$-periodic, it suffices to consider the slices of $\kq$. Note that \cite{Bac17} uses the term generalized slice for what we call a very effective slice.
\end{Rem}

\section{The effective slice spectral sequence}\label{section: The effective slice spectral sequence}

Let $X \in \textbf{SH}^{\cl}$ be a bounded below classical spectrum with even $\MU$ homology. Using the descriptions of the effective filtration from Proposition \ref{Filtered spectrum model for q-th effective cover} and the effective slices from Proposition \ref{Filtered spectrum model for effective slice} we can compute the effective slice spectral sequence of $\Gamma_\star(X)$ in its entirety. We will see that it is a re-indexing of the Adams-Novikov spectral sequence of the underlying classical spectrum $X$.

Let us first state the definition of the effective slice spectral sequence for $Y \in \textbf{SH}^{\mathbb{C}}$. Consider the tower
\[\dotsc \to f_{q + 1}(Y) \to f_q(Y) \to f_{q - 1}(Y) \to \dotsc.\]
Taking cofibers and applying $\pi_{*, *}$ yields an exact couple
\[
\begin{tikzcd}
    \dotsc \ar[r] & \pi_{*, *}(f_{q + 1}(Y)) \ar[rr] &[-4em] &[-4em] \pi_{*, *}(f_q(Y)) \ar[rr] \ar[dl] &[-4em] &[-4em] \pi_{*, *}(f_{q - 1}(Y)) \ar[r] \ar[dl] & \dotsc\\
     & & \pi_{*, *}(s_q(Y)) \ar[ul] & & \pi_{*, *}(s_{q - 1}(Y)) \ar[ul] & &
\end{tikzcd}
\]
The associated spectral sequence is by definition the effective slice spectral sequence of $Y$. Note that every map in the above diagram preserves the weight, so we can analyze the effective slice spectral sequence one weight at a time.

The goal of this section will be to prove Theorem \ref{Theorem on the effective slice spectral sequence of a nice motivic analogue} below. We state the theorem first, then give a list of the steps involved in its proof, then follow those steps. The proof of Theorem \ref{Theorem on the effective slice spectral sequence of a nice motivic analogue} is given at the end of this section. The reader might notice that we have not given an explicit grading convention for the effective slice spectral sequence. We believe that carrying the additional notation through the statements and proofs does not help the exposition.

\begin{Th}\phantomsection\label{Theorem on the effective slice spectral sequence of a nice motivic analogue}
    Let $X$ be a bounded below classical spectrum with even $\normalfont{\MU}$ homology. Then the $E_1$-page of the effective slice spectral sequence of $\Gamma_\star(X)$ is given by freely adjoining $\tau$ to the $E_2$-page of the Adams-Novikov spectral sequence of $X$.
    
    Furthermore, the differentials in the effective slice spectral sequence and the Adams-Novikov spectral sequence are related as follows: $d_r(x) = \tau^r y$ in the effective slice spectral sequence of $\Gamma_\star(X)$ if and only if $d_{2r + 1}(x) = y$ in the Adams-Novikov spectral sequence of $X$.
\end{Th}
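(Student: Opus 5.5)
The $E_1$-statement follows directly from Proposition \ref{Homotopy groups of effective slices of motivic analogues}. By definition $E_1 = \bigoplus_q \pi_{*,*}(s_q(\Gamma_\star(X)))$, and that proposition identifies the $q$-th summand with $\bigoplus_{s+f=2q} E_2^{s,f}[\tau]$, with $E_2^{s,f}$ sitting in bidegree $(s,q)$. Summing over $q$ gives the $E_2$-page of the ANSS with $\tau$ freely adjoined. Since $X$ has even $\MU$ homology, only even antidiagonals occur, so the assignment $q \leftrightarrow$ antidiagonal $2q$ is a bijection. A class $x \in E_2^{s,f}$ appears as $\tau^k x$ in weight $(s+f)/2 - k$.

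For the differentials, I would compare two towers rather than argue with the exact couple abstractly. Fix a weight $w$. By Proposition \ref{Filtered spectrum model for q-th effective cover}, the weight $w$ part of $f_q(\Gamma_\star(X))$ is $\Gamma_{\max(w,q)}(X) = \Tot(\tau_{\geq 2\max(w,q)}(X\wedge \MU^{\wedge\bullet+1}))$. So in weight $w$ the effective slice tower is the tower
\[\dotsc \to \Gamma_{q+1}(X) \to \Gamma_q(X) \to \dotsc \to \Gamma_w(X) = \Gamma_w(X) = \dotsc,\]
which is constant for $q \le w$. The slices are $\Tot(\tau^{<2q+2}_{\geq 2q}(\cdots))$ for $q \ge w$ and zero otherwise. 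The homotopy of these layers is, by evenness, $E_2$ restricted to antidiagonal $2q$. This is the décalage situation: the tower $\Tot(\tau_{\geq 2q}(X\wedge\MU^{\wedge\bullet+1}))$ is the décalage of the cosimplicial (Bousfield--Kan) tower. By the standard result of Deligne/Levine (cf. \cite{Lev15}), its spectral sequence is the ANSS reindexed, with $E_1^{\text{Déc}} = E_2^{\text{ANSS}}$ and $d_r^{\text{Déc}}$ corresponding to $d_{r+1}^{\text{ANSS}}$ when filtration is indexed by antidiagonals $2q$. Because even antidiagonals are the only nonzero ones, one step in $q$ is two antidiagonal steps. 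An ANSS $d_{2r+1}$ from antidiagonal $2q$ lands on antidiagonal $2q+2r$, i.e. slice index $q+r$, which is a slice-spectral-sequence $d_r$. The even ANSS differentials vanish, matching the fact that every differential has odd length.

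The remaining bookkeeping concerns the power of $\tau$. A class $x$ on antidiagonal $2q$ viewed in weight $w \le q$ is $\tau^{q-w}x$. Its target $y$ on antidiagonal $2q+2r$, in the same weight $w$, is $\tau^{q+r-w}y$. So the relation reads $d_r(x) = \tau^r y$ after normalising $x$ to its top weight $q$. I would verify this normalisation by using the map of weight-$w$ towers to weight $w-1$ (multiplication by $\tau$). It is the identity on the relevant segments and so commutes with all differentials. For the converse direction, I would note that whenever $\tau^r y$ is hit in weight $q$ with $y$ on antidiagonal $2q+2r$, the same exact-couple data computes the ANSS $d_{2r+1}$.

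The main obstacle is making the décalage identification rigorous rather than pictorial. Specifically, one must show that the spectral sequence of the tower $\{\Tot(\tau_{\geq 2q}(\cdots))\}_q$ agrees with the ANSS including the higher differentials, not just on $E_1$. I would first try to cite Levine's argument \cite{Lev15} (or the Hedenlund/Antieau-type décalage comparison) directly. Failing that, I would argue by hand: identify $r$-cycles via lifts through $\Gamma_{q}(X) \leftarrow \Gamma_{q+r}(X)$ and compare them with the BKSS description of $r$-cycles, using the paper's observation that truncating below antidiagonal $2w$ leaves all differentials whose sources survive unchanged.
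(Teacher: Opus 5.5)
Your proposal is correct and follows the paper's proof essentially step by step. You identify the weight-$w$ slice tower as $\Gamma_{\max(w,q)}(X)$, use evenness of $\MU$ homology to pass to the half-speed tower $\Tot(\tau_{\geq n}(X\wedge\MU^{\wedge\bullet+1}))$ (whose odd transition maps are equivalences), recognize it as a décalage, apply Levine's page-shift comparison (which the paper cites via \cite[Theorem 2.80]{vN25}), and obtain the factor $\tau^r$ by weight bookkeeping.

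One precision to add: in weight $w$ the décalage is that of the truncated cosimplicial object $\tau_{\geq 2w}(X\wedge\MU^{\wedge\bullet+1})$, not of $X\wedge\MU^{\wedge\bullet+1}$ itself. So the comparison is with the BKSS of $\Gamma_w(X)$, i.e. the ANSS truncated below antidiagonal $2w$. That spectral sequence agrees with the ANSS on differentials whose source lies on or above antidiagonal $2w$, which is all you need.
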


\begin{Rem}
	In the statement of Theorem \ref{Theorem on the effective slice spectral sequence of a nice motivic analogue}, if $x$ describes an element on the $E_2$-page of the ANSS of $X$ in stem $s$ and filtration $f$, then the weight of the class corresponding to $x$ on the $E_1$-page of the effective slice spectral sequence of $\Gamma_\star(X)$ is given by $\frac{s + f}{2}$.
\end{Rem}

\begin{Rem}
	In the case where $X$ is the sphere spectrum and we consider the effective slice spectral sequence in weight $w = 0$, Theorem \ref{Theorem on the effective slice spectral sequence of a nice motivic analogue} recovers a result of Levine \cite[Theorem 1]{Lev15}. We should mention that the cited result of Levine does not assume completion at a prime.
\end{Rem}

The steps involved in the proof of Theorem \ref{Theorem on the effective slice spectral sequence of a nice motivic analogue} are as follows:
\begin{enumerate}
    \item\phantomsection\label{Step 1 for proof of identification of effective slice spectral sequence} Start by identifying the $E_1$-page of the effective slice spectral sequence of $\Gamma_\star(X)$,
    \item\phantomsection\label{Step 2 for proof of identification of effective slice spectral sequence} halve the speed of the spectral sequence by also considering odd values for $n$ in the cosimplicial object $\tau_{\geq n}(X \wedge \MU^{\wedge \bullet + 1})$,
    \item\phantomsection\label{Step 3 for proof of identification of effective slice spectral sequence} observe that the half-speed spectral sequence is the décalage of a truncated ANSS for $X$,
    \item\phantomsection\label{Step 4 for proof of identification of effective slice spectral sequence} use Levine's result \cite[Proposition 6.3]{Lev15} as stated in \cite[Theorem 2.80]{vN25} to compare the décalage spectral sequence to the truncated ANSS,
    \item\phantomsection\label{Step 5 for proof of identification of effective slice spectral sequence} by definition that truncated ANSS will also be the BKSS for $\Gamma_w(X)$.
\end{enumerate}
In step \eqref{Step 1 for proof of identification of effective slice spectral sequence} we start by identifying the $E_1$-page of the effective slice spectral sequence.

\begin{Prop}\phantomsection\label{E_1-page of the effective slice spectral sequence of Gamma_star(X)}
    Let $X$ be a bounded below classical spectrum with even $\normalfont{\MU}$ homology. Then the $E_1$-page of the effective slice spectral sequence of $\Gamma_\star(X)$ is given by freely adjoining $\tau$ to the $E_2$-page of the Adams-Novikov spectral sequence of $X$.
\end{Prop}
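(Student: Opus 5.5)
The $E_1$-page of the effective slice spectral sequence of a motivic spectrum $Y$ is by definition $\pi_{*,*}(s_q(Y))$, summed over all $q \in \mathbb{Z}$. So the plan is to read off this page directly from Proposition \ref{Homotopy groups of effective slices of motivic analogues}. Its hypotheses are exactly that $X$ is bounded below with even $\MU$ homology. It gives
\[\pi_{*,*}(s_q(\Gamma_\star(X))) \cong \bigoplus_{s+f=2q} E_2^{s,f}[\tau],\]
with $E_2^{s,f}$ placed in stem $s$ and weight $q$. Summing over all $q$, every ANSS $E_2$-class in bidegree $(s,f)$ appears exactly once: since $X$ has even $\MU$ homology, the odd antidiagonals of the ANSS vanish by \cite[4.4.2. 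Proposition]{Rav86}, so $s+f$ is always even and $q = \frac{s+f}{2}$ is determined. Hence
\[E_1 = \bigoplus_q \pi_{*,*}(s_q(\Gamma_\star(X))) \cong \Bigl(\bigoplus_{s,f} E_2^{s,f}\Bigr)[\tau].\]
This is the $E_2$-page with $\tau$ freely adjoined, the class in $(s,f)$ sitting in stem $s$, weight $\frac{s+f}{2}$, and slice degree $q=\frac{s+f}{2}$.

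To make the identification honest rather than merely abstract, I will recall where the isomorphism of Proposition \ref{Homotopy groups of effective slices of motivic analogues} comes from. Proposition \ref{Filtered spectrum model for effective slice} models $s_q(\Gamma_\star(X))$ in weights $w \le q$ by the constant value $\Tot(\tau^{<2q+2}_{\geq 2q}(X\wedge \MU^{\wedge\bullet+1}))$. Its BKSS is the ANSS restricted to antidiagonals $2q$ and $2q+1$. Under the even $\MU$ homology hypothesis the $(2q+1)$-st antidiagonal is zero, so the BKSS collapses at $E_2$. This gives a canonical isomorphism between $\pi_{s,w}$ of the slice and $E_2^{s,2q-s}$ for every $w\le q$, while the homotopy vanishes for $w>q$. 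Because the structure maps are identities for $w\le q$, multiplication by $\tau$ is an isomorphism in that range, and the groups are free over $\mathbb{Z}[\tau]$ on the weight-$q$ generators.

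The only point needing care is bookkeeping, and that is where I expect the main obstacle. First, the identification must be natural enough to be compatible with the $\tau$-action in the exact couple, which holds because the filtered structure maps act as $\tau$. Second, no extension or convergence issues should arise in the BKSS of the slice. Totalization of the two-stage truncated cosimplicial object is a finite-type situation: only two antidiagonals occur, so each stem has finitely many filtrations, and the BKSS converges strongly. Collapse then yields the groups on the nose, with no extension problems since only a single antidiagonal is nonzero. With these remarks the proposition follows directly from Proposition \ref{Homotopy groups of effective slices of motivic analogues}.
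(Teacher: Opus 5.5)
Your proposal is correct and follows the paper's proof. The paper identifies $f_q(\Gamma_\star(X))$ with the model $\Gamma_\star^q(X)$ to write down the exact couple, then reads off the $E_1$-page directly from the proposition computing $\pi_{*,*}(s_q(\Gamma_\star(X)))$. Your additional remarks are correct and make explicit what the paper leaves implicit: the strong convergence of the two-antidiagonal BKSS, the absence of extension problems, and the fact that each ANSS class appears in exactly one slice because odd antidiagonals vanish.
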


\begin{proof}
    The effective filtration of $\Gamma_\star(X)$ can be described using the results of Section \ref{section: filtered spectrum models for effective covers}, meaning we have $f_q(\Gamma_\star(X)) = \Gamma_\star^q(X)$. So the effective slice spectral sequence comes from the exact couple
    \[
    \begin{tikzcd}
        \dotsc \ar[r] & \pi_{*, *}(\Gamma_\star^{q + 1}(X)) \ar[rr] &[-4em] &[-4em] \pi_{*, *}(\Gamma_\star^q(X)) \ar[rr] \ar[dl] &[-4em] &[-4em] \pi_{*, *}(\Gamma_\star^{q - 1}(X)) \ar[r] \ar[dl] & \dotsc\\
         & & \pi_{*, *}(s_q(\Gamma_\star(X))) \ar[ul] & & \pi_{*, *}(s_{q - 1}(\Gamma_\star(X))) \ar[ul] & &
    \end{tikzcd}
    \]
    The claim then follows from Proposition \ref{Homotopy groups of effective slices of motivic analogues}.    
\end{proof}

\begin{Rem}
    Tautologically, the exact couple from the proof of Proposition \ref{E_1-page of the effective slice spectral sequence of Gamma_star(X)} is exact. Using only this fact, one can almost deduce the claim on the effective slice differentials in Theorem \ref{Theorem on the effective slice spectral sequence of a nice motivic analogue}. However, exactness only tells us what happens to a set of elements. What we could deduce is that there are bijections between the sets of elements supporting differentials in the effective slice spectral sequence of $\Gamma_\star(X)$ and in the Adams-Novikov spectral sequence of $X$, and similarly for targets of differentials. But we would not know whether we have the same source and target pairs in the effective slice spectral sequence as in the Adams-Novikov spectral sequence.
\end{Rem}

Recall from the definition of the effective slice spectral sequence that every map in the defining exact couple preserves the weight. So we can analyze the spectral sequence one weight at a time. Let us fix a weight $w$ for the remainder of this section. In weight $w$, the tower giving rise to the exact couple defining the effective slice spectral sequence is given by
\[\dotsc \to \Gamma_w^{q + 1}(X) \to \Gamma_w^q(X) \to \Gamma_w^{q - 1}(X) \to \dotsc.\]
Unpacking the definition, we see that this is the tower
\[\dotsc \to \Gamma_{w + 1}(X) \to \Gamma_{w}(X) \xrightarrow[]{=} \Gamma_w(X) \xrightarrow[]{=} \dotsc,\]
where the transition maps on the left are the ones from $\Gamma_\star(X)$, and on the right we have the identity. For step \eqref{Step 2 for proof of identification of effective slice spectral sequence} we now need to halve the speed of this tower, so that in step \eqref{Step 3 for proof of identification of effective slice spectral sequence} we can identify the associated half-speed spectral sequence as a décalage.

Consider the following adjustment $\zeta_\star$ of $\Gamma_\star$, which in degree $w$ is given by
\[\zeta_w(X) = \Tot(\tau_{\geq w}(X \wedge \MU^{\wedge \bullet + 1})),\]
and the transition maps $\zeta_{w + 1}(X) \to \zeta_w(X)$ are induced by $\tau_{\geq w + 1} \to \tau_{\geq w}$. We want to compare the spectral sequence associated to the tower
\[\dotsc \to \Gamma_{w + 1}(X) \to \Gamma_{w}(X) \xrightarrow[]{=} \Gamma_w(X) \xrightarrow[]{=} \dotsc,\]
with the spectral sequence associated to the tower
\[\dotsc \to \zeta_{2w + 1}(X) \to \zeta_{2w}(X) \xrightarrow[]{=} \zeta_{2w}(X) \xrightarrow[]{=} \dotsc.\]
Let us denote the first spectral sequence by ${}_{\eff_w}E_*^{*, *}(X)$ because it is the effective slice spectral sequence of $X$ in weight $w$, and the second one by ${}_{\zeta_{2w}}E_*^{*, *}(X)$.

\begin{Lemma}\phantomsection\label{iso from Gamma_w to zeta_2w}
    For every $r \geq 1$ there is an isomorphism
    \[{}_{\zeta_{2w}}E_{2r}^{*, *}(X) \cong {}_{\eff_w}E_r^{*, *}(X).\]
    In the spectral sequence ${}_{\zeta_{2w}}E_*^{*, *}(X)$ all differentials of odd index $d_{2r - 1}$ are trivial. The even index differentials $d_{2r}$ on ${}_{\zeta_{2w}}E_{2r}^{*, *}(X)$ correspond to the differentials $d_r$ on ${}_{\eff_w}E_r^{*, *}(X)$ under the above isomorphism.
\end{Lemma}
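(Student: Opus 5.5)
The plan is to compare the two towers directly. The $\zeta$-tower in weight $w$ refines the effective tower: its even-indexed stages $\zeta_{2w+2k}(X)=\Gamma_{w+k}(X)$ are exactly the stages of the effective tower. So the effective tower is the $\zeta$-tower with every other stage deleted.

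First, I would compute the layers of the $\zeta$-tower. The cofiber of $\zeta_{n+1}(X)\to\zeta_n(X)$ is $\Tot(\tau_{=n}(X\wedge\MU^{\wedge\bullet+1}))$, by the same limits-commute-with-limits argument as in Proposition \ref{Filtered spectrum model for effective slice}. Its BKSS is the ANSS $E_1$-page restricted to the single antidiagonal $s+f=n$. It has only $d_1$, so its homotopy is the ANSS $E_2$-page on that antidiagonal. By the even $\MU$ homology assumption (\cite[4.4.2. Proposition]{Rav86}), this vanishes for $n$ odd. Hence ${}_{\zeta_{2w}}E_1$ is concentrated in even tower degrees, and these terms coincide with the $E_1$-terms $\pi_*(s_q)$ computed in Proposition \ref{Homotopy groups of effective slices of motivic analogues}, in weight $w$. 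On the constant part, both towers are constant below $q=w$, resp. $2w$, so the layers there are trivial.

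Next, I would set up the reindexing. Sending the effective tower degree $q$ to the $\zeta$-degree $2q$ gives a map of filtered objects from the effective tower to the $\zeta$-tower, namely the inclusion of the even stages. The odd layers $\zeta_{2q+1}/\zeta_{2q+2}$ are zero, so $\zeta_{2q+1}(X)\to\zeta_{2q}(X)$ induces the identity-type identification $\zeta_{2q+2}\simeq\zeta_{2q+1}$. More precisely, $\zeta_{2q+2}(X)\to\zeta_{2q+1}(X)$ is an equivalence, since its cofiber is the totalization of $\tau_{=2q+1}$, whose homotopy vanishes. Thus the $\zeta$-tower is, up to equivalence, the effective tower with each stage repeated twice.

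The general statement is then a formal fact about exact couples. If a tower is obtained by doubling each stage, with the intermediate maps being equivalences, then the associated spectral sequence has $E_1$ concentrated in even degrees and every odd differential is zero. Consequently $E_{2r-1}=E_{2r}$, and the $2r$-differential corresponds to the $r$-th differential of the original tower. I would verify this by the standard description $E_r = Z_r/B_r$: an element of the layer at stage $2q$ lifts through $k$ stages of the $\zeta$-tower exactly when it lifts through $\lceil k/2\rceil$ stages of the effective tower. The boundaries land only in even layers.

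The main obstacle is making this exact-couple bookkeeping precise, in particular checking that the differential of index $2r$ in the doubled tower genuinely coincides with $d_r$, rather than just having matching pages. I would handle this by writing $Z_r$ and $B_r$ as images and kernels in the homotopy of the tower stages, and by using the equivalences $\zeta_{2q+2}\simeq\zeta_{2q+1}$ to identify the relevant images stage by stage.
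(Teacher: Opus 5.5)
Your proposal is correct and follows the paper's argument: the maps $\zeta_{2q+2}(X)\to\zeta_{2q+1}(X)$ are equivalences because odd antidiagonals vanish under the even $\MU$ homology hypothesis, so the $\zeta$-tower is the effective tower with each stage doubled, and the lemma follows by comparing the associated exact couples. Your explicit $Z_r/B_r$ bookkeeping, which gives ${}_{\zeta_{2w}}E_{2r-1}={}_{\zeta_{2w}}E_{2r}\cong{}_{\eff_w}E_r$, spells out the step the paper summarizes as ``inspecting the associated exact couples and their spectral sequences.''
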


\begin{proof}
    Note that for all $i \in \mathbb{Z}$ the transition maps $\zeta_{2i}(X) \to \zeta_{2i - 1}(X)$ are equivalences. To see this, first observe that the homotopy groups of both spectra can be computed by a BKSS, which is a truncated version of the ANSS of $X$. The two truncations are below the $2i$-th or below the $(2i - 1)$-st antidiagonal, respectively. But these truncations have the same effect on the ANSS because $X$ is assumed to have even $\MU$ homology, so that odd antidiagonals are trivial.
    
    It follows that in the tower
    \[\dotsc \to \zeta_{2w + 1}(X) \to \zeta_{2w}(X) \xrightarrow[]{=} \zeta_{2w}(X) \xrightarrow[]{=} \dotsc\]
    every second map on the left is an equivalence, and every other map agrees with the appropriate map from the tower
    \[\dotsc \to \Gamma_{w + 1}(X) \to \Gamma_{w}(X) \xrightarrow[]{=} \Gamma_w(X) \xrightarrow[]{=} \dotsc.\]
    The claims then follow by inspecting the associated exact couples and their spectral sequences.
\end{proof}

Now for step \eqref{Step 3 for proof of identification of effective slice spectral sequence} we want to identify the spectral sequence ${}_{\zeta_{2w}}E_*^{*, *}(X)$ as a décalage. Let us first recall the décalage construction.

Let $\Dec$ be the décalage functor defined as follows: If $Z^\bullet$ is a cosimplicial spectrum, then $\Dec_\star(Z^\bullet)$ is the filtered spectrum given in filtration $s$ by
\[\Dec_s(Z^\bullet) = \Tot(\tau_{\geq s} Z^\bullet).\]
The maps in this filtered spectrum are induced by $\tau_{\geq s + 1} \to \tau_{\geq s}$. The décalage spectral sequence of $Z^\bullet$ is by definition the spectral sequence associated to the filtered spectrum $\Dec_s(Z^\bullet)$, i.e. the spectral sequence of the tower
\[\dotsc \to \Dec_{s + 1}(Z^\bullet) \to \Dec_s(Z^\bullet) \to \Dec_{s - 1}(Z^\bullet) \to \dotsc.\]

Our claim for step \eqref{Step 3 for proof of identification of effective slice spectral sequence} is that the décalage spectral sequence of the filtered spectrum
\[Z_{2w}^\bullet = \tau_{\geq 2w}(X \wedge \MU^{\wedge \bullet + 1})\]
is the same as the spectral sequence of the tower
\[\dotsc \to \zeta_{2w + 1}(X) \to \zeta_{2w}(X) \xrightarrow[]{=} \zeta_{2w}(X) \xrightarrow[]{=} \dotsc.\]
The spectral sequence of this tower already has the notation ${}_{\zeta_{2w}}E_*^{*, *}(X)$. Let us denote the décalage spectral sequence of the object $Z_{2w}^\bullet$ by ${}_{\Dec_{2w}}E_*^{*, *}(X)$.

\begin{Lemma}\phantomsection\label{equality of zeta_2w and Dec}
    The spectral sequences ${}_{\zeta_{2w}}E_*^{*, *}(X)$ and ${}_{\normalfont \Dec_{2w}}E_*^{*, *}(X)$ are equal.
\end{Lemma}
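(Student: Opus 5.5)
The plan is to compare the two towers term by term, since a spectral sequence of a tower is determined by the tower together with its transition maps. First I will compute $\Dec_s(Z_{2w}^\bullet)$ for each $s$:
\[\Dec_s(Z_{2w}^\bullet) = \Tot\bigl(\tau_{\geq s}\tau_{\geq 2w}(X \wedge \MU^{\wedge \bullet + 1})\bigr).\]
Connective covers compose by taking the larger index, $\tau_{\geq s}\tau_{\geq 2w} \simeq \tau_{\geq \max\{s, 2w\}}$, naturally in the spectrum. The identification is applied levelwise in the cosimplicial degree and is compatible with the coface and codegeneracy maps. Hence
\[\Dec_s(Z_{2w}^\bullet) \simeq \begin{dcases} \zeta_s(X), & s \geq 2w,\\ \zeta_{2w}(X), & s \leq 2w. \end{dcases}\]

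Next I will check that this identification is compatible with the transition maps. For $s \geq 2w+1$, the map $\Dec_s \to \Dec_{s-1}$ is induced by $\tau_{\geq s}\tau_{\geq 2w} \to \tau_{\geq s-1}\tau_{\geq 2w}$. Under the equivalence above this corresponds to $\tau_{\geq s} \to \tau_{\geq s-1}$, which is exactly the transition map $\zeta_s(X) \to \zeta_{s-1}(X)$. For $s \leq 2w$, the map $\tau_{\geq s}\tau_{\geq 2w} \to \tau_{\geq s-1}\tau_{\geq 2w}$ is an equivalence: both sides are $\tau_{\geq 2w}$ and the map is the identity on it. This matches the identity maps in the $\zeta_{2w}$ tower. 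Applying $\Tot$, which is functorial, then gives an equivalence of towers
\[\{\Dec_s(Z_{2w}^\bullet)\}_s \simeq \{\dotsc \to \zeta_{2w+1}(X) \to \zeta_{2w}(X) \xrightarrow{=} \zeta_{2w}(X) \xrightarrow{=} \dotsc\}.\]

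Equivalent towers give isomorphic exact couples: the homotopy groups of the terms agree, and so do those of the cofibers, since cofibers are functorial. Isomorphic exact couples give isomorphic spectral sequences, with all pages and differentials matching. So the two spectral sequences are equal, provided the indexing conventions for ${}_{\zeta_{2w}}E$ and ${}_{\Dec_{2w}}E$ are the same, which holds by definition since both are indexed by the filtration parameter $s$.

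The only step needing care is the naturality claim. The equivalence $\tau_{\geq s}\tau_{\geq 2w} \simeq \tau_{\geq \max\{s,2w\}}$ must hold as natural transformations of functors $\textbf{SH}^{\cl} \to \textbf{SH}^{\cl}$, compatibly in $s$, so that it passes through cosimplicial objects and $\Tot$. This follows from the universal property of connective covers as right adjoints to inclusions of the subcategories $\mathcal{C}_n$: the composite of the right adjoints $\mathcal{C}_{\max} \subset \mathcal{C}_{\min}$ is the right adjoint of the composite inclusion. The comparison maps $\tau_{\geq n+1} \to \tau_{\geq n}$ are counit-induced and therefore compatible with this.
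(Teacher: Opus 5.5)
Your proposal is correct and follows the same route as the paper: both identify $\tau_{\geq s}\tau_{\geq 2w}$ with $\tau_{\geq \max\{s,2w\}}$ levelwise on the cosimplicial object. This shows that the d\'ecalage tower of $Z_{2w}^\bullet$ is exactly the tower $\dotsc \to \zeta_{2w+1}(X) \to \zeta_{2w}(X) \to \zeta_{2w}(X) \to \dotsc$ (with identities at and below filtration $2w$), so the two spectral sequences coincide; your extra checks on naturality and on the transition maps make explicit what the paper leaves implicit.
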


\begin{proof}
    The spectral sequence ${}_{\zeta_{2w}}E_*^{*, *}(X)$ comes from the tower
    \[\dotsc \to \zeta_{2w + 1}(X) \to \zeta_{2w}(X) \xrightarrow[]{=} \zeta_{2w}(X) \xrightarrow[]{=} \dotsc.\]
    The spectral sequence ${}_{\Dec_{2w}}E_*^{*, *}(X)$ comes from the tower
    \[\dotsc \to \Dec_{s + 1}(Z_{2w}^\bullet) \to \Dec_s(Z_{2w}^\bullet) \to \Dec_{s - 1}(Z_{2w}^\bullet) \to \dotsc.\]
    By definition this second tower is given by
    \[\dotsc \to \Tot(\tau_{\geq s + 1} Z_{2w}^\bullet) \to \Tot(\tau_{\geq s} Z_{2w}^\bullet) \to \Tot(\tau_{\geq s - 1} Z_{2w}^\bullet) \to \dotsc.\]
    Further expanding the definition of $Z_{2w}^\bullet$ yields
    \[\tau_{\geq s} Z_{2w}^\bullet = \tau_{\geq s} \tau_{\geq 2w}(X \wedge \MU^{\wedge \bullet + 1}) = \begin{dcases}
        \tau_{\geq s}(X \wedge \MU^{\wedge \bullet + 1}), & s \geq 2w\\
        \tau_{\geq 2w}(X \wedge \MU^{\wedge \bullet + 1}), & s \leq 2w.
    \end{dcases}\]
    Recalling the definition of $\zeta_\star(X)$ via
    \[\zeta_w(X) = \Tot(\tau_{\geq w}(X \wedge \MU^{\wedge \bullet + 1}))\]
    shows that the two towers giving rise to the two spectral sequences under consideration are equal. So their spectral sequences are equal too.
\end{proof}

In step \eqref{Step 4 for proof of identification of effective slice spectral sequence} we will now use Levine's result \cite[Proposition 6.3]{Lev15} as stated in \cite[Theorem 2.80]{vN25} to compare the décalage spectral sequence to the Bousfield-Kan spectral sequence of the underlying cosimplicial object. The décalage spectral sequence already has the notation ${}_{\Dec_{2w}}E_*^{*, *}(X)$. Let us denote the Bousfield-Kan spectral sequence of the underlying cosimplicial object $Z_{2w}^\bullet$ by ${}_{Z_{2w}^\bullet}E_*^{*, *}(X)$.

\begin{Lemma}\phantomsection\label{iso from Dec to Z_2w}
    There is an isomorphism of spectral sequences
    \[{}_{\normalfont \Dec_{2w}}E_{r}^{*, *}(X) \cong {}_{Z_{2w}^\bullet}E_{r + 1}^{*, *}(X).\]
\end{Lemma}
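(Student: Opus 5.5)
The plan is to invoke Levine's comparison theorem \cite[Proposition 6.3]{Lev15} in the form of \cite[Theorem 2.80]{vN25}, applied to the cosimplicial spectrum $Z_{2w}^\bullet = \tau_{\geq 2w}(X \wedge \MU^{\wedge \bullet + 1})$. That theorem says that for any cosimplicial spectrum $Z^\bullet$, the spectral sequence of the décalage tower $\Dec_\star(Z^\bullet)$ is isomorphic, from its $E_r$-page on, to the Bousfield-Kan spectral sequence of $Z^\bullet$ from its $E_{r+1}$-page on, after a suitable regrading. Our task is therefore mainly to match conventions: the filtration on $\Tot$, the indexing of pages, and the bigradings.

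First, we identify the $E_1$-page of the décalage spectral sequence. Its layers are $\Tot(\tau_{=s} Z_{2w}^\bullet) = \Tot(\Sigma^s H\pi_s(Z_{2w}^\bullet))$. This is the totalization of a cosimplicial Eilenberg-MacLane spectrum, so its homotopy is the cohomotopy of the cosimplicial abelian group $\pi_s(Z_{2w}^\bullet)$. Here we use that a cosimplicial object in the heart totalizes to the normalized cochain complex. For $s \geq 2w$ this cohomotopy is exactly $E_2^{*,*}$ of the BKSS of $Z_{2w}^\bullet$, restricted to internal degree $s$. For $s < 2w$ it vanishes, consistent with the truncation. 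This gives the case $r = 1$: ${}_{\Dec_{2w}}E_1 \cong {}_{Z_{2w}^\bullet}E_2$. In this identification a class in Adams-Novikov bidegree (stem $t$, filtration $f$) sits in décalage filtration $t+f$. Concretely, a BKSS $d_{r+1}$ raises $f$ by $r+1$ and lowers $t$ by $1$, so it raises $t+f$ by $r$, as a décalage $d_r$ must.

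Second, we cite the theorem to promote the $E_1$/$E_2$ identification to an isomorphism of spectral sequences for all $r \geq 1$, compatible with differentials. We also check that the hypotheses of \cite[Theorem 2.80]{vN25} hold. Because everything is defined levelwise on the same cosimplicial object, the only inputs are that $Z_{2w}^\bullet$ is a cosimplicial spectrum and that $\Dec$ is defined with $\tau_{\geq s}$ applied levelwise, which matches our definition. Convergence is not needed for a page-by-page isomorphism.

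The main obstacle I expect is bookkeeping. Levine and van Nigtevecht may use different conventions: a Whitehead versus Postnikov tower, homological versus cohomological indexing, and a possible sign or shift in the page index. I will therefore verify directly the $d_1 \leftrightarrow d_2$ comparison on a representative class. I would chase a class through the exact couples of $\Tot(\tau_{\geq s})$ and of the $\Tot$ skeletal filtration, and confirm that the décalage $d_1$, a connecting map between $\tau_{=s}$ and $\tau_{=s+1}$, agrees up to sign with the BKSS $d_2$. Once this base case of the convention match holds, the general statement follows from the cited theorem.
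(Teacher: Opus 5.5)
Your proposal is correct and takes the same approach as the paper, whose entire proof is the citation of \cite[Theorem 2.80]{vN25} (Levine's \cite[Proposition 6.3]{Lev15}) applied to the cosimplicial spectrum $Z_{2w}^\bullet$. Your extra checks are consistent with the statement but not needed beyond the citation: the layers $\Tot(\tau_{=s}Z_{2w}^\bullet)$ recover the truncated Adams--Novikov $E_2$-page in décalage filtration $t+f$, and a BKSS $d_{r+1}$ raises $t+f$ by $r$.
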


\begin{proof}
    This is exactly the statement of \cite[Theorem 2.80]{vN25} applied to our case.
\end{proof}

Step \eqref{Step 5 for proof of identification of effective slice spectral sequence} is just the simple observation that the cosimplicial object $Z_{2w}^\bullet$ is the same one as the one used in the definition of $\Gamma_w(X)$. Again, to fix notation let us denote the Bousfield-Kan spectral sequence of $\Gamma_w(X)$ by ${}_{\Gamma_w}E_{*}^{*, *}(X)$.

\begin{Lemma}\phantomsection\label{iso from Z_2w to Gamma_w}
    The spectral sequences ${}_{Z_{2w}^\bullet}E_*^{*, *}(X)$ and ${}_{\Gamma_w}E_{*}^{*, *}(X)$ are equal.
\end{Lemma}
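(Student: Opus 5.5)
This is essentially a matter of unwinding definitions, so I will argue by matching the underlying cosimplicial objects rather than by comparing pages. By definition, $\Gamma_w(X) = \Tot(\tau_{\geq 2w}(X \wedge \MU^{\wedge \bullet + 1}))$, and $Z_{2w}^\bullet = \tau_{\geq 2w}(X \wedge \MU^{\wedge \bullet + 1})$. So $\Gamma_w(X) = \Tot(Z_{2w}^\bullet)$ on the nose, as totalizations of one and the same cosimplicial spectrum.

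The Bousfield-Kan spectral sequence of a totalization is built only from the cosimplicial object. One takes the Tot-tower
\[\dotsc \to \Tot_n(Z^\bullet) \to \Tot_{n-1}(Z^\bullet) \to \dotsc\]
and its fibers, which are loops on the normalized cochains $N^n Z^\bullet$. The exact couple comes from applying $\pi_*$ to this tower. Hence:

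\begin{enumerate}
\item The spectral sequence ${}_{\Gamma_w}E_*^{*,*}(X)$ is, by the convention of Section \ref{section: The filtered spectrum model for the C-motivic stable homotopy category}, the BKSS attached to this particular presentation of $\Gamma_w(X)$ as $\Tot(Z_{2w}^\bullet)$.
\item ${}_{Z_{2w}^\bullet}E_*^{*,*}(X)$ is the BKSS of $Z_{2w}^\bullet$ itself.
\item The two therefore arise from the identical tower and the identical exact couple, so all pages, differentials and convergence data coincide.
\end{enumerate}

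The only point needing care, and the one I would state explicitly, is that the BKSS of $\Gamma_w(X)$ means the one attached to this cosimplicial presentation, not some spectral sequence intrinsic to the spectrum $\Gamma_w(X)$. That is exactly how the paper has used it throughout, for example in the truncated-ANSS pictures. I would also check that the grading convention matches the one used in Lemma \ref{iso from Dec to Z_2w}, i.e.\ the indexing from \cite[Theorem 2.80]{vN25}, so that no reindexing shift arises.

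No real obstacle is expected. Once this is noted, the lemma combines with Lemmas \ref{iso from Gamma_w to zeta_2w}, \ref{equality of zeta_2w and Dec} and \ref{iso from Dec to Z_2w}. Together they give ${}_{\eff_w}E_r \cong {}_{\Gamma_w}E_{2r+1}$, with $d_r$ corresponding to $d_{2r+1}$. This is the input for Theorem \ref{Theorem on the effective slice spectral sequence of a nice motivic analogue}.
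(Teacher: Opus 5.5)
Your proposal is correct and takes the same approach as the paper. The paper's proof likewise just observes that both spectral sequences are the Bousfield--Kan spectral sequence of $\Tot(Z_{2w}^\bullet) = \Tot(\tau_{\geq 2w}(X \wedge \MU^{\wedge \bullet + 1})) = \Gamma_w(X)$, with the BKSS of $\Gamma_w(X)$ understood, as you note, as the one attached to this cosimplicial presentation.
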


\begin{proof}
    The spectral sequence ${}_{Z_{2w}^\bullet}E_*^{*, *}(X)$ is the BKSS associated to
    \[\Tot(Z_{2w}^\bullet) = \Tot(\tau_{\geq 2w}(X \wedge \MU^{\wedge \bullet + 1})).\]
    The same is true for ${}_{\Gamma_w}E_{*}^{*, *}(X)$.
\end{proof}

With all of the above steps, we can now prove Theorem \ref{Theorem on the effective slice spectral sequence of a nice motivic analogue}.

\begin{proof}[Proof of Theorem \ref{Theorem on the effective slice spectral sequence of a nice motivic analogue}]
    The claim on the $E_1$-page is Proposition \ref{E_1-page of the effective slice spectral sequence of Gamma_star(X)}. The claim on the differentials follows from the chain of isomorphisms
    \[{}_{\eff_w}E_r^{*, *}(X) \cong {}_{\zeta_{2w}}E_{2r}^{*, *}(X) = {}_{\Dec_{2w}}E_{2r}^{*, *}(X) \cong {}_{Z_{2w}^\bullet}E_{2r + 1}^{*, *}(X) = {}_{\Gamma_w}E_{2r + 1}^{*, *}(X),\]
    described in Lemmas \ref{iso from Gamma_w to zeta_2w}, \ref{equality of zeta_2w and Dec}, \ref{iso from Dec to Z_2w}, and \ref{iso from Z_2w to Gamma_w}. The fact that the effective differentials are $d_r(x) = \tau^r y$, instead of just $d_r(x) = y$, is forced by degree reasons: If $x$ is on the $2q$-th antidiagonal of the ANSS for $X$, then $y$ will be on the $2(q + r)$-th antidiagonal. So $x$ is detected in the homotopy of $s_q(\Gamma_\star(X))$ and $y$ is detected in the homotopy of $s_{q + r}(\Gamma_\star(X))$. Because of Corollary \ref{Corollary on slices of motivic analogues} that means in the slice $E_1$-page $x$ has weight $q$ and $y$ has weight $q + r$. Therefore $x$ must hit $\tau^r y$ with its slice $d_r$-differential. In other words, $\tau^r y$ is the version of $y$ that lives in the same weight as $x$, and since the effective slice differentials preserve weights, this is the differential that must occur.
\end{proof}

\bibliographystyle{alpha}
\bibliography{bib}

\newcommand{\etalchar}[1]{$^{#1}$}
\begin{thebibliography}{GRS{\O}12}

\bibitem[AR{\O}20]{ARO20}
Alexey Ananyevskiy, Oliver R\"ondigs, and Paul~Arne {\O}stv{\ae}r.
\newblock On very effective hermitian {$K$}-theory.
\newblock {\em Math. Z.}, 294(3-4):1021--1034, 2020.

\bibitem[Bac17]{Bac17}
Tom Bachmann.
\newblock The generalized slices of {H}ermitian {$K$}-theory.
\newblock {\em J. Topol.}, 10(4):1124--1144, 2017.

\bibitem[Bau08]{Bau08}
Tilman Bauer.
\newblock Computation of the homotopy of the spectrum {\tt tmf}.
\newblock In {\em Groups, homotopy and configuration spaces}, volume~13 of {\em
  Geom. Topol. Monogr.}, pages 11--40. Geom. Topol. Publ., Coventry, 2008.

\bibitem[BE21]{BE21}
Tom Bachmann and Elden Elmanto.
\newblock Voevodsky's slice conjectures via {H}ilbert schemes.
\newblock {\em Algebr. Geom.}, 8(5):626--636, 2021.

\bibitem[BHS22]{BHS22}
Robert Burklund, Jeremy Hahn, and Andrew Senger.
\newblock Galois reconstruction of {A}rtin-{T}ate $\mathbb{R}$-motivic spectra.
\newblock preprint,
  \href{https://doi.org/10.48550/arXiv.2010.10325}{\nolinkurl{
  arXiv:2010.10325}}, 2022.

\bibitem[CQ21]{CQ21}
Dominic~Leon Culver and J.~D. Quigley.
\newblock {$kq$}-resolutions {I}.
\newblock {\em Trans. Amer. Math. Soc.}, 374(7):4655--4710, 2021.

\bibitem[DI05]{DI05}
Daniel Dugger and Daniel~C. Isaksen.
\newblock Motivic cell structures.
\newblock {\em Algebr. Geom. Topol.}, 5:615--652, 2005.

\bibitem[GIKR22]{GIKR}
Bogdan Gheorghe, Daniel~C. Isaksen, Achim Krause, and Nicolas Ricka.
\newblock {$\mathbb{C}$}-motivic modular forms.
\newblock {\em J. Eur. Math. Soc. (JEMS)}, 24(10):3597--3628, 2022.

\bibitem[GRS{\O}12]{GRSO12}
Javier~J. Guti\'errez, Oliver R\"ondigs, Markus Spitzweck, and Paul~Arne
  {\O}stv{\ae}r.
\newblock Motivic slices and coloured operads.
\newblock {\em J. Topol.}, 5(3):727--755, 2012.

\bibitem[Hea19]{Hea19}
Drew Heard.
\newblock On equivariant and motivic slices.
\newblock {\em Algebr. Geom. Topol.}, 19(7):3641--3681, 2019.

\bibitem[Hoy15]{Hoy15}
Marc Hoyois.
\newblock From algebraic cobordism to motivic cohomology.
\newblock {\em J. Reine Angew. Math.}, 702:173--226, 2015.

\bibitem[IKL{\etalchar{+}}24]{IKL+24}
Daniel~C. Isaksen, Hana~Jia Kong, Guchuan Li, Yangyang Ruan, and Heyi Zhu.
\newblock The {$\mathbb{C}$}-motivic {A}dams-{N}ovikov spectral sequence for
  topological modular forms.
\newblock {\em Adv. Math.}, 458:Paper No. 109966, 47, 2024.

\bibitem[Isa19]{Isa19}
Daniel~C. Isaksen.
\newblock Stable stems.
\newblock {\em Mem. Amer. Math. Soc.}, 262(1269):viii+159, 2019.

\bibitem[IWX23]{IWX}
Daniel~C. Isaksen, Guozhen Wang, and Zhouli Xu.
\newblock Stable homotopy groups of spheres: from dimension 0 to 90.
\newblock {\em Publ. Math. Inst. Hautes \'Etudes Sci.}, 137:107--243, 2023.

\bibitem[Lev14]{Lev14}
Marc Levine.
\newblock A comparison of motivic and classical stable homotopy theories.
\newblock {\em J. Topol.}, 7(2):327--362, 2014.

\bibitem[Lev15]{Lev15}
Marc Levine.
\newblock The {A}dams-{N}ovikov spectral sequence and {V}oevodsky's slice
  tower.
\newblock {\em Geom. Topol.}, 19(5):2691--2740, 2015.

\bibitem[Lur09]{HTT}
Jacob Lurie.
\newblock {\em Higher topos theory}, volume 170 of {\em Annals of Mathematics
  Studies}.
\newblock Princeton University Press, Princeton, NJ, 2009.

\bibitem[LWX25]{LWX25}
Weinan Lin, Guozhen Wang, and Zhouli Xu.
\newblock On the {L}ast {K}ervaire {I}nvariant {P}roblem.
\newblock preprint,
  \href{https://doi.org/10.48550/arXiv.2412.10879}{\nolinkurl{arXiv:2412.10879}},
  2025.

\bibitem[Mat16]{Mat16}
Akhil Mathew.
\newblock The homology of tmf.
\newblock {\em Homology Homotopy Appl.}, 18(2):1--29, 2016.

\bibitem[Mor99]{Mor99}
Fabien Morel.
\newblock Th\'eorie homotopique des sch\'emas.
\newblock {\em Ast\'erisque}, (256):vi+119, 1999.

\bibitem[Mor05]{Mor05}
Fabien Morel.
\newblock The stable {${\mathbb A}^1$}-connectivity theorems.
\newblock {\em $K$-Theory}, 35(1-2):1--68, 2005.

\bibitem[MV99]{MV99}
Fabien Morel and Vladimir Voevodsky.
\newblock {${\bf A}^1$}-homotopy theory of schemes.
\newblock {\em Inst. Hautes \'Etudes Sci. Publ. Math.}, (90):45--143, 1999.

\bibitem[NRZ26]{NRZ26}
Ahina Nandy, Oliver Röndigs, and Egor Zolotarev.
\newblock Slices of the special linear algebraic cobordism spectrum.
\newblock preprint,
  \href{https://doi.org/10.48550/arXiv.2606.05020}{\nolinkurl{arXiv:2606.05020}},
  2026.

\bibitem[NS{\O}09]{NSO09}
Niko Naumann, Markus Spitzweck, and Paul~Arne {\O}stv{\ae}r.
\newblock Motivic {L}andweber exactness.
\newblock {\em Doc. Math.}, 14:551--593, 2009.

\bibitem[Pel11]{Pel11}
Pablo Pelaez.
\newblock Multiplicative properties of the slice filtration.
\newblock {\em Ast\'erisque}, (335):xvi+289, 2011.

\bibitem[Pst23]{Pst23}
Piotr Pstr\k{a}gowski.
\newblock Synthetic spectra and the cellular motivic category.
\newblock {\em Invent. Math.}, 232(2):553--681, 2023.

\bibitem[Rav86]{Rav86}
Douglas~C. Ravenel.
\newblock {\em Complex cobordism and stable homotopy groups of spheres}, volume
  121 of {\em Pure and Applied Mathematics}.
\newblock Academic Press, Inc., Orlando, FL, 1986.

\bibitem[RS{\O}19]{RSO19}
Oliver R\"ondigs, Markus Spitzweck, and Paul~Arne {\O}stv{\ae}r.
\newblock The first stable homotopy groups of motivic spheres.
\newblock {\em Ann. of Math. (2)}, 189(1):1--74, 2019.

\bibitem[S{\O}12]{SO12}
Markus Spitzweck and Paul~Arne {\O}stv{\ae}r.
\newblock Motivic twisted {$K$}-theory.
\newblock {\em Algebr. Geom. Topol.}, 12(1):565--599, 2012.

\bibitem[Spi10]{Spi10}
Markus Spitzweck.
\newblock Relations between slices and quotients of the algebraic cobordism
  spectrum.
\newblock {\em Homology Homotopy Appl.}, 12(2):335--351, 2010.

\bibitem[Spi12]{Spi12}
Markus Spitzweck.
\newblock Slices of motivic {L}andweber spectra.
\newblock {\em J. K-Theory}, 9(1):103--117, 2012.

\bibitem[vN25]{vN25}
Sven van Nigtevecht.
\newblock An introduction to filtered and synthetic spectra.
\newblock preprint,
  \href{https://doi.org/10.48550/arXiv.2509.21127}{\nolinkurl{
  arXiv:2509.21127}}, 2025.

\bibitem[Voe02a]{Voe02a}
Vladimir Voevodsky.
\newblock Open problems in the motivic stable homotopy theory. {I}.
\newblock In {\em Motives, polylogarithms and {H}odge theory, {P}art {I}
  ({I}rvine, {CA}, 1998)}, volume 3, I of {\em Int. Press Lect. Ser.}, pages
  3--34. Int. Press, Somerville, MA, 2002.

\bibitem[Voe02b]{Voe02b}
Vladimir Voevodsky.
\newblock A possible new approach to the motivic spectral sequence for
  algebraic {$K$}-theory.
\newblock In {\em Recent progress in homotopy theory ({B}altimore, {MD},
  2000)}, volume 293 of {\em Contemp. Math.}, pages 371--379. Amer. Math. Soc.,
  Providence, RI, 2002.

\bibitem[Voe03]{Voe03}
Vladimir Voevodsky.
\newblock Motivic cohomology with {${\bf Z}/2$}-coefficients.
\newblock {\em Publ. Math. Inst. Hautes \'Etudes Sci.}, (98):59--104, 2003.

\bibitem[Voe04]{Voe04}
Vladimir Voevodsky.
\newblock On the zero slice of the sphere spectrum.
\newblock {\em Tr. Mat. Inst. Steklova}, 246:106--115, 2004.

\bibitem[Voe11]{Voe11}
Vladimir Voevodsky.
\newblock On motivic cohomology with {$\mathbf Z/l$}-coefficients.
\newblock {\em Ann. of Math. (2)}, 174(1):401--438, 2011.

\end{thebibliography}

\end{document}